\documentclass[a4paper,10pt,reqno]{amsart}
\usepackage{a4wide,color,array}
\usepackage{cite}
\usepackage{hyperref}
\usepackage{amsmath,amssymb,amsthm,color}
\hypersetup{
	linktocpage,
	colorlinks,
	linkcolor={red},
	citecolor={blue},
	bookmarksdepth=2
}
\usepackage[english]{babel}
\usepackage[T1]{fontenc}
\usepackage[utf8]{inputenc}
\usepackage{dsfont}
\usepackage{enumitem}
\usepackage[a4paper,left=2.6cm,right=2.6cm,top=3cm,bottom=3.5cm]{geometry}%
\usepackage{marginnote}%

\usepackage{float}
\usepackage{subcaption}
\usepackage{accents}
\usepackage{mathrsfs}

\usepackage{xspace}
\usepackage{bm}
\usepackage{bbm,upgreek}
\usepackage[e]{esvect}
\usepackage{esint}
\usepackage{etoolbox}
\usepackage{calc}
\usepackage{eso-pic}

\usepackage{lmodern}
\usepackage{xr}
\numberwithin{equation}{section}

\setlist{nosep}
\setlist{noitemsep}

\newtheorem{theorem}{Theorem}%

\newtheorem{proposition}{Proposition}[section]
\newtheorem{lemma}[proposition]{Lemma}%
\newtheorem{fact}[proposition]{Fact}%
\newtheorem*{question}{Question}%
\newtheorem*{answer}{Answer}%
\newtheorem{remark}{Remark}[section]%

\newtheorem{assumption}[remark]{Assumption}
\newtheorem{definition}[proposition]{Definition}%
\newcommand{\dR}{\mathbb{R}}%
\newcommand{\dN}{\mathbb{N}}%

\newcommand{\ve}{\varepsilon}

\newcommand{\dP}{\mathbb{P}}%
\newcommand{\dE}{\mathbb{E}}%
\newcommand{\Var}{\mathrm{Var}}%
\newcommand{\Cov}{\mathrm{Cov} }%

\newcommand{\supp}{\mathrm{supp}}
\newcommand{\Law}{\mathrm{Law}}

\newcommand{\Ising}{\mathrm{Ising}}

\newcommand{\mc}{\mathcal}

\newcommand{\dd}{\mathrm{d}}
\newcommand{\TAP}{\mathrm{TAP}}

\newcommand{\eq}{\mathrm{eq}}
\newcommand{\Band}{\mathrm{Band}}

\newcommand{\bfm}{\mathbf{m}}
\newcommand{\bfa}{\mathbf{a}}
\newcommand{\bfb}{\mathbf{b}}
\newcommand{\zhat}{\widehat\zeta}
\newcommand{\Shift}{\mathrm{Shift}}% the q-collapse of a full measure to a band measure

\usepackage{graphicx}

\title[Large deviations for the maximum of the generalized TAP free energy]{Large deviations for the maximum of\\ the generalized TAP free energy}

\author{Jeanne Boursier}
\address{Department of Mathematics, Massachusetts Institute of Technology, Cambridge, MA, USA}
\email{boursier@mit.edu}

\begin{document}
	
	\begin{abstract}
		We study large deviations of the maximum of the generalized TAP free
		energy introduced by Chen, Panchenko, and Subag~\cite{ChenPanchenkoSubag}
		for the Ising mixed \(p\)-spin model. The upper bound uses a new
		Guerra-type interpolation that may also be useful for other spin-glass
		models. The lower bound follows the strategy of Huang and
		Sellke~\cite{HuangSellke}. We show that supersymmetric critical points have
		no competitor at positive overlap on their own sphere. As such, they form
		a spherical code, which turns the annealed count into a lower bound on the
		probability of existence. This identifies the supersymmetric formula
		proposed by physicists with the large-deviation exponent for the existence
		of TAP maxima, rather than with the ordinary annealed complexity. The
		resulting rate function is the Legendre
		transform of a constrained Parisi value in the bottom-atom mass. The same
		argument, iterated in bands above TAP ancestors, would give a constructive
		proof of the Parisi formula for the Ising model under a technical strict
		stability assumption.
	\end{abstract}
	
	\maketitle
	
	\setcounter{tocdepth}{1}
	\tableofcontents
	
	%==============================================================================
	\section{Introduction}
	%==============================================================================
	
	\subsection{Setting}\label{sub:setup}
	
	Spin-glass theory has often been driven by conjectural exact formulas
	obtained through nonrigorous methods. The replica method predicted the
	Parisi formula for the free energy, while supersymmetric calculations
	produced Legendre formulas for the complexity of TAP states. The Parisi
	formula has since been proved, but the probabilistic meaning of the
	supersymmetric formulas has remained unclear.
	
	In this paper, we consider the Ising mixed $p$-spin model on
	\(\Sigma_N:=\{-1,1\}^N\). Let
	\((H_N(\bfm))_{\bfm\in[-1,1]^N}\) be the centered Gaussian process with
	covariance
	\[
	\dE\bigl[H_N(\bfm)H_N(\bfm')\bigr]
	=
	N\xi\!\left(\frac{\langle\bfm,\bfm'\rangle}{N}\right),
	\qquad \bfm,\bfm'\in[-1,1]^N,
	\]
	where
	\begin{equation}\label{eq:mixture-assumption}
		\xi(x)=\sum_{p\ge2}\beta_p^2x^p,
		\qquad \beta_p^2\ge0,
	\end{equation}
	has radius of convergence larger than $1$. The restriction of \(H_N\) to
	\(\Sigma_N\) is the Hamiltonian of the model. The normalized partition
	function is
	\begin{equation}\label{eq:normalized-partition-function}
		Z_N:=2^{-N}\sum_{\sigma\in\Sigma_N}e^{H_N(\sigma)}.
	\end{equation}
	
	The limiting free energy is described by the following variational problem.
	Given $\zeta\in\mc P([0,1])$, let
	$\Phi_\zeta:[0,1]\times\dR\to\dR$ be the solution of
	\begin{equation}\label{eq:parisi-pde}
		\partial_s\Phi_\zeta(s,x)
		+
		\frac{\xi''(s)}2
		\left[
		\partial_{xx}\Phi_\zeta(s,x)
		+
		\zeta([0,s])\bigl(\partial_x\Phi_\zeta(s,x)\bigr)^2
		\right]
		=0,
		\qquad
		\Phi_\zeta(1,x)=\log\cosh x.
	\end{equation}
	The corresponding Parisi functional is
	\begin{equation}\label{eq:parisi-value}
		P_\zeta
		:=
		\Phi_\zeta(0,0)
		-
		\frac12\int_0^1s\xi''(s)\zeta([0,s])\,\dd s.
	\end{equation}
	The Parisi formula~\cite[Theorem~3.1]{PanchenkoBook} and Gaussian
	concentration give
	\begin{equation}\label{eq:parisi}
		\lim_{N\to\infty}\frac1N\log Z_N
		=
		\inf_{\zeta\in\mc P([0,1])}P_\zeta
		=:
		f_{\eq}
	\end{equation}
	almost surely.
	
	The Parisi picture behind \eqref{eq:parisi} is that the Gibbs measure splits
	into a hierarchy of clusters whose masses are described by nested Chinese
	restaurant processes. The center of each cluster is called a magnetization.
	
	Chen, Panchenko, and Subag
	\cite[Theorems~1 and~2]{ChenPanchenkoSubag} identified the free-energy
	cost of fixing the barycenter of many replicas at
	\(\bfm\in[-1,1]^N\). Write \(q=\|\bfm\|^2/N\). For
	\(\eta\in\mc P([q,1])\), let
	\[
	\Phi_\eta^*(q,a)
	:=
	\inf_{x\in\dR}\{\Phi_\eta(q,x)-ax\},
	\]
	and set
	\begin{equation}\label{eq:FTAP-eta}
		F_{\TAP,\eta}(\bfm)
		:=
		H_N(\bfm)
		+
		\sum_{i=1}^N\Phi_\eta^*(q,m_i)
		-
		\frac N2\int_q^1s\xi''(s)\eta([0,s])\,\dd s.
	\end{equation}
	The generalized TAP free energy is
	\begin{equation}\label{eq:FTAP}
		F_{\TAP}(\bfm)
		:=
		\inf_{\eta\in\mc P([q,1])}F_{\TAP,\eta}(\bfm).
	\end{equation}
	
	\subsection{Controversies}\label{sub:controversies}
	
	Since the late 1970s, physicists have sought the exponential growth rate of the
	number of critical points of the TAP free energy at level $f$, both on average
	(annealed) and typically (quenched). Here the functional was the classical one,
	i.e.\ \eqref{eq:FTAP} with the infimum over $\eta$ dropped and $\eta$ fixed to
	$\delta_q$. The counting of metastable states goes back to Bray and
	Moore~\cite{AJBray}. The Rome group then recast the Kac--Rice count of critical
	points at a fixed level as a supersymmetric integral: the Hessian determinant
	becomes a fermionic Grassmann integral, and the gradient and energy constraints
	become bosonic integrals with Lagrange
	multipliers~\cite{cavagnaformal,parisi2004supersymmetry,crisanti2003complexity}.
	A Becchi--Rouet--Stora--Tyutin supersymmetry of the action shows that, once the
	absolute value of the determinant is dropped, the dominant saddle solves a set of
	Ward identities~\cite{annibalerole}. The resulting supersymmetric complexity,
	studied further in~\cite{TAPIsing}, is a Legendre transform whose dual parameters
	are the bottom-atom mass $\zeta(\{0\})$ and the level $f$.
	
	This picture is, however, controversial. Dropping the absolute value is
	legitimate only when the Hessian keeps a fixed signature, and this may fail: a
	non-supersymmetric branch could then dominate~\cite{parisi2004supersymmetry}.
	
	\begin{question}
		Is the supersymmetric formula the correct annealed complexity?
	\end{question}
	
	\begin{answer}
		No, not for the unsigned count in general.
		Appendix~\ref{sec:spherical-examples} gives spherical examples in which the
		ordinary annealed complexity is strictly larger than the supersymmetric one.
	\end{answer}
	
	\begin{question}
		How can this be, when the supersymmetric formulas are so clean?
	\end{question}
	
	\begin{answer}
		Because they are counting something else. Also notice that these formulas are valid only for
		$f\ge f_{\eq}$, and they always produce an exponent that is zero or negative.
	\end{answer}
	
	\begin{question}
		What, then, is the supersymmetric quenched complexity? If the states being
		counted have a small probability of existing, would we not get
		$\dE[\log(\mathrm{number})]=\log(0^+)=-\infty$?
	\end{question}
	
	\begin{answer}
		Writing $\log n=\lim_{\ve\downarrow0}\tfrac1\ve(n^\ve-1)$, one sees that the quenched complexity at level $f$ morally corresponds to the probability of existence of a TAP state at level $f$. The supersymmetric formulas thus concern the large deviations of
		the maximum of the TAP free energy.
	\end{answer}
	
	Notice that the annealed and quenched complexities of TAP states at a level
	$f\ge f_{\eq}$ may differ. Indeed, the difference of the two exponents is the rate
	of the conditional expected number of states given that at least one exists,
	\[
	\dE[\mathsf N\mid\mathsf N\ge1]
	=\frac{\dE\mathsf N}{\dP(\mathsf N\ge1)}.
	\]
	Because of the correlations of the Hamiltonian, this conditional count can be exponentially large, so the annealed exponent can strictly exceed the quenched one. 
	
	The supersymmetric formula computes the annealed exponent of isolated local
	maxima of the TAP free energy at high levels. As such, their Hessians are
	negative definite, justifying the supersymmetric manipulation.
	This corrects \cite[Conjectures~1 and~2]{TAPIsing}, where the Legendre
	transform in the bottom-atom mass was conjectured to describe the ordinary
	annealed complexity and the Legendre transform in the top cumulative mass
	the ordinary quenched complexity.
	
	\subsection{Main results}\label{sub:main-results}
	
	Throughout the paper, the mixture satisfies the following
	assumption.
	
	\begin{assumption}[Covariance function]\label{ass:xi}
		The function \(\xi\) is as in \eqref{eq:mixture-assumption} and is
		strictly convex on \([-1,1]\), with \(\xi(0)=\xi'(0)=0\).
	\end{assumption}
	
	For \(f\ge f_{\eq}\) and \(\ve>0\), let \(\mc E_{N,\ve}(f)\) be the
	event that there exist \(\zeta\in\mc P([0,1])\) and
	\(\bfm\in(-1,1)^N\) such that
	\[
	\nabla F_{\TAP,\zeta}(\bfm)=0,
	\qquad
	\left|\frac1N F_{\TAP}(\bfm)-f\right|\le\ve,
	\qquad
	\left|\frac1N F_{\TAP,\zeta}(\bfm)-f\right|\le\ve.
	\]
	For \(0\le\theta<1\), define
	\[
	\Lambda(\theta)
	:=
	\theta
	\inf_{\substack{\zeta\in\mc P([0,1])\\
			\zeta(\{0\})\ge\theta}}
	P_\zeta.
	\]
	Define \(\bar\Lambda\) on \(\mathbb R\) by
	\[
	\bar\Lambda(\theta)
	=
	\begin{cases}
		\Lambda(\theta), & 0\le\theta<1,\\
		\displaystyle\liminf_{u\uparrow1}\Lambda(u), & \theta=1,\\
		+\infty, & \theta\notin[0,1],
	\end{cases}
	\]
	and set
	\[
	\Sigma(f)
	:=
	\inf_{\theta\ge0}
	\bigl[\bar\Lambda(\theta)-\theta f\bigr].
	\]
	Set
	\[
	f_1:=\bar\Lambda'_-(1).
	\]
	
	\begin{definition}[Regular levels]
		\label{def:regular-levels}
		A level \(f\in(f_{\eq},f_1)\) is regular if \(\Sigma\) is
		differentiable at \(f\). Equivalently, the minimizer of
		\(\theta\mapsto\bar\Lambda(\theta)-\theta f\) is unique.
	\end{definition}
	
	The strict-convexity argument of Auffinger and
	Chen~\cite[Theorem~2]{AuffingerChen} applies to the constrained problem
	defining \(\Lambda(\theta)\). Thus its minimizer is unique for every
	\(\theta\in(0,1)\).
	
	Unless an initial condition is specified, the Auffinger--Chen process
	associated with \(\zeta\in\mc P([0,1])\) starts at \(X_0=0\). A positive
	contact point of \(\zeta\) is a minimizer over \((0,1]\) of the Parisi
	obstacle
	\[
	t\longmapsto
	\frac12\int_t^1\xi''(s)
	\left(
	\dE\left[
	\left(\partial_x\Phi_\zeta(s,X_s)\right)^2
	\right]-s
	\right)\,\dd s.
	\]
	When the set of positive contact points has a smallest element, it is
	called the first positive contact point.
	
	\begin{theorem}[Large deviations for the existence of TAP states]
		\label{thm:legendre}
		Fix a regular \(f\in(f_{\eq},f_1)\). Let \(\theta_f\in(0,1)\) be the
		unique minimizer defining \(\Sigma(f)\), let \(\zeta_f\) be the
		corresponding constrained Parisi minimizer, and let \(q_f\) be its first
		positive contact point. Let \(X^f\) be the Auffinger--Chen process
		associated with \(\zeta_f\). Assume the strict Plefka condition
		\begin{equation}\label{eq:onshell-strict-plefka}
			1-\xi''(q_f)\dE\left[
			\left(
			\partial_{xx}\Phi_{\zeta_f}(q_f,X^f_{q_f})
			\right)^2
			\right]>0.
		\end{equation}
		Then
		\begin{equation}\label{eq:existence-tap}
			\begin{aligned}
				\lim_{\ve\downarrow0}\liminf_{N\to\infty}
				\frac1N\log\dP\bigl(\mc E_{N,\ve}(f)\bigr)
				&=
				\lim_{\ve\downarrow0}\limsup_{N\to\infty}
				\frac1N\log\dP\bigl(\mc E_{N,\ve}(f)\bigr)\\
				&=
				\Sigma(f)
				=
				\inf_{\theta\in(0,1)}
				\bigl[\Lambda(\theta)-\theta f\bigr].
			\end{aligned}
		\end{equation}
		Moreover,
		\begin{equation}\label{eq:legendre}
			\lim_{N\to\infty}
			\frac1N
			\log
			\dE\exp\left\{
			\theta_f\max_{\bfm\in[-1,1]^N}F_{\TAP}(\bfm)
			\right\}
			=
			\Lambda(\theta_f).
		\end{equation}
	\end{theorem}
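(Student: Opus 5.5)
The argument has three parts: a matching upper and lower bound on \(\dP(\mc E_{N,\ve}(f))\), and the exponential-moment identity \eqref{eq:legendre}, which comes from the same two bounds via a Varadhan-type argument.

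\textbf{Upper bound.} The goal is
\[
\limsup_N\frac1N\log\dE\exp\Bigl\{\theta\max_{\bfm}F_{\TAP}(\bfm)\Bigr\}\le\Lambda(\theta)
\qquad\text{for every }\theta\in(0,1).
\]
Chebyshev's inequality then gives \(\limsup\frac1N\log\dP(\mc E_{N,\ve}(f))\le\Lambda(\theta)-\theta(f-\ve)\). Optimizing over \(\theta\) and letting \(\ve\downarrow0\) yields the bound \(\Sigma(f)\).

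To prove the moment bound, fix \(\zeta\) with \(\zeta(\{0\})\ge\theta\) and write \(\dE e^{\theta\max F_{\TAP}}=\dE\bigl[(e^{\max F_{\TAP}})^{\theta}\bigr]\). The mass \(\theta\) at \(0\) plays the role of the first Ruelle level: \(\dE Z^\theta\) with \(Z=\sum_\alpha e^{X_\alpha}\) is bounded by a Guerra hierarchy whose bottom parameter is \(\theta\). The bound will come from a Guerra-type interpolation between \(\theta\,F_{\TAP,\zeta^{(q)}}(\bfm)\) and a decoupled field. Here \(\zeta^{(q)}\) denotes the restriction of \(\zeta\) above \(q=\|\bfm\|^2/N\). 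Two facts are needed:
\begin{itemize}
\item \(F_{\TAP}\le F_{\TAP,\eta}\) for every \(\eta\), so one may plug in a convenient \(\eta\) built from \(\zeta\);
\item \(\max_{\bfm}\) can be handled by a net over \(q\) together with Gaussian concentration.
\end{itemize}
The interpolation should be arranged so that the derivative in time is nonpositive, exactly as in Guerra's bound. After the interpolation, the remaining decoupled problem is controlled by the Parisi PDE \eqref{eq:parisi-pde} for \(\zeta\), and the quantity it produces is \(\theta P_\zeta\). Taking the infimum over admissible \(\zeta\) gives \(\Lambda(\theta)\).

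\textbf{Lower bound (following Huang--Sellke).} The plan is a second-moment argument on a restricted count. Count critical points of \(F_{\TAP,\zeta_f}\) near level \(f\) that are strict local maxima lying on the sphere \(\|\bfm\|^2=q_fN\). The strict Plefka condition \eqref{eq:onshell-strict-plefka} will be used to show that the Hessian is negative definite at these points. Negative definiteness makes the absolute value in Kac--Rice harmless, so the annealed count \(\dE\mathsf N\) equals the supersymmetric exponent. That exponent should be identified with \(\Sigma(f)\) through the stationarity conditions of \(\Lambda\) at \(\theta_f\), with \(q_f\) the first positive contact point.

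The key structural step is showing that such supersymmetric critical points have no competitor at positive overlap on their own sphere. That is, conditionally on one point existing, with high probability there is no other such point at overlap in \((\delta,q_f]\). Once this holds, the points form a spherical code, and so the points counted are nearly orthogonal. Their existence events then decorrelate enough that
\[
\dP(\mathsf N\ge1)\ge e^{-o(N)}\min\bigl(1,\dE\mathsf N\bigr)
\]
holds at exponential scale. Since \(\Sigma(f)\le0\) for \(f>f_{\eq}\), this gives the bound \(\Sigma(f)\).

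One also has to check two things about these points. First, \(F_{\TAP}(\bfm)\approx F_{\TAP,\zeta_f}(\bfm)\), so that the infimum over \(\eta\) is attained near \(\zeta_f\); this uses the contact-point property of \(q_f\). Second, the event \(\mc E_{N,\ve}(f)\) is therefore realized by these points.

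\textbf{Exponential moment and expected main obstacle.} Given the upper bound \(\le\Lambda(\theta)\), the exact formula \eqref{eq:legendre} at \(\theta_f\) follows from a lower Varadhan estimate:
\[
\dE e^{\theta_f\max F_{\TAP}}\ge e^{N\theta_f(f-\ve)}\,\dP(\mc E_{N,\ve}(f)).
\]
This gives \(\ge\theta_f f+\Sigma(f)=\Lambda(\theta_f)\), where regularity of \(f\) ensures \(\theta_f\) is the unique dual point. The equality \(\Sigma(f)=\inf_{\theta\in(0,1)}[\Lambda(\theta)-\theta f]\) holds because \(f\in(f_{\eq},f_1)\) excludes the endpoints \(\theta=0\) and \(\theta=1\).

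I expect the main obstacle to be the no-competitor statement. It requires a Kac--Rice two-point computation: one must show that the two-point complexity at positive overlap is strictly smaller than the one-point complexity. I would first attempt this via the same interpolation upper bound, applied conditionally on a planted TAP point, combined with strict concavity from the Plefka condition.
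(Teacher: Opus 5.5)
Your upper bound and the final Varadhan step agree with the paper. Proposition~\ref{prop:direct-tilt-TAP} bounds \(\limsup_N N^{-1}\log\dE e^{\theta\max_{\bfm}F_{\TAP}(\bfm)}\) by \(\Lambda(\theta)\) using a cascade comparison with an outer level of parameter \(\theta\). Markov's inequality then gives the upper bound, and \eqref{eq:legendre} follows from \(\dE e^{\theta_f\max_{\bfm}F_{\TAP}(\bfm)}\ge e^{N\theta_f(f-\ve)}\dP(\mc E_{N,\ve}(f))\), as you say. In the paper, the comparison is Slepian's inequality for suprema, with an Onsager field added to match variances and later removed by cascade invariance. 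The maximum over \(\bfm\) is removed by the Fenchel inequality, and the supremum over radii by a Doob maximal estimate, not by a net.

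The gap is in the lower bound, at the passage from ``no competitor'' to existence. A Palm statement of the form ``given that \(\bfm\) is counted, with high probability no other counted point has overlap in \((\delta,q_f]\)'' does not make the counted points a spherical code. ``The existence events decorrelate'' is not an argument. If you actually run a second moment, \(\dP(\mathsf N\ge1)\ge(\dE\mathsf N)^2/\dE\mathsf N^2\) with \(\dE\mathsf N=e^{N\Sigma(f)+o(N)}\) exponentially small requires \(\dE[\mathsf N(\mathsf N-1)]\le e^{o(N)}\dE\mathsf N\). That means two-point control at every overlap, including the whole range \((-q_f,\delta)\), where your no-competitor statement says nothing. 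Overlap \(<\delta\) does not mean nearly orthogonal.

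The missing idea is that no second moment is needed once isolation is made deterministic. The paper counts only states \(\bfm\) that satisfy two conditions:
\begin{enumerate}
\item[(i)] no distinct \(\ve\)-SUSY state, retained or not, lies at overlap in \([\delta,q_f-\delta]\);
\item[(ii)] \(\lambda_{\max}(\nabla^2F_{\TAP,\zeta_f})\le-\kappa\) on the whole ball of radius \(2\sqrt{\delta N}\) around \(\bfm\).
\end{enumerate}
Under the Palm measure, (i) fails with probability at most \(e^{-cN}\). This comes from the conditional comparison bound \(-\theta_fH_{\zeta_f,q_f}(r)<0\) on each overlap slice (the strict gap below the first contact point) plus concentration. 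This is essentially the route in your last sentence, and it needs no two-point Kac--Rice. Condition (ii) holds with probability at least \(1/2\) by strict Plefka.

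Because Kac--Rice weights by \(|\det\nabla^2F_{\TAP,\zeta_f}|\), these probabilities preserve the exponent only after a determinant lower-tail estimate, Lemma~\ref{lem:one-point-conditional-determinant}\textup{(ii)}. It gives \(\dE_{\bfm,f'}[|\det|\,\mathbf 1_{\mathcal G}]\ge e^{-o(N)}\dE_{\bfm,f'}|\det|\). Your plan omits this.

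Condition (ii) forbids two critical points at overlap \(\ge q_f-\delta\), by integrating the Hessian along the segment between them. So retained points have pairwise overlap \(<\delta\) deterministically. The one-sided code bound (averaging over caps plus Rankin) then gives \(\widehat{\mathsf N}\le A_N(\delta/(q_f-\ve))\,\mathbf 1\{\widehat{\mathsf N}\ge1\}\), with \(N^{-1}\log A_N(a)\le-\frac12\log(1-a)+o(1)\). Hence \(\dP(\widehat{\mathsf N}\ge1)\ge\dE\widehat{\mathsf N}/A_N\), and letting \(\delta\downarrow0\) finishes.

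Finally, the count must be restricted to the window \(W_2(N^{-1}\sum_i\delta_{b_i},\Law(X_{q_f}))\le\ve\). Strict Plefka controls the Hessian only for such points. The exponent \(\Sigma(f)\) comes from evaluating \(\dE|\det|\) and the Gaussian density directly on that window, not from the heuristic that negative definiteness makes the unsigned count supersymmetric.
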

	
	\begin{remark}
		Since \(\zeta_f\) is the constrained Parisi minimizer, the non-strict
		Plefka inequality
		\[
		1-\xi''(q_f)\dE\left[
		\left(
		\partial_{xx}\Phi_{\zeta_f}(q_f,X^f_{q_f})
		\right)^2
		\right]\ge0
		\]
		holds at \(q_f\). Thus \eqref{eq:onshell-strict-plefka} only requires
		this inequality to be strict. This strictness and the strict convexity
		assumption on \(\xi\) are technical. We expect that both can be removed.
	\end{remark}

	\begin{remark}
		Talagrand's large-deviation principle
		\cite[Section~7]{LDTalagrand} identifies
		\(-\Sigma\) as the upper-tail rate function of
		\(N^{-1}\log Z_N\). Equation~\eqref{eq:existence-tap} gives the same
		rate for the existence of TAP states at regular levels in
		\((f_{\eq},f_1)\). This coincidence is natural from the generalized
		TAP representation. An upper deviation of the free energy is realized
		by an upper deviation of the TAP maximum, which requires the existence
		of a TAP state at the corresponding level.
	\end{remark}
	
	\begin{remark}
		The proof also gives a direct lower bound for upper deviations of the
		ordinary free energy. More precisely, for every regular
		\(f\in(f_{\eq},f_1)\) satisfying the strict Plefka condition, we show (see Proposition~\ref{prop:free-energy-ldp-lower-bound}) that
		\[
		\lim_{\rho\downarrow0}\liminf_{N\to\infty}
		\frac1N\log\dP\left(
		\frac1N\log Z_N\ge f-\rho
		\right)
		\ge\Sigma(f).
		\]
		
		The estimates developed here provide ingredients for a constructive proof
		of the Parisi formula for the Ising model following Huang and
		Sellke~\cite{HuangSellke}. Suppose that the
		Parisi contact set (for $f=f_\eq$) is
		\[
		0=q_0<q_1<\cdots<q_D.
		\]
		One can show that for every $i\in \{1,\ldots,D\}$, there exists a TAP state $\bfm^{(i)}$ of self-overlap $q_i$ and free energy level $\approx f_\eq$. This can be proved by induction by studying the maximum of $F_\TAP$ over a band orthogonal to $\bfm^{(i-1)}$, conditionally on the skeleton of ancestors being critical at level $f_\eq$.

		The planted truncated second-moment argument of Huang and
		Sellke~\cite[Proposition~2.7]{HuangSellke} applies almost verbatim to the
		strictly replica-symmetric band above \(q_D\). Combining the two steps
		would give a constructive proof of the lower bound in the Parisi formula
		for the Ising model.
		
		Notice that if the contact set has a continuous part, then a finite iteration through
		contact levels no longer suffices. Subag developed the corresponding
		Hessian-ascent mechanism for spherical models with full replica symmetry
		breaking (FRSB)
		\cite[Theorem~4]{SubagHessianAscent}. Jekel, Sandhu, and Shi established a
		cube analogue for the SK model under an FRSB assumption
		\cite[Theorem~1.5]{JekelSandhuShi}.
	\end{remark}

	\subsection{Some literature}
	
	In 1977, Thouless, Anderson, and Palmer introduced a random free-energy
	functional of the local magnetizations \(m_i=\langle\sigma_i\rangle\), meant
	to govern the thermodynamic behavior of the magnetization vector
	\cite{ThoulessAndersonPalmer1977}. This is the classical TAP free energy,
	named after Thouless, Anderson, and Palmer.
	
	A rigorous approach was developed by Subag for spherical spin glasses and by
	Chen, Panchenko, and Subag for Ising spin glasses
	\cite{Subag2018,ChenPanchenkoSubag,ChenPanchenkoSubag2}. The TAP free energy
	at \(\bfm\) is obtained by constraining many replicas to have barycenter
	\(\bfm\) and computing the corresponding free-energy cost. Thus, after
	normalization, \(f_{\eq}-N^{-1}F_{\TAP}(\bfm)\) is the quenched
	large-deviation rate for the barycenter.
	
	A line of rigorous work studies landscape complexity in regimes
	where critical points are exponentially numerous. This theory is most
	developed for spherical pure \(p\)-spin Hamiltonians. Auffinger, Ben Arous,
	and \v{C}ern\'y computed the annealed exponential growth rate of the number
	of critical points at a prescribed energy using Kac--Rice and random matrix
	theory \cite{AuffingerBenArousCerny}. Subag then used a second-moment argument
	to show that the same exponent describes the typical count, and proved
	concentration relative to the mean in a low-energy interval
	\cite{SubagComplexity}. Subag and Zeitouni extended this concentration to
	arbitrary energies for pure models with \(p\ge32\)
	\cite{SubagZeitouniComplexity}. These results concern positive complexity,
	where the number of critical points grows exponentially with \(N\). At the
	extremal scale, Subag and Zeitouni proved that the critical values of the pure
	\(p\)-spin spherical Hamiltonian converge to a Poisson point process with
	exponential intensity \cite{SubagZeitouniExtremal}. After the corresponding
	exponential reweighting and normalization, the limiting weights have law
	\(\mathrm{PD}(\zeta(\{0\}),0)\), where \(\zeta\) is the equilibrium Parisi
	measure. This matches the Chinese restaurant representation of the Parisi
	hierarchy, in which replicas are customers and TAP states are the first-level
	tables. For the Ising TAP
	landscape, Belius, Concetti, and Genovese proved the determinant
	asymptotics in the Bray--Moore complexity formula at exponential scale and
	computed the prefactor arising from a small spectral outlier
	\cite{BeliusConcettiGenovese}.
	
	The supersymmetric calculations summarized in
	Subsection~\ref{sub:controversies} produce a Legendre relation between a
	distinguished saddle of the TAP Kac--Rice action and the thermodynamic free
	energy. The same variational quantity appears independently in Talagrand's
	fractional-moment formula. Talagrand computed the fractional moments of the
	partition function using Guerra's interpolation and the
	Aizenman--Sims--Starr scheme \cite[Section~7]{LDTalagrand}. In the present notation, for
	\(0<\theta<1\), his formula reads
	\[
	\lim_{N\to\infty}\frac1N\log\dE Z_N^\theta
	=
	\theta
	\inf_{\substack{\zeta\in\mc P([0,1])\\
			\zeta(\{0\})\ge\theta}}
	P_\zeta
	=
	\Lambda(\theta).
	\]
	Under the change of measure with density
	\(Z_N^\theta/\dE Z_N^\theta\), Talagrand's Poisson--Dirichlet cascade
	assigns to the clusters at level \(q\) weights with law
	\(\mathrm{PD}(\alpha,\theta)\), in his convention, where
	\(\alpha=\zeta([0,q])>\theta\) is the cumulative Parisi mass
	\cite[Sections~2 and~5]{LDTalagrand}. It would be interesting to prove,
	following Subag and Zeitouni \cite{SubagZeitouniExtremal}, that under this
	tilted law the ranked normalized weights \(e^{F_{\TAP}}\) of extremal TAP
	states at level \(q\) converge to the same distribution.
	
	A constructive proof of the spherical Parisi formula based on the TAP
	landscape was obtained by Huang and Sellke
	\cite[Theorem~1.5 and Corollary~1.6]{HuangSellke}. The centers of Gibbs
	clusters, which form the nodes of the Parisi ultrametric tree, are expected
	to be critical points of the TAP free energy at the equilibrium level.
	Huang and Sellke realize this picture by constructing exponentially
	branching families of near-optimal points at successive contact levels and
	organizing them into an ultrametric tree. Their main new input is a
	truncated second-moment argument for isolated critical points. Above the
	final contact point, where the remaining band is strictly replica
	symmetric, they instead restrict the partition function to configurations
	that are typical under the planted measure and apply the second-moment
	method to this restricted partition function
	\cite[Proposition~2.7]{HuangSellke}. We adapt their
	isolated-critical-point argument to the Ising TAP landscape.
	
	\subsection{Organization of the paper}\label{sub:organization}
	
	Section~\ref{sec:new5} proves upper bounds for the expected TAP maximum and
	its exponential moments, both in the full cube and conditionally in a band.
	The comparison uses an additive Ruelle probability cascade, Fenchel duality,
	and a maximal estimate along the Parisi flow.
	
	Section~\ref{sec:onshell-lb} proves the matching lower bound and completes
	the proof of Theorem~\ref{thm:legendre}. We compute the one-point annealed
	complexity at a regular level and retain isolated SUSY states. The strict
	Parisi obstacle gap allows us to discard, without changing the exponent,
	points with competitors whose overlap lies in $[\delta,q-\delta]$. The
	strict Plefka condition gives a uniform negative Hessian bound near each
	retained state, which excludes distinct competitors with overlap in
	$[q-\delta,q]$. Distinct retained states therefore have pairwise overlap
	less than $\delta$. A one-sided spherical-code bound then converts the
	retained first moment into an existence probability. The section ends by
	deriving the corresponding lower bound for upper deviations of the
	ordinary free energy.
	
	Appendix~\ref{section:Auffinger} collects the identities for the Parisi PDE and
	the Auffinger--Chen process. Appendix~
	\ref{sec:one-point-susy-computations} contains the supersymmetric Kac--Rice
	computation as well as the Hessian estimates used in
	Section~\ref{sec:onshell-lb}.
	Appendix~\ref{sec:spherical-examples} gives spherical examples comparing the ordinary
	annealed and supersymmetric complexities.
	
	\subsection*{Acknowledgments}
	
	I would like to thank Gérard Ben Arous, Alice Guionnet, and
	Jean-Christophe Mourrat for introducing me to this topic and for numerous
	generous and insightful conversations.
	
	\subsection{Statement of AI use}
	Codex was used to assist in writing the manuscript. GPT Pro was used to help with computations, particularly in Appendices B and C.
	
	\subsection{Notation}\label{sub:notation}
	
	We collect the notation used throughout the paper.
	\begin{itemize}\itemsep2pt
		\item \(\dR\), \(\dE\), and \(\dP\) denote the real line, expectation,
		and probability. The Euclidean inner product and norm on \(\dR^N\) are
		\(\langle\cdot,\cdot\rangle\) and \(\|\cdot\|\). The gradient and
		Hessian are \(\nabla\) and \(\nabla^2\). For symmetric matrices \(A,B\),
		\(A\preceq B\) means that \(B-A\) is positive semidefinite.
		
		\item For \(x,y\in\dR^N\), the overlap and self-overlap are
		\[
		R(x,y):=\frac1N\langle x,y\rangle,
		\qquad
		q_x:=R(x,x)=\frac{\|x\|^2}{N}.
		\]
		
		\item \(\mc P([a,b])\) is the space of probability measures on
		\([a,b]\), and \(\delta_q\) is the Dirac mass at \(q\). We write
		\(\Law(Y)\) for the law of \(Y\) and \(W_p\) for the
		\(p\)-Wasserstein distance, where \(p\in\{1,2\}\).
	\end{itemize}
	
	\begin{itemize}\itemsep2pt
		\item We write
		\[
		\vartheta(x):=x\xi'(x)-\xi(x).
		\]
		
		\item For \(\bfa\in[-1,1]^N\) with \(q_\bfa<1\), the band above
		\(\bfa\) is
		\[
		\Band(\bfa)
		:=
		\left\{
		\bfm\in[-1,1]^N:
		\bfm-\bfa\perp\bfa,\
		q_\bfm\in[q_\bfa,1)
		\right\}.
		\]
	\end{itemize}
	
	\begin{itemize}\itemsep2pt
		\item For \(\zeta\in\mc P([0,1])\), its cumulant is
		\begin{equation}\label{eq:order-parameter-cumulants}
			\widehat\zeta(s)
			:=
			\int_s^1\zeta([0,t])\,\dd t.
		\end{equation}
		
		\item For \(\mu\in\mc P([0,1])\) and \(r\in[0,1]\), set
		\begin{equation}\label{eq:ising-A-mu}
			A_\mu(r)
			:=
			\frac N2\int_0^r s\xi''(s)\mu([0,s])\,\dd s.
		\end{equation}
		
		\item The concave Fenchel conjugate of the Parisi potential is
		\[
		\Phi_\zeta^*(q,m)
		:=
		\inf_{x\in\dR}\{\Phi_\zeta(q,x)-mx\},
		\qquad
		h_\zeta(q,m):=-\Phi_\zeta^*(q,m).
		\]
		Thus \(mx+\Phi_\zeta^*(q,m)\le\Phi_\zeta(q,x)\), with equality when
		\(m=\partial_x\Phi_\zeta(q,x)\), and
		\[
		\partial_m h_\zeta(q,\cdot)
		=
		(\partial_x\Phi_\zeta(q,\cdot))^{-1}.
		\]
		
		\item For \(q\in[0,1)\), the collapse of \(\zeta\) to \(q\) is
		\[
		\Shift_q\zeta
		:=
		\zeta|_{(q,1]}+\zeta([0,q])\delta_q.
		\]
		It satisfies \(\widehat{\Shift_q\zeta}(s)=\widehat\zeta(s)\) for
		\(s\ge q\) and \(V_{\Shift_q\zeta}(q,\mu)=V_\zeta(q,\mu)\).
		
		\item For \(\eta\in\mc P([q,1])\) and
		\(\mu\in\mc P([-1,1])\), the band correction is
		\[
		V_\eta(q,\mu)
		:=
		\int\Phi_\eta^*(q,m)\,\dd\mu(m)
		-
		\frac12\int_q^1s\xi''(s)\eta([0,s])\,\dd s.
		\]
		For \(\bfm\in[-1,1]^N\) and
		\(\eta\in\mc P([q_\bfm,1])\), set
		\[
		F_{\TAP,\eta}(\bfm)
		:=
		H_N(\bfm)
		+
		N V_\eta\left(
		q_\bfm,\frac1N\sum_{i=1}^N\delta_{m_i}
		\right).
		\]
		The generalized TAP free energy and the Parisi value are
		\[
		F_{\TAP}(\bfm)
		=
		\inf_{\eta\in\mc P([q_\bfm,1])}F_{\TAP,\eta}(\bfm),
		\qquad
		P_\zeta=V_\zeta(0,\delta_0).
		\]
		For a full order parameter \(\zeta\), we write
		\[
		F_{\TAP,\zeta}(\bfm)
		:=
		F_{\TAP,\Shift_{q_\bfm}\zeta}(\bfm).
		\]
		
		\item The obstacle of \(\zeta\in\mc P([0,1])\), anchored at \(q\), is
		\begin{equation}\label{eq:ward-obstacle}
			H_{\zeta,q}(t)
			:=
			\frac12\int_t^q\xi''(s)
			\bigl(\dE[(\partial_x\Phi_\zeta(s,X_s))^2]-s\bigr)\,\dd s,
			\qquad 0\le t,q\le1,
		\end{equation}
		where \(X\) is the Auffinger--Chen process in
		\eqref{eq:AuffingerChen}. The integral is signed.
	\end{itemize}
	
	\begin{itemize}\itemsep2pt
		\item The event \(\mc E_{N,\ve}(f)\) records the existence of a critical
		point of \(F_{\TAP,\zeta}\) for some
		\(\zeta\in\mc P([0,1])\) such that both
		\(N^{-1}F_{\TAP}\) and \(N^{-1}F_{\TAP,\zeta}\) lie within \(\ve\) of \(f\).
		
		\item The constrained Parisi functional and the complexity are
		\[
		\Lambda(\theta)
		:=
		\theta
		\inf_{\substack{\zeta\in\mc P([0,1])\\
				\zeta(\{0\})\ge\theta}}
		P_\zeta,
		\qquad
		\Sigma(f)
		:=
		\inf_{\theta\ge0}\bigl[\bar\Lambda(\theta)-\theta f\bigr],
		\]
		where \(\bar\Lambda\) is the extension defined in
		Subsection~\ref{sub:main-results}, and
		\[
		f_1:=\bar\Lambda'_-(1).
		\]
	\end{itemize}
	
	%==============================================================================
	\section{Large deviations upper bound}\label{sec:new5}
	%==============================================================================
	
	In this section we prove upper bounds on the expectation and exponential moments of the maximum of the TAP free energy. We also prove the corresponding conditional bounds for the maximum in the orthogonal band above a prescribed ancestor. The proofs are based on a new Guerra-type comparison with an additive Ruelle probability cascade.

	\subsection{The expected maximum}\label{sub:new5-ising}
	
	We begin by bounding the expectation of the maximum of the TAP free energy.
	
	\begin{proposition}\label{prop:ising-exp}
		The expected maximum of the TAP free energy converges to the Parisi free energy:
		\[
		\lim_{N\to\infty}\frac1N\dE\max_{\bfm\in[-1,1]^N}F_{\TAP}(\bfm)
		=\inf_\zeta P_\zeta\,.
		\]
	\end{proposition}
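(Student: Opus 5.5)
We prove the two inequalities separately. The lower bound $\liminf_N N^{-1}\dE\max F_{\TAP}\ge f_\eq$ should come from the Chen--Panchenko--Subag theory, which we treat as given input. By \cite[Theorems~1 and~2]{ChenPanchenkoSubag}, the free energy satisfies
\[
\frac1N\log Z_N\le \max_{\bfm}\frac1N F_{\TAP}(\bfm)+o(1)
\]
with high probability. This holds because the Gibbs measure decomposes into clusters around barycenters $\bfm$, and the cost of constraining the barycenter is exactly $f_\eq-N^{-1}F_{\TAP}(\bfm)$. Equivalently, we can use their statement that $\max_\bfm N^{-1}F_{\TAP}(\bfm)\to f_\eq$ in probability. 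Combined with the Parisi formula \eqref{eq:parisi} and Gaussian concentration of $\max_\bfm F_{\TAP}$, this gives the lower bound in expectation. The concentration holds because $F_{\TAP}-H_N$ is deterministic and $\max_\bfm H_N$-type functionals are $\sqrt{N\xi(1)}$-Lipschitz in the disorder. If we only had a local-maximum statement, we would instead evaluate $F_{\TAP,\zeta}$ at the cluster barycenters given by the Parisi measure.

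The substantive part is the upper bound $\limsup_N N^{-1}\dE\max_\bfm F_{\TAP}(\bfm)\le P_\zeta$ for every fixed $\zeta$. Since $F_{\TAP}\le F_{\TAP,\zeta}=F_{\TAP,\Shift_{q_\bfm}\zeta}$, it suffices to bound $\dE\max_\bfm F_{\TAP,\zeta}(\bfm)$, and we may take $\zeta$ atomic with finitely many atoms and pass to general $\zeta$ by continuity. We will run a Guerra-type interpolation in which the Gibbs sum is replaced by a maximum over $\bfm$ at a fixed self-overlap $q$, with $q$ ranging over a finite net. The interpolating Hamiltonian is
\[
H_t(\bfm,\alpha)=\sqrt t\,H_N(\bfm)+\sqrt{1-t}\,\sum_i Y_i(\alpha)\,m_i,
\]
defined on pairs $(\bfm,\alpha)$, where $\alpha$ indexes the leaves of an additive Ruelle probability cascade built from $\Shift_q\zeta$. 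The field $Y_i(\alpha)$ has covariance $\xi'(R(\alpha,\alpha'))$ on the cascade. The interpolated functional is
\[
\max_{\bfm}\Bigl[\log\sum_\alpha v_\alpha e^{H_t(\bfm,\alpha)}+\sum_i\Phi^*_{\zeta}(q,m_i)\Bigr]+\text{deterministic correction}.
\]
The key design choice is to handle the levels above $q$ with the cascade, which produces the band part $-\tfrac N2\int_q^1 s\xi''\zeta$ together with $\Phi_\zeta(q,\cdot)$, and to handle the levels below $q$ through the maximum over the center $\bfm$.

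At $t=0$, Fenchel duality decouples the coordinates. The maximum over $m_i$ of $m_ix+\Phi^*_\zeta(q,m_i)$ equals $\Phi_\zeta(q,x)$, and the remaining cascade recursion from $0$ to $q$ reconstructs $\Phi_\zeta(0,0)$, so the $t=0$ value is $NP_\zeta$. Differentiating in $t$, Gaussian integration by parts at the maximizer $\bfm^*$ produces a term of the form
\[
\tfrac12\,\dE\bigl\langle \xi(R)-R\xi'(R)+\theta_\zeta\text{-terms}\bigr\rangle,
\]
averaged over pairs of cascade leaves sharing the center $\bfm^*$. The overlap is then $q+R(\bfm^*-\text{band})$, and convexity of $\xi$ makes the derivative nonpositive, just as in Guerra's bound.

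The step we expect to be the main obstacle is the maximum over $\bfm$. The derivative of a maximum is not a Gibbs average, and there may be competing near-maximizers at intermediate overlaps, which spoils the clean sign. Our first attempt will be to replace the maximum by a soft maximum $\frac1\beta\log\int e^{\beta(\cdot)}\,\dd\bfm$ over a fine net of $[-1,1]^N$ at self-overlap $q$, with $\beta\to\infty$ after $N\to\infty$. This turns the maximum level into one more cascade level with weight parameter $1/\beta\to0$, that is, an atom at $q$ of mass $\zeta([0,q])$ seen as a limiting Ruelle level. Guerra's positivity then applies to the whole additive cascade, the net costs $e^{o(N)}$ entropy, and the error from $1/\beta$ vanishes. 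The needed regularity of $\Phi_\zeta$ and the maximal estimate along the Parisi flow would be taken from Appendix~\ref{section:Auffinger}. Finally, a union bound over $q$ in the net, together with concentration, upgrades the bound from a fixed $q$ to the full maximum, and taking the infimum over $\zeta$ gives the upper bound.
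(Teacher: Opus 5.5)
\textbf{Lower bound.} Your lower bound is correct but much heavier than needed. Since $\xi(0)=0$, we have $H_N(0)=0$, and $\Phi_\eta^*(0,0)=\Phi_\eta(0,0)$ because $\Phi_\eta(0,\cdot)$ is even and convex. Hence $F_{\TAP}(0)=N\inf_\zeta P_\zeta$, and the lower bound is immediate. Also, if you are willing to quote $\max_\bfm N^{-1}F_{\TAP}(\bfm)\to f_{\eq}$ from Chen--Panchenko--Subag, that already gives both inequalities, and the interpolation is unnecessary.

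\textbf{Upper bound: the gap.} Your construction has the cascade and the maximum in the wrong places.

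At radius $q$, $F_{\TAP,\zeta}$ already contains the band above $q$ deterministically, through $\Phi^*_\zeta(q,\cdot)$ and $-\frac N2\int_q^1 s\xi''(s)\zeta([0,s])\,\dd s$. There is no spin sum left from which a cascade on $[q,1]$ could produce $\Phi_\zeta(q,\cdot)$. What must be generated is the passage from $\Phi_\zeta(q,\cdot)$ back to $\Phi_\zeta(0,0)$, i.e.\ the nonlinear Parisi evolution on $[0,q]$ with coefficient $\zeta([0,s])$.

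Your cascade is built from $\Shift_q\zeta$, which has no levels in $[0,q)$. So every random ingredient of your interpolation at radius $q$ depends on $\zeta$ only through $\Shift_q\zeta$, whereas $P_\zeta$ also depends on $\zeta|_{[0,q)}$. The ``remaining cascade recursion from $0$ to $q$'' that you invoke to get $t=0$ value $NP_\zeta$ does not exist in your construction. The common cascade field of variance $\xi'(q)$ only produces a heat-kernel average over $[0,q]$, i.e.\ a bound of the type $P_{\Shift_q\zeta}$. Even optimizing the order parameter separately at each radius, this gives at best $\sup_q\inf_{\mu\in\mc P([q,1])}P_\mu$. By uniqueness of the Parisi measure, that exceeds $f_{\eq}$ whenever the Parisi measure charges $[0,q)$.

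Your two other steps also fail as written:
\begin{itemize}
\item \emph{Soft-max.} The maximum over $\bfm$ is a zero-temperature operation and cannot supply the finite parameters $\zeta([0,s])$. A level whose parameter $1/\beta\to0$ cannot carry mass $\zeta([0,q])$.
\item \emph{Fenchel decoupling.} $\log\sum_\alpha v_\alpha e^{\langle Y(\alpha),\bfm\rangle}$ is not linear in $\bfm$. Even after exchanging $\max_\bfm$ and $\sum_\alpha$, the leaf fields $Y_i(\alpha)$ have variance $\xi'(1)$, so Fenchel pairs $\Phi^*_\zeta(q,\cdot)$ with a field at the wrong time.
\end{itemize}

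\textbf{The missing idea: reverse the roles.} This also removes the obstacle you identified (the derivative of a maximum not being a Gibbs average).
\begin{itemize}
\item The paper builds the additive RPC from $\zeta$ on all of $[0,1]$ and stops the cavity field at the radius: $Z(\bfm,\alpha)=\sum_i m_iB^i_{q_\bfm}(\alpha)$.
\item It adds to $H_N$ an Onsager field $Y(q_\bfm,\alpha)$ with covariance $N\vartheta(Q_{12})$, and takes a \emph{joint} maximum over $(\bfm,\alpha)$.
\item Equal variances, $N\xi(q)+N\vartheta(q)=Nq\xi'(q)$, together with the convexity inequality $\xi(R)-\xi(Q)-\xi'(Q)(R-Q)\ge0$, give a Slepian comparison of expected suprema. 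No interpolation derivative and no Gibbs average are needed.
\item For each fixed leaf the field is linear in $\bfm$, so Fenchel gives $\sum_i\Phi_\zeta(q_\bfm,B^i_{q_\bfm}(\alpha))$.
\item The martingale property of $e^{\theta_\ell S_s}$, Doob's inequality with $p=1+N^{-1}$, and the marking identity return this to $N\Phi_\zeta(0,0)$ up to $O(\log N)$. This holds uniformly in the radius, so no net in $q$ is needed.
\item Finally, the Onsager field is removed by evaluating at an $H_N$-measurable maximizer of $T_N$ and using $\{u_\alpha+Y(q,\alpha)-A_\zeta(q)\}_\alpha\stackrel d=\{u_\alpha\}_\alpha$.
\end{itemize}
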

	
	The lower bound in Proposition~\ref{prop:ising-exp} is immediate, since
	\[
	\frac1N\max_\bfm F_{\TAP}(\bfm)
	\ge
	\frac1N F_{\TAP}(0)
	=
	\inf_\zeta P_\zeta.
	\]
	At $\bfm=0$, the identity
	$\Phi_\zeta^*(0,0)=\Phi_\zeta(0,0)$ follows because
	$\Phi_\zeta(0,\cdot)$ is even and convex. The quantity
	$\inf_\zeta P_\zeta$ is the Ising Parisi free energy~\cite{PanchenkoBook}.
	
	\begin{remark}
		For comparison, the upper bound in Proposition~\ref{prop:ising-exp} follows from the generalized TAP representation of Chen, Panchenko, and Subag and the Parisi formula. Fix $\delta>0$ and an integer $k\ge1$. For $\bfm\in[-1,1]^N$, define
		\[
		\Band_{k,\delta}^{\Ising}(\bfm)
		:=
		\left\{
		(\sigma^1,\ldots,\sigma^k)\in(\{-1,1\}^N)^k:
		\begin{array}{l}
			\bigl|R(\sigma^a,\bfm)-q_\bfm\bigr|\le\delta
			\quad\text{for every }a,\\
			\bigl|R(\sigma^a,\sigma^b)-q_\bfm\bigr|\le\delta
			\quad\text{for }a\ne b
		\end{array}
		\right\}.
		\]
		Then set
		\begin{equation}\label{eq:replicated-band-free-energy}
			U_N^{k,\delta}(\bfm)
			:=\frac1{Nk}\log
			\left[
			2^{-Nk}
			\sum_{(\sigma^1,\ldots,\sigma^k)
				\in\Band_{k,\delta}^{\Ising}(\bfm)}
			\exp\left\{\sum_{a=1}^k
			\bigl(H_N(\sigma^a)-H_N(\bfm)\bigr)\right\}
			\right].
		\end{equation}
		Restricting the replicated partition
		function to this band gives
		\[
		\frac1N\log Z_N
		\ge \frac1N H_N(\bfm)+U_N^{k,\delta}(\bfm)
		\]
		for every $\bfm\in[-1,1]^N$. The expectation and concentration
		estimates in~\cite[Theorem~1]{ChenPanchenkoSubag} give
		\[
		\lim_{\delta\downarrow0}\lim_{k\to\infty}
		\limsup_{N\to\infty}
		\dE\sup_{\bfm\in[-1,1]^N}
		\left|
		U_N^{k,\delta}(\bfm)
		-\frac1N\bigl(F_{\TAP}(\bfm)-H_N(\bfm)\bigr)
		\right|=0.
		\]
		Taking the maximum over $\bfm$, followed by expectation and the
		successive limits, yields
		\[
		\limsup_{N\to\infty}\frac1N
		\dE\max_{\bfm\in[-1,1]^N}F_{\TAP}(\bfm)
		\le
		\lim_{N\to\infty}\frac1N\dE\log Z_N
		=\inf_\zeta P_\zeta
		\]
		by the Parisi formula~\cite{PanchenkoBook}.
	\end{remark}
	
	We next give a direct proof of Proposition~\ref{prop:ising-exp}.
	Fix a finite-step order parameter $\zeta$ satisfying
	$\zeta(\{0\})>0$. Choose radial levels
	\[
	0=q_0<q_1<\cdots<q_L=1,
	\]
	and write
	\[
	\zeta([0,s])=\theta_\ell,
	\qquad q_{\ell-1}<s<q_\ell,
	\]
	where $0<\theta_1<\cdots<\theta_L\le1$. The additive construction below is the logarithmic form of the
	multiplicative RPC.
	
	\begin{definition}[Additive Ruelle probability cascade (RPC)]\label{def:ising-rpc}
		Consider a tree with $L$ levels, rooted at $\varnothing$, in which every vertex before the last level has
		countably many children. We label the children of each vertex by $\dN$, so a vertex at level $\ell$ is a sequence
		$(n_1,\ldots,n_\ell)\in\dN^\ell$.
		
		For every $1\le\ell\le L$, attach to each vertex at level $\ell-1$ an independent Poisson point process on $\dR$
		with intensity
		\[
		\theta_\ell e^{-\theta_\ell u}\,\dd u.
		\]
		Order its points decreasingly and assign them in this order to the edges leading to its children. All these point
		processes are independent.
		
		For a leaf $\alpha=(n_1,\ldots,n_L)$, set $\alpha|0=\varnothing$ and
		$\alpha|\ell=(n_1,\ldots,n_\ell)$. Let $u_{\alpha|\ell}$ denote the label of the edge from
		$\alpha|_{\ell-1}$ to $\alpha|\ell$, and define
		\[
		u_\alpha:=\sum_{\ell=1}^L u_{\alpha|\ell}.
		\]
		For two leaves $\alpha^1$ and $\alpha^2$, let $\alpha^1\wedge\alpha^2$ be the largest
		$\ell\in\{0,\ldots,L\}$ for which $\alpha^1|\ell=\alpha^2|\ell$, and set
		\[
		s_{\alpha^1\wedge\alpha^2}:=q_{\alpha^1\wedge\alpha^2}.
		\]
	\end{definition}
	
	The cascade obeys the marking identity: for a level of parameter $x$ and i.i.d.\ marks $W_i$ independent of
	the labels,
	\begin{equation}\label{eq:ising-marking}
		\{u_i+W_i\}_{i\ge1}\stackrel d=\{u_i+c\}_{i\ge1}\,,\qquad c=\frac1x\log\dE e^{xW},
	\end{equation}
	whenever $c<\infty$. This identity follows from the mapping theorem
	for Poisson point processes. Under the change of variables
	$v=u+W$, the image intensity is
	\[
	x\int e^{-x(v-w)}\,\Law(W)(\dd w)\,\dd v
	=x e^{-xv}\dE e^{xW}\,\dd v
	=x e^{-x(v-c)}\,\dd v,
	\]
	which is exactly the intensity of the original process translated by $c$.

	\begin{definition}[Multiscale cavity field and Onsager field]\label{def:ising-cavity}
		On the tree of Definition~\ref{def:ising-rpc}, let
		$B_s^i(\alpha)$, $1\le i\le N$, and $Y(s,\alpha)$ be centered
		continuous Gaussian martingales indexed by $s\in[0,1]$ and the
		leaves $\alpha$. The two families are independent of each other,
		of $H_N$, and of the Poisson point processes defining the
		cascade. Their covariances are
		\begin{align*}
			\dE B_s^i(\alpha)B_t^j(\beta)
			&=
			\delta_{ij}\,
			\xi'\bigl(s\wedge t\wedge
			s_{\alpha\wedge\beta}\bigr),\\
			\dE Y(s,\alpha)Y(t,\beta)
			&=
			N\vartheta\bigl(s\wedge t\wedge
			s_{\alpha\wedge\beta}\bigr).
		\end{align*}
		For $\bfm\in[-1,1]^N$, with
		\[
		q_\bfm:=\frac1N\sum_{i=1}^N m_i^2,
		\]
		define the multiscale cavity field by
		\[
		Z(\bfm,\alpha)
		:=
		\sum_{i=1}^N m_i B_{q_\bfm}^i(\alpha).
		\]
		We call $Y(q,\alpha)$ the Onsager field at self-overlap $q$.
	\end{definition}
	
	For two marked points $(\bfm^1,\alpha^1)$ and
	$(\bfm^2,\alpha^2)$, set
	\[
	R_{12}:=\frac1N\langle\bfm^1,\bfm^2\rangle,
	\qquad
	Q_{12}:=
	q_{\bfm^1}\wedge q_{\bfm^2}\wedge
	s_{\alpha^1\wedge\alpha^2}.
	\]
	The covariance formulas in Definition~\ref{def:ising-cavity} give
	\begin{equation}\label{eq:ising-covs}
		\dE Z(\bfm^1,\alpha^1)Z(\bfm^2,\alpha^2)
		=
		NR_{12}\xi'(Q_{12}),
		\qquad
		\dE Y(q_{\bfm^1},\alpha^1)Y(q_{\bfm^2},\alpha^2)
		=
		N\vartheta(Q_{12}).
	\end{equation}
	\begin{remark}\label{rem:ising-invariance}
		For every deterministic $q\in[0,1]$,
		\[
		\{u_\alpha+Y(q,\alpha)-A_\zeta(q)\}_\alpha
		\stackrel d=
		\{u_\alpha\}_\alpha,
		\]
		and
		\[
		\left\{
		u_\alpha
		+
		\sum_{i=1}^N\Phi_\zeta(q,B_q^i(\alpha))
		-
		N\Phi_\zeta(0,0)
		\right\}_\alpha
		\stackrel d=
		\{u_\alpha\}_\alpha.
		\]
		
		For the first identity, consider a level $\ell$ such that $q_{\ell-1}<q$. The increment
		\[
		Y(q_\ell\wedge q,\alpha)-Y(q_{\ell-1},\alpha)
		\]
		is a centered Gaussian mark with variance
		\[
		N\int_{q_{\ell-1}}^{q_\ell\wedge q}s\xi''(s)\,\dd s.
		\]
		By the marking identity~\eqref{eq:ising-marking}, adding this increment translates the Poisson points at level
		$\ell$ by
		\[
		\frac{\theta_\ell N}{2}
		\int_{q_{\ell-1}}^{q_\ell\wedge q}s\xi''(s)\,\dd s.
		\]
		Applying the marking identity successively over the levels produces the total translation
		\[
		\frac N2\int_0^q s\xi''(s)\zeta([0,s])\,\dd s=A_\zeta(q),
		\]
		which proves the first identity.
		
		For the second identity, set
		\[
		S_s(\alpha):=\sum_{i=1}^N\Phi_\zeta(s,B_s^i(\alpha)).
		\]
		On $[q_{\ell-1},q_\ell]$, the coefficient in the Parisi equation is $\theta_\ell$. It\^o's formula gives
		\[
		\dd S_s
		=
		\dd M_s-\frac{\theta_\ell}{2}\,\dd\langle M\rangle_s,
		\qquad
		\dd M_s:=
		\sum_{i=1}^N
		\partial_x\Phi_\zeta(s,B_s^i(\alpha))\,\dd B_s^i(\alpha).
		\]
		Since $|\partial_x\Phi_\zeta|\le1$, the process
		\[
		\exp\left\{\theta_\ell\bigl(S_s-S_{q_{\ell-1}}\bigr)\right\},
		\qquad q_{\ell-1}\le s\le q_\ell,
		\]
		is a true martingale. Conditionally on the fields up to $q_{\ell-1}$, the increment of $S$ therefore has zero
		shift in the marking identity. Applying the identity successively over the levels gives
		\[
		\{u_\alpha+S_q(\alpha)-S_0\}_\alpha
		\stackrel d=
		\{u_\alpha\}_\alpha.
		\]
		Since $S_0=N\Phi_\zeta(0,0)$, this proves the second identity.
	\end{remark}
	
	By~\eqref{eq:parisi-value} and~\eqref{eq:ising-A-mu}, we can rewrite $F_{\TAP,\zeta}$ as
	\begin{equation}\label{eq:ising-reduced}
		F_{\TAP,\zeta}(\bfm)-NP_\zeta=T_N(\bfm)\,,\qquad
		T_N(\bfm):=H_N(\bfm)+\sum_{i=1}^N\Phi_\zeta^*(q_\bfm,m_i)-N\Phi_\zeta(0,0)+A_\zeta(q_\bfm)\,,
	\end{equation}
	so the goal is to show that $\limsup_N\tfrac1N\dE\max_\bfm T_N(\bfm)\le0$. 
	
	We compare the Hamiltonian, augmented by the Onsager field $Y$, to the cavity field $Z$. Define
	\[
	\Psi(\bfm,\alpha):=u_\alpha+\sum_{i=1}^N\Phi_\zeta^*(q_\bfm,m_i)-N\Phi_\zeta(0,0)\,,
	\]
	and put 
	\begin{equation}\label{def:X}
		X(\bfm,\alpha):=H_N(\bfm)+Y(q_\bfm,\alpha)+\Psi(\bfm,\alpha)
	\end{equation}
	and 
	\begin{equation}\label{def:tildeX}\widetilde X(\bfm,\alpha):=Z(\bfm,\alpha)+\Psi(\bfm,\alpha).
	\end{equation}
	
	\begin{lemma}[Slepian comparison]\label{lem:ising-slepian}
		The processes defined in~\eqref{def:X} and~\eqref{def:tildeX} satisfy
		\[
		\dE
		\sup_{\substack{\bfm\in[-1,1]^N\\ \alpha\in\dN^L}}
		X(\bfm,\alpha)
		\le
		\dE
		\sup_{\substack{\bfm\in[-1,1]^N\\ \alpha\in\dN^L}}
		\widetilde X(\bfm,\alpha).
		\]
	\end{lemma}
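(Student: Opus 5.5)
This is a Sudakov--Fernique comparison, applied conditionally on the cascade labels \(u_\alpha\). The term \(\Psi(\bfm,\alpha)\) is the same in both processes and independent of the Gaussian parts. We compare the centered Gaussian processes
\[
G(\bfm,\alpha):=H_N(\bfm)+Y(q_\bfm,\alpha),
\qquad
\widetilde G(\bfm,\alpha):=Z(\bfm,\alpha).
\]
The Sudakov--Fernique inequality (Slepian--Fernique with deterministic drift) gives \(\dE\sup(G+\Psi)\le\dE\sup(\widetilde G+\Psi)\) as soon as the increments satisfy
\[
\dE\bigl(G_1-G_2\bigr)^2\le\dE\bigl(\widetilde G_1-\widetilde G_2\bigr)^2
\]
for every pair of indices. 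Here \(\Psi\) is treated as a fixed function after conditioning on the Poisson processes.

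\textbf{Computing the increment variances.} By \eqref{eq:ising-covs} and independence of \(H_N\) and \(Y\), write \(q_a=q_{\bfm^a}\). Then
\[
\dE G_aG_b=N\bigl(\xi(R_{ab})+\vartheta(Q_{ab})\bigr),
\qquad
\dE\widetilde G_a\widetilde G_b=NR_{ab}\xi'(Q_{ab}).
\]
For \(a=b\) we have \(Q_{aa}=q_a\) and \(R_{aa}=q_a\), so both variances equal \(Nq_a\xi'(q_a)\). The diagonal terms therefore agree, and the increment condition reduces to
\[
\xi(R)+\vartheta(Q)\ \ge\ R\,\xi'(Q),
\qquad\text{i.e.}\qquad
\xi(R)-\xi(Q)-\xi'(Q)(R-Q)\ge0,
\]
with \(R=R_{12}\) and \(Q=Q_{12}\). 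This is exactly the convexity of \(\xi\) on \([-1,1]\) (Assumption~\ref{ass:xi}; convexity alone suffices), since \(R,Q\in[-1,1]\).

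\textbf{Technical points.} The index set is infinite and \(\Psi\) is unbounded, because \(u_\alpha\) ranges over a Poisson process. I would first restrict to finitely many leaves (the top \(n\) points at each level) and to a finite net of \([-1,1]^N\), apply the finite Sudakov--Fernique inequality conditionally on \(u\), and then take monotone limits. The supremum over the cube is reached by continuity of \(\Phi^*_\zeta\) on \((-1,1)\) together with upper semicontinuity at the boundary, where \(\Phi^*_\zeta(q,\pm1)\) is finite or \(-\infty\) consistently in both processes. The limit in the leaves uses monotone convergence of the supremum.

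\textbf{Main obstacle.} The step I expect to be hardest is integrability: showing that \(\dE\sup\widetilde X<\infty\), or at least that both sides make sense so that the limits can be exchanged. For this I would use the second identity of Remark~\ref{rem:ising-invariance}, together with \(\Phi_\zeta^*(q,m)+mx\le\Phi_\zeta(q,x)\). These give
\[
\sup_\bfm\widetilde X\le\sup_\alpha\Bigl(u_\alpha+\sum_i\Phi_\zeta\bigl(q,B^i_q(\alpha)\bigr)-N\Phi_\zeta(0,0)\Bigr),
\]
up to controlling the dependence on \(q_\bfm\) by a union over a grid of \(q\). The right-hand side has the law of \(\sup_\alpha u_\alpha\), which is integrable for a Poisson cascade.
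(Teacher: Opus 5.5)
Your proposal is correct and follows the paper's proof. You condition on the cascade labels so that \(\Psi\) is a common deterministic mean, and you note that the variances agree, both equal to \(Nq_\bfm\xi'(q_\bfm)\). The comparison then reduces to \(\xi(R_{12})-\xi(Q_{12})-\xi'(Q_{12})(R_{12}-Q_{12})\ge0\) by convexity; you apply Slepian/Sudakov--Fernique on finite subsets and pass to the full suprema by separability and monotone convergence.

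The integrability bound you single out as the main obstacle is not needed for this lemma. The finite-subset maxima increase to the full supremum and are bounded below by an integrable variable such as \(X(0,\alpha_0)\), so monotone convergence gives the inequality even if the right-hand side is infinite. That bound belongs to the subsequent maximal estimate, Lemma~\ref{lem:parisi-maximal-estimate}. One small imprecision: approximating the supremum by a countable dense set uses continuity of \(\Phi^*_\zeta(q,\cdot)\) up to \(\pm1\), which holds for a finite concave function on a closed interval, not upper semicontinuity.
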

	
	\begin{proof}
		Condition on the cascade labels $(u_\alpha)_\alpha$. The processes
		$X$ and $\widetilde X$ are then Gaussian with the common
		deterministic mean $\Psi$ and centered parts
		\[
		G(\bfm,\alpha)
		:=
		H_N(\bfm)+Y(q_\bfm,\alpha),
		\qquad
		\widetilde G(\bfm,\alpha)
		:=
		Z(\bfm,\alpha).
		\]
		
		For every marked point $(\bfm,\alpha)$, we have
		$Q_{11}=R_{11}=q_\bfm$. Therefore,
		\[
		\begin{aligned}
			\dE G(\bfm,\alpha)^2
			&=
			N\xi(q_\bfm)+N\vartheta(q_\bfm) \\
			&=
			Nq_\bfm\xi'(q_\bfm)
			=
			\dE\widetilde G(\bfm,\alpha)^2.
		\end{aligned}
		\]
		For two marked points,~\eqref{eq:ising-covs} gives
		\[
		\begin{aligned}
			\dE G(\bfm^1,\alpha^1)G(\bfm^2,\alpha^2)
			&-
			\dE\widetilde G(\bfm^1,\alpha^1)
			\widetilde G(\bfm^2,\alpha^2) \\
			&=
			N\left[
			\xi(R_{12})
			+
			\vartheta(Q_{12})
			-
			R_{12}\xi'(Q_{12})
			\right] \\
			&=
			N\left[
			\xi(R_{12})
			-
			\xi(Q_{12})
			-
			\xi'(Q_{12})(R_{12}-Q_{12})
			\right]
			\ge 0,
		\end{aligned}
		\]
		where the inequality follows from the convexity of $\xi$ in
		Assumption~\ref{ass:xi}.
		
		Slepian's inequality now gives the desired comparison on every
		finite subset of $[-1,1]^N\times\dN^L$. By separability and
		monotone convergence, the comparison extends to the full
		suprema. Taking expectation over the cascade labels completes the
		proof.
	\end{proof}
	
	By Lemma~\ref{lem:ising-slepian} it remains to bound the cascade side $\dE\sup_{\bfm,\alpha}\widetilde X$. Recall 
	\begin{equation*}
		\widetilde X(\bfm,\alpha)=u_\alpha+\sum_{i=1}^N m_i B_{q_\bfm}^i(\alpha)+\sum_{i=1}^N \Phi_\zeta^*(q_\bfm,m_i)-N\Phi_\zeta(0,0). 
	\end{equation*}
	Using the Fenchel inequality $m_i B^i_{q_\bfm}(\alpha)+\Phi_\zeta^*(q_\bfm,m_i)\le\Phi_\zeta(q_\bfm,B^i_{q_\bfm}(\alpha))$, we get
	\begin{equation}\label{eq:ising-fenchel}
		\widetilde X(\bfm,\alpha)\le u_\alpha+\sum_{i=1}^N\Phi_\zeta(q_\bfm,B^i_{q_\bfm}(\alpha))-N\Phi_\zeta(0,0)\,.
	\end{equation}
	
	The following lemma bounds the supremum of the right-hand side
	of~\eqref{eq:ising-fenchel}.

	\begin{lemma}\label{lem:parisi-maximal-estimate}
		For the fixed finite-step order parameter $\zeta$, as
		$N\to\infty$,
		\begin{equation}\label{eq:star1}
			\dE
			\sup_{\substack{q\in[0,1]\\ \alpha\in\dN^L}}
			\left\{
			u_\alpha
			+
			\sum_{i=1}^N
			\Phi_\zeta(q,B_q^i(\alpha))
			-
			N\Phi_\zeta(0,0)
			\right\}
			\le
			\dE\max_{\alpha\in\dN^L}u_\alpha
			+
			o(N).
		\end{equation}
	\end{lemma}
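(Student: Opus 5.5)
We will prove \eqref{eq:star1} by reducing the supremum over \(q\) to a supremum over finitely many deterministic levels, and then applying the invariance in Remark~\ref{rem:ising-invariance} at each level.

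\textbf{Setup.} Set
\[
S_q(\alpha):=\sum_{i=1}^N\Phi_\zeta(q,B_q^i(\alpha))-N\Phi_\zeta(0,0).
\]
For each fixed deterministic \(q\), Remark~\ref{rem:ising-invariance} gives \(\{u_\alpha+S_q(\alpha)\}_\alpha\stackrel d=\{u_\alpha\}_\alpha\). In particular,
\[
\dE\max_\alpha\{u_\alpha+S_q(\alpha)\}=\dE\max_\alpha u_\alpha,
\]
which is finite: the maximum is a sum of \(L\) independent Gumbel-type variables.

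\textbf{Step 1: a finite grid.} Take a grid \(0=r_0<r_1<\cdots<r_K=1\) of mesh \(1/K\). For a finite family of random fields, each of which is a relabeling of the same point process, we use a union-type bound:
\[
\dE\max_{k\le K}\max_\alpha\{u_\alpha+S_{r_k}(\alpha)\}\le\dE\max_\alpha u_\alpha+C\log K.
\]
To justify this, work at the first level of the cascade. The exponential tail of the top points gives
\[
\dP\Bigl(\max_\alpha(u_\alpha+S_{r_k})>t\Bigr)\le Ce^{-\theta_1 t}
\]
for each \(k\). A union bound over the \(K\) grid levels then yields an additive \(O(\log K/\theta_1)\), which uses \(\zeta(\{0\})=\theta_1>0\).

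\textbf{Step 2: oscillation between grid points.} We must control
\[
\sup_\alpha\ \sup_{q\in[r_{k-1},r_k]}\bigl(S_q(\alpha)-S_{r_{k-1}}(\alpha)\bigr).
\]
Use Itô's formula from Remark~\ref{rem:ising-invariance}:
\[
S_q-S_{r}=M_q-M_r-\tfrac{\theta}{2}\bigl(\langle M\rangle_q-\langle M\rangle_r\bigr)\le M_q-M_r.
\]
For each fixed \(\alpha\), \(M\) is a continuous martingale with \(\dd\langle M\rangle\le N\xi''(s)\,\dd s\) (since \(|\partial_x\Phi_\zeta|\le1\)). Doob's inequality or Bernstein's exponential martingale bound then gives
\[
\dP\Bigl(\sup_{q\in[r_{k-1},r_k]}(M_q-M_{r_{k-1}})>\ve N\Bigr)\le\exp\bigl(-c\,\ve^2 N K\bigr).
\]

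\textbf{Step 3: uniformity over \(\alpha\) (main obstacle).} The difficulty is that \(\alpha\) ranges over infinitely many leaves. We handle this by tilting: the oscillation adds an increment inside the cascade, and we apply the marking identity \eqref{eq:ising-marking} to the variable
\[
W=\sup_{q\in[r_{k-1},r_k]}(M_q-M_{r_{k-1}}),
\]
a mark attached along the path at the relevant levels. The resulting shift is
\[
c=\frac1\theta\log\dE e^{\theta W}.
\]
Since \(W\) is the running maximum of a martingale with quadratic variation at most \(N\xi''(1)/K\), its exponential moment gives
\[
c\le\frac{\theta N\xi''(1)}{2K}+O(\log N).
\]
The marking identity only applies to marks that are independent within a level. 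So we apply it level by level, conditioning on the fields at earlier levels, and use that \(e^{\theta_\ell(S-S_{\text{prev}})}\) is a martingale to absorb the non-supremum part. Done this way, the shifts at most add up.

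\textbf{Conclusion.} Combining the three steps gives
\[
\dE\sup_{q,\alpha}\{u_\alpha+S_q(\alpha)\}\le\dE\max_\alpha u_\alpha+C\log K+\frac{CN}{K}+O(\log N).
\]
Choosing \(K=\sqrt N\) makes every error term \(o(N)\), which proves \eqref{eq:star1}.

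The delicate point is Step 3. If the level-by-level marking argument becomes messy, the fallback is to discretize \(q\) within the level containing it and apply the identity to \(\sup_q\) only within that single level. Because \(\theta_\ell\le1\) and \(|\partial_x\Phi_\zeta|\le1\) bound the increments, the exponential-moment estimate should still hold.
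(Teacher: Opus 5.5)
Your argument is sound in substance, but it takes a detour that the paper avoids. The paper never discretizes $q$. It keeps the It\^o drift, so that on each level interval $[q_{\ell-1},q_\ell]$ the process $\exp\{\theta_\ell(S_s-S_{q_{\ell-1}})\}$ is an exact positive martingale. Let $\Delta_\ell(\alpha)=\sup_{q_{\ell-1}\le s\le q_\ell}(S_s-S_{q_{\ell-1}})(\alpha)$ be the running maximum over the \emph{whole} level. Doob's $L^p$ inequality with $p=1+N^{-1}$ bounds $\dE e^{\theta_\ell\Delta_\ell}$ by $(N+1)e^{O(1)}$, because the factor $p-1=N^{-1}$ cancels the $N$ in the quadratic variation. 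This gives a marking shift $d_\ell(N)=O(\log N)$. Since every $\Delta_\ell\ge0$, the pathwise bound $S_q-S_0\le\sum_\ell\Delta_\ell(\alpha)$ holds for all $q$ at once. The supremum over $q$ therefore collapses into a single maximum of $u_\alpha+\sum_\ell\Delta_\ell(\alpha)$, which the conditional marking identity handles from level $L$ down to level $1$, with no grid, no union bound, and error $O(\log N)$.

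You instead discard the drift via $S_q-S_r\le M_q-M_r$. Without the compensator, the exponential moment of a whole-level supremum is $e^{\Theta(N)}$. That is what forces your mesh-$1/K$ grid, the $O(N/K)$ shift per cell, and the union bound over cells through the Gumbel tail of $\max_\alpha u_\alpha$, ending at $O(\sqrt N)$ for $K=\sqrt N$. This is still $o(N)$, so the route works. But the exponential martingale you already use to ``absorb the non-supremum part'' applies verbatim to the supremum part, and using it there makes Steps~1 and~2 unnecessary. Step~2, a fixed-$\alpha$ Bernstein bound, is never used in any case.

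If you keep your route, two points need tightening.

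First, the union bound must be applied to the cell maxima $Y_k:=\max_\alpha\{u_\alpha+S_{r_{k-1}}(\alpha)+W_k(\alpha)\}$, where $W_k(\alpha)=\sup_{q\in[r_{k-1},r_k]}(S_q-S_{r_{k-1}})(\alpha)$. It should not be applied to the grid values $\max_\alpha\{u_\alpha+S_{r_k}(\alpha)\}$. The expectation statements of Steps~1 and~3 do not combine, because $\dE\max_k Y_k$ is not controlled by the individual $\dE Y_k$. What you need is the distributional form of the level-by-level marking argument. With a deterministic bound $\bar c$ on the conditional shifts, $Y_k$ is stochastically dominated by $\max_\alpha u_\alpha+\bar c$. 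This gives $\dP(Y_k>t)\le Ce^{-\theta_1(t-\bar c)}$, and then the union bound over $k$ applies.

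Second, the grid should contain $q_1,\dots,q_{L-1}$, so that each $W_k$ is a mark at a single cascade level.

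A minor inaccuracy: $\max_\alpha u_\alpha$ is not a sum of $L$ independent Gumbel variables. By the marking identity it is a single top-level Gumbel variable plus a constant. This does not affect the argument.
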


	\begin{proof}
		Set
		\[
		S_s(\alpha):=
		\sum_{i=1}^N\Phi_\zeta(s,B_s^i(\alpha)).
		\]
		We first consider $L=1$. In this case $q_1=1$ and
		\[
		\zeta([0,s])=\theta_1=:\theta,
		\qquad 0<s<1.
		\]
		Define
		\[
		W(\alpha):=
		\sup_{0\le s\le1}
		\bigl(S_s(\alpha)-S_0\bigr).
		\]
		The variables $W(\alpha)$ are independent and identically
		distributed over $\alpha\in\dN$ and independent of the cascade
		labels. Therefore,
		\[
		\sup_{\substack{0\le s\le1\\ \alpha\in\dN}}
		\left\{
		u_\alpha+S_s(\alpha)-S_0
		\right\}
		=
		\max_{\alpha\in\dN}
		\left\{
		u_\alpha+W(\alpha)
		\right\}.
		\]
		The marking identity gives
		\[
		\dE
		\max_{\alpha\in\dN}
		\left\{
		u_\alpha+W(\alpha)
		\right\}
		=
		\dE\max_{\alpha\in\dN}u_\alpha+c_N,
		\qquad
		c_N:=
		\frac1\theta\log\dE e^{\theta W},
		\]
		where $W$ has the common law of the variables $W(\alpha)$.
		
		On $[0,1]$, It\^o's formula and the Parisi equation give
		\[
		S_s(\alpha)-S_0
		=
		M_s(\alpha)
		-\frac \theta2\langle M(\alpha)\rangle_s,
		\]
		where
		\[
		M_s(\alpha)
		:=
		\sum_{i=1}^N
		\int_0^s
		\partial_x\Phi_\zeta(r,B_r^i(\alpha))
		\,\dd B_r^i(\alpha).
		\]
		Since $|\partial_x\Phi_\zeta|\le1$,
		\[
		\langle M(\alpha)\rangle_s
		=
		\int_0^s
		\xi''(r)
		\sum_{i=1}^N
		\bigl(
		\partial_x\Phi_\zeta(r,B_r^i(\alpha))
		\bigr)^2
		\,\dd r
		\le
		N\bigl(\xi'(s)-\xi'(0)\bigr).
		\]
		Consequently,
		\[
		\mathcal M_s(\alpha)
		:=
		\exp\left\{
		\theta\bigl(S_s(\alpha)-S_0\bigr)
		\right\}
		\]
		is a true positive martingale, and
		\[
		e^{\theta W(\alpha)}
		=
		\sup_{0\le s\le1}\mathcal M_s(\alpha).
		\]
		
		Set $p=1+N^{-1}$. Doob's inequality gives
		\[
		\dE e^{\theta W}
		\le
		\frac p{p-1}
		\left(
		\dE\mathcal M_1^p
		\right)^{1/p}.
		\]
		Moreover,
		\[
		\begin{aligned}
			\mathcal M_1^p
			&=
			\exp\left\{
			p\theta M_1
			-
			\frac{p^2\theta^2}{2}
			\langle M\rangle_1
			\right\}
			\\
			&\qquad\times
			\exp\left\{
			\frac{(p^2-p)\theta^2}{2}
			\langle M\rangle_1
			\right\}.
		\end{aligned}
		\]
		The first factor has expectation one, while the bracket bound gives
		\[
		\dE\mathcal M_1^p
		\le
		\exp\left\{
		\frac{(p^2-p)\theta^2}{2}
		N\bigl(\xi'(1)-\xi'(0)\bigr)
		\right\}.
		\]
		Since $p/(p-1)=N+1$ and $p^2-p=p/N$, we obtain
		\[
		c_N
		\le
		\frac{\log(N+1)}{\theta}
		+
		\frac{\theta}{2}
		\bigl(\xi'(1)-\xi'(0)\bigr)
		=
		o(N).
		\]
		It follows that
		\[
		\dE
		\sup_{\substack{0\le s\le1\\ \alpha\in\dN}}
		\left\{
		u_\alpha+S_s(\alpha)-S_0
		\right\}
		\le
		\dE\max_{\alpha\in\dN}u_\alpha+o(N).
		\]
		
		For a general finite-step order parameter and each
		$1\le\ell\le L$, define
		\[
		\Delta_\ell(\alpha)
		:=
		\sup_{q_{\ell-1}\le s\le q_\ell}
		\bigl(
		S_s(\alpha)-S_{q_{\ell-1}}(\alpha)
		\bigr).
		\]
		If $q\in[q_{k-1},q_k]$, then
		\begin{equation}\label{eq:splitting}
			\begin{aligned}
				S_q(\alpha)-S_0
				&=
				\sum_{\ell<k}
				\bigl(
				S_{q_\ell}(\alpha)-S_{q_{\ell-1}}(\alpha)
				\bigr)
				+
				S_q(\alpha)-S_{q_{k-1}}(\alpha)
				\\
				&\le
				\sum_{\ell=1}^L \Delta_\ell(\alpha).
			\end{aligned}
		\end{equation}
		Consequently,
		\[
		\sup_{\substack{q\in[0,1]\\ \alpha\in\dN^L}}
		\left\{
		u_\alpha+S_q(\alpha)-S_0
		\right\}
		\le
		\max_{\alpha\in\dN^L}
		\left\{
		u_\alpha+\sum_{\ell=1}^L \Delta_\ell(\alpha)
		\right\}.
		\]
		
		The preceding one-level calculation applies conditionally on
		every interval $[q_{\ell-1},q_\ell]$, uniformly in the Gaussian
		field at the parent vertex. It bounds the corresponding marking
		shift by
		\[
		d_\ell(N)
		:=
		\frac{\log(N+1)}{\theta_\ell}
		+
		\frac{\theta_\ell}{2}
		\bigl(
		\xi'(q_\ell)-\xi'(q_{\ell-1})
		\bigr).
		\]
		
		We spell out the recursion for $L=2$. For
		$\alpha=(n_1,n_2)$, write
		$u_\alpha=u_{(n_1)}+u_{(n_1,n_2)\mid2}$. The paths with the
		same first coordinate agree up to time $q_1$, so
		$\Delta_1(\alpha)$ depends only on $n_1$. We denote it by
		$\Delta_1(n_1)$. Therefore,
		\[
		\begin{aligned}
			&\max_{\alpha}
			\left\{
			u_\alpha+\Delta_1(\alpha)+\Delta_2(\alpha)
			\right\}
			\\
			&\qquad=
			\max_{n_1}
			\left\{
			u_{(n_1)}+\Delta_1(n_1)
			+
			\max_{n_2}
			\left(
			u_{(n_1,n_2)\mid2}+\Delta_2(n_1,n_2)
			\right)
			\right\}.
		\end{aligned}
		\]
		
		Let $\mathcal G_1$ be the $\sigma$-field generated by the
		first-level cascade labels and the Gaussian paths up to time
		$q_1$. Conditionally on $\mathcal G_1$, for each fixed $n_1$,
		the variables $\Delta_2(n_1,n_2)$ are independent and identically
		distributed over $n_2$ and independent of the second-level
		labels. The collections corresponding to distinct values of
		$n_1$ are conditionally independent. The conditional one-level
		estimate gives
		\[
		c_2(n_1)
		:=
		\frac1{\theta_2}
		\log
		\dE\left[
		e^{\theta_2\Delta_2(n_1,n_2)}
		\,\middle|\,
		\mathcal G_1
		\right]
		\le d_2(N).
		\]
		Set $V(n_1):=\max_{n_2}u_{(n_1,n_2)\mid2}$. Applying the
		marking identity conditionally at every first-level vertex gives,
		jointly in $n_1$,
		\[
		\max_{n_2}
		\left\{
		u_{(n_1,n_2)\mid2}+\Delta_2(n_1,n_2)
		\right\}
		\stackrel d=
		c_2(n_1)+V(n_1).
		\]
		Since $c_2(n_1)\le d_2(N)$, it follows that
		\[
		\begin{aligned}
			&\dE\max_{\alpha}
			\left\{
			u_\alpha+\Delta_1(\alpha)+\Delta_2(\alpha)
			\right\}
			\\
			&\qquad\le
			d_2(N)
			+
			\dE\max_{n_1}
			\left\{
			u_{(n_1)}+\Delta_1(n_1)+V(n_1)
			\right\}.
		\end{aligned}
		\]
		
		Each of the families $\{\Delta_1(n_1)\}_{n_1\ge1}$ and
		$\{V(n_1)\}_{n_1\ge1}$ consists of independent and identically
		distributed variables. The two families are mutually independent
		and both are independent of the first-level labels. Moreover,
		$\dE e^{\theta_1V(n_1)}<\infty$ because
		$\theta_1<\theta_2$. Writing
		$c_1:=\theta_1^{-1}\log\dE e^{\theta_1\Delta_1(n_1)}
		\le d_1(N)$, the marking identity at the first level gives
		\[
		\begin{aligned}
			\dE\max_{n_1}
			\left\{
			u_{(n_1)}+\Delta_1(n_1)+V(n_1)
			\right\}
			&=
			c_1+
			\dE\max_{n_1}
			\left\{
			u_{(n_1)}+V(n_1)
			\right\}
			\\
			&\le
			d_1(N)+\dE\max_\alpha u_\alpha.
		\end{aligned}
		\]
		Combining the last two estimates yields
		\[
		\dE\max_{\alpha}
		\left\{
		u_\alpha+\Delta_1(\alpha)+\Delta_2(\alpha)
		\right\}
		\le
		\dE\max_\alpha u_\alpha+d_1(N)+d_2(N).
		\]
		
		For general $L$, the same conditional argument is applied from
		level $L$ down to level $1$. At level $\ell$, the maximum of the
		remaining subtree is independent of $\Delta_\ell$, and the strict
		ordering of the cascade parameters guarantees the exponential
		moments required by the marking identity. We obtain
		\[
		\dE
		\max_{\alpha\in\dN^L}
		\left\{
		u_\alpha+\sum_{\ell=1}^L \Delta_\ell(\alpha)
		\right\}
		\le
		\dE\max_{\alpha\in\dN^L}u_\alpha
		+
		\sum_{\ell=1}^L d_\ell(N).
		\]
		Since $L$ and $\zeta$ are fixed,
		$\sum_{\ell=1}^L d_\ell(N)=O(\log N)=o(N)$. Together with~\eqref{eq:splitting}
		and $S_0=N\Phi_\zeta(0,0)$, this
		proves~\eqref{eq:star1}.
	\end{proof}

	\begin{lemma}\label{lem:ising-lower}
		Let $X(\bfm,\alpha)$ and $T_N(\bfm)$ be defined by~\eqref{def:X}
		and~\eqref{eq:ising-reduced}. Then
		\[
		\dE
		\sup_{\substack{\bfm\in[-1,1]^N\\ \alpha\in\dN^L}}
		X(\bfm,\alpha)
		\ge
		\dE\max_{\bfm\in[-1,1]^N}T_N(\bfm)
		+
		\dE\max_{\alpha\in\dN^L}u_\alpha.
		\]
	\end{lemma}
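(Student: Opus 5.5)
The idea is to test the supremum in $\bfm$ against a near-maximizer of $T_N$ and then let the cascade invariance of Remark~\ref{rem:ising-invariance} absorb the Onsager field. Write
\[
X(\bfm,\alpha)=T_N(\bfm)+\bigl(u_\alpha+Y(q_\bfm,\alpha)-A_\zeta(q_\bfm)\bigr),
\]
which follows directly from \eqref{def:X} and \eqref{eq:ising-reduced}. Let $\bfm^*=\bfm^*(H_N)$ be a measurable maximizer of $T_N$ over $[-1,1]^N$. Such a maximizer exists because $T_N$ is upper semicontinuous on a compact set: $\Phi^*_\zeta(q,\cdot)$ is concave and u.s.c., and $q\mapsto\Phi_\zeta^*(q,m)$ is continuous. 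Crucially, $\bfm^*$ depends only on $H_N$, which is independent of the cascade labels and of the Onsager field $Y$.

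With this choice, bound $\sup_{\bfm,\alpha}X\ge\sup_\alpha X(\bfm^*,\alpha)=T_N(\bfm^*)+\max_\alpha\{u_\alpha+Y(q^*,\alpha)-A_\zeta(q^*)\}$, where $q^*=q_{\bfm^*}$. Now condition on $H_N$. Then $q^*$ is a deterministic value, and $(u,Y)$ are independent of the conditioning. The first identity in Remark~\ref{rem:ising-invariance} therefore says that $\{u_\alpha+Y(q^*,\alpha)-A_\zeta(q^*)\}_\alpha$ has the same law as $\{u_\alpha\}_\alpha$. Hence its conditional expected maximum equals $\dE\max_\alpha u_\alpha$. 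Taking expectation over $H_N$ gives the claim.

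Two technical points need checking.

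\emph{Integrability.} We need $\dE\max_\alpha u_\alpha$ to be finite. For $L=1$ the top point of a $\theta e^{-\theta u}$ process is Gumbel-type. In general the existence of $\max_\alpha u_\alpha$ and its integrability use $\theta_1<\cdots<\theta_L$, exactly as in the recursion of Lemma~\ref{lem:parisi-maximal-estimate}. We also need $\dE|\max T_N|<\infty$, which follows from Gaussian bounds on $\sup H_N$ and boundedness of $\Phi^*$ on the relevant range.

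\emph{Infinite values of $T_N$.} If the maximizer has $|m_i|=1$, then $\Phi^*$ may be finite there or $-\infty$, but the maximum of $T_N$ is attained where it is finite, for instance at $\bfm=0$. So this case causes no difficulty.

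The main obstacle I expect is justifying the conditional use of the invariance at the random level $q^*$. This requires the independence structure and a measurable selection of $\bfm^*$. If measurable selection is awkward, I would instead take a countable dense set of $\bfm$ and an $\varepsilon$-maximizer chosen measurably, apply the same argument, and let $\varepsilon\downarrow0$.
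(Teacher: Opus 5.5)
Your proposal is correct and is essentially the paper's proof. Both write $X(\bfm,\alpha)=T_N(\bfm)+u_\alpha+Y(q_\bfm,\alpha)-A_\zeta(q_\bfm)$, evaluate at a measurable maximizer $\bfm_\star(H_N)$ of $T_N$, and apply the first identity of Remark~\ref{rem:ising-invariance} conditionally on $H_N$, using that $(u,Y)$ is independent of $H_N$. Your extra technical remarks are harmless additions, and the worry about $-\infty$ values is moot, since $\Phi^*_\zeta(q,\pm1)$ is finite.
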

	
	\begin{proof}
		By~\eqref{def:X} and~\eqref{eq:ising-reduced},
		\[
		X(\bfm,\alpha)
		=
		T_N(\bfm)+u_\alpha+Y(q_\bfm,\alpha)-A_\zeta(q_\bfm).
		\]
		Let $\bfm_\star=\bfm_\star(H_N)$ be a measurable maximizer of
		$T_N$ on $[-1,1]^N$, and set $q_\star=q_{\bfm_\star}$. Since
		$T_N$ depends on the randomness only through $H_N$, the self-overlap
		$q_\star$ is independent of $(u,Y)$. Therefore,
		\[
		\sup_{\bfm,\alpha}X(\bfm,\alpha)
		\ge
		\max_{\bfm\in[-1,1]^N}T_N(\bfm)
		+
		\max_{\alpha\in\dN^L}
		\{u_\alpha+Y(q_\star,\alpha)-A_\zeta(q_\star)\}.
		\]
		Conditionally on $H_N$, the self-overlap $q_\star$ is deterministic, so
		Remark~\ref{rem:ising-invariance} gives
		\[
		\dE\left[
		\max_{\alpha\in\dN^L}
		\{u_\alpha+Y(q_\star,\alpha)-A_\zeta(q_\star)\}
		\,\middle|\,H_N
		\right]
		=
		\dE\max_{\alpha\in\dN^L}u_\alpha.
		\]
		Taking expectations proves the claim.
	\end{proof}
	
	We conclude the proof of Proposition~\ref{prop:ising-exp}.

	\begin{proof}[Proof of Proposition~\ref{prop:ising-exp}]
		By Lemma~\ref{lem:ising-slepian}, the Fenchel bound~\eqref{eq:ising-fenchel}, and
		Lemma~\ref{lem:parisi-maximal-estimate},
		\[
		\dE\sup_{\bfm,\alpha}X(\bfm,\alpha)
		\le
		\dE\sup_{\bfm,\alpha}\widetilde X(\bfm,\alpha)
		\le
		\dE\max_\alpha u_\alpha+o(N).
		\]
		Combining this with Lemma~\ref{lem:ising-lower} yields
		\[
		\limsup_{N\to\infty}
		\frac1N\dE\max_{\bfm\in[-1,1]^N}T_N(\bfm)
		\le 0.
		\]
		The preceding bound holds for every finite-step order parameter
		$\zeta$ satisfying $\zeta(\{0\})>0$. Indeed,
		$F_{\TAP}\le F_{\TAP,\zeta}$ and~\eqref{eq:ising-reduced} give
		\[
		\limsup_{N\to\infty}
		\frac1N\dE\max_{\bfm\in[-1,1]^N}F_{\TAP}(\bfm)
		\le P_\zeta.
		\]
		
		Now let $\zeta$ be an arbitrary finite-step order parameter. For
		$\varepsilon\in(0,1)$, set
		\[
		\zeta_\varepsilon
		:=
		(1-\varepsilon)\zeta+\varepsilon\delta_0.
		\]
		Since $\zeta_\varepsilon(\{0\})>0$, the preceding bound gives
		\[
		\limsup_{N\to\infty}
		\frac1N\dE\max_{\bfm\in[-1,1]^N}F_{\TAP}(\bfm)
		\le P_{\zeta_\varepsilon}.
		\]
		Letting $\varepsilon\downarrow0$ and using continuity of the Parisi
		functional gives the same bound with $P_\zeta$ on the right-hand
		side. Taking the infimum over finite-step order parameters and using
		their density proves
		\[
		\limsup_{N\to\infty}
		\frac1N\dE\max_{\bfm\in[-1,1]^N}F_{\TAP}(\bfm)
		\le\inf_\zeta P_\zeta.
		\qedhere
		\]
	\end{proof}

	\subsection{The exponential moments}
	We extend the comparison of Subsection~\ref{sub:new5-ising} from the expected maximum to its fractional
	exponential moments. The only new ingredient is an outer Poisson level of parameter $\theta$. This one-level Poisson--Dirichlet device is the same mechanism employed by Chen, Guionnet, Ko,
	Lacroix-A-Chez-Toine, and Mourrat~\cite{ChenGuionnetKoLacroixMourrat} to compute $\dE[Z_N^\theta]$.
	
	\begin{proposition}\label{prop:direct-tilt-TAP}
		For every $\theta\in(0,1)$,
		\[
		\limsup_{N\to\infty}
		\frac1N
		\log\dE\exp\left\{
		\theta\max_{\bfm\in[-1,1]^N}F_{\TAP}(\bfm)
		\right\}
		\le
		\Lambda(\theta)
		=
		\theta
		\inf_{\substack{\zeta\in\mc P([0,1])\\
				\zeta(\{0\})\ge\theta}}
		P_\zeta.
		\]
	\end{proposition}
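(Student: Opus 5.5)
Fix a finite-step order parameter $\zeta$ with $\zeta(\{0\})=\theta_1\ge\theta$; by the continuity and density argument at the end of the proof of Proposition~\ref{prop:ising-exp}, it suffices to show that $\limsup_N N^{-1}\log\dE e^{\theta\max_\bfm T_N(\bfm)}\le 0$, with $T_N$ as in~\eqref{eq:ising-reduced}. Indeed, $F_{\TAP}\le F_{\TAP,\zeta}=NP_\zeta+T_N$ then gives the bound $\theta P_\zeta$. To handle $\theta_1=\theta$ I would perturb to $\theta_1>\theta$ using $(1-\ve)\zeta+\ve\delta_0$ as before. The key device is to convert the exponential moment into an expected maximum by adding an outer Poisson level. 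Let $(v_j)_{j\ge1}$ be a Poisson process with intensity $\theta e^{-\theta v}\,\dd v$, independent of everything else, and attach to each $j$ an independent copy $(H_N^j,u^j,B^j,Y^j)$ of the whole setup of Subsection~\ref{sub:new5-ising}, including the cascade with parameters $\theta_1<\cdots<\theta_L$, which are all strictly larger than $\theta$. The marking identity~\eqref{eq:ising-marking} at the outer level gives
\[
\dE\max_j\Bigl\{v_j+\max_\bfm T_N^j(\bfm)+\max_\alpha u^j_\alpha\Bigr\}
=\dE\max_j v_j+\frac1\theta\log\dE\exp\Bigl\{\theta\max_\bfm T_N(\bfm)+\theta\max_\alpha u_\alpha\Bigr\},
\]
and since the two maxima are independent, the last term factors as $\theta^{-1}\log\dE e^{\theta\max T_N}+\theta^{-1}\log\dE e^{\theta\max u}$. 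The second factor is finite and independent of $N$ because $\theta<\theta_1$.

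Next I would treat the combined object as one cascade with $L+1$ levels, parameters $\theta<\theta_1<\cdots<\theta_L$, outer overlap level $q_0=0$, and leaves $(j,\alpha)$. Put $\mathbf X((\bfm,j),\alpha):=v_j+X^j(\bfm,\alpha)$. Different $j$ are independent, so every cross-covariance with $j\ne j'$ vanishes. Within a fixed $j$ the covariance structure is exactly that of Lemma~\ref{lem:ising-slepian}. Slepian's lemma therefore applies conditionally on $(v,u)$ to $\mathbf X$ versus $\widetilde{\mathbf X}:=v_j+\widetilde X^j$, with centered cross-terms $0-0=0$. Lemma~\ref{lem:ising-lower}, applied inside each $j$ and combined with the invariance of Remark~\ref{rem:ising-invariance} conditionally on $H_N^j$, gives the lower bound by the left side of the display above. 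The one point to check is that $q_\star^j$ depends only on $H_N^j$, so the invariance applies jointly over $j$; this holds because everything is independent across $j$.

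For the upper bound I would use the Fenchel bound~\eqref{eq:ising-fenchel} and then the argument of Lemma~\ref{lem:parisi-maximal-estimate}, with the outer level carried along. Each subtree maximum is bounded by $\max_\alpha u^j_\alpha+\sum_\ell\Delta^j_\ell(\alpha)$, and the recursion from level $L$ down to level $1$ proceeds as before. At the outer level, the variables $V(j)=\max_\alpha\{u^j_\alpha+\sum_\ell\Delta^j_\ell\}$ are i.i.d.\ with finite $\theta$-exponential moment because $\theta<\theta_1$. The result is the bound $\dE\max_j v_j+\theta^{-1}\log\dE e^{\theta\max u}+\sum_\ell d_\ell(N)$. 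Comparing with the lower bound gives $\theta^{-1}\log\dE e^{\theta\max T_N}\le O(\log N)$, which is the claim.

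The main obstacle is the outer-level step of this recursion. There the marking identity must be applied to $V(j)$, which is a supremum involving the $\Delta_\ell$. A bound on $\dE$ alone does not suffice; I need the identity $\theta^{-1}\log\dE e^{\theta V}=\theta^{-1}\log\dE e^{\theta\max u}+O(\log N)$. My first attempt is to iterate the conditional marking identity inside each $j$ to show that $V(j)$ is stochastically dominated by $\max_\alpha u_\alpha+\sum_\ell d_\ell(N)$, in the sense of exponential moments of order $\theta<\theta_1$. This works because each inner marking step produces an exact distributional shift of the Poisson points bounded by $d_\ell(N)$, rather than merely an expectation bound.
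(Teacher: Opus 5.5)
Your proposal is correct and follows essentially the same route as the paper. Both use an outer Poisson level of parameter $\theta$ over independent copies, the branch-by-branch Slepian comparison, the Fenchel bound with the level-by-level maximal estimate, the invariance of Remark~\ref{rem:ising-invariance} for the lower bound, the outer marking identity on both sides, and finally the $(1-\varepsilon)\zeta+\varepsilon\delta_0$ approximation.

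Your point that each inner marking step gives an exact distributional shift bounded by $d_\ell(N)$, and hence a stochastic domination valid for exponential moments, makes explicit what the paper's Step~5 leaves implicit. The one check you omit is the finiteness of $\dE e^{\theta\max_\bfm T_N(\bfm)}$, needed for the outer marking identity. The paper obtains it from Borell--TIS and the boundedness of the deterministic part of $T_N$ on the cube.
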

	
	\begin{proof}
		Fix $\theta\in(0,1)$. We first consider a finite-step order
		parameter $\zeta$ satisfying $\zeta(\{0\})>\theta$. Write
		$x_1<\cdots<x_L$ for its cumulative masses, so that
		$\theta<x_1$.
		
		By~\eqref{eq:ising-reduced},
		$F_{\TAP,\zeta}=NP_\zeta+T_N$, so it remains to prove
		\begin{equation}\label{eq:ising-tilt-reduced}
			\limsup_{N\to\infty}
			\frac1N
			\log\dE\exp\left\{
			\theta\max_{\bfm\in[-1,1]^N}T_N(\bfm)
			\right\}
			\le0.
		\end{equation}
		
		\medskip\noindent\emph{Step 1 (marking identity).}\;
		Let $(w_b)_{b\ge1}$ be a Poisson point process on $\dR$ with
		intensity $\theta e^{-\theta w}\,\dd w$. If $(V_b)_b$ are
		independent and identically distributed marks, independent of
		$(w_b)_b$, then~\eqref{eq:ising-marking} gives
		\begin{equation}\label{eq:outer-theta-marking}
			\dE\max_b\{w_b+V_b\}
			=
			\dE\max_b w_b
			+\frac1\theta\log\dE e^{\theta V},
		\end{equation}
		whenever the exponential moment is finite, where $V$ has the
		common law of the marks.
		
		\medskip\noindent\emph{Step 2 (adding the outer level).}\;
		For every $b$, take an independent copy
		$(H_N^b,u^b,B^b,Y^b)$ of the variables used in the proof of
		Proposition~\ref{prop:ising-exp}, with all the copies independent
		of $(w_b)_b$. Let $T_N^b$ denote $T_N$ with $H_N$ replaced by
		$H_N^b$, let $Z^b$ be the corresponding cavity field, and set
		\[
		U_b:=\max_{\alpha\in\dN^L}u_\alpha^b.
		\]
		The labels $w_b+u_\alpha^b$ form a cascade with parameters
		$\theta<x_1<\cdots<x_L$. The level of parameter $\theta$ has a
		radial interval of length zero. Define
		\[
		\begin{aligned}
			X_b(\bfm,\alpha)
			&:=
			w_b+H_N^b(\bfm)+Y^b(q_\bfm,\alpha)+u_\alpha^b
			+\sum_{i=1}^N\Phi_\zeta^*(q_\bfm,m_i)
			-N\Phi_\zeta(0,0),\\
			\widetilde X_b(\bfm,\alpha)
			&:=
			w_b+Z^b(\bfm,\alpha)+u_\alpha^b
			+\sum_{i=1}^N\Phi_\zeta^*(q_\bfm,m_i)
			-N\Phi_\zeta(0,0).
		\end{aligned}
		\]
		
		\medskip\noindent\emph{Step 3 (Slepian comparison).}\;
		Conditionally on $(w_b,u^b)_b$, the two processes are Gaussian with
		the same mean. Within each branch
		$b$, their covariance comparison is exactly that of
		Lemma~\ref{lem:ising-slepian}. For distinct branches, both
		covariances vanish. Slepian's inequality therefore gives
		\[
		\dE\sup_{b,\bfm,\alpha}X_b(\bfm,\alpha)
		\le
		\dE\sup_{b,\bfm,\alpha}\widetilde X_b(\bfm,\alpha).
		\]
		
		\medskip\noindent\emph{Step 4 (Fenchel bound).}\;
		The Fenchel bound~\eqref{eq:ising-fenchel} gives
		\[
		\widetilde X_b(\bfm,\alpha)
		\le
		w_b+u_\alpha^b
		+\sum_{i=1}^N
		\Phi_\zeta(q_\bfm,B_{q_\bfm}^{b,i}(\alpha))
		-N\Phi_\zeta(0,0).
		\]
		
		\medskip\noindent\emph{Step 5 (maximal estimate).}\;
		The level-by-level argument in
		Lemma~\ref{lem:parisi-maximal-estimate} applies to the cascade
		from Step~2. The outer level contributes no Gaussian increment,
		while $\theta<x_1$ gives the exponential moment required by the
		marking identity at that level. Combining Steps~3 and~4, we obtain
		\begin{equation}\label{eq:tilt-upper}
			\dE\sup_{b,\bfm,\alpha}X_b(\bfm,\alpha)
			\le
			\dE\max_b\{w_b+U_b\}+o(N).
		\end{equation}
		
		\medskip\noindent\emph{Step 6 (lower bound).}\;
		For each $b$, let $\bfm_b$ be a measurable maximizer of $T_N^b$
		on $[-1,1]^N$ and set $q_b=q_{\bfm_b}$. Then
		\[
		\sup_{\bfm,\alpha}X_b(\bfm,\alpha)
		\ge
		w_b+\max_{\bfm\in[-1,1]^N}T_N^b(\bfm)
		+V_b,
		\]
		where
		\[
		V_b
		:=
		\max_{\alpha\in\dN^L}
		\{u_\alpha^b+Y^b(q_b,\alpha)-A_\zeta(q_b)\}.
		\]
		Conditionally on $(H_N^b)_b$, the radii $(q_b)_b$ are
		deterministic. Remark~\ref{rem:ising-invariance}, applied
		independently in every branch, shows that the variables $(V_b)_b$
		are independent, have the same law as $(U_b)_b$, and are
		independent of $(\max_\bfm T_N^b)_b$. Consequently,
		\begin{equation}\label{eq:tilt-lower}
			\dE\sup_{b,\bfm,\alpha}X_b(\bfm,\alpha)
			\ge
			\dE\max_b
			\left\{
			w_b+\max_{\bfm\in[-1,1]^N}T_N^b(\bfm)+U_b
			\right\}.
		\end{equation}
		
		\medskip\noindent\emph{Step 7 (conclusion and approximation).}\;
		The marking identity and the strict ordering
		$x_1<\cdots<x_L$ show that $U_b$ has exponential moments of every
		order below $x_1$. In particular, $\dE e^{\theta U_b}<\infty$.
		Moreover, Borell--TIS and the boundedness of the deterministic part
		of $T_N$ on the cube give
		\[
		\dE\exp\left\{
		\theta\max_{\bfm\in[-1,1]^N}T_N(\bfm)
		\right\}<\infty.
		\]
		Applying~\eqref{eq:outer-theta-marking} and using the independence
		of $U_b$ and $T_N^b$, we obtain
		\[
		\begin{aligned}
			\dE\max_b\{w_b+U_b\}
			&=
			\dE\max_b w_b
			+\frac1\theta\log\dE e^{\theta U_b},\\
			\dE\max_b
			\left\{
			w_b+\max_\bfm T_N^b(\bfm)+U_b
			\right\}
			&=
			\dE\max_b w_b
			+\frac1\theta
			\log\dE e^{\theta\max_\bfm T_N(\bfm)}
			+\frac1\theta\log\dE e^{\theta U_b}.
		\end{aligned}
		\]
		Comparing~\eqref{eq:tilt-upper} and~\eqref{eq:tilt-lower} now
		proves~\eqref{eq:ising-tilt-reduced}. Since
		$F_{\TAP}\le F_{\TAP,\zeta}$,
		\[
		\limsup_{N\to\infty}
		\frac1N
		\log\dE\exp\left\{
		\theta\max_{\bfm\in[-1,1]^N}F_{\TAP}(\bfm)
		\right\}
		\le\theta P_\zeta.
		\]
		
		Finally, let $\zeta\in\mc P([0,1])$ satisfy
		$\zeta(\{0\})\ge\theta$ and set
		\[
		\zeta_\varepsilon
		:=
		(1-\varepsilon)\zeta+\varepsilon\delta_0.
		\]
		Then $\zeta_\varepsilon(\{0\})>\theta$. Approximate
		$\zeta_\varepsilon$ by finite-step order parameters that retain
		their mass at zero. Continuity of the Parisi functional, followed
		by $\varepsilon\downarrow0$, gives the same bound with $P_\zeta$
		on the right-hand side. Taking the infimum over such $\zeta$ proves
		the proposition.
	\end{proof}
	
	Markov's inequality gives the corresponding bound on the upper tail of the maximum. For every level
	$f$,
	\[
	\limsup_{N\to\infty}\frac1N\log\dP\Bigl(\tfrac1N\max_{\bfm\in[-1,1]^N}F_{\TAP}(\bfm)\ge f\Bigr)
	\le\inf_{\theta\in(0,1)}\Bigl[\theta\inf_{\zeta:\,\zeta(\{0\})\ge\theta}P_\zeta-\theta f\Bigr]\,.
	\]
	
	%------------------------------------------------------------------------------
	\subsection{The expected maximum in a band}
	
	The results of this subsection and the next are not needed for the proof of
	Theorem~\ref{thm:legendre}. They show how the same argument applies in a
	band above a TAP ancestor.
	
	We prove the conditional counterpart of
	Proposition~\ref{prop:ising-exp} in the band above a prescribed
	ancestor. Fix $q\in[0,1)$ and a finite-step order parameter
	$\eta\in\mc P([q,1])$. For $\bfa\in(-1,1)^N$ with $q_\bfa=q$,
	assume that $\eta$ attains the TAP correction at $\bfa$, so that
	\[
	F_{\TAP}(\bfa)=F_{\TAP,\eta}(\bfa).
	\]
	For $f\in\dR$ and $g\in\bfa^\perp$, set
	\begin{equation}\label{eq:cond-band-conditioning}
		\mc J_\bfa(f,g)
		:=
		\bigl\{
		F_{\TAP,\eta}(\bfa)=Nf,
		\ P_{\bfa^\perp}\nabla F_{\TAP,\eta}(\bfa)=g
		\bigr\}.
	\end{equation}
	When $q>0$, this conditioning is defined for every $f\in\dR$ and
	$g\in\bfa^\perp$. When $q=0$, we take $\bfa=0$, $f=P_\eta$, and
	$g=0$, in which case the two quantities in
	\eqref{eq:cond-band-conditioning} are deterministic.
	
	Since $a_i\in(-1,1)$, there is a unique $b_i\in\dR$ such that
	\[
	a_i=\partial_x\Phi_\eta(q,b_i),
	\qquad\text{equivalently}\qquad
	b_i=\partial_m h_\eta(q,a_i).
	\]
	In particular,
	\[
	a_i b_i+\Phi_\eta^*(q,a_i)=\Phi_\eta(q,b_i).
	\]
	
	The descendants considered below lie in the band
	$\Band(\bfa)$ defined in Subsection~\ref{sub:notation}. Since
	$q_\bfa=q$, in the present setting
	\[
	\Band(\bfa)
	=
	\left\{
	\bfm\in[-1,1]^N:
	\bfm-\bfa\perp\bfa,
	\ q_\bfm\in[q,1)
	\right\}.
	\]
	Writing $\bfm=\bfa+\tau$, one has
	$q_\bfm=q+N^{-1}\|\tau\|^2$. For $\ve>0$, set
	\[
	\Band_{\le1-\ve}(\bfa)
	:=
	\{\bfm\in\Band(\bfa):q_\bfm\le1-\ve\}.
	\]
	
	\begin{definition}
		For $g\in\bfa^\perp$, define
		\begin{equation}\label{eq:conditional-band-D}
			\mathscr D_\eta(\bfa,g)
			:=
			\inf_{h:\,P_{\bfa^\perp}h=g}
			\frac1N\sum_{i=1}^N
			\left[
			\Phi_\eta(q,b_i+h_i)
			-\Phi_\eta(q,b_i)
			-a_i h_i
			\right].
		\end{equation}
	\end{definition}
	
	The summands in~\eqref{eq:conditional-band-D} are nonnegative by
	convexity, and therefore
	\[
	\mathscr D_\eta(\bfa,0)=0.
	\]
	
	\begin{proposition}\label{prop:cond-band-noncrit}
		Fix $q\in[0,1)$ and a finite-step order parameter
		$\eta\in\mc P([q,1])$. For each \(N\), let
		\(\bfa_N\in(-1,1)^N\) satisfy
		\[
		q_{\bfa_N}=q,
		\qquad
		F_{\TAP}(\bfa_N)=F_{\TAP,\eta}(\bfa_N).
		\]
		Let \(f_N\in\dR\) and \(g_N\in\bfa_N^\perp\). When \(q=0\), take
		\(\bfa_N=0\), \(f_N=P_\eta\), and \(g_N=0\). Condition on
		\[
		F_{\TAP,\eta}(\bfa_N)=Nf_N,
		\qquad
		P_{\bfa_N^\perp}\nabla F_{\TAP,\eta}(\bfa_N)=g_N.
		\]
		Then
		\[
		\limsup_{\ve\downarrow0}
		\limsup_{N\to\infty}
		\left[
		\frac1N
		\dE\left[
		\sup_{\bfm\in\Band_{\le1-\ve}(\bfa_N)}
		\bigl(
		F_{\TAP}(\bfm)-F_{\TAP}(\bfa_N)
		\bigr)
		\,\middle|\,
		\mc J_{\bfa_N}(f_N,g_N)
		\right]
		-
		\mathscr D_\eta(\bfa_N,g_N)
		\right]
		\le0.
		\]
	\end{proposition}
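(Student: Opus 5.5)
We transplant the Guerra-type comparison of Subsection~\ref{sub:new5-ising} to the band, replacing the unconditional Hamiltonian by the conditioned field around $\bfa=\bfa_N$. Write $\bfm=\bfa+\tau$ with $\tau\perp\bfa$, so $q_\bfm=q+\|\tau\|^2/N$.

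\emph{Step 1 (conditional decomposition).} Conditionally on $\mc J_\bfa(f,g)$, decompose
\[
H_N(\bfa+\tau)=H_N(\bfa)+\langle\nabla H_N(\bfa),\tau\rangle+\widehat H(\tau),
\]
where, after Gaussian conditioning on $H_N(\bfa)$ and $P_{\bfa^\perp}\nabla H_N(\bfa)$, the remainder $\widehat H$ is a centered Gaussian field on $\bfa^\perp$. Its covariance is
\[
N\bigl[\xi(q+R)-\xi(q)-\xi'(q)R\bigr], \qquad R=\langle\tau^1,\tau^2\rangle/N,
\]
up to an additional one-dimensional radial correction coming from the conditioning on $H_N(\bfa)$. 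That correction only affects $q_\bfm$-dependent terms and can be absorbed into the deterministic part. The conditioning fixes the projected gradient
\[
P_{\bfa^\perp}\nabla H_N(\bfa)=g-P_{\bfa^\perp}\bigl(\partial_m\Phi^*_\eta(q,a_i)\bigr)_i=g+P_{\bfa^\perp}b,
\]
since $\partial_m\Phi_\eta^*(q,a_i)=-b_i$.

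\emph{Step 2 (band cascade and Slepian).} Use an additive RPC adapted to $\eta$ (levels $q=q_0<\dots<q_L=1$, with cumulative masses of $\eta$) and the cavity fields $B^i$, $Y$ started at time $q$. Their covariances are $\xi'(s\wedge t\wedge s_{\alpha\wedge\beta})-\xi'(q)$ and $N[\vartheta(\cdot)-\vartheta(q)]$ respectively, and the cavity field is evaluated at $\tau$ (i.e.\ $Z=\sum_i \tau_i B^i_{q_\bfm}$). The variance identity
\[
\xi(q+r)-\xi(q)-\xi'(q)r+\bigl[\vartheta(q+r)-\vartheta(q)\bigr]-\bigl[\xi'(q+r)-\xi'(q)\bigr]r
\]
needs checking against the expansion. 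Its off-diagonal difference is again a Bregman divergence of $\xi$, which is nonnegative by convexity, as in Lemma~\ref{lem:ising-slepian}. Slepian's inequality gives the comparison of expected suprema. The lower-bound step of Lemma~\ref{lem:ising-lower} goes through because the marking invariance of Remark~\ref{rem:ising-invariance} holds for increments above $q$ with $A_\eta(q_\bfm)-A_\eta(q)$.

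\emph{Step 3 (Fenchel with shifted centre).} On the cascade side, the linear term $\langle g+P_{\bfa^\perp}b,\tau\rangle$ plus the cavity field combine, coordinatewise, into
\[
(a_i+\tau_i)\bigl(b_i+h_i+B^i_{q_\bfm}\bigr)+\Phi^*_\eta(q_\bfm,a_i+\tau_i),
\]
where $h$ is any vector with $P_{\bfa^\perp}h=g$. Since $\tau\perp\bfa$, adding multiples of $\bfa$ to $h$ costs only a term $\langle\bfa,\cdot\rangle$, which is independent of $\tau$. The Fenchel inequality bounds this by $\Phi_\eta(q_\bfm,b_i+h_i+B^i_{q_\bfm})$, minus the $\bfa$-terms $a_i(b_i+h_i)$. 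Subtracting $F_{\TAP}(\bfa)=F_{\TAP,\eta}(\bfa)$, which contributes $\Phi_\eta^*(q,a_i)=\Phi_\eta(q,b_i)-a_ib_i$, the deterministic part at time $q$ is exactly
\[
\sum_i\bigl[\Phi_\eta(q,b_i+h_i)-\Phi_\eta(q,b_i)-a_ih_i\bigr].
\]
The maximal estimate of Lemma~\ref{lem:parisi-maximal-estimate}, run from initial points $x_i=b_i+h_i$ at time $q$ (the Doob/Itô argument is uniform in the starting point since $|\partial_x\Phi_\eta|\le1$), shows that the random part costs $o(N)$. Optimizing over $h$ gives $\mathscr D_\eta(\bfa,g)$, and the bound $F_{\TAP}\le F_{\TAP,\eta}$ on the band (collapse onto $[q_\bfm,1]$) finishes the argument.

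\emph{Main obstacle.} The main difficulty is the conditional structure in Step~1. The restriction $q_\bfm\le1-\ve$ and the $\ve\downarrow0$ limit are needed to keep $\Phi^*_\eta(q_\bfm,\cdot)$ bounded on the relevant range and to control the discrepancy, introduced by the conditioning, between the conditioned covariance and the Bregman form. That discrepancy comes from the radial direction: conditioning on $H_N(\bfa)$ and on the gradient introduces terms depending only on $\|\tau\|$. I would first try to show that these are deterministic functions of $q_\bfm$, which cancel in the Slepian variance match, possibly after adjusting the Onsager field by a radial Gaussian of matching variance, with uniform-in-$N$ bounds valid only away from $q_\bfm=1$.
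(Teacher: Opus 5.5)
Your route is the paper's: Gaussian regression around \(\bfa\), an additive RPC for \(\eta\) with cavity and Onsager fields started at \(q\), a Slepian comparison whose gap is a Bregman divergence of \(\xi\), Fenchel's inequality at the shifted centres \(b_i+h_i\), the maximal estimate of Lemma~\ref{lem:parisi-maximal-estimate} from arbitrary starting points, evaluation at a measurable maximizer, and optimization over \(h\). However, Step~2 as you specify it does not work.

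\textbf{The cavity field in Step~2.} Take \(Y\) with covariance \(N[\vartheta(Q_{12})-\vartheta(q)]\) and the cavity field \(\sum_i\tau_iB^i_{q_\bfm}\). The expression you flag as needing a check is then
\[
\xi(q+r)-\xi(q)-\xi'(q)r+\vartheta(q+r)-\vartheta(q)-r\bigl[\xi'(q+r)-\xi'(q)\bigr]
=q\bigl[\xi'(q+r)-\xi'(q)\bigr],
\]
which is nonzero for \(q,r>0\). So the variances do not match.

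The increment (Sudakov--Fernique) comparison fails as well. Take \(\tau^2=c\tau^1\) with \(c>1\), on a common leaf. The squared increment of the original process then exceeds that of the cascade process by \(N\) times \(q[\xi'(q_{\bfm^2})-\xi'(q_{\bfm^1})]\) minus twice a Bregman term. The first term is of order \(\|\tau^1\|^2/N\), while the Bregman term is of order \((\|\tau^1\|^2/N)^2\).

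The cavity field must instead be evaluated at the full point,
\[
Z(\bfm,\alpha)=\sum_im_iB^i_{q_\bfm}(\alpha),\qquad m_i=a_i+\tau_i.
\]
This is in fact what your Step~3 uses when it writes \((a_i+\tau_i)(b_i+h_i+B^i_{q_\bfm})\). With this choice both variances equal \(q_\bfm[\xi'(q_\bfm)-\xi'(q)]\). The covariance difference is then exactly
\[
\xi(R_{12})-\xi(Q_{12})-\xi'(Q_{12})(R_{12}-Q_{12}),
\]
with \(R_{12}=R(\bfm^1,\bfm^2)\) the full overlap.

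\textbf{The obstacle you single out is not there.} On the band, \(R(\bfm,\bfa)=q\) and \(\tau\perp\bfa\). Set
\[
G_\bfa(\bfm):=H_N(\bfm)-H_N(\bfa)-\langle\nabla H_N(\bfa),\tau\rangle .
\]
Its covariance with \(H_N(\bfa)\) is \(N\xi(q)-N\xi(q)-\xi'(q)\langle\bfa,\tau\rangle=0\). Its covariance with the full gradient \(\nabla H_N(\bfa)\) is \(\xi'(q)(\bfm-\bfa-\tau)=0\).

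Hence \(G_\bfa\) is exactly independent of the conditioned data, and its covariance is exactly \(N[\xi(R_{12})-\xi(q)-\xi'(q)(R_{12}-q)]\). Moreover \(\langle\nabla H_N(\bfa),\tau\rangle=\langle g+\bfb,\tau\rangle\) is fixed under \(\mc J_\bfa(f,g)\). There is therefore no radial correction to absorb and no reason to modify the Onsager field. The cutoff \(q_\bfm\le1-\ve\) plays no role in this step.

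A smaller omission concerns the case where \(\eta\) puts no mass on an initial interval \([q,r_0]\). There the increments are common to all leaves, so the exponential-martingale trick with a cascade parameter is unavailable. These increments give zero marking shift in expectation and enter the maximal estimate through a Doob bound of order \(\sqrt N\).
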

	
	In particular, if \(g_N=0\) for every \(N\), then
	\(\mathscr D_\eta(\bfa_N,g_N)=0\). Since \(\bfa_N\) belongs to the
	band, the normalized conditional maximum converges to zero.

	\begin{proof}
		We suppress the subscript \(N\) from \(\bfa_N\), \(f_N\), and \(g_N\).
		For $\bfm\in\Band(\bfa)$ of self-overlap $r$,
		$F_{\TAP,\eta}(\bfm)$ below means
		$F_{\TAP,\Shift_r\eta}(\bfm)$. Since
		\[
		F_{\TAP}(\bfa)=F_{\TAP,\eta}(\bfa),
		\qquad
		F_{\TAP}(\bfm)\le F_{\TAP,\eta}(\bfm),
		\]
		it is enough to bound
		\[
		T_{\bfa,\eta}(\bfm)
		:=
		F_{\TAP,\eta}(\bfm)-F_{\TAP,\eta}(\bfa).
		\]
		
		Let $h\in\dR^N$ satisfy $P_{\bfa^\perp}h=g$. If
		$\bfm=\bfa+\tau\in\Band(\bfa)$, then
		\[
		g\cdot\tau=h\cdot\tau.
		\]
		
		\medskip\noindent\emph{Step 1 (Gaussian regression).}\;
		Define
		\[
		G_{\bfa}(\bfm)
		:=
		H_N(\bfm)-H_N(\bfa)-DH_N(\bfa)[\bfm-\bfa].
		\]
		Differentiating the covariance kernel shows that $G_{\bfa}$ is
		independent of $(H_N(\bfa),\nabla H_N(\bfa))$ and that, for
		$R_{12}=R(\bfm^1,\bfm^2)$,
		\[
		\frac1N
		\Cov\bigl(
		G_{\bfa}(\bfm^1),G_{\bfa}(\bfm^2)
		\bigr)
		=
		\xi(R_{12})-\xi(q)-\xi'(q)(R_{12}-q).
		\]
		
		The conditioned value and transverse gradient of $F_{\TAP,\eta}$ are affine
		functions of $H_N(\bfa)$ and $\nabla H_N(\bfa)$. Since
		$Dq_{\bfa}[\tau]=0$ and
		\[
		\partial_m\Phi_\eta^*(q,a_i)=-b_i,
		\]
		the gradient condition gives
		\[
		DH_N(\bfa)[\tau]
		=
		g\cdot\tau+\sum_i b_i\tau_i
		=
		\sum_i(b_i+h_i)\tau_i.
		\]
		Consequently, under the conditioning $\mc J_{\bfa}(f,g)$,
		\[
		H_N(\bfm)-H_N(\bfa)
		=
		G_{\bfa}(\bfm)
		+
		\sum_i(b_i+h_i)(m_i-a_i).
		\]
		
		Since $\eta$ is supported on $[q,1]$,
		formula~\eqref{eq:ising-A-mu} gives $A_\eta(q)=0$. Using
		\[
		\Phi_\eta^*(q,a_i)=\Phi_\eta(q,b_i)-a_i b_i,
		\]
		we obtain
		\[
		\begin{aligned}
			T_{\bfa,\eta}(\bfm)
			={}&
			G_{\bfa}(\bfm)+A_\eta(q_\bfm)\\
			&+
			\sum_i
			\left[
			m_i b_i
			+\Phi_\eta^*(q_\bfm,m_i)
			-\Phi_\eta(q,b_i)
			+h_i(m_i-a_i)
			\right].
		\end{aligned}
		\]
		
		\medskip\noindent\emph{Step 2 (Guerra-type comparison).}\;
		Write $x(r)=\eta([q,r])$. If $\eta\ne\delta_1$, choose
		\[
		q\le r_0<r_1<\cdots<r_L=1,
		\qquad
		0<\theta_1<\cdots<\theta_L\le1,
		\]
		so that $x=0$ on $[q,r_0)$ and
		\[
		x(r)=\theta_\ell,
		\qquad r_{\ell-1}<r<r_\ell.
		\]
		Let $(u_\alpha)_\alpha$ be the additive RPC with parameters
		$\theta_1,\ldots,\theta_L$ and overlap levels
		$r_0,\ldots,r_L$. When $\eta=\delta_1$, take a single index
		$\alpha$, set $u_\alpha=0$, and set
		$s_{\alpha\wedge\alpha}=1$.
		
		Independently, let $B_r^i(\alpha)$ and $Y(r,\alpha)$,
		$r\in[q,1]$, be centered continuous Gaussian fields with
		\[
		\dE B_s^i(\alpha)B_t^j(\beta)
		=
		\delta_{ij}
		\left[
		\xi'(s\wedge t\wedge s_{\alpha\wedge\beta})
		-\xi'(q)
		\right]
		\]
		and
		\[
		\dE Y(s,\alpha)Y(t,\beta)
		=
		N\left[
		\vartheta(s\wedge t\wedge s_{\alpha\wedge\beta})
		-\vartheta(q)
		\right].
		\]
		If $\eta\ne\delta_1$ and $r_0>q$, these fields are common to all leaves on
		$[q,r_0]$.
		
		The marking identity on the positive-mass intervals, together
		with the centering of the common increment on $[q,r_0]$, gives
		\[
		\dE\max_\alpha
		\left\{
		u_\alpha+Y(r,\alpha)-A_\eta(r)
		\right\}
		=
		\dE\max_\alpha u_\alpha
		\]
		for every deterministic $r\in[q,1]$.
		
		Set
		\[
		X_{\bfa}(\bfm,\alpha)
		=
		T_{\bfa,\eta}(\bfm)
		+
		u_\alpha+Y(q_\bfm,\alpha)-A_\eta(q_\bfm)
		\]
		and
		\[
		\begin{aligned}
			\widetilde X_{\bfa,h}(\bfm,\alpha)
			=
			u_\alpha+\sum_i
			\bigl[
			&m_i\bigl(b_i+h_i+B_{q_\bfm}^i(\alpha)\bigr)
			+\Phi_\eta^*(q_\bfm,m_i)\\
			&-\Phi_\eta(q,b_i)-h_i a_i
			\bigr].
		\end{aligned}
		\]
		
		Conditionally on the cascade labels, these processes have the
		same mean. For two marked points, set
		\[
		Q_{12}
		=
		q_{\bfm^1}\wedge q_{\bfm^2}
		\wedge s_{\alpha^1\wedge\alpha^2}.
		\]
		The covariances of their centered parts, divided by $N$, are
		\[
		\xi(R_{12})-\xi(q)-\xi'(q)(R_{12}-q)
		+\vartheta(Q_{12})-\vartheta(q)
		\]
		and
		\[
		R_{12}\bigl[\xi'(Q_{12})-\xi'(q)\bigr],
		\]
		respectively. Their variances agree, while the first covariance
		minus the second is
		\[
		\xi(R_{12})-\xi(Q_{12})
		-\xi'(Q_{12})(R_{12}-Q_{12})\ge0
		\]
		by Assumption~\ref{ass:xi}. Writing \(\dE_{\bfa,f,g}\) for expectation
		under the conditioning $\mc J_\bfa(f,g)$, Slepian's inequality,
		first on finite subsets and then by separability, gives
		\[
		\dE_{\bfa,f,g}
		\sup_{\substack{
				\bfm\in\Band_{\le1-\ve}(\bfa)\\
				\alpha
		}}
		X_{\bfa}(\bfm,\alpha)
		\le
		\dE_{\bfa,f,g}
		\sup_{\substack{
				\bfm\in\Band_{\le1-\ve}(\bfa)\\
				\alpha
		}}
		\widetilde X_{\bfa,h}(\bfm,\alpha).
		\]
		
		\medskip\noindent\emph{Step 3 (Fenchel inequality and the maximal estimate).}\;
		Fenchel's inequality gives
		\[
		\begin{aligned}
			\widetilde X_{\bfa,h}(\bfm,\alpha)
			\le{}&
			\sum_i
			\left[
			\Phi_\eta(q,b_i+h_i)
			-\Phi_\eta(q,b_i)-a_i h_i
			\right]\\
			&+
			u_\alpha+
			\sum_i
			\left[
			\Phi_\eta\bigl(
			q_\bfm,b_i+h_i+B_{q_\bfm}^i(\alpha)
			\bigr)
			-\Phi_\eta(q,b_i+h_i)
			\right].
		\end{aligned}
		\]
		The proof of Lemma~\ref{lem:parisi-maximal-estimate}, applied on
		$[q,1]$, yields
		\[
		\begin{aligned}
			\dE
			\sup_{\substack{r\in[q,1-\ve]\\\alpha}}
			\left\{
			u_\alpha+
			\sum_i
			\left[
			\Phi_\eta(r,b_i+h_i+B_r^i(\alpha))
			-\Phi_\eta(q,b_i+h_i)
			\right]
			\right\}
			\le
			\dE\max_\alpha u_\alpha+o(N).
		\end{aligned}
		\]
		The estimate is uniform in $(b_i)_{i=1}^N$ and $h$. On a possible initial
		interval with zero mass, the process is a martingale common to all
		leaves and Doob's inequality gives an $O(\sqrt N)$ contribution.
		The remaining intervals are exactly those treated in
		Lemma~\ref{lem:parisi-maximal-estimate}. Hence
		\[
		\begin{aligned}
			\dE_{\bfa,f,g}
			\sup_{\substack{
					\bfm\in\Band_{\le1-\ve}(\bfa)\\
					\alpha
			}}
			X_{\bfa}(\bfm,\alpha)
			\le{}&
			\dE\max_\alpha u_\alpha\\
			&+
			\sum_i
			\left[
			\Phi_\eta(q,b_i+h_i)
			-\Phi_\eta(q,b_i)-a_i h_i
			\right]
			+o(N).
		\end{aligned}
		\]
		
		\medskip\noindent\emph{Step 4 (removing the auxiliary field).}\;
		Let $\bfm_\star$ be a measurable maximizer of $T_{\bfa,\eta}$ on
		$\Band_{\le1-\ve}(\bfa)$. Under the conditional law, $\bfm_\star$
		is measurable with respect to $H_N$ and is independent of the RPC
		and the auxiliary Gaussian fields. Conditioning on $\bfm_\star$ and applying
		the preceding identity at fixed self-overlap gives
		\[
		\begin{aligned}
			\dE_{\bfa,f,g}
			\sup_{\substack{
					\bfm\in\Band_{\le1-\ve}(\bfa)\\
					\alpha
			}}
			X_{\bfa}(\bfm,\alpha)
			\ge
			\dE_{\bfa,f,g}
			\max_{\bfm\in\Band_{\le1-\ve}(\bfa)}
			T_{\bfa,\eta}(\bfm)
			+
			\dE\max_\alpha u_\alpha.
		\end{aligned}
		\]
		Comparison with the preceding upper bound yields, for every
		$h$ satisfying $P_{\bfa^\perp}h=g$,
		\[
		\begin{aligned}
			\dE_{\bfa,f,g}
			\max_{\bfm\in\Band_{\le1-\ve}(\bfa)}
			T_{\bfa,\eta}(\bfm)
			\le
			\sum_i
			\left[
			\Phi_\eta(q,b_i+h_i)
			-\Phi_\eta(q,b_i)-a_i h_i
			\right]
			+o(N).
		\end{aligned}
		\]
		The error is uniform in the conditioning data, $h$, and $\ve$,
		for the fixed finite-step order parameter $\eta$. Dividing by
		$N$, optimizing over $h$, and using the initial reduction gives
		\[
		\frac1N
		\dE_{\bfa,f,g}
		\sup_{\bfm\in\Band_{\le1-\ve}(\bfa)}
		\left[
		F_{\TAP}(\bfm)-F_{\TAP}(\bfa)
		\right]
		-
		\mathscr D_\eta(\bfa,g)
		\le o(1).
		\]
		Taking $N\to\infty$ and then $\ve\downarrow0$ proves the
		proposition.
	\end{proof}
	
	%------------------------------------------------------------------------------
	\subsection{The exponential moments in a band}
	
	Adding an outer Poisson level to the conditional comparison gives a
	bound on the conditional exponential moments.
	
	\begin{proposition}
		Let $q$, $\eta$, $\bfa$, $f$, and $g$ satisfy the assumptions of
		Proposition~\ref{prop:cond-band-noncrit}. Let $\theta\in(0,1)$ satisfy
		\[
		\theta<\eta(\{q\}).
		\]
		Then
		\[
		\limsup_{\ve\downarrow0}
		\limsup_{N\to\infty}
		\frac1{N\theta}
		\log
		\dE\left[
		\exp\left\{
		\theta
		\sup_{\bfm\in\Band_{\le1-\ve}(\bfa)}
		\bigl(F_{\TAP}(\bfm)-F_{\TAP}(\bfa)\bigr)
		\right\}
		\,\middle|\,
		\mc J_\bfa(f,g)
		\right]
		\le
		\mathscr D_\eta(\bfa,g).
		\]
	\end{proposition}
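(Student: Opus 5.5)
The plan is to merge the proof of Proposition~\ref{prop:cond-band-noncrit} with the outer-level device used for Proposition~\ref{prop:direct-tilt-TAP}. We suppress \(N\), fix \(h\) with \(P_{\bfa^\perp}h=g\), and reduce as before to bounding \(T_{\bfa,\eta}\), using \(F_{\TAP}(\bfa)=F_{\TAP,\eta}(\bfa)\) and \(F_{\TAP}\le F_{\TAP,\eta}\). Since \(\theta<\eta(\{q\})\), the cumulative function \(x(r)=\eta([q,r])\) satisfies \(x\ge\eta(\{q\})>\theta\) on \([q,1]\). We may therefore take \(r_0=q\) with \(\theta_1=\eta(\{q\})\), and the band cascade has parameters \(\theta<\theta_1<\cdots<\theta_L\). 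There is then no zero-mass initial interval, which is what makes the outer marking identity available.

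\emph{Step 1: outer level.} Take a Poisson process \((w_b)_b\) with intensity \(\theta e^{-\theta w}\,\dd w\). For each \(b\), take an independent copy of everything that is random under the conditional law \(\mc J_\bfa(f,g)\): the residual field \(G_\bfa^b\) (which is independent of the conditioning data by Step~1 of Proposition~\ref{prop:cond-band-noncrit}), the band cascade \(u^b\), and the fields \(B^b,Y^b\). The conditioning data \(\bfa,b_i,h\) stay common to all branches. Define
\[
X_b=w_b+X^b_{\bfa},\qquad \widetilde X_b=w_b+\widetilde X^b_{\bfa,h}.
\]
Conditionally on the labels, the two processes have equal means. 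Their within-branch covariances compare exactly as in Step~2 of Proposition~\ref{prop:cond-band-noncrit}, and across branches both covariances vanish. Slepian's inequality, first on finite sets and then by separability, gives \(\dE\sup X_b\le \dE\sup \widetilde X_b\).

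\emph{Step 2: upper bound.} Apply Fenchel's inequality as in Step~3. This pulls out the deterministic term \(N\mathscr D\)-type sum \(\sum_i[\Phi_\eta(q,b_i+h_i)-\Phi_\eta(q,b_i)-a_ih_i]\). Then run the level-by-level maximal estimate of Lemma~\ref{lem:parisi-maximal-estimate} on the combined cascade \(\theta<\theta_1<\cdots\). The outer level carries no Gaussian increment, and \(\theta<\theta_1\) supplies the exponential moment needed for the marking identity. Uniformity in \((b_i)\) and \(h\) comes from the bounds \(|\partial_x\Phi_\eta|\le1\) and \(d_\ell(N)\). The result is
\[
\dE\sup_{b,\bfm,\alpha}X_b\le \dE\max_b\{w_b+U_b\}+\sum_i[\cdots]+o(N),
\qquad U_b=\max_\alpha u^b_\alpha .
\]

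\emph{Step 3: lower bound and conclusion.} In each branch choose a measurable maximizer \(\bfm_b\) of \(T^b_{\bfa,\eta}\) over \(\Band_{\le1-\ve}(\bfa)\). This maximizer is measurable with respect to \(G^b_\bfa\). The fixed-radius identity
\[
\max_\alpha\{u_\alpha+Y(r,\alpha)-A_\eta(r)\}\stackrel d=\max_\alpha u_\alpha
\]
holds in distribution by the marking identity on the positive-mass intervals, not merely in expectation. Hence the resulting \(V_b\) are i.i.d.\ with the law of \(U_b\) and independent of \(\max T^b\). Two applications of the outer marking identity~\eqref{eq:outer-theta-marking}, together with the finiteness of the exponential moments (by Borell--TIS for \(G_\bfa\) and boundedness of the deterministic part on the band; by \(\theta<\theta_1\) for \(U_b\)), cancel the common \(\dE\max_b w_b\) and \(\theta^{-1}\log\dE e^{\theta U}\) terms. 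This leaves
\[
\frac1\theta\log\dE_{\bfa,f,g}e^{\theta\max T_{\bfa,\eta}}\le\sum_i[\cdots]+o(N).
\]
Divide by \(N\), optimize over \(h\), and take \(N\to\infty\) and then \(\ve\downarrow0\).

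\emph{Main obstacle.} The delicate point is checking that Step~3 is legitimate in distribution rather than just in expectation, and that the \(o(N)\) error is uniform in the conditioning data \(\bfa,f,g,h\). The case \(\eta(\{q\})>0\) with a common initial segment must also be handled. Here it is excluded, because a positive atom at \(q\) means the first cascade level starts at \(q\) itself, so no \(O(\sqrt N)\) common-martingale term arises.
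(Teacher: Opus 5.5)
Your proposal is correct and follows the paper's argument: the paper's proof simply says to rerun the proof of Proposition~\ref{prop:cond-band-noncrit} after adding an outer Poisson level of parameter $\theta$, as in Proposition~\ref{prop:direct-tilt-TAP}, which is exactly what you do. The paper leaves two details implicit that you supply. First, $\eta(\{q\})>\theta>0$ forces the first cascade level to start at $q$, so no common zero-mass segment enters at the outer level (your ``Main obstacle'' paragraph should read $\eta(\{q\})=0$ there, not $\eta(\{q\})>0$). Second, the fixed-radius identity $\max_\alpha\{u_\alpha+Y(r,\alpha)-A_\eta(r)\}\stackrel d=\max_\alpha u_\alpha$ then holds in distribution, not just in expectation, which is what the outer marking identity requires.
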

	
	\begin{proof}
		Apply the proof of Proposition~\ref{prop:cond-band-noncrit} after
		adding an outer Poisson level of parameter $\theta$, as in the proof
		of Proposition~\ref{prop:direct-tilt-TAP}.
	\end{proof}
	
	%==============================================================================
	\section{Large deviations lower bound}\label{sec:onshell-lb}
	
	We prove Theorem~\ref{thm:legendre} by establishing the large-deviation
	lower bound for TAP states at regular levels in \((f_{\eq},f_1)\).
	
	%==============================================================================
	\subsection{Preliminaries on selected levels}
	%==============================================================================
	
	We first identify the levels for which the Legendre problem is attained in
	the interior and record the first-contact properties of the corresponding
	constrained Parisi minimizer.
	
	\begin{definition}[Selected levels]\label{def:selected-levels}
		A level \(f>f_{\eq}\) is selected if the infimum defining
		\(\Sigma(f)\) is attained at some \(\theta_f\in(0,1)\) and the
		corresponding constrained Parisi minimizer \(\zeta_f\) satisfies
		\[
		\zeta_f(\{0\})=\theta_f.
		\]
	\end{definition}
	
	\begin{lemma}\label{lem:selected-first-contact}
		\begin{enumerate}[label=\textup{(\arabic*)}]
			\item The function \(\bar\Lambda\) is closed and convex,
			\[
			\bar\Lambda'_+(0)=f_{\eq},
			\qquad
			f_1=\bar\Lambda'_-(1)<\infty.
			\]
			
			\item Every \(f\in(f_{\eq},f_1)\) is selected.
			
			\item Suppose that \(f>f_{\eq}\) and that some
			\(\theta\in(0,1)\) attains the infimum defining \(\Sigma(f)\).
			Every corresponding constrained Parisi minimizer \(\zeta\) satisfies
			\(\zeta(\{0\})=\theta\). In particular, \(f\) is selected. Moreover,
			\(\zeta\) has a first positive
			contact point \(q\in(0,1)\) such that
			\[
			\zeta((0,q))=0,
			\qquad
			H_{\zeta,q}(r)>0
			\quad\text{for every \(r\in(0,q)\)}.
			\]
			For the Auffinger--Chen process associated with \(\zeta\),
			\[
			\dE\bigl[(\partial_x\Phi_\zeta(q,X_q))^2\bigr]=q.
			\]
		\end{enumerate}
	\end{lemma}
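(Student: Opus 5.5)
For (1), I would use Talagrand's fractional-moment formula $\Lambda(\theta)=\lim_N N^{-1}\log\dE Z_N^\theta$. Each $\theta\mapsto N^{-1}\log\dE Z_N^\theta$ is convex by Hölder's inequality, and pointwise limits of convex functions are convex, so $\Lambda$ is convex on $(0,1)$. The extension $\bar\Lambda$ takes the value $+\infty$ outside $[0,1]$ and the lower limit at $1$; this is exactly the lower-semicontinuous closure, so $\bar\Lambda$ is closed and convex. For the derivative at $0$, note that $\Lambda(\theta)/\theta=\inf_{\zeta(\{0\})\ge\theta}P_\zeta$ is nondecreasing in $\theta$. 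As $\theta\downarrow0$ it tends to $\inf_\zeta P_\zeta=f_{\eq}$, by density of measures with an atom at $0$ and continuity of $\zeta\mapsto P_\zeta$. Since $\Lambda(0)=0$, this gives $\bar\Lambda'_+(0)=f_{\eq}$. For finiteness of $f_1$, I would bound $P_\zeta$ uniformly using $|\Phi_\zeta|\le C$ on bounded sets. Then $\Lambda$ is Lipschitz on $[0,1)$, so its left derivative at $1$ is bounded by that Lipschitz constant.

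For (3), fix $\theta\in(0,1)$ attaining $\Sigma(f)$. The first-order condition gives $f\in\partial\Lambda(\theta)$.

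Suppose a minimizer $\zeta$ had $\zeta(\{0\})>\theta$. Then the constraint is slack. For $\theta'$ near $\theta$, $\zeta$ remains admissible for both sides, and the unique minimizer is then locally the unconstrained one. Concretely, I would show that $\zeta$ is a critical point of $P$ in all directions preserving $\zeta(\{0\})\ge\theta$. By strict convexity (Auffinger--Chen), $\zeta$ is then the global Parisi minimizer $\zeta_{\eq}$. Moreover $\Lambda(\theta')=\theta' f_{\eq}$ for $\theta'$ near $\theta$, so $\partial\Lambda(\theta)=\{f_{\eq}\}$. This contradicts $f>f_{\eq}$. Hence $\zeta(\{0\})=\theta$, and $f$ is selected.

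For (2), given $f\in(f_{\eq},f_1)$, convexity of $\bar\Lambda$ together with the slopes $\bar\Lambda'_+(0)=f_{\eq}<f<\bar\Lambda'_-(1)$ forces the infimum of $\bar\Lambda(\theta)-\theta f$ to be attained in $(0,1)$. Part (3) then gives selection.

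For the contact structure in (3), I would use the first-order optimality conditions of $P_\zeta$ under the constraint $\zeta(\{0\})\ge\theta$, computed through the Auffinger--Chen representation of the directional derivative. Perturbing $\zeta$ by moving mass between points $s,t>0$, which keeps the atom at zero fixed, shows that the function
\[
\Gamma(t)=\tfrac12\int_t^1\xi''(s)\bigl(\dE[(\partial_x\Phi_\zeta(s,X_s))^2]-s\bigr)\,\dd s
\]
is minimized over $(0,1]$ on the support of $\zeta|_{(0,1]}$. The same computation shows that the Lagrange multiplier of the active constraint is related to $f$ through $\Gamma(0)-\min\Gamma$.

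Since $\zeta(\{0\})=\theta<1$, the restriction $\zeta|_{(0,1]}$ is nonzero, so positive contact points exist. Continuity of $\Gamma$ makes the contact set closed, so it has a smallest element $q$. If $q=1$, the measure would be $\theta\delta_0+(1-\theta)\delta_1$, and I would need to rule this out. I would try to show that $\Gamma'(1)\ne0$ forces a contact below $1$, or otherwise absorb this case by the convention $f<f_1$. Given $q<1$, we get $\zeta((0,q))=0$, because all positive support lies in the contact set. We also get $H_{\zeta,q}(r)=\Gamma(r)-\Gamma(q)>0$ on $(0,q)$, by strict minimality at the first contact. Since $q$ is an interior minimum, $\Gamma'(q)=0$, which gives $\dE(\partial_x\Phi_\zeta(q,X_q))^2=q$ because $\xi''(q)>0$.

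The main obstacle is the rigorous constrained first-order analysis. This consists of two parts: ruling out the slack-constraint case via differentiability of $\Lambda$, and excluding $q=1$. I would attack both using the Auffinger--Chen directional derivative formula for $P_\zeta$.
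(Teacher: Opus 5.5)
\textbf{There is a genuine gap: the existence of the first positive contact point.} You write that continuity of $\Gamma$ makes the contact set closed, ``so it has a smallest element.'' But $\{t\in(0,1]:\Gamma(t)=\min_{(0,1]}\Gamma\}$ is only relatively closed in $(0,1]$. The first-order conditions you derive, which move mass between positive points, do not stop it from accumulating at $0$, as happens for full-RSB Parisi measures. In that case there is no smallest element. Ruling this out is exactly where $f>f_{\eq}$ must be used a second time, and your proposal never uses it at this point. Your remark that the multiplier is related to $f$ through $\Gamma(0)-\min\Gamma$ points the right way, but it is neither proved nor invoked.

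The paper's argument runs as follows. Suppose the positive contact points accumulate at $0$. Continuity gives $\Gamma(0)=c:=\min_{(0,1]}\Gamma$. Together with $\supp(\zeta)\cap(0,1]\subset\{\Gamma=c\}$, this is the unconstrained Parisi optimality condition, so $P_\zeta=f_{\eq}$. Now take the competitor $\zeta_h=(\theta+h)\delta_0+\frac{1-\theta-h}{1-\theta}\zeta|_{(0,1]}$. It moves mass in the direction $\delta_0-\frac{1}{1-\theta}\zeta|_{(0,1]}$, whose first variation is $\Gamma(0)-c=0$. Hence $P_{\zeta_h}=f_{\eq}+o(h)$, and
\[
\bar\Lambda(\theta+h)-(\theta+h)f\le(\theta+h)\bigl(P_{\zeta_h}-f\bigr)=\bar\Lambda(\theta)-\theta f+h(f_{\eq}-f)+o(h).
\]
For small $h>0$ this contradicts the minimality of $\theta$. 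Only after this step is the contact set bounded away from $0$, so that $q$ exists.

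\textbf{Two smaller points.}
\begin{itemize}
\item Your first idea for excluding $q=1$ works and should be carried out. We have $\Gamma(1)=0$ and $\Gamma'(1)=-\frac12\xi''(1)\bigl(\dE\tanh^2X_1-1\bigr)>0$, so $\Gamma(t)<\Gamma(1)$ for $t<1$ close to $1$. Hence $1$ is never a contact point and $\zeta(\{1\})=0$. The fallback of absorbing this case through $f<f_1$ is not available, since part (3) is claimed for every $f>f_{\eq}$ with an interior minimizer.
\item For $f_1<\infty$, a uniform bound on $P_\zeta$ does not make $\Lambda$ Lipschitz up to $1$: a bounded convex function can have infinite left slope at an endpoint. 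You need $\Lambda(1-h)\ge\bar\Lambda(1)-Ch$. The paper gets this from $|P_\mu-P_\nu|\le\|\xi''\|_\infty W_1(\mu,\nu)$ together with $W_1(\mu,\delta_0)\le h$ whenever $\mu(\{0\})\ge1-h$.
\end{itemize}

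The rest is sound: the slack-constraint (saturation) argument, part (2), and the right slope at $0$. Deriving convexity from Talagrand's fractional-moment formula is valid but imports a deep theorem. The paper instead gets convexity directly from convexity of $\zeta\mapsto P_\zeta$, applied to the mixture $(t\theta_1\zeta_1+(1-t)\theta_2\zeta_2)/\theta$, which is admissible at $\theta=t\theta_1+(1-t)\theta_2$.
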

	
	\begin{proof}
		We first prove \textup{(1)}. Let
		\(\theta=t\theta_1+(1-t)\theta_2\), and choose \(\zeta_i\) arbitrarily
		close to the constrained infimum defining \(\Lambda(\theta_i)\). The
		probability measure
		\[
		\zeta
		:=
		\frac{t\theta_1\zeta_1+(1-t)\theta_2\zeta_2}{\theta}
		\]
		satisfies
		\[
		\zeta(\{0\})
		\ge
		\frac{t\theta_1^2+(1-t)\theta_2^2}{\theta}
		\ge\theta.
		\]
		The convexity of the Parisi functional therefore gives
		\[
		\Lambda(\theta)
		\le
		t\theta_1P_{\zeta_1}+(1-t)\theta_2P_{\zeta_2}.
		\]
		Taking the constrained infima proves that \(\Lambda\) is convex on
		\((0,1)\).
		
		Let \(\zeta_*\) be the Parisi minimizer and set
		\(\zeta_s=(1-s)\zeta_*+s\delta_0\). Then
		\[
		f_{\eq}
		\le\frac{\Lambda(s)}s
		\le P_{\zeta_s}
		\longrightarrow f_{\eq}.
		\]
		Thus \(\bar\Lambda\) is continuous at zero and
		\begin{equation}\label{eq:selected-right-slope-zero}
			\bar\Lambda'_+(0)=f_{\eq}.
		\end{equation}
		
		For \(\mu,\nu\in\mc P([0,1])\), interpolation in the Parisi equation
		and \(|\partial_x\Phi|\le1\) give
		\[
		|P_\mu-P_\nu|
		\le
		\|\xi''\|_\infty
		\int_0^1
		|\mu([0,s])-\nu([0,s])|\,\dd s
		=
		\|\xi''\|_\infty W_1(\mu,\nu).
		\]
		If \(\mu(\{0\})\ge1-h\), then
		\[
		W_1(\mu,\delta_0)=\int_0^1s\,\mu(\dd s)\le h.
		\]
		Consequently,
		\[
		(1-h)\bigl(P_{\delta_0}-\|\xi''\|_\infty h\bigr)
		\le\Lambda(1-h)
		\le(1-h)P_{\delta_0}.
		\]
		It follows that \(\bar\Lambda(1)=P_{\delta_0}\) and
		\[
		\frac{\bar\Lambda(1)-\bar\Lambda(1-h)}h
		\le P_{\delta_0}+\|\xi''\|_\infty.
		\]
		The endpoint definitions now show that \(\bar\Lambda\) is closed and
		convex, and that \(f_1<\infty\).
		
		We record the saturation argument used to prove \textup{(2)} and
		\textup{(3)}. Suppose that \(f>f_{\eq}\), that
		\(\theta\in(0,1)\) minimizes
		\[
		G_f(s):=\bar\Lambda(s)-sf,
		\]
		and let \(\zeta\) minimize \(P_\nu\) under the constraint
		\(\nu(\{0\})\ge\theta\). Such a minimizer exists by compactness and
		continuity.
		
		Suppose that \(a:=\zeta(\{0\})>\theta\). For every
		\(\nu\in\mc P([0,1])\), the measure
		\((1-t)\zeta+t\nu\) remains admissible for all sufficiently small
		\(t>0\). Hence
		\[
		P_\zeta
		\le P_{(1-t)\zeta+t\nu}
		\le(1-t)P_\zeta+tP_\nu.
		\]
		Thus
		\begin{equation}\label{eq:selected-slack-global}
			\zeta(\{0\})>\theta
			\quad\Longrightarrow\quad
			P_\zeta=f_{\eq}.
		\end{equation}
		For \(\theta<s<a\), the same measure is
		admissible in the constrained problem at \(s\), and therefore
		\(\Lambda(s)=sf_{\eq}\). Since \(f>f_{\eq}\),
		\[
		G_f(s)=s(f_{\eq}-f)
		<\theta(f_{\eq}-f)=G_f(\theta),
		\]
		a contradiction. Thus \(\zeta(\{0\})=\theta\).
		
		We now prove \textup{(2)}. Fix \(f\in(f_{\eq},f_1)\). The function
		\(G_f\) attains its minimum on \([0,1]\). By
		\eqref{eq:selected-right-slope-zero},
		\[
		G'_{f,+}(0)=f_{\eq}-f<0,
		\qquad
		G'_{f,-}(1)=f_1-f>0.
		\]
		Every minimizer therefore lies in \((0,1)\), and the saturation argument
		above shows that \(f\) is selected.
		
		It remains to prove \textup{(3)}. Fix \(f>f_{\eq}\), let
		\(\theta\in(0,1)\) minimize \(G_f\), and let \(\zeta\) be a
		corresponding constrained Parisi minimizer. The preceding
		saturation argument gives \(\zeta(\{0\})=\theta\).
		
		Write \(M_t=\partial_x\Phi_\zeta(t,X_t)\). The first-variation obstacle is
		\[
		H(t)
		:=
		\frac12\int_t^1\xi''(s)\bigl(\dE M_s^2-s\bigr)\,\dd s.
		\]
		If \(c=\min_{t\in(0,1]}H(t)\), the first-variation formula
		\eqref{eq:TAP-first-variation} gives
		\[
		\supp(\zeta)\cap(0,1]\subset\{t:H(t)=c\},
		\qquad
		H(0)\ge c.
		\]
		Suppose that the positive contact points accumulate at zero. Continuity
		of \(H\) then gives \(H(0)=c\). The first-variation condition is now the
		unconstrained Parisi optimality condition, so \(P_\zeta=f_{\eq}\).
		For \(0<h<1-\theta\), set
		\[
		\zeta_h
		:=
		(\theta+h)\delta_0
		+
		\frac{1-\theta-h}{1-\theta}\,
		\zeta|_{(0,1]}.
		\]
		The signed measure
		\[
		\frac{\zeta_h-\zeta}{h}
		=
		\delta_0-\frac1{1-\theta}\zeta|_{(0,1]}
		\]
		has total mass zero. Since \(H(0)=c\) and
		\(H=c\) on \(\supp(\zeta)\cap(0,1]\), the first-variation formula
		gives
		\[
		P_{\zeta_h}=P_\zeta+o(h)=f_{\eq}+o(h).
		\]
		As \(\zeta_h(\{0\})=\theta+h\),
		\[
		\begin{aligned}
			G_f(\theta+h)
			&\le(\theta+h)\bigl(P_{\zeta_h}-f\bigr)\\
			&=G_f(\theta)+h(f_{\eq}-f)+o(h)
			<G_f(\theta)
		\end{aligned}
		\]
		for all sufficiently small \(h>0\), a contradiction.
		
		Since \(M_1=\tanh X_1\), we have \(\dE M_1^2<1\). Hence
		\(H(t)<H(1)\) for \(t<1\) sufficiently close to \(1\), so
		\(\zeta(\{1\})=0\). The positive contact set is therefore nonempty and
		contains a point below \(1\), because \(\theta<1\) and the positive
		support of \(\zeta\) is contained in the contact set. It has a smallest
		element \(q\in(0,1)\). As \(q\) is an interior minimum of \(H\), the
		contact equation gives \(\dE M_q^2=q\). For \(0<r<q\),
		\[
		H_{\zeta,q}(r)=H(r)-H(q)>0.
		\]
		The support inclusion also gives \(\zeta((0,q))=0\).
	\end{proof}
	
	%==============================================================================
	\subsection{Main statement and strategy}\label{sub:strategy}
	%==============================================================================
	
	Recall the selected levels from Definition~\ref{def:selected-levels} and the
	regular levels from Definition~\ref{def:regular-levels}. For a selected level,
	\begin{equation}\label{eq:selected-level-variational-formula}
		\Sigma(f)
		=
		\inf_{\theta\in(0,1)}
		\inf_{\substack{\zeta\in\mc P([0,1])\\
				\zeta(\{0\})=\theta}}
		\theta\bigl(P_\zeta-f\bigr).
	\end{equation}
	If the level is regular, then \(\Sigma\) is differentiable at \(f\), and
	\[
	\Sigma'(f)=-\theta_f.
	\]
	
	For the Kac--Rice arguments below, first assume that \(\xi\) has at least
	two nonzero coefficients. If
	\(\xi(x)=\beta_p^2x^p\), choose an even \(\ell\ne p\), let
	\(H_{N,\ell}\) be an independent pure \(\ell\)-spin Hamiltonian, and set
	\[
	H_N^\gamma:=H_N+\gamma H_{N,\ell},
	\qquad
	\xi_\gamma(x):=\xi(x)+\gamma^2x^\ell.
	\]
	For every \(\gamma>0\), the joint law of
	\((\nabla H_N^\gamma(\bfm),H_N^\gamma(\bfm))\) is nondegenerate when
	\(q_\bfm>0\). We apply this section and
	Appendix~\ref{sec:one-point-susy-computations} with all Parisi and TAP
	quantities formed from \(\xi_\gamma\). At fixed \(\gamma\), the dependence
	on \(\gamma\) is suppressed. Continuity of the Parisi and TAP quantities
	preserves the strict Plefka inequality for small \(\gamma\). The local
	negative Hessian estimate in
	Proposition~\ref{prop:one-point-conditional-hessian} makes the existence
	of the retained critical points stable under this \(C^2\) perturbation.
	Letting \(\gamma\downarrow0\) after \(N\to\infty\) proves the pure case.
	
	The main result of this section is the following lower bound.
	
	\begin{proposition}\label{prop:onshell-lower-bound}
		Let \(f\in(f_{\eq},f_1)\) be regular. Let \(q\) be the first
		positive contact point of \(\zeta_f\), and let \(X\) be the Auffinger--Chen process
		associated with \(\zeta_f\).
		Assume that
		\[
		1-\xi''(q)\dE\left[
		\left(
		\partial_{xx}\Phi_{\zeta_f}(q,X_q)
		\right)^2
		\right]>0.
		\]
		Then
		\[
		\lim_{\ve\downarrow0}
		\liminf_{N\to\infty}
		\frac1N\log\dP\bigl(\mc E_{N,\ve}(f)\bigr)
		\ge\Sigma(f).
		\]
	\end{proposition}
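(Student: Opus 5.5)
The plan follows the strategy announced in Subsection~\ref{sub:organization}: a truncated first moment of isolated supersymmetric critical points, converted into an existence probability by a spherical-code argument. Set $q=q_f$ and $\zeta=\zeta_f$.

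\textbf{Step 1: the annealed count.} Let $\mathsf N_N$ be the number of critical points $\bfm\in(-1,1)^N$ of $F_{\TAP,\zeta}$ with $q_\bfm\in[q-\delta',q+\delta']$, empirical measure of coordinates close to $\Law(\partial_x\Phi_\zeta(q,X_q))$, and both $N^{-1}F_{\TAP,\zeta}(\bfm)$ and $N^{-1}F_{\TAP}(\bfm)$ within $\ve$ of $f$. Using the supersymmetric Kac--Rice computation of Appendix~\ref{sec:one-point-susy-computations}, I would show
\[
\lim_{N\to\infty}\frac1N\log\dE\mathsf N_N\ge\Sigma(f)-o_\ve(1).
\]
The one-point Kac--Rice action is a variational expression in the Lagrange multipliers for the energy and gradient constraints. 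Regularity of $f$ and Lemma~\ref{lem:selected-first-contact}(3) identify its saddle with $(\theta_f,\zeta_f)$: the multiplier for the energy is $\theta_f=-\Sigma'(f)$, and the contact equation $\dE M_q^2=q$ fixes the radius. The strict Plefka condition makes the conditional Hessian at the saddle negative definite (Proposition~\ref{prop:one-point-conditional-hessian}), so $|\det|=\det$ with no loss on the relevant set and the supersymmetric value is the true exponent. For $F_{\TAP}=F_{\TAP,\zeta}$ I would use that $\Shift_q\zeta$ satisfies the first-variation optimality of the band problem at $\bfm$, which holds on the conditioned event up to $o(1)$.

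\textbf{Step 2: pruning to an isolated family.} I keep only the states with no competitor in $\mathsf N_N$ at overlap in $[\delta,q-\delta]$. Here I would use a two-point Kac--Rice upper bound: the expected number of pairs with overlap $r\in[\delta,q-\delta]$ is bounded, via the band comparison of Proposition~\ref{prop:cond-band-noncrit} applied conditionally, by $\exp\{N(\Sigma(f)-c\,\inf_{r}H_{\zeta,q}(r))\}$. This is strictly below the first moment by the strict obstacle gap $H_{\zeta,q}(r)>0$ from Lemma~\ref{lem:selected-first-contact}. Competitors at overlap in $[q-\delta,q]$ are excluded deterministically. The uniform negative Hessian bound near each retained state (strict Plefka, Proposition~\ref{prop:one-point-conditional-hessian}) makes $F_{\TAP,\zeta}$ strictly concave on a ball of radius $c\sqrt{N\delta}$, so two distinct critical points cannot be that close. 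The retained set $\mathsf N_N'$ therefore satisfies $\dE\mathsf N_N'\ge e^{N(\Sigma(f)-o(1))}$, and distinct retained points have pairwise overlap below $\delta$.

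\textbf{Step 3: spherical code to probability.} Retained points lie near the sphere of radius $\sqrt{Nq}$ with pairwise overlaps less than $\delta$. A one-sided spherical-code bound (Rankin/Kabatiansky--Levenshtein type, one-sided since only $R<\delta$ is known) gives $\mathsf N_N'\le e^{o_\delta(1)N}$ deterministically. Then
\[
\dP(\mc E_{N,\ve}(f))\ge\dP(\mathsf N_N'\ge1)\ge\frac{\dE\mathsf N_N'}{\sup\mathsf N_N'}\ge e^{N(\Sigma(f)-o(1))}.
\]
Sending $\delta,\ve\to0$ gives the claim.

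\textbf{Main obstacle.} The hardest step is the middle-overlap exclusion in Step~2. It needs a two-point Kac--Rice bound whose exponent is controlled exactly by the obstacle $H_{\zeta,q}$, which requires the conditional band upper bound with the correct Legendre tilt $\theta_f$ at the ancestor. I would attempt it through the exponential-moment band proposition applied with $\eta=\Shift_r\zeta$ and the gradient conditioned to vanish.
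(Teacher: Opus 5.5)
Your architecture (first moment of $\ve$-SUSY states, pruning to isolated states, local concavity against high-overlap competitors, one-sided spherical code) is the paper's, and your Step~3 is fine. The gap is the intermediate-overlap exclusion in Step~2. Proposition~\ref{prop:cond-band-noncrit} and its exponential-moment version only control descendants $\bfm'\in\Band(\bfa)$, i.e.\ points with $\bfm'-\bfa\perp\bfa$, of a conditioned ancestor $\bfa$. Their output is $\mathscr D_\eta(\bfa,g)$, which vanishes when $g=0$, and the obstacle $H_{\zeta,q}(r)$ never enters. A competitor with $R(\bfm,\bfm')=r\in[\delta,q-\delta]$ is not in $\Band(\bfm)$, and nothing in your setup is a conditioned critical point at self-overlap $r$. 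Since $\zeta((0,q))=0$, the only natural ancestor is $\bfa=0$, for which the band bound is unconditional and says nothing given that $\bfm$ is critical at level $f$. Moreover $\Shift_r\zeta(\{r\})=\zeta([0,r])=\theta_f$, so the exponential-moment band bound is not even available at tilt $\theta_f$.

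What is needed is a comparison on the slice $\mc C_N(\bfm,r,\delta)$, conditioned on the value and gradient at $\bfm$ itself; this is Proposition~\ref{prop:sibling-guerra-bound}. Regression through the tilt $L_\bfm=\langle x,\nabla H_N(\bfm)\rangle+\theta H_N(\bfm)$, with $x=(\bfb-\theta\xi'(q)\bfm)/\xi'(q)$, gives the drift $\frac{\xi'(r)}{\xi'(q)}\sum_i m_i'b_i+N\theta(\xi(r)-r\xi'(r))$ and a residual field of covariance $\xi(R_{12})-\frac{\xi'(r)^2}{\xi'(q)}R_{12}+C_r$. Three further inputs then give exactly $-\theta H_{\zeta,q}(r)$, as in \eqref{eq:sibling-rs-value}: a Guerra comparison with a cascade on $[r,q]$ whose cumulative starts at $\theta$, the replica-symmetric choice $\rho=\delta_q$, and the plateau transition $X_r\mid X_q=b\sim\frac{\xi'(r)}{\xi'(q)}b+Z$. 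Gaussian concentration and a finite cover of $[\delta,q-\delta]$ then make a competitor with TAP value within $2N\ve<2Ng_\delta$ of $F_{\TAP}(\bfm)$ exponentially unlikely under $\dP_{\bfm,f'}$. A bound on a conditional \emph{maximum} controls the probability that a competitor exists, not the conditional expected number of competitors. So it does not yield your estimate on the expected number of pairs; the failure probability must instead be inserted into the one-point Kac--Rice integrand.

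That insertion is the second missing ingredient. The integrand for the retained count is $\dE_{\bfm,f'}\bigl[|\det\nabla^2F_{\TAP,\zeta}(\bfm)|\boldsymbol 1_{\mathcal G_{\bfm,\delta}}\bigr]$. The local negative Hessian bound you invoke against high-overlap competitors holds only with conditional probability tending to one. Since $|\det|$ may a priori carry most of its conditional mass on the complement, knowing $\dP_{\bfm,f'}(\mathcal G_{\bfm,\delta})\ge\frac12$ does not give $\dE\widehat{\mathsf N}_{\ve,\delta,\kappa}\ge e^{N(\Sigma(f)-o(1))}$.

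The paper closes this with the determinant lower tail of Lemma~\ref{lem:one-point-conditional-determinant}(ii). Outside an event of probability $e^{-c_LN}$, it gives $|D_\bfm|\ge e^{-N\rho_{\det}(\ve)-r_{N,\ve}}\dE_{\bfm,f'}|D_\bfm|$. This is valid after restricting the integral to $\frac1N\sum_ie^{(8+\kappa_0)|b_i|}\le L$, which costs nothing in the exponent by the law of large numbers under the law of $X_q$. Intersecting with $\mathcal G_{\bfm,\delta}$ gives $\dE_{\bfm,f'}\bigl[|D_\bfm|\boldsymbol 1_{\mathcal G_{\bfm,\delta}}\bigr]\ge\frac14e^{-N\rho_{\det}(\ve)-r_{N,\ve}}\dE_{\bfm,f'}|D_\bfm|$. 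Without some such concentration of $\log|\det\nabla^2F_{\TAP,\zeta}(\bfm)|$, the retained first moment claimed in your Step~2 is unproved.
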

	
	Together with the upper bound obtained from
	Proposition~\ref{prop:direct-tilt-TAP} and Markov's inequality,
	Proposition~\ref{prop:onshell-lower-bound} yields the existence
	probability~\eqref{eq:existence-tap} in
	Theorem~\ref{thm:legendre}.
	
	\begin{definition}[SUSY states]\label{def:susy-state}
		Fix a regular \(f\in(f_{\eq},f_1)\), write \(\theta=\theta_f\) and
		\(\zeta=\zeta_f\), and let \(q\) be the first positive contact point of
		\(\zeta\). Let \(X\) be the Auffinger--Chen process associated with
		\(\zeta\).
		
		For \(\ve>0\), an \(\ve\)-SUSY state at level \(f\) is a
		critical point \(\bfm\in(-1,1)^N\) of \(F_{\TAP,\zeta}\) satisfying
		\[
		|q_\bfm-q|\le\ve,
		\qquad
		\left|\frac1N F_{\TAP,\zeta}(\bfm)-f\right|\le\ve,
		\qquad
		\left|\frac1N F_{\TAP}(\bfm)-f\right|\le\ve.
		\]
		Define \(b_i\) by
		\[
		m_i=\partial_x\Phi_\zeta(q,b_i),
		\qquad 1\le i\le N,
		\]
		and require
		\[
		W_2\left(
		\frac1N\sum_{i=1}^N\delta_{b_i},
		\Law(X_q)
		\right)\le\ve.
		\]
		Let \(\mathsf N_\ve\) denote the number of
		\(\ve\)-SUSY states at level \(f\).
	\end{definition}
	
	For \(\sigma\in\{-,+\}\), let
	\(\nabla_\sigma^2F_{\TAP,\zeta}(x)\) denote the one-sided Hessian obtained
	by approaching \(q_x\) from below or above, respectively. The two Hessians
	coincide when \(q_x\) is not an atom of \(\zeta\). At an atom, the unsigned
	notation \(\nabla^2F_{\TAP,\zeta}(x)\) means
	\(\nabla_-^2F_{\TAP,\zeta}(x)\).
	
	\begin{definition}[Isolated SUSY states]\label{def:isolated-susy-state}
		For \(\ve,\kappa>0\) and \(0<\delta<q/4\), an
		\(\ve\)-SUSY state \(\bfm\) is isolated at parameters
		\((\delta,\kappa)\) if:
		\begin{enumerate}[label=\textup{(\roman*)}]
			\item no distinct \(\ve\)-SUSY state \(\bfm'\)
			satisfies
			\[
			R(\bfm,\bfm')\in[\delta,q-\delta].
			\]
			\item
			\[
			\sup_{\substack{
					x\in(-1,1)^N,\\
					\|x-\bfm\|_2\le2\sqrt{\delta N}
			}}
			\lambda_{\max}\!\left(
			\nabla^2F_{\TAP,\zeta}(x)
			\right)
			\le-\kappa.
			\]
		\end{enumerate}
		Let
		\[
		\widehat{\mathsf N}_{\ve,\delta,\kappa}
		:=
		\#\{\text{\(\ve\)-SUSY states isolated at parameters
			\((\delta,\kappa)\)}\}.
		\]
	\end{definition}
	Set
	\begin{equation}\label{eq:susy-intermediate-gap}
		g_\delta
		:=
		\frac{\theta}{8}
		\inf_{r\in[\delta,q-\delta]}H_{\zeta,q}(r)>0.
	\end{equation}
	
	The strategy is as follows.
	\begin{enumerate}[label=\textup{(\arabic*)}]
		\item The one-point Kac--Rice formula gives the annealed exponent
		\(\Sigma(f)\) of \(\mathsf N_\ve\). For fixed \(\delta\) and
		\(\kappa\), the truncation below preserves this exponent.
		On \([\delta,q-\delta]\), this follows from the strict gap
		\eqref{eq:susy-intermediate-gap}.
		On \([q-\delta,q]\), it follows from the
		conditional local spectral-gap estimate and the determinant
		lower-tail estimate.
		\item The two isolation conditions imply that distinct isolated states
		have overlap less than \(\delta\). After radial normalization they form
		a one-sided spherical code, and therefore
		\[
		\dE\widehat{\mathsf N}_{\ve,\delta,\kappa}
		\le
		A_N\left(\frac{\delta}{q-\ve}\right)
		\dP\left(\widehat{\mathsf N}_{\ve,\delta,\kappa}\ne0\right).
		\]
		The spherical-code estimate and Step~\textup{(1)} prove
		Proposition~\ref{prop:onshell-lower-bound} after letting
		\(\delta\downarrow0\).
	\end{enumerate}
	
	%==============================================================================
	\subsection{The SUSY annealed complexity}
	%==============================================================================
	
	We first compute the annealed exponent of the SUSY count.
	
	\begin{proposition}[Annealed complexity of SUSY states]\label{prop:susy-annealed}
		Let \(f\in(f_{\eq},f_1)\) be regular, write
		\(\theta=\theta_f\) and \(\zeta=\zeta_f\), and let \(q\) be the first
		positive contact point of \(\zeta\). Let \(X\) be the
		Auffinger--Chen process associated with \(\zeta\). Assume that
		\[
		1-\xi''(q)\dE\left[
		\left(
		\partial_{xx}\Phi_\zeta(q,X_q)
		\right)^2
		\right]>0.
		\]
		Then the counts \(\mathsf N_\ve\) from
		Definition~\ref{def:susy-state} satisfy
		\[
		\liminf_{\ve\downarrow0}
		\liminf_{N\to\infty}
		\frac1N\log\dE\mathsf N_\ve
		=
		\limsup_{\ve\downarrow0}
		\limsup_{N\to\infty}
		\frac1N\log\dE\mathsf N_\ve
		=
		\theta\bigl(P_\zeta-f\bigr)
		=
		\Sigma(f).
		\]
	\end{proposition}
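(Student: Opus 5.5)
The plan is to compute \(\dE\mathsf N_\ve\) with the one-point Kac--Rice formula and to evaluate the resulting variational problem at \(\zeta=\zeta_f\). The value is identified with \(\theta(P_\zeta-f)\) through the first-contact identities of Lemma~\ref{lem:selected-first-contact}(3).

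\emph{Reduction to one point.} By rotational invariance of \(H_N\) and the product structure of the TAP correction, the Kac--Rice density at \(\bfm\) depends only on \(q_\bfm\) and on the empirical measure of \((m_i)\), equivalently of \((b_i)\). Conditionally on \(H_N(\bfm)\) and \(\nabla H_N(\bfm)\), the Hessian of \(F_{\TAP,\zeta}\) is a GOE-type matrix \(\sqrt{\xi''(q)}\,\mathrm{GOE}\), shifted by a rank-one radial term and by the diagonal matrix \(-\mathrm{diag}(\partial_m h_\zeta(q,m_i))=-\mathrm{diag}(1/\partial_{xx}\Phi_\zeta(q,b_i))\).

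\emph{Exponent as a variational problem.} The density is a product of three factors:
\begin{itemize}
\item the Gaussian density of the value and the gradient at the constrained values \(Nf\) and \(\nabla(\text{correction})\);
\item the volume factor, obtained by integrating over \(\bfm\) with empirical law of \(b\) close to \(\mu\);
\item \(\dE|\det\nabla^2 F|\) on the conditioning.
\end{itemize}
Together they give \(\frac1N\log\dE\mathsf N_\ve\to\sup_\mu\Psi(\mu)\). The supremum runs over \(\mu\) within \(W_2\)-distance \(\ve\) of \(\Law(X_q)\), and the bounds hold up to errors vanishing as \(\ve\downarrow0\). The entropy and Gaussian terms are large deviations of the coordinates. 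The determinant term is \(\int\log|\lambda|\,\dd\rho\), where \(\rho\) is the free convolution of the semicircle of variance \(\xi''(q)\) with the law of \(-1/\partial_{xx}\Phi_\zeta(q,X_q)\). The strict Plefka condition~\eqref{eq:onshell-strict-plefka} is exactly the statement that this spectral measure lies strictly in \((-\infty,0)\): the edge condition \(\xi''(q)\dE[(\partial_{xx}\Phi)^2]<1\) holds. Hence the absolute value can be dropped and the determinant functional is continuous in \(\mu\), and the same continuity gives matching upper and lower bounds. The determinant lower tail is controlled by standard concentration for the GOE-plus-diagonal model, for which we invoke Appendix~\ref{sec:one-point-susy-computations}. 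Continuity of \(\Psi\) in \(\mu\) then yields the equality of \(\liminf\) and \(\limsup\) at \(\mu=\Law(X_q)\), after \(\ve\downarrow0\).

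\emph{Identification, the main obstacle.} We must show that \(\Psi(\Law(X_q))=\theta(P_\zeta-f)\).
\begin{enumerate}
\item Since the determinant is of a negative-definite matrix, it can be written as a Gaussian integral, the supersymmetric trick.
\item The Lagrange multiplier for the level \(f\) is identified with \(\theta=\zeta(\{0\})\).
\item The multiplier for the gradient constraint is matched to the field \(b\) through the Auffinger--Chen tilt: the density of \(\Law(X_q)\) relative to the Gaussian \(N(0,\xi'(q))\) is \(\exp\{\theta(\Phi_\zeta(q,x)-\Phi_\zeta(0,0))\}\) on the one-level segment \([0,q]\), where \(\zeta([0,s])=\theta\) because \(\zeta((0,q))=0\).
\end{enumerate}
With these choices, the entropy term combined with the Fenchel identity \(m_ib_i+\Phi^*_\zeta(q,m_i)=\Phi_\zeta(q,b_i)\) produces \(\theta(\Phi_\zeta(0,0)-\tfrac12\int_0^q s\xi''(s)\theta\,\dd s+\ldots)\). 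The contact equation \(\dE[(\partial_x\Phi_\zeta(q,X_q))^2]=q\) ensures that the radial constraint \(q_\bfm=q\) is automatically typical under the tilt, so no radial Lagrange multiplier appears. The remaining determinant and Gaussian-normalization terms cancel via the Ward identities, i.e.\ via the identities for \(\partial_{xx}\Phi\) along the Auffinger--Chen flow from Appendix~\ref{section:Auffinger}. Collecting terms gives \(\theta(P_\zeta-f)\), and this equals \(\Sigma(f)\) by~\eqref{eq:selected-level-variational-formula}, since \(f\) is selected with minimizer \(\theta_f\).

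I expect this exact cancellation, namely matching the log-determinant of the free convolution against the Itô corrections of \(\Phi_\zeta\) on \([q,1]\), to be the hardest computation. I would first verify it in the replica-symmetric-above-\(q\) case and then extend it by the same PDE identities.
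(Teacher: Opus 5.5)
Your outline (one-point Kac--Rice, free-convolution determinant, value--gradient density, evaluation at \(\Law(X_q)\)) matches the paper's. However, the step that carries the proof, \(\Psi(\Law(X_q))=\theta(P_\zeta-f)\), is only asserted, and the route you sketch for it would not close. A bosonic Gaussian integral represents \(|\det|^{-1/2}\), not \(|\det|\). The Grassmann version is the formal physics manipulation, not an evaluation of \(\dE|\det\nabla^2F_{\TAP,\zeta}(\bfm)|\) under the conditional law. More seriously, your conditional Hessian omits the Onsager term. Differentiating the \(q_\bfm\)-dependence of the TAP correction gives the diagonal \(-\operatorname{diag}(1/u_i)-\xi''(q)\chi_N I_N\), where \(u_i=\partial_{xx}\Phi_\zeta(q,b_i)\) and \(\chi_N=\frac1N\sum_j u_j\to\widehat\zeta(q)\). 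Without this shift, your claim that strict Plefka is exactly negativity of the bulk is false. For a point mass \(u\equiv c\), the right edge of \(\sqrt{\xi''(q)}\,W_N-c^{-1}I_N\) is \(2\sqrt{\xi''(q)}-1/c\), which is positive whenever \(\frac14<\xi''(q)c^2<1\); with the shift it is \(-(1-\sqrt{\xi''(q)}\,c)^2/c\). With the shift, the relevant measure is \(\mu_q=\Law\bigl(1/\partial_{xx}\Phi_\zeta(q,X_q)+\xi''(q)\widehat\zeta(q)\bigr)\). The subordination point of \(0\) is \(\omega_0=\xi''(q)\widehat\zeta(q)\), because \(G_{\mu_q}(\omega_0)=-\dE\,\partial_{xx}\Phi_\zeta(q,X_q)=-\widehat\zeta(q)\). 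Strict Plefka reads \(1+\xi''(q)G_{\mu_q}'(\omega_0)>0\), and the log-potential is explicit: \(\frac1N\log\dE|\det|=-\frac1N\sum_i\log u_i+\frac{\xi''(q)}2\chi_N^2+o(1)\).

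That explicit formula is what closes the computation. The part of \(P_\zeta\) above \(q\) cancels in \(F_{\TAP,\zeta}-NP_\zeta\) (see \eqref{eq:one-point-decomposition}). So no It\^o calculus on \([q,1]\) and no replica-symmetric reduction is needed beyond \(\dE\,\partial_{xx}\Phi_\zeta(q,X_q)=\widehat\zeta(q)\).

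Evaluate the density of \((\nabla H_N(\bfm),H_N(\bfm))\) at the conditioned values by the Cameron--Martin tilt with \(L_\bfm=\theta H_N(\bfm)+\langle x_\bfm,\nabla H_N(\bfm)\rangle\) and \(x_\bfm=(\bfb-\theta\xi'(q)\bfm)/\xi'(q)\). Using \eqref{eq:ising-ward-inner}, this gives \(-\theta E_\bfm(f')-\frac1{2\xi'(q)}\|\bfb-\theta\xi'(q)\bfm\|_2^2+\frac N2\theta^2\xi(q)-\frac N2\xi''(q)\widehat\zeta(q)^2\), with \(E_\bfm(f')\) as in \eqref{eq:one-point-gE}. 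To this is added the log of a Gaussian density bounded by \(-\frac N2\log(2\pi\xi'(q))\) up to \(No_\ve(1)+o(N)\). The term \(-\frac N2\xi''(q)\widehat\zeta(q)^2\) cancels \(\frac{N\xi''(q)}2\chi_N^2\) from the determinant. The factor \(e^{-\sum_i\log u_i}\) is exactly the Jacobian: \(e^{-\sum_i\log u_i}\,\dd\bfm=\dd\bfb\). The Fenchel identity then turns the rest into \(e^{-\theta N(f'-P_\zeta)}\prod_i p_{X_q}(b_i)\), where \(p_{X_q}\) is the Cole--Hopf density on the plateau \([0,q]\) and integrates to one.

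The upper bound follows by enlarging the domain to the whole cube. This needs no entropy functional and no continuity of \(\Psi\), which in any case fails for an entropy term in \(W_2\). For the lower bound, integrate against \(\Law(X_q)^{\otimes N}\). The law of large numbers gives the windows. Equality in the density bound, however, requires the tilted mean of \(H_N(\bfm)\) to match its conditioned value. That is \eqref{eq:ward2}, which follows from \eqref{eq:ising-selected-potential}, i.e.\ from stationarity of \(\theta\mapsto\Lambda(\theta)-\theta f\) at \(\theta_f\). This stationarity is the actual content of your step (2), which you leave unjustified.
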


	%==============================================================================
	\subsection{Conditional Gaussian regression}
	%==============================================================================
	
	Conditioning on the value and gradient at a fixed point gives the following
	regression formula.
	
	\begin{lemma}\label{lem:reduced-same-shell-increment}
		Fix a regular \(f\in(f_{\eq},f_1)\), write
		\(\theta=\theta_f\) and \(\zeta=\zeta_f\), and let \(q\) be the first
		positive contact point of \(\zeta\). Let \(X\) be the
		Auffinger--Chen process associated with \(\zeta\). For
		\(\ve>0\), fix \(\bfm\in(-1,1)^N\), and define \(b_i\) by
		\[
		m_i=\partial_x\Phi_\zeta(q,b_i).
		\]
		Write \(\bfb=(b_i)_{i\le N}\).
		Assume that
		\[
		|q_\bfm-q|\le\ve,
		\qquad
		W_2\left(
		\frac1N\sum_{i=1}^N\delta_{b_i},
		\Law(X_q)
		\right)\le\ve.
		\]
		For \(f'\in[f-\ve,f+\ve]\), condition on
		\[
		F_{\TAP,\zeta}(\bfm)=Nf',
		\qquad
		\nabla F_{\TAP,\zeta}(\bfm)=0.
		\]
		Assume also that
		\[
		\left|
		f'
		+\frac1N\left(
		F_{\TAP}(\bfm)-F_{\TAP,\zeta}(\bfm)
		\right)
		-f
		\right|
		\le\ve.
		\]
		Denote the corresponding conditional expectation by
		\(\dE_{\bfm,f'}\).
		There is a centered Gaussian field \(G_\bfm\), independent of the
		conditioned value and gradient, such that, for every fixed
		\(r\in(-q,q)\),
		\begin{equation}\label{eq:ising-reduced-sibling-increment}
			\begin{aligned}
				F_{\TAP,\zeta}(\bfm')-F_{\TAP,\zeta}(\bfm)
				&=G_\bfm(\bfm')
				+\sum_{i=1}^N\left[
				\frac{\xi'(r)}{\xi'(q)}m'_i b_i
				+\Phi_\zeta^*(q,m'_i)-\Phi_\zeta(q,b_i)
				\right]\\
				&\quad+N\theta\bigl[\xi(r)-r\xi'(r)-\xi(q)+q\xi'(q)\bigr]
				+N o_\ve(1)+N o_\delta(1)+o_N(N),
			\end{aligned}
		\end{equation}
		uniformly when \(\lvert q_{\bfm'}-q\rvert\le\delta\) and
		\(\lvert R(\bfm,\bfm')-r\rvert\le\delta\). Here
		\(o_N(N)/N\to0\) for fixed \(\ve,\delta\), while
		\(o_\ve(1)\to0\) and \(o_\delta(1)\to0\) as their respective
		parameters tend to zero. All errors are uniform over the displayed
		\(\ve\)-window conditions. For two points
		\(\bfm'^1,\bfm'^2\) in this slice,
		\[
		\frac1N\dE_{\bfm,f'}\left[
		G_\bfm(\bfm'^1)G_\bfm(\bfm'^2)
		\right]
		=
		\xi(R_{12})-
		\frac{\xi'(r)^2}{\xi'(q)}R_{12}+C_r
		+o_\ve(1)+o_\delta(1)+o_N(1),
		\]
		where \(R_{12}=N^{-1}\langle\bfm'^1,\bfm'^2\rangle\) and \(C_r\) depends
		only on \(q\) and \(r\).
	\end{lemma}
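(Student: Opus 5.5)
We prove the lemma by Gaussian regression of \(H_N\) on its value and full gradient at \(\bfm\), followed by a Taylor expansion of the deterministic part of \(F_{\TAP,\zeta}\) in the slice.

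\emph{Regression and conditioned mean.} Write \(q_0=q_\bfm\). Conditioning on \(F_{\TAP,\zeta}(\bfm)=Nf'\) and \(\nabla F_{\TAP,\zeta}(\bfm)=0\) is the same as conditioning on \(H_N(\bfm)\) and \(\nabla H_N(\bfm)\), because the correction term is deterministic. Since \(\partial_m\Phi_\zeta^*(q,m_i)=-b_i\), the conditioned gradient is
\[
\nabla H_N(\bfm)=\bfb-c\,\bfm,
\]
where \(c\) collects the radial derivative of the \(q_\bfm\)-dependence, namely \(\partial_q\Phi^*\) and \(A'_\zeta(q)\). This constant is explicit and \(O(1)\). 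We then set
\[
G_\bfm(\bfm')=H_N(\bfm')-\dE[H_N(\bfm')\mid H_N(\bfm),\nabla H_N(\bfm)],
\]
which is independent of the conditioned data by construction. The conditional mean is computed from the covariance kernel \(N\xi(R)\). The joint covariance of \((H_N(\bfm),\nabla H_N(\bfm))\) with \(H_N(\bfm')\) is \(N\xi(R)\) and \(\xi'(R)\bfm'\). After inverting the \((1+N)\)-dimensional covariance, whose structure is identity plus a rank-one part along \(\bfm\), the mean has the form
\[
\frac{\xi'(r)}{\xi'(q)}\langle \bfm',\bfb\rangle
+\kappa_1(r)\,H_N(\bfm)+\kappa_2(r)\,N,
\]
up to errors \(o_\ve(1)N+o_\delta(1)N\). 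These errors come from replacing \(R(\bfm,\bfm')\) by \(r\), \(q_{\bfm'}\) and \(q_\bfm\) by \(q\), and \(N^{-1}\langle\bfm,\bfb\rangle\) by \(\dE[X_q\,\partial_x\Phi_\zeta(q,X_q)]\). The last replacement uses the \(W_2\) window together with Lipschitz continuity of \(\partial_x\Phi_\zeta\).

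\emph{Assembling the increment.} We substitute \(H_N(\bfm)=Nf'-(\text{deterministic TAP correction at }\bfm)\) and use
\[
\Phi_\zeta^*(q,m_i)=\Phi_\zeta(q,b_i)-m_ib_i.
\]
We also replace \(\Phi^*_\zeta(q_{\bfm'},\cdot)\) by \(\Phi^*_\zeta(q,\cdot)\) and \(A_\zeta(q_{\bfm'})\) by \(A_\zeta(q)\), at cost \(o_\delta(1)N\), by continuity in \(q\) near a point where \(\zeta\) has no interior atom issue. The terms linear in \(\bfm'\) then reduce to the displayed \(\frac{\xi'(r)}{\xi'(q)}m'_ib_i\). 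The remaining constants should collapse to
\[
N\theta\bigl[\xi(r)-r\xi'(r)-\xi(q)+q\xi'(q)\bigr].
\]
Checking this collapse is the main bookkeeping obstacle. Three identities should make it work:
\begin{itemize}
\item the contact equation \(\dE[(\partial_x\Phi_\zeta(q,X_q))^2]=q\) from Lemma~\ref{lem:selected-first-contact};
\item \(\zeta([0,s])=\theta\) on \([0,q)\), since \(\zeta((0,q))=0\), which gives \(A_\zeta(q)=N\theta\,\vartheta(q)/2\)-type terms;
\item the Itô identity \(\dE[X_q\partial_x\Phi_\zeta(q,X_q)]=\theta\xi'(q)q\) (or the analogous relation from Appendix~\ref{section:Auffinger}) linking \(\langle\bfm,\bfb\rangle\) to \(\xi'(q)\) and \(\theta\).
\end{itemize}
The hypothesis that \(f'\) and the true TAP value are both near \(f\) keeps the \(H_N(\bfm)\)-coefficient term within \(o_\ve(1)N\). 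The \(o_N(N)\) term absorbs the finite-rank corrections from the matrix inversion.

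\emph{Conditional covariance.} The covariance of \(G_\bfm\) is
\[
N\xi(R_{12})-(\text{projection onto the span of value and gradient}).
\]
Since \(R(\bfm,\bfm'^j)\approx r\) and \(q_{\bfm'^j}\approx q\), the gradient part contributes
\[
\frac{\xi'(r)^2}{\xi'(q)}R_{12},
\]
from \(\langle \xi'(r)\bfm'^1,\xi'(r)\bfm'^2\rangle/\xi'(q)\), plus corrections along \(\bfm\). All remaining terms depend only on \(r\) and \(q\), and we collect them into \(C_r\). The errors are uniform because every approximation uses only the window parameters and the Lipschitz bounds \(|\partial_x\Phi_\zeta|\le1\) and \(|\partial_{xx}\Phi_\zeta|\le1\).
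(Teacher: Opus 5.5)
\paragraph{Verdict.} Your overall route is the paper's: regress $H_N$ on $(H_N(\bfm),\nabla H_N(\bfm))$, expand the deterministic correction in the slice, and read the covariance off the Schur complement. The covariance part is fine. The gap is the step you defer as bookkeeping. That collapse is the actual content of the lemma, and the identities you list do not produce it.

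\paragraph{Your third identity is false.} Because $\zeta((0,q))=0$, the law of $X_q$ has density proportional to $\exp\{\theta\Phi_\zeta(q,b)-b^2/(2\xi'(q))\}$. Gaussian integration by parts then gives
\[
\dE\bigl[X_q\,\partial_x\Phi_\zeta(q,X_q)\bigr]
=\xi'(q)\bigl(\theta\,\dE M_q^2+\dE\,\partial_{xx}\Phi_\zeta(q,X_q)\bigr)
=\xi'(q)\bigl(\theta q+\widehat\zeta(q)\bigr).
\]
\begin{itemize}
\item The last step uses the susceptibility identity of Lemma~\ref{lem:AC-susceptibility}. It applies because $\Shift_q\zeta$ minimizes the band problem at $q$, so the contact equations hold above $q$.
\item Your value $\theta q\xi'(q)$ is what \eqref{eq:AC-basic-ward-a} would give with $M_q$ in place of $M_0$. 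That identity, however, pairs the increment with the earlier time.
\item The same susceptibility identity pins down your radial constant:
\[
c=-\xi''(q)N^{-1}\sum_i\partial_{xx}\Phi_\zeta(q,b_i)+o_\ve(1)=-\xi''(q)\widehat\zeta(q)+o_\ve(1).
\]
\item If you actually carry out the $2\times2$ regression on $(H_N(\bfm),\langle\bfm,\nabla H_N(\bfm)\rangle)$, the $r$-dependent part of the mean matches $N\theta[\xi(r)-r\xi'(r)]$ only if $N^{-1}\langle\bfm,\bfb\rangle=\xi'(q)\bigl(\theta q-c/\xi''(q)\bigr)$.
\item With your value this would force $N^{-1}\sum_i\partial_{xx}\Phi_\zeta(q,b_i)\approx0$. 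That is impossible, since this average is close to $\widehat\zeta(q)\ge\theta(1-q)>0$.
\end{itemize}

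\paragraph{The value constraint is not an $o_\ve(1)N$ effect.} The regression coefficient of $H_N(\bfm)$ is of order one, and
\[
H_N(\bfm)=Nf'-\sum_i\Phi^*_\zeta(q_\bfm,m_i)+\frac N2\int_{q_\bfm}^1t\xi''(t)\zeta([0,t])\,\dd t
\]
is of order $N$. The target formula contains no $f$, so you must eliminate it. For that you need the relation tying the level to the order parameter:
\[
\dE\Phi_\zeta(q,X_q)=f+\theta\bigl(q\xi'(q)-\xi(q)\bigr)+\frac12\int_q^1t\xi''(t)\zeta([0,t])\,\dd t,
\qquad\text{equivalently}\qquad f=P_\zeta+\theta H_{\zeta,q}(0).
\]
This follows from stationarity of $\theta\mapsto\Lambda(\theta)-\theta f$ at $\theta_f$. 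You compute it with the first variation of Lemma~\ref{lem:first-variation-contact} along $\delta_0-\zeta|_{(0,1]}/(1-\theta)$. This is exactly where the selected level enters, and it is absent from your list.

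\paragraph{How the computation closes.} With both identities, the conditioned data become
\[
H_N(\bfm)\approx N\bigl(\theta\xi(q)+\xi'(q)\widehat\zeta(q)\bigr),
\qquad
\nabla H_N(\bfm)\approx\bfb+\xi''(q)\widehat\zeta(q)\bfm.
\]
These are precisely the covariances of $(H_N(\bfm),\nabla H_N(\bfm))$ with $L_\bfm=\theta H_N(\bfm)+\langle x,\nabla H_N(\bfm)\rangle$, where $x=(\bfb-\theta\xi'(q)\bfm)/\xi'(q)$. Hence, up to the stated errors, the conditional mean of $H_N(\bfm')$ is
\[
\Cov(H_N(\bfm'),L_\bfm)=\frac{\xi'(r)}{\xi'(q)}\langle\bfm',\bfb\rangle+N\theta\bigl(\xi(r)-r\xi'(r)\bigr).
\]
The Fenchel identity $\Phi_\zeta^*(q,m_i)=\Phi_\zeta(q,b_i)-m_ib_i$ then gives the displayed increment. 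This is how the paper closes the computation, without inverting the covariance.
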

	
	\begin{proof}
		Let \(X\) be the Auffinger--Chen process associated with \(\zeta\), set
		\(M_t=\partial_x\Phi_\zeta(t,X_t)\), and recall the definition of
		\(\widehat\zeta\) from \eqref{eq:order-parameter-cumulants}.
		By Lemmas~\ref{lem:selected-first-contact} and
		\ref{lem:first-variation-contact}, together with stationarity of
		\eqref{eq:selected-level-variational-formula} at \(\theta\), we have
		\begin{align}
			\dE M_q^2&=q,
			\label{eq:ising-contact}\\
			\dE\Phi_\zeta(q,X_q)
			&=
			f+\theta\bigl(q\xi'(q)-\xi(q)\bigr)
			+\frac12\int_q^1t\xi''(t)\zeta([0,t])\,\dd t.
			\label{eq:ising-selected-potential}
		\end{align}
		Optimality of \(\zeta\) implies that \(\Shift_q\zeta\) minimizes
		\(V_\eta(q,\Law(M_q))\) over \(\eta\in\mc P([q,1])\). The
		Auffinger--Chen process in Lemma~\ref{lem:first-variation-contact}
		then agrees in law with \(X\) from time \(q\) onward. Its contact
		equations verify the hypotheses of
		Lemma~\ref{lem:AC-susceptibility}. Hence
		\[
		\dE\,\partial_{xx}\Phi_\zeta(q,X_q)=\widehat\zeta(q).
		\]
		Together with~\eqref{eq:ising-contact} and Gaussian integration by
		parts for the Auffinger--Chen law, this yields
		\[
		\dE[M_qX_q]
		=
		\xi'(q)\left(
		\theta\dE M_q^2
		+\dE\,\partial_{xx}\Phi_\zeta(q,X_q)
		\right)
		=
		\xi'(q)\bigl(\theta q+\widehat\zeta(q)\bigr).
		\]
		The empirical-law window therefore yields
		\begin{equation}\label{eq:ising-ward-inner}
			\frac1N\sum_{i=1}^N m_i b_i
			=
			\xi'(q)\bigl(\theta q+\widehat\zeta(q)\bigr)
			+o_\ve(1).
		\end{equation}
		The definition of \(b_i\) gives the Fenchel equality
		\begin{equation}\label{eq:ising-fenchel-identity}
			\Phi_\zeta^*(q,m_i)+m_i b_i=\Phi_\zeta(q,b_i),
			\qquad 1\le i\le N.
		\end{equation}
		Together with~\eqref{eq:ising-selected-potential}, the
		empirical-law window gives
		\begin{equation}\label{eq:ising-ward-energy}
			-\sum_{i=1}^N\bigl(\Phi_\zeta^*(q,m_i)+m_i b_i\bigr)
			+N\theta\bigl(q\xi'(q)-\xi(q)\bigr)
			+\frac N2\int_q^1t\xi''(t)\zeta([0,t])\,\dd t
			=-Nf+N o_\ve(1).
		\end{equation}
		
		Differentiating the deterministic TAP correction and using the conditioned
		gradient and the finite-window assumptions gives
		\[
		\nabla H_N(\bfm)
		=
		\bfb+\xi''(q)\widehat\zeta(q)\bfm
		+\sqrt N\,o_\ve(1),
		\]
		where the error is measured in Euclidean norm.
		
		Recall that
		\[
		\Cov(H_N(\bfm'),H_N(\bfm))=N\xi(R(\bfm,\bfm'))\,,\qquad
		\Cov(H_N(\bfm'),\nabla H_N(\bfm))=\xi'(R(\bfm,\bfm'))\,\bfm'\,,
		\]
		together with
		\[
		\Cov(\nabla H_N(\bfm),\nabla H_N(\bfm))
		=
		\xi'(q_\bfm)I+\frac{\xi''(q_\bfm)}N\bfm\bfm^\top.
		\]
		Set
		\[
		x:=\frac{\bfb-\theta\xi'(q)\bfm}{\xi'(q)}.
		\]
		By~\eqref{eq:ising-ward-inner}, we have
		\[
		\frac1N\langle x,\bfm\rangle
		=\widehat\zeta(q)+o_\ve(1).
		\]
		Set
		\[
		L_\bfm:=x\cdot\nabla H_N(\bfm)+\theta H_N(\bfm).
		\]
		Tilt the original Gaussian law by the density
		\[
		\exp\left\{L_\bfm-\frac12\dE L_\bfm^2\right\}
		\].
		Under the tilted law, the covariance is unchanged and every centered
		Gaussian variable \(Z\) has mean
		\(\Cov(Z,L_\bfm)\). We verify below that the resulting means of
		\(H_N(\bfm)\) and \(\nabla H_N(\bfm)\) agree with their conditioned values,
		up to the prescribed errors. For the gradient,
		\[
		\Cov\bigl(
		\nabla H_N(\bfm),
		L_\bfm
		\bigr)
		=
		\bfb+\xi''(q)\widehat\zeta(q)\bfm
		+\sqrt N\,o_\ve(1)
		=
		\nabla H_N(\bfm)+\sqrt N\,o_\ve(1),
		\]
		where the vector errors are measured in Euclidean norm. For the value,
		\eqref{eq:ising-ward-energy} and the conditioned value of
		\(F_{\TAP,\zeta}(\bfm)\) give
		\[
		\Cov\bigl(
		H_N(\bfm),
		L_\bfm
		\bigr)
		=
		N\xi'(q)\widehat\zeta(q)+\theta N\xi(q)
		+N o_\ve(1)
		=
		H_N(\bfm)+N o_\ve(1).
		\]
		Since \(L_\bfm\) is a linear combination of the conditioned value and
		gradient, matching their tilted means with the prescribed values identifies
		the conditional mean by Gaussian regression.
		Therefore, if
		\(\lvert R(\bfm,\bfm')-r\rvert\le\delta\),
		\[
		\begin{aligned}
			\dE_{\bfm,f'} H_N(\bfm')
			&=\Cov\bigl(H_N(\bfm'),L_\bfm\bigr)
			+N o_\ve(1)+o_N(N)\\
			&=\xi'(r)\langle\bfm',x\rangle+\theta N\xi(r)
			+N o_\ve(1)+N o_\delta(1)+o_N(N)\\
			&=\frac{\xi'(r)}{\xi'(q)}\sum_i m'_i b_i
			+N\theta\bigl(\xi(r)-r\xi'(r)\bigr)
			+N o_\ve(1)+N o_\delta(1)+o_N(N).
		\end{aligned}
		\]
		Hence,
		\[
		\begin{aligned}
			\dE_{\bfm,f'}\bigl[H_N(\bfm')-H_N(\bfm)\bigr]
			&=
			\frac{\xi'(r)}{\xi'(q)}\sum_i m'_i b_i-\sum_i m_i b_i\\
			&\quad+N\theta\bigl[\xi(r)-r\xi'(r)-\xi(q)+q\xi'(q)\bigr]\\
			&\quad+N o_\ve(1)+N o_\delta(1)+o_N(N).
		\end{aligned}
		\]
		
		Since \(q_\bfm\) and \(q_{\bfm'}\) lie in the respective
		\(\ve\)- and \(\delta\)-windows around \(q\), both TAP corrections
		may be evaluated at \(q\) with an
		\(N o_\ve(1)+N o_\delta(1)\) error. Hence,
		\[
		F_{\TAP,\zeta}(\bfm')-F_{\TAP,\zeta}(\bfm)
		=
		H_N(\bfm')-H_N(\bfm)
		+\sum_i\bigl[\Phi_\zeta^*(q,m'_i)-\Phi_\zeta^*(q,m_i)\bigr]
		+N o_\ve(1)+N o_\delta(1).
		\]
		Set
		\[
		G_\bfm(\bfm')
		:=
		H_N(\bfm')
		-
		\dE\left[
		H_N(\bfm')
		\,\middle|\,
		F_{\TAP,\zeta}(\bfm),
		\nabla F_{\TAP,\zeta}(\bfm)
		\right].
		\]
		By joint Gaussianity, \(G_\bfm\) is centered and independent of the
		conditioned value and gradient.
		Combining the last two displays with~\eqref{eq:ising-fenchel-identity} proves
		\eqref{eq:ising-reduced-sibling-increment}.
		Smoothness of \(\xi\) and continuity of the TAP correction in the self-overlap
		make the calculation uniform on the stated \(\delta\)-window, with error
		\(N o_\ve(1)+N o_\delta(1)+o_N(N)\).
		
		Write
		\[
		\bfm'^a
		=
		\frac{R(\bfm,\bfm'^a)}{q_\bfm}\bfm+\tau^a,
		\qquad \tau^a\perp\bfm.
		\]
		Conditioning on the transverse gradient subtracts
		\[
		\frac{
			\xi'(R(\bfm,\bfm'^1))\xi'(R(\bfm,\bfm'^2))
		}{\xi'(q_\bfm)}
		\langle\tau^1,\tau^2\rangle
		=
		N\frac{\xi'(r)^2}{\xi'(q)}
		\left(R_{12}-\frac{r^2}{q}\right)
		+N o_\ve(1)+N o_\delta(1)+o_N(N).
		\]
		All remaining terms in the Schur complement involve \(H_N(\bfm)\) or
		\(\langle\bfm,\nabla H_N(\bfm)\rangle\). On the slice, their covariances
		with \(H_N(\bfm'^a)\) depend only on \(q\) and \(r\), up to the stated
		errors. Absorbing these contributions and the \(R_{12}\)-independent part
		of the preceding display into \(NC_r\) gives the covariance in the lemma.
	\end{proof}
	
	%==============================================================================
	\subsection{Conditional Guerra bound at fixed overlap}
	%==============================================================================
	
	For the remainder of this subsection, fix a regular
	\(f\in(f_{\eq},f_1)\), write
	\(\theta=\theta_f\) and \(\zeta=\zeta_f\), and let \(q\) be the first
	positive contact point of \(\zeta\). Let \(X\) be the
	Auffinger--Chen process associated with \(\zeta\).
	We bound the TAP increment over configurations at overlap \(r\) with \(\bfm\).
	For \(\bfm\in[-1,1]^N\), \(r\in(0,q)\), and \(\delta>0\), let
	\[
	\mc C_N(\bfm,r,\delta)
	:=
	\left\{
	\bfm'\in[-1,1]^N:
	\lvert R(\bfm,\bfm')-r\rvert\le\delta,
	\qquad
	\lvert q_{\bfm'}-q\rvert\le\delta
	\right\}.
	\]
	
	\begin{definition}
		Fix \(r\in(0,q)\). For \(\bfm\in(-1,1)^N\), define \(b_i\) by
		\[
		m_i=\partial_x\Phi_\zeta(q,b_i),
		\qquad 1\le i\le N.
		\]
		Let \(\rho\) be a finite-step probability measure on \([r,q]\), and write
		\[
		\bar x_\rho(t)
		=
		\theta+(1-\theta)\rho([r,t]),
		\qquad r\le t<q.
		\]
		Let \(\phi_\rho\) solve on \([r,q]\)
		\begin{equation}\label{eq:sibling-reduced-PDE}
			\begin{aligned}
				\partial_t\phi_\rho
				+\frac{\xi''(t)}2
				\left(
				\partial_{xx}\phi_\rho
				+\bar x_\rho(t)(\partial_x\phi_\rho)^2
				\right)&=0,\\
				\phi_\rho(q,x)&=\Phi_\zeta(q,x).
			\end{aligned}
		\end{equation}
		For
		\[
		Z\sim\mc N\left(
		0,\xi'(r)-\frac{\xi'(r)^2}{\xi'(q)}
		\right),
		\]
		define
		\[
		\begin{aligned}
			\mc J_{N,\bfm,q,r}(\rho)
			:={}&
			\theta\bigl[\xi(r)-r\xi'(r)-\xi(q)+q\xi'(q)\bigr]
			-\frac1N\sum_i\Phi_\zeta(q,b_i)\\
			&+\frac1N\sum_i
			\dE_Z\phi_\rho\left(
			r,Z+\frac{\xi'(r)}{\xi'(q)}b_i
			\right)
			-\frac12\int_r^q t\xi''(t)\bar x_\rho(t)\,\dd t.
		\end{aligned}
		\]
	\end{definition}
	
	\begin{proposition}\label{prop:sibling-guerra-bound}
		Fix a regular \(f\in(f_{\eq},f_1)\), write
		\(\theta=\theta_f\) and \(\zeta=\zeta_f\), and let \(q\) be the first
		positive contact point of \(\zeta\). Let \(X\) be the
		Auffinger--Chen process associated with \(\zeta\).
		For \(\ve>0\), fix \(\bfm\in(-1,1)^N\), and define \(b_i\) by
		\[
		m_i=\partial_x\Phi_\zeta(q,b_i).
		\]
		Assume that
		\[
		|q_\bfm-q|\le\ve,
		\qquad
		W_2\left(
		\frac1N\sum_{i=1}^N\delta_{b_i},
		\Law(X_q)
		\right)\le\ve.
		\]
		Let \(f'\in[f-\ve,f+\ve]\) satisfy
		\[
		\left|
		f'
		+\frac1N\left(
		F_{\TAP}(\bfm)-F_{\TAP,\zeta}(\bfm)
		\right)
		-f
		\right|
		\le\ve.
		\]
		Let \(\dE_{\bfm,f'}\) denote conditional expectation given
		\[
		F_{\TAP,\zeta}(\bfm)=Nf',
		\qquad
		\nabla F_{\TAP,\zeta}(\bfm)=0.
		\]
		For every \(r\in(0,q)\),
		every \(\delta>0\), and every fixed finite-step probability measure
		\(\rho\) on \([r,q]\),
		\begin{equation}\label{eq:sibling-guerra-bound}
			\frac1N\dE_{\bfm,f'}
			\max_{\bfm'\in\mc C_N(\bfm,r,\delta)}
			\left[
			F_{\TAP,\zeta}(\bfm')-F_{\TAP,\zeta}(\bfm)
			\right]
			\le
			\mc J_{N,\bfm,q,r}(\rho)
			+o_\ve(1)+o_\delta(1)+o_N(1).
		\end{equation}
		The errors are uniform over the displayed \(\ve\)-window data,
		with \(N\to\infty\) taken before
		\(\ve,\delta\downarrow0\).
	\end{proposition}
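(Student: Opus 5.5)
We will reduce Proposition~\ref{prop:sibling-guerra-bound} to a conditional version of the Guerra-type comparison of Section~\ref{sec:new5}, run on the slice \(\mc C_N(\bfm,r,\delta)\). The starting point is Lemma~\ref{lem:reduced-same-shell-increment}. It writes the increment \(F_{\TAP,\zeta}(\bfm')-F_{\TAP,\zeta}(\bfm)\) as a centered Gaussian field \(G_\bfm(\bfm')\) plus the deterministic part
\[
\sum_i\Bigl[\tfrac{\xi'(r)}{\xi'(q)}m'_ib_i+\Phi_\zeta^*(q,m'_i)-\Phi_\zeta(q,b_i)\Bigr]
+N\theta\bigl[\xi(r)-r\xi'(r)-\xi(q)+q\xi'(q)\bigr],
\]
up to the admissible errors. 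The field \(G_\bfm\) has covariance \(N[\xi(R_{12})-\tfrac{\xi'(r)^2}{\xi'(q)}R_{12}+C_r]\) on the slice. The \(\theta\)-term and \(-\frac1N\sum_i\Phi_\zeta(q,b_i)\) appear verbatim in \(\mc J_{N,\bfm,q,r}(\rho)\). It therefore remains to bound
\[
\frac1N\dE\max_{\bfm'}\Bigl\{G_\bfm(\bfm')+\sum_i\bigl[\tfrac{\xi'(r)}{\xi'(q)}m'_ib_i+\Phi_\zeta^*(q,m'_i)\bigr]\Bigr\}.
\]

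For this we build an additive RPC with an outer level of parameter \(\theta\) (zero radial length, as in Proposition~\ref{prop:direct-tilt-TAP}), followed by levels \(\theta+(1-\theta)\rho([r,t])\) on \([r,q]\), matching \(\bar x_\rho\). On this RPC we define a cavity field
\[
Z(\bfm',\alpha)=\sum_i m'_i\bigl(Z_i+B^i_{q}(\alpha)-B^i_r(\alpha)\bigr),
\]
where \(Z_i\sim\mc N(0,\xi'(r)-\xi'(r)^2/\xi'(q))\) are i.i.d.\ and shared by all leaves, and the \(B^i\) increments on \([r,q]\) have covariance \(\xi'(s\wedge t\wedge s_{\alpha\wedge\beta})-\xi'(r)\). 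We also add an Onsager field \(Y\) on \([r,q]\) with covariance \(N[\vartheta(\cdot)-\vartheta(r)]\). Since \(\xi'(r)-\xi'(r)^2/\xi'(q)\) plus the \([r,q]\)-increment \(\xi'(q)-\xi'(r)\) gives \(\xi'(q)-\xi'(r)^2/\xi'(q)\), the variances of \(G_\bfm+Y\) and of \(Z\) agree on the slice once \(C_r\) is fixed by the diagonal. The covariance difference is again
\[
\xi(R_{12})-\xi(Q_{12})-\xi'(Q_{12})(R_{12}-Q_{12})\ge0,
\]
with \(Q_{12}\in[r,q]\), using convexity of \(\xi\) (Assumption~\ref{ass:xi}). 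Slepian's inequality then compares the two suprema. Since \(q_{\bfm'}\approx q\) on the slice, the field can be evaluated at time \(q\) with an \(o_\delta(1)\) error. This also means the maximal estimate over a range of \(q\) is not needed.

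Next we apply Fenchel's inequality coordinatewise. With \(x_i=\tfrac{\xi'(r)}{\xi'(q)}b_i+Z_i+B^i_q(\alpha)-B^i_r(\alpha)\),
\[
m'_ix_i+\Phi_\zeta^*(q,m'_i)\le\Phi_\zeta(q,x_i).
\]
The terminal condition \(\phi_\rho(q,\cdot)=\Phi_\zeta(q,\cdot)\) and the marking identity, applied level by level on \([r,q]\) as in Remark~\ref{rem:ising-invariance}, turn \(\dE\max_\alpha\{u_\alpha+\sum_i\Phi_\zeta(q,x_i)\}\) into
\[
\dE\max_\alpha u_\alpha+\sum_i\phi_\rho\bigl(r,Z_i+\tfrac{\xi'(r)}{\xi'(q)}b_i\bigr).
\]
Averaging over \(Z_i\) then produces \(\sum_i\dE_Z\phi_\rho(\cdot)\), and the outer \(\theta\)-level turns the average into a \(\frac1\theta\log\dE e^{\theta\,\cdot}\) that concentrates to the mean at scale \(o(N)\). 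On the lower side, the Onsager field contributes exactly \(A\)-type shifts, which yield \(\tfrac12\int_r^q t\xi''\bar x_\rho\). Removing \(u,Y\) proceeds as in Lemma~\ref{lem:ising-lower}, because the maximizer over the slice is measurable with respect to \(G_\bfm\), and \(q_{\bfm'}\) is pinned within \(\delta\).

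\textbf{Main obstacle.} The delicate step is the Slepian setup: verifying that the conditional covariance of \(G_\bfm\) (including the constant \(C_r\) and the subtraction of the \(\bfm\)-direction) exactly matches the cavity construction, so that the outer \(\theta\)-level with zero radial length and the shared \(Z_i\) reproduce it with a nonnegative difference. The shared-\(Z\) variance \(\xi'(r)-\xi'(r)^2/\xi'(q)\) strongly suggests this decomposition. I would first check that the diagonal and \(R_{12}\to r\) limits agree, then absorb the residual \(o_\ve,o_\delta\) discrepancies via Sudakov--Fernique continuity.
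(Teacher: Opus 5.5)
Apart from one ingredient, your argument is the paper's proof. It uses the same reduction through Lemma~\ref{lem:reduced-same-shell-increment} and the same cascade with cumulative $\bar x_\rho$ on $[r,q]$. It has the same root Gaussian of variance $\xi'(r)-\xi'(r)^2/\xi'(q)$ common to all leaves: the paper's $B^i_r$, with covariance $K_r'(t\wedge s\wedge q_{\alpha^1\wedge\alpha^2})$, where $K_r(s)=\xi(s)-\frac{\xi'(r)^2}{\xi'(q)}s+r\xi'(r)-\xi(r)$. The Onsager field, the convexity gap, and the Fenchel/Cole--Hopf recursion with terminal value $\Phi_\zeta(q,\cdot)$ are also the same. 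So is the lower bound through a measurable maximizer and the shift $\frac N2\int_r^q t\xi''(t)\bar x_\rho(t)\,\dd t$.

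The ingredient that must be removed is the outer Poisson level of parameter $\theta$. It is not needed: the statement concerns a conditional expectation, not an exponential moment, and $\theta$ already enters as the bottom value $\bar x_\rho(r)=\theta$ of the cascade. As written, it breaks the argument.

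When $\rho(\{r\})=0$ --- in particular for $\rho=\delta_q$, the case used afterwards --- the first level of the $\bar x_\rho$-cascade also has parameter $\theta$. The marking identity at your outer level then needs $\dE e^{\theta\max_\alpha U_\alpha}<\infty$. This fails, because the top point of a Poisson process with intensity $\theta e^{-\theta u}\,\dd u$ has tail $\sim e^{-\theta v}$. In fact the two-level cascade with equal parameters has an almost surely infinite maximum, so both sides of your comparison are $+\infty$.

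Your justification is also false, and in either reading of where the $Z_i$ sit it does not help:
\begin{itemize}
\item If the $Z_i$ are resampled in each outer branch, the outer level produces $\sum_i\theta^{-1}\log\dE e^{\theta\phi_\rho(r,Z+c_i)}$, with $c_i=\frac{\xi'(r)}{\xi'(q)}b_i$. This exceeds $\sum_i\dE\phi_\rho(r,Z+c_i)$ by order $N$, since the coordinates are independent, $Z$ is nondegenerate, and each coordinate contributes a strictly positive Jensen gap. You would get a bound strictly weaker than $\mc J_{N,\bfm,q,r}(\rho)$, and for $\rho=\delta_q$ it would no longer reduce to $-\theta H_{\zeta,q}(r)$.
\item If the $Z_i$ are shared across outer branches, the outer level only adds the constant $\theta^{-1}\log\dE e^{\theta\max_\alpha U_\alpha}$ to both sides. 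That constant is finite only when $\rho(\{r\})>0$.
\end{itemize}
The correct bookkeeping is the paper's. Run the level-by-level recursion conditionally on the root variables to get $\dE\max_\alpha U_\alpha+\sum_i\phi_\rho(r,Z_i+c_i)$. Then average over $Z$; this is linear and needs no concentration.

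A smaller point: you do not need, and in general do not have, matching variances. The constant $C_r$ is fixed by the regression on the value and radial gradient, and in general differs from $r\xi'(r)-\xi(r)$. So Slepian in its equal-variance form is not the right tool. Compare increments via Sudakov--Fernique, as the paper does. Then $C_r$ cancels, and the gap $K_r(R_{12})-K_r(Q_{12})-K_r'(Q_{12})(R_{12}-Q_{12})\ge0$ disposes of the ``main obstacle'' you flag.
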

	
	\begin{proof}
		Fix a finite-step probability measure \(\rho\) on \([r,q]\), and write
		\[
		I_N:=\mc C_N(\bfm,r,\delta).
		\]
		The bound is immediate when \(I_N\) is empty.
		
		\medskip\noindent\emph{Step 1. Reduction.}
		Define
		\[
		\Psi_r(\bfm')
		:=
		\sum_{i=1}^N
		\left[
		\frac{\xi'(r)}{\xi'(q)}b_im_i'
		+\Phi_\zeta^*(q,m_i')
		\right].
		\]
		Lemma~\ref{lem:reduced-same-shell-increment} gives, uniformly over
		\(\bfm'\in I_N\),
		\[
		\begin{aligned}
			F_{\TAP,\zeta}(\bfm')-F_{\TAP,\zeta}(\bfm)
			={}&
			G_\bfm(\bfm')+\Psi_r(\bfm')
			-\sum_{i=1}^N\Phi_\zeta(q,b_i)\\
			&+
			N\theta
			\bigl[
			\xi(r)-r\xi'(r)-\xi(q)+q\xi'(q)
			\bigr]
			+N o_\ve(1)+N o_\delta(1)+o_N(N).
		\end{aligned}
		\]
		
		\medskip\noindent\emph{Step 2. Gaussian comparison.}
		Set
		\[
		K_r(s)
		:=
		\xi(s)-\frac{\xi'(r)^2}{\xi'(q)}s+r\xi'(r)-\xi(r).
		\]
		Then
		\[
		K_r''=\xi'',
		\qquad
		K_r'(r)
		=
		\xi'(r)-\frac{\xi'(r)^2}{\xi'(q)}
		\ge0,
		\qquad
		K_r(r)=rK_r'(r).
		\]
		In particular,
		\[
		\int_r^t v\xi''(v)\,\dd v
		=
		tK_r'(t)-K_r(t),
		\qquad r\le t\le q.
		\]
		
		For \(\bfm'^1,\bfm'^2\in I_N\), write
		\[
		R_{12}:=R(\bfm'^1,\bfm'^2).
		\]
		The covariance formula in
		Lemma~\ref{lem:reduced-same-shell-increment} gives
		\[
		\frac1{2N}\dE_{\bfm,f'}
		\left[
		G_\bfm(\bfm'^1)
		-G_\bfm(\bfm'^2)
		\right]^2
		=
		K_r(q)-K_r(R_{12})
		+o_\ve(1)+o_\delta(1)+o_N(1).
		\]
		
		Let \((U_\alpha)_\alpha\) be the additive RPC with cumulative
		\(\bar x_\rho\). Let \(B_t^i(\alpha)\) and \(Y_t(\alpha)\),
		\(r\le t\le q\), be centered Gaussian fields with
		\[
		\begin{aligned}
			\dE B_t^i(\alpha^1)B_s^j(\alpha^2)
			&=
			\delta_{ij}
			K_r'\bigl(
			t\wedge s\wedge q_{\alpha^1\wedge\alpha^2}
			\bigr),\\
			\dE Y_t(\alpha^1)Y_s(\alpha^2)
			&=
			N\int_r^{t\wedge s\wedge q_{\alpha^1\wedge\alpha^2}}
			v\xi''(v)\,\dd v.
		\end{aligned}
		\]
		The fields \(U,B,Y\) are mutually independent and independent of
		\(G_\bfm\).
		
		For \(\bfm'\in I_N\), set
		\[
		\begin{aligned}
			\mathcal X(\bfm',\alpha)
			&:=
			G_\bfm(\bfm')
			+Y_q(\alpha)+U_\alpha+\Psi_r(\bfm'),\\
			\widetilde{\mathcal X}(\bfm',\alpha)
			&:=
			\sum_{i=1}^N m_i'B_q^i(\alpha)
			+U_\alpha+\Psi_r(\bfm').
		\end{aligned}
		\]
		For two indices \((\bfm'^1,\alpha^1)\) and
		\((\bfm'^2,\alpha^2)\), write
		\[
		Q_{12}:=q_{\alpha^1\wedge\alpha^2}.
		\]
		The difference between the squared increments of the centered Gaussian
		parts of \(\widetilde{\mathcal X}\) and \(\mathcal X\) is
		\[
		2N\bigl[
		K_r(R_{12})-K_r(Q_{12})
		-K_r'(Q_{12})(R_{12}-Q_{12})
		\bigr]
		+N o_\ve(1)+N o_\delta(1)+o_N(N).
		\]
		The term in brackets is nonnegative by
		Assumption~\ref{ass:xi}. Gaussian comparison, first on finite subsets
		and then by continuity, therefore gives
		\[
		\dE_{\bfm,f'}
		\sup_{\bfm'\in I_N,\alpha}\mathcal X(\bfm',\alpha)
		\le
		\dE_{\bfm,f'}
		\sup_{\bfm'\in I_N,\alpha}\widetilde{\mathcal X}(\bfm',\alpha)
		+N o_\ve(1)+N o_\delta(1)+o_N(N).
		\]
		
		\medskip\noindent\emph{Step 3. Fenchel bound.}
		For every \(i\),
		\[
		m_i'
		\left[
		B_q^i(\alpha)+\frac{\xi'(r)}{\xi'(q)}b_i
		\right]
		+\Phi_\zeta^*(q,m_i')
		\le
		\Phi_\zeta\left(
		q,
		B_q^i(\alpha)+\frac{\xi'(r)}{\xi'(q)}b_i
		\right).
		\]
		Enlarging the supremum to the cube and applying the RPC recursion gives
		\[
		\begin{aligned}
			\dE_{\bfm,f'}
			\sup_{\bfm'\in I_N,\alpha}\widetilde{\mathcal X}(\bfm',\alpha)
			\le{}&
			\dE\max_\alpha U_\alpha\\
			&+
			\sum_{i=1}^N
			\dE_Z\phi_\rho\left(
			r,Z+\frac{\xi'(r)}{\xi'(q)}b_i
			\right)
			+o(N).
		\end{aligned}
		\]
		
		\medskip\noindent\emph{Step 4. Evaluation at the original maximizer.}
		Let \(\bfm_*\) be a measurable maximizer of
		\[
		G_\bfm(\bfm')+\Psi_r(\bfm'),
		\qquad \bfm'\in I_N.
		\]
		Since \(\bfm_*\) is independent of \((U,Y)\),
		\[
		\begin{aligned}
			\dE_{\bfm,f'}
			\sup_{\bfm'\in I_N,\alpha}\mathcal X(\bfm',\alpha)
			\ge{}&
			\dE_{\bfm,f'}
			\max_{\bfm'\in I_N}
			\left[
			G_\bfm(\bfm')+\Psi_r(\bfm')
			\right]\\
			&+
			\dE\max_\alpha
			\left[
			U_\alpha+Y_q(\alpha)
			\right].
		\end{aligned}
		\]
		The RPC recursion gives
		\[
		\dE\max_\alpha
		\left[
		U_\alpha+Y_q(\alpha)
		\right]
		=
		\dE\max_\alpha U_\alpha
		+\frac N2
		\int_r^q t\xi''(t)\bar x_\rho(t)\,\dd t.
		\]
		Combining the last three estimates,
		\[
		\begin{aligned}
			\dE_{\bfm,f'}
			\max_{\bfm'\in I_N}
			\left[
			G_\bfm(\bfm')+\Psi_r(\bfm')
			\right]
			\le{}&
			\sum_{i=1}^N
			\dE_Z\phi_\rho\left(
			r,Z+\frac{\xi'(r)}{\xi'(q)}b_i
			\right)\\
			&-
			\frac N2
			\int_r^q t\xi''(t)\bar x_\rho(t)\,\dd t
			+N o_\ve(1)+N o_\delta(1)+o_N(N).
		\end{aligned}
		\]
		Returning to the first display and dividing by \(N\) gives
		\[
		\frac1N\dE_{\bfm,f'}
		\max_{\bfm'\in I_N}
		\left[
		F_{\TAP,\zeta}(\bfm')-F_{\TAP,\zeta}(\bfm)
		\right]
		\le
		\mc J_{N,\bfm,q,r}(\rho)
		+o_\ve(1)+o_\delta(1)+o_N(1).
		\]
		This is \eqref{eq:sibling-guerra-bound}.
	\end{proof}
	
	\begin{remark}
		The upper bound in Proposition~\ref{prop:sibling-guerra-bound} is not
		sharp in general. In the spherical analogue
		\cite[Proposition~3.13]{HuangSellke}, the final cancellation in the proof
		leaves, in the notation of that paper, the term
		\[
		-\frac{
			\left(
			q\xi'(1)-(1+(1-q)z)\xi'(q)
			\right)^2
		}{
			2\xi'(1)(1+(1-q)z)
		}.
		\]
		It is strictly negative unless the square vanishes.
	\end{remark}
	
	Let \(\mu=\Law(X_q)\). The empirical-law window in
	Proposition~\ref{prop:sibling-guerra-bound} gives, for every fixed
	finite-step \(\rho\) and uniformly over its conditional data,
	\[
	\mc J_{N,\bfm,q,r}(\rho)
	=
	\mc J_{q,r}^{\mu}(\rho)+o_\ve(1),
	\]
	where
	\begin{equation}\label{eq:sibling-functional-limit}
		\begin{aligned}
			\mc J_{q,r}^{\mu}(\rho)
			:={}&
			\theta\bigl[\xi(r)-r\xi'(r)-\xi(q)+q\xi'(q)\bigr]
			-\int\Phi_\zeta(q,b)\,\dd\mu(b)\\
			&+\int\dE_Z\,\phi_\rho\left(
			r,Z+\frac{\xi'(r)}{\xi'(q)}b
			\right)\,\dd\mu(b)
			-\frac12\int_r^q t\xi''(t)\bar x_\rho(t)\,\dd t.
		\end{aligned}
	\end{equation}
	Taking \(\rho=\delta_q\) in
	Proposition~\ref{prop:sibling-guerra-bound}, we obtain, uniformly over its
	conditional data,
	\begin{equation}\label{eq:sibling-guerra-bound-limit}
		\frac1N\dE_{\bfm,f'}
		\max_{\bfm'\in\mc C_N(\bfm,r,\delta)}
		\left[
		F_{\TAP,\zeta}(\bfm')-F_{\TAP,\zeta}(\bfm)
		\right]
		\le
		\mc J_{q,r}^{\mu}(\delta_q)
		+o_\ve(1)+o_\delta(1)+o_N(1).
	\end{equation}
	
	%==============================================================================
	\subsection{The replica-symmetric value}
	%==============================================================================
	
	The replica-symmetric choice \(\rho=\delta_q\) gives the strict
	fixed-overlap bound used below.
	
	\begin{lemma}
		Fix a regular \(f\in(f_{\eq},f_1)\), write
		\(\theta=\theta_f\) and \(\zeta=\zeta_f\), and let \(q\) be the first
		positive contact point of \(\zeta\). Let \(X\) be the
		Auffinger--Chen process associated with \(\zeta\), and let
		\(\mu=\Law(X_q)\). For every \(r\in(0,q)\),
		\begin{equation}\label{eq:sibling-rs-value}
			\mc J_{q,r}^{\mu}(\delta_q)
			=
			-\theta H_{\zeta,q}(r).
		\end{equation}
	\end{lemma}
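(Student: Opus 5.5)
The plan is a direct computation: substitute \(\rho=\delta_q\), identify \(\phi_\rho\) with \(\Phi_\zeta\) on \([r,q]\), convert the Gaussian-smoothed integral against \(\mu\) into an expectation along the Auffinger--Chen process, and finish with Itô's formula. The only real input is Lemma~\ref{lem:selected-first-contact}.

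\emph{Step 1: identify \(\phi_\rho\).} For \(\rho=\delta_q\) we have \(\rho([r,t])=0\) for \(r\le t<q\), hence \(\bar x_\rho\equiv\theta\) on \([r,q)\). By Lemma~\ref{lem:selected-first-contact}(3), \(\zeta(\{0\})=\theta\) and \(\zeta((0,q))=0\), so \(\zeta([0,t])=\theta\) for \(t\in[r,q)\). The PDE~\eqref{eq:sibling-reduced-PDE} therefore coincides on \([r,q]\) with the Parisi PDE~\eqref{eq:parisi-pde} for \(\zeta\), and the terminal data agree. By uniqueness, \(\phi_{\delta_q}=\Phi_\zeta\) on \([r,q]\).

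\emph{Step 2: remove the Gaussian smoothing (the main step).} Let \(W\) be a centered Gaussian martingale with \(\langle W\rangle_t=\xi'(t)\) and \(W_0=0\). On \([0,q]\) the Auffinger--Chen drift is \(\theta\xi''(t)\partial_x\Phi_\zeta(t,X_t)\). By Itô's formula for \(\Phi_\zeta(t,W_t)\), with coefficient \(\theta\) on \((0,q)\), the process
\[
D_t:=\exp\{\theta(\Phi_\zeta(t,W_t)-\Phi_\zeta(0,0))\}
\]
is a true martingale (since \(|\partial_x\Phi_\zeta|\le1\)). By Girsanov, \(\Law(X_{[0,q]})\) is the law of \(W_{[0,q]}\) tilted by \(D_q\). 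Set \(c=\xi'(r)/\xi'(q)\). Conditionally on \(W_q\), the variable \(W_r\) is Gaussian with mean \(cW_q\) and variance \(\xi'(r)-\xi'(r)^2/\xi'(q)=\Var Z\). Hence
\[
\int\dE_Z\Phi_\zeta(r,Z+cb)\,\dd\mu(b)
=\dE\bigl[\Phi_\zeta(r,W_r)D_q\bigr]
=\dE\bigl[\Phi_\zeta(r,W_r)D_r\bigr]
=\dE\,\Phi_\zeta(r,X_r).
\]
The middle equality uses the tower property, since \(\Phi_\zeta(r,W_r)\) is \(\mc F_r\)-measurable and \(D_r=\dE[D_q\mid\mc F_r]\). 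I expect this step to need the most care: one must check that the bridge mean is exactly \(cW_q\), and that the martingale property of \(D\) holds because \(\zeta\) has no mass in \((0,q)\).

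\emph{Step 3: Itô along \(X\).} Write \(M_t=\partial_x\Phi_\zeta(t,X_t)\). Along \(X\) on \([r,q]\), the PDE gives
\[
\dd\Phi_\zeta(t,X_t)=M_t\,\dd W_t+\tfrac{\theta}{2}\xi''(t)M_t^2\,\dd t,
\]
so that
\[
\dE\Phi_\zeta(q,X_q)-\dE\Phi_\zeta(r,X_r)=\frac\theta2\int_r^q\xi''(t)\,\dE M_t^2\,\dd t.
\]
For the remaining deterministic terms, use
\[
q\xi'(q)-\xi(q)-\bigl(r\xi'(r)-\xi(r)\bigr)=\int_r^q t\xi''(t)\,\dd t
\]
and \(\frac12\int_r^q t\xi''(t)\,\bar x_\rho(t)\,\dd t=\frac\theta2\int_r^q t\xi''(t)\,\dd t\). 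Substituting everything into~\eqref{eq:sibling-functional-limit} gives
\[
\theta\int_r^q t\xi''-\frac\theta2\int_r^q\xi''\,\dE M_t^2-\frac\theta2\int_r^q t\xi''
=-\frac\theta2\int_r^q\xi''(t)\bigl(\dE M_t^2-t\bigr)\,\dd t
=-\theta H_{\zeta,q}(r),
\]
by the definition~\eqref{eq:ward-obstacle} of the obstacle, which proves~\eqref{eq:sibling-rs-value}.
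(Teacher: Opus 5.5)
Your proof is correct and follows the paper's argument: identify \(\phi_{\delta_q}=\Phi_\zeta\) on \([r,q]\) via \(\zeta((0,q))=0\), use the Brownian-bridge representation \(X_r\mid X_q=b\stackrel{d}{=}\frac{\xi'(r)}{\xi'(q)}b+Z\), and conclude with the It\^o identity for \(\Phi_\zeta(t,X_t)\). The only difference is that you derive the bridge step by Girsanov with the density \(D_q\) and re-derive the It\^o identity, whereas the paper reads these off the Cole--Hopf plateau transition density (Lemma~\ref{lem:cole-hopf-plateau}, the same tilt) and Lemma~\ref{lem:AC-identities}; one small notational point is that in Step~3 the martingale term should be \(\sqrt{\xi''(t)}\,M_t\,\dd B_t\), with \(B\) the noise driving \(X\), rather than \(M_t\,\dd W_t\) with the reference process \(W\) of Step~2 (which carries a drift under the tilted law), though this does not affect the expectation you take.
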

	
	\begin{proof}
		For \(\rho=\delta_q\), we have
		\(\bar x_{\delta_q}(t)=\theta\) on \([r,q)\). Since
		\(\zeta((0,q))=0\), the reduced equation
		\eqref{eq:sibling-reduced-PDE} agrees with the Parisi equation on this
		interval, and therefore
		\[
		\phi_{\delta_q}(t,x)=\Phi_\zeta(t,x),
		\qquad r\le t\le q.
		\]
		The transition density in
		Lemma~\ref{lem:cole-hopf-plateau} gives
		\[
		X_r\mid X_q=b
		\stackrel{d}{=}
		\frac{\xi'(r)}{\xi'(q)}b+Z,
		\qquad
		Z\sim\mc N\left(
		0,
		\xi'(r)-\frac{\xi'(r)^2}{\xi'(q)}
		\right).
		\]
		Hence,
		\[
		\int\dE_Z\Phi_\zeta\left(
		r,
		Z+\frac{\xi'(r)}{\xi'(q)}b
		\right)\,\dd\mu(b)
		=
		\dE\Phi_\zeta(r,X_r).
		\]
		
		Write \(M_t=\partial_x\Phi_\zeta(t,X_t)\).
		Lemma~\ref{lem:AC-identities} gives
		\[
		\dE\Phi_\zeta(q,X_q)-\dE\Phi_\zeta(r,X_r)
		=
		\frac{\theta}{2}
		\int_r^q\xi''(t)\dE[M_t^2]\,\dd t.
		\]
		Substituting this identity into
		\eqref{eq:sibling-functional-limit} and using
		\[
		\xi(r)-r\xi'(r)-\xi(q)+q\xi'(q)
		=
		\int_r^q t\xi''(t)\,\dd t
		\]
		gives
		\[
		\mc J_{q,r}^{\mu}(\delta_q)
		=
		-\frac{\theta}{2}
		\int_r^q\xi''(t)
		\left(\dE[M_t^2]-t\right)\,\dd t
		=
		-\theta H_{\zeta,q}(r).
		\]
	\end{proof}
	
	For the conditional data in
	Proposition~\ref{prop:sibling-guerra-bound},
	\eqref{eq:sibling-guerra-bound-limit} and
	\eqref{eq:sibling-rs-value} give, uniformly,
	\begin{equation}\label{eq:sibling-nonpositive-bound}
		\frac1N\dE_{\bfm,f'}
		\max_{\bfm'\in\mc C_N(\bfm,r,\delta)}
		\left[
		F_{\TAP,\zeta}(\bfm')-F_{\TAP,\zeta}(\bfm)
		\right]
		\le
		-\theta H_{\zeta,q}(r)
		+o_\ve(1)+o_\delta(1)+o_N(1),
	\end{equation}
	where \(H_{\zeta,q}(r)>0\).
	The finite-tolerance conditions in that proposition give
	\[
	\left|
	F_{\TAP}(\bfm)-F_{\TAP,\zeta}(\bfm)
	\right|
	\le2N\ve.
	\]
	Since \(F_{\TAP}\le F_{\TAP,\zeta}\) pointwise, for every \(\bfm'\),
	\[
	F_{\TAP}(\bfm')-F_{\TAP}(\bfm)
	\le
	F_{\TAP,\zeta}(\bfm')-F_{\TAP,\zeta}(\bfm)
	+2N\ve.
	\]
	Thus \eqref{eq:sibling-nonpositive-bound} remains valid with
	\(F_{\TAP}\) inside the maximum after enlarging
	\(o_\ve(1)\) by \(2\ve\).
	
	%==============================================================================
	\subsection{From annealed complexity to existence}
	%==============================================================================
	
	We now carry out the strategy of Subsection~\ref{sub:strategy}. The
	deterministic input is the following spherical-code bound.
	For \(a\in(0,1)\), let \(A_N(a)\) denote the largest cardinality of a set
	\(\mathcal A\) on the sphere of radius \(\sqrt N\) such that
	\[
	\frac1N\langle u,v\rangle\le a
	\qquad
	\text{for all distinct \(u,v\in\mathcal A\)}.
	\]
	
	\begin{lemma}
		\label{lem:one-sided-spherical-code}
		For every \(a\in(0,1)\),
		\begin{equation}\label{eq:one-sided-spherical-code-explicit}
			\limsup_{N\to\infty}
			\frac1N\log A_N(a)
			\le-\frac12\log(1-a).
		\end{equation}
		In particular,
		\begin{equation}\label{eq:one-sided-spherical-code}
			\lim_{a\downarrow0}
			\limsup_{N\to\infty}
			\frac1N\log A_N(a)=0.
		\end{equation}
	\end{lemma}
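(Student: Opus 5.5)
The plan is a Rankin-type argument: average over random caps, and use the fact that inside a cap slightly narrower than \(\arccos\sqrt a\) any code has bounded size. Normalize to the unit sphere \(S^{N-1}\). The hypothesis then says that distinct code points \(u,v\) satisfy \(\langle u,v\rangle\le a\). The first attempt, disjoint caps of angular radius \(\tfrac12\arccos a\), only gives the exponent \(-\tfrac12\log\frac{1-a}{2}\). That loses a factor \(\tfrac12\log 2\), so I would use overlapping larger caps instead and count how many code points a single cap can contain.

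\emph{Step 1 (few code points per cap).} Fix \(\psi\in(0,\pi/2)\) with \(\cos^2\psi>a\), and set \(c:=\cos^2\psi-a>0\). Let \(w\in S^{N-1}\), and let \(u,v\) be distinct code points in the cap \(C(w,\psi)=\{x:\langle x,w\rangle\ge\cos\psi\}\). Write
\[
u=\cos\alpha\, w+\sin\alpha\, u',
\qquad
v=\cos\beta\, w+\sin\beta\, v',
\]
with \(u',v'\perp w\) unit vectors and \(\alpha,\beta\in[0,\psi]\). If \(\alpha\) or \(\beta\) vanishes, then \(\langle u,v\rangle\ge\cos\psi>a\), which is impossible. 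Otherwise
\[
\sin\alpha\sin\beta\,\langle u',v'\rangle
=\langle u,v\rangle-\cos\alpha\cos\beta
\le a-\cos^2\psi=-c .
\]
Since \(0<\sin\alpha\sin\beta\le1\), this gives \(\langle u',v'\rangle\le -c\). A family of \(k\) unit vectors with pairwise inner products at most \(-c\) satisfies
\[
0\le\Bigl\|\sum u'\Bigr\|^2\le k-k(k-1)c,
\]
so \(k\le 1+1/c\). Hence every cap \(C(w,\psi)\) contains at most \(1+1/c\) code points, uniformly in \(N\).

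\emph{Step 2 (averaging over the cap center).} Let \(w\) be uniform on \(S^{N-1}\), and let \(\mathcal A\) be a code. Then
\[
\dE\,\#\bigl(\mathcal A\cap C(w,\psi)\bigr)=|\mathcal A|\,\sigma_N(\psi),
\]
where \(\sigma_N(\psi)\) is the normalized measure of a cap of angular radius \(\psi\). Step 1 bounds the left side by \(1+1/c\), so \(|\mathcal A|\le(1+1/c)/\sigma_N(\psi)\). The standard cap asymptotics give
\[
\frac1N\log\sigma_N(\psi)\to\log\sin\psi .
\]
One gets this by integrating \(\sin^{N-2}\phi\) over \([0,\psi]\); the integrand is maximized at the endpoint \(\psi<\pi/2\). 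Therefore
\[
\limsup_{N\to\infty}\frac1N\log A_N(a)\le-\log\sin\psi=-\tfrac12\log(1-\cos^2\psi).
\]

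\emph{Step 3 (conclusion).} Letting \(\cos^2\psi\downarrow a\) gives \eqref{eq:one-sided-spherical-code-explicit}. The right-hand side tends to \(0\) as \(a\downarrow0\), and \(A_N(a)\ge1\), so \eqref{eq:one-sided-spherical-code} follows. The only delicate point is Step 1: the cap must be chosen with \(\cos^2\psi>a\), rather than the half-angle caps, so that projected inner products become uniformly negative. The rest is routine.
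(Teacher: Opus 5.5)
Your proof is correct and follows essentially the paper's argument: average over a uniformly random cap center, bound the number of code points in one cap by projecting onto the orthogonal complement of the center, and then use cap-measure asymptotics. The only difference is that the paper takes the critical cap $\langle x_u,z\rangle\ge\sqrt a$, where the projected inner products are merely nonpositive, and invokes Rankin's bound $2(N-1)$. You instead shrink the cap to $\cos^2\psi>a$, so the projected inner products are at most $-c$; the elementary estimate $\|\sum u'\|^2\ge0$ then gives an $N$-independent bound $1+1/c$, at the cost of a final limit $\cos^2\psi\downarrow a$.
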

	
	\begin{proof}
		Set \(x_u=u/\sqrt N\) for \(u\in\mathcal A\). Let \(z\) be uniform on
		\(S^{N-1}\), and define
		\[
		\mathcal A_z
		:=\{u\in\mathcal A:\langle x_u,z\rangle\ge\sqrt a\}.
		\]
		If \(x\in S^{N-1}\) is fixed and
		\[
		p_N(a):=\dP\bigl(\langle x,z\rangle\ge\sqrt a\bigr),
		\]
		then rotational invariance gives
		\[
		\dE_z|\mathcal A_z|=|\mathcal A|p_N(a).
		\]
		Hence some \(z\) satisfies
		\(|\mathcal A_z|\ge|\mathcal A|p_N(a)\).
		
		Let \(P\) be the orthogonal projection onto \(z^\perp\). For distinct
		\(u,v\in\mathcal A_z\),
		\[
		\langle Px_u,Px_v\rangle
		=
		\langle x_u,x_v\rangle
		-\langle x_u,z\rangle\langle x_v,z\rangle
		\le0.
		\]
		By Rankin's theorem~\cite{Rankin1955}, a family of nonzero vectors in an
		\((N-1)\)-dimensional space with pairwise nonpositive inner products has
		cardinality at most \(2(N-1)\). A projection can vanish only when
		\(x_u=\pm z\), and the definition of \(\mathcal A_z\) excludes
		\(x_u=-z\). Thus at most one projection vanishes, and
		\[
		|\mathcal A_z|\le2(N-1)+1\le2N.
		\]
		Since \(|\mathcal A_z|\ge|\mathcal A|p_N(a)\), it follows that
		\[
		A_N(a)\le\frac{2N}{p_N(a)}.
		\]
		
		Since the first coordinate of a uniform point on \(S^{N-1}\) has density
		proportional to \((1-t^2)^{(N-3)/2}\) on \([-1,1]\), with a
		subexponential normalizing constant, Laplace's method gives
		\[
		\lim_{N\to\infty}\frac1N\log p_N(a)
		=\frac12\log(1-a).
		\]
		This proves \eqref{eq:one-sided-spherical-code-explicit}. Letting
		\(a\downarrow0\), and using \(A_N(a)\ge1\), proves
		\eqref{eq:one-sided-spherical-code}.
	\end{proof}
	
	We can now prove the main result of this section.
	
	\begin{proof}[Proof of Proposition~\ref{prop:onshell-lower-bound}]
		Write \(\theta=\theta_f\) and \(\zeta=\zeta_f\).
		Let \(\mathsf N_\ve\) be the SUSY count from
		Definition~\ref{def:susy-state}.
		Proposition~\ref{prop:one-point-conditional-hessian} gives
		\(\delta_0,\kappa>0\). Fix
		\[
		0<\delta<\min\left\{\frac q4,\delta_0\right\}.
		\]
		Fix \(\kappa_0>0\) and choose
		\[
		L>
		\dE\exp\{(8+\kappa_0)|X_q|\}.
		\]
		Let \(\ve_{\mathrm H}(\delta)>0\) and
		\(\ve_{\det}(L)>0\) be thresholds for the
		local spectral-gap and determinant lower-tail estimates.
		
		For a deterministic \(\bfm\) and \(f'\in\dR\), let
		\(\dP_{\bfm,f'}\) and \(\dE_{\bfm,f'}\) denote probability and
		expectation conditioned on
		\[
		F_{\TAP,\zeta}(\bfm)=Nf',\quad
		\nabla F_{\TAP,\zeta}(\bfm)=0.
		\]
		
		\medskip\noindent\emph{Step 1. Stable truncation.}
		The regression and comparison errors are locally uniform in
		\(r\in[\delta,q-\delta]\). Indeed, the covariance coefficients,
		deterministic terms, and reduced Parisi equation in
		Proposition~\ref{prop:sibling-guerra-bound} depend continuously on \(r\)
		and remain uniformly bounded on this interval.
		For each \(r\in[\delta,q-\delta]\),
		\eqref{eq:sibling-nonpositive-bound} and
		\(\theta H_{\zeta,q}(r)\ge8g_\delta\) therefore give
		\(h_r,\ve_r>0\) such that, for all sufficiently large \(N\),
		\[
		\frac1N\dE_{\bfm,f'}
		\max_{\bfm'\in\mc C_N(\bfm,r,h_r)}
		\left[
		F_{\TAP}(\bfm')-F_{\TAP}(\bfm)
		\right]
		\le-6g_\delta
		\]
		uniformly over the one-point windows of width at most
		\(\ve_r\).
		
		Choose \(r_1,\ldots,r_J\) such that the intervals
		\[
		\left(r_j-\frac{h_{r_j}}2,r_j+\frac{h_{r_j}}2\right),
		\qquad 1\le j\le J,
		\]
		cover \([\delta,q-\delta]\). Choose
		\[
		0<\ve_{\mathrm I}
		\le
		\min_{1\le j\le J}
		\left\{\ve_{r_j},h_{r_j}\right\}.
		\]
		If \(0<\ve<\ve_{\mathrm I}\), every point in the maximum
		below belongs to at least one slice
		\(\mc C_N(\bfm,r_j,h_{r_j})\), and the expected maximum on each slice is at
		most \(-6Ng_\delta\). The conditional variance of every Hamiltonian
		increment on the cube is bounded by \(CN\). Gaussian concentration and
		a union bound over the finite cover give \(c_\delta^{\mathrm I}>0\) such
		that
		\begin{equation}\label{eq:sibling-uniform-intermediate}
			\dP_{\bfm,f'}\left(
			\max_{\substack{
					\bfm'\in[-1,1]^N,\\
					|q_{\bfm'}-q|\le\ve,\\
					R(\bfm,\bfm')\in[\delta,q-\delta]
			}}
			\left[F_{\TAP}(\bfm')-F_{\TAP}(\bfm)\right]
			>-2Ng_\delta
			\right)
			\le e^{-c_\delta^{\mathrm I}N}
		\end{equation}
		for \(0<\ve<\ve_{\mathrm I}\), uniformly over the
		one-point Kac--Rice windows.
		
		Choose \(\ve>0\) such that
		\begin{equation}\label{eq:susy-geometric-parameter-choice}
			\ve
			<
			\min\left\{
			\ve_{\mathrm H}(\delta),\ve_{\det}(L),
			\ve_{\mathrm I},
			\delta,g_\delta
			\right\}.
		\end{equation}
		
		For a point \(\bfm\) counted by \(\mathsf N_\ve\), let
		\(\mathcal G_{\bfm,\delta}\) be the event that \(\bfm\) is isolated in
		the sense of Definition~\ref{def:isolated-susy-state}.
		
		If both \(\bfm\) and \(\bfm'\) are counted by
		\(\mathsf N_\ve\), their TAP values differ by at most
		\(2N\ve\). Hence, by
		\eqref{eq:susy-geometric-parameter-choice}, failure of condition
		\textup{(i)} in Definition~\ref{def:isolated-susy-state} is contained
		in the event in \eqref{eq:sibling-uniform-intermediate}. Therefore
		\begin{equation}\label{eq:geometric-intermediate-failure}
			\dP_{\bfm,f'}\left(
			\bfm\text{ fails condition \textup{(i)} in
				Definition~\ref{def:isolated-susy-state}}
			\right)
			\le e^{-c_\delta^{\mathrm I}N}.
		\end{equation}
		By \eqref{eq:one-point-local-spectral-gap}, for all sufficiently
		large \(N\), we use the local spectral-gap estimate only in the form
		\begin{equation}\label{eq:geometric-local-gap-probability}
			\dP_{\bfm,f'}\left(
			\bfm\text{ satisfies condition \textup{(ii)} in
				Definition~\ref{def:isolated-susy-state}}
			\right)
			\ge\frac12.
		\end{equation}
		\eqref{eq:geometric-intermediate-failure} and
		\eqref{eq:geometric-local-gap-probability} give
		\begin{equation}\label{eq:geometric-good-point-probability}
			\dP_{\bfm,f'}\left(
			\mathcal G_{\bfm,\delta}
			\right)
			\ge\frac12-e^{-c_\delta^{\mathrm I}N}.
		\end{equation}
		
		For \(f'\in[f-\ve,f+\ve]\), let
		\[
		\mathcal W_{\ve,f'}
		:=
		\left\{
		\begin{array}{l}
			\bfm\in(-1,1)^N:\ |q_\bfm-q|\le\ve,\\[1mm]
			W_2\left(N^{-1}\sum_{i=1}^N\delta_{b_i},\Law(X_q)\right)
			\le\ve,\\[1mm]
			\left|f'+N^{-1}\left(F_{\TAP}(\bfm)
			-F_{\TAP,\zeta}(\bfm)\right)-f\right|\le\ve
		\end{array}
		\right\}.
		\]
		Set
		\[
		\mathcal W_{\ve,f'}^{(L)}
		:=
		\left\{
		\bfm\in\mathcal W_{\ve,f'}:
		\frac1N\sum_{i=1}^N
		\exp\{(8+\kappa_0)|b_i|\}\le L
		\right\}.
		\]
		By the Kac--Rice formula,
		\[
		\begin{aligned}
			\dE\widehat{\mathsf N}_{\ve,\delta,\kappa}
			\ge{}&
			N\int_{f-\ve}^{f+\ve}
			\int_{\mathcal W_{\ve,f'}^{(L)}}\\
			&\quad\dE_{\bfm,f'}\!\left[
			\left|\det\nabla^2F_{\TAP,\zeta}(\bfm)\right|
			\boldsymbol 1_{\mathcal G_{\bfm,\delta}}
			\right]\\
			&\quad\times
			p_{\left(
				\nabla F_{\TAP,\zeta}(\bfm),
				F_{\TAP,\zeta}(\bfm)
				\right)}(0,Nf')
			\,\dd\bfm\,\dd f'.
		\end{aligned}
		\]
		
		Set
		\[
		D_\bfm
		:=
		\det\nabla^2F_{\TAP,\zeta}(\bfm).
		\]
		Let \(r_{N,\ve}=o_N(N)\) be the uniform remainder from
		Lemma~\ref{lem:one-point-conditional-determinant}\textup{(ii)}, and let
		\[
		\mathcal D_\bfm
		:=
		\left\{
		|D_\bfm|
		\ge
		\exp\{-N\rho_{\det}(\ve)-r_{N,\ve}\}
		\dE_{\bfm,f'}|D_\bfm|
		\right\}.
		\]
		Lemma~\ref{lem:one-point-conditional-determinant}\textup{(ii)} gives
		\[
		\dP_{\bfm,f'}(\mathcal D_\bfm^{\,c})
		\le e^{-c_LN}
		\]
		uniformly on \(\mathcal W_{\ve,f'}^{(L)}\). Together with
		\eqref{eq:geometric-good-point-probability}, this gives
		\[
		\dP_{\bfm,f'}\left(
		\mathcal G_{\bfm,\delta}\cap\mathcal D_\bfm
		\right)
		\ge
		\frac12-e^{-c_\delta^{\mathrm I}N}-e^{-c_LN}
		\ge\frac14
		\]
		for all sufficiently large \(N\). Therefore
		\begin{equation}\label{eq:geometric-local-stability-input}
			\dE_{\bfm,f'}\left[
			|D_\bfm|
			\boldsymbol 1_{\mathcal G_{\bfm,\delta}}
			\right]
			\ge
			\frac14
			\exp\{-N\rho_{\det}(\ve)-r_{N,\ve}\}
			\dE_{\bfm,f'}|D_\bfm|
		\end{equation}
		uniformly on \(\mathcal W_{\ve,f'}^{(L)}\).
		
		The Auffinger--Chen drift is bounded, so \(X_q\) has all exponential
		moments. With \(L\) chosen above,
		the law of large numbers shows that the cutoff has probability tending to
		one under the product measure used in the lower-bound proof of
		Proposition~\ref{prop:susy-annealed}. Hence the restricted
		Kac--Rice integral has exponent \(\Sigma(f)\). Together with
		\eqref{eq:geometric-local-stability-input} and
		\(\rho_{\det}(\ve)\to0\), this gives the lower bound. The upper
		bound follows from
		\(\widehat{\mathsf N}_{\ve,\delta,\kappa}\le\mathsf N_\ve\) and
		Proposition~\ref{prop:susy-annealed}. Therefore
		\begin{equation}\label{eq:geometric-truncated-first-moment}
			\lim_{\ve\downarrow0}
			\liminf_{N\to\infty}
			\frac1N
			\log
			\dE\widehat{\mathsf N}_{\ve,\delta,\kappa}
			=
			\Sigma(f).
		\end{equation}
		
		\medskip\noindent\emph{Step 2. Exclusion at high overlap.}
		A point \(\bfm\) counted by
		\(\widehat{\mathsf N}_{\ve,\delta,\kappa}\) has no distinct competitor
		\(\bfm'\) counted by \(\mathsf N_\ve\) with
		\[
		R(\bfm,\bfm')\ge q-\delta.
		\]
		To prove this, suppose that such a competitor exists. Since
		\(\ve\le\delta\),
		\[
		\begin{aligned}
			\frac1N\|\bfm'-\bfm\|_2^2
			&=
			q_\bfm+q_{\bfm'}-2R(\bfm,\bfm')\\
			&\le
			2(q+\ve)-2(q-\delta)
			\le4\delta.
		\end{aligned}
		\]
		With \(v=\bfm'-\bfm\), the segment
		\[
		\{\bfm+tv:0\le t\le1\}
		\]
		is contained in the ball appearing in condition \textup{(ii)} of
		Definition~\ref{def:isolated-susy-state}.
		
		Since both endpoints are critical,
		\[
		\nabla F_{\TAP,\zeta}(\bfm)
		=
		\nabla F_{\TAP,\zeta}(\bfm')
		=0.
		\]
		Taylor's formula for the gradient gives
		\[
		\begin{aligned}
			0
			&=
			\left\langle
			v,
			\nabla F_{\TAP,\zeta}(\bfm')
			-
			\nabla F_{\TAP,\zeta}(\bfm)
			\right\rangle\\
			&=
			\int_0^1
			v^{\mathsf T}
			\nabla^2F_{\TAP,\zeta}(\bfm+tv)
			v\,\dd t\\
			&\le
			-\kappa\|v\|_2^2.
		\end{aligned}
		\]
		It follows that \(v=0\), contradicting \(\bfm'\ne\bfm\).
		
		\medskip\noindent\emph{Step 3. Spherical-code bound.}
		Let \(\bfm\) and \(\bfm'\) be two distinct points counted by
		\(\widehat{\mathsf N}_{\ve,\delta,\kappa}\). By condition
		\textup{(i)} of Definition~\ref{def:isolated-susy-state}, their overlap
		cannot belong to
		\([\delta,q-\delta]\), and Step~2 excludes overlaps at least
		\(q-\delta\). Therefore
		\[
		R(\bfm,\bfm')<\delta.
		\]
		
		Normalize the retained points by setting
		\[
		u_\bfm:=\frac{\bfm}{\sqrt{q_\bfm}}.
		\]
		Then
		\[
		\|u_\bfm\|_2=\sqrt N,
		\]
		and, for two distinct retained points,
		\[
		\frac1N
		\langle u_\bfm,u_{\bfm'}\rangle
		=
		\frac{R(\bfm,\bfm')}
		{\sqrt{q_\bfm q_{\bfm'}}}
		\le
		\frac{\delta}{q-\ve}.
		\]
		Thus the normalized retained points form a code to which
		Lemma~\ref{lem:one-sided-spherical-code} applies, and
		\begin{equation}\label{eq:geometric-code-count}
			\widehat{\mathsf N}_{\ve,\delta,\kappa}
			\le
			A_N\left(
			\frac{\delta}{q-\ve}
			\right).
		\end{equation}
		
		\medskip\noindent\emph{Step 4. Existence probability.}
		By \eqref{eq:geometric-code-count},
		\[
		\widehat{\mathsf N}_{\ve,\delta,\kappa}
		\le
		A_N\left(
		\frac{\delta}{q-\ve}
		\right)
		\boldsymbol 1_{
			\{\widehat{\mathsf N}_{\ve,\delta,\kappa}\ge1\}
		}.
		\]
		Taking expectations gives
		\[
		\dP\left(
		\widehat{\mathsf N}_{\ve,\delta,\kappa}\ge1
		\right)
		\ge
		\frac{
			\dE\widehat{\mathsf N}_{\ve,\delta,\kappa}
		}{
			A_N\left(\delta/(q-\ve)\right)
		}.
		\]
		Since
		\(\mathsf N_\ve
		\ge\widehat{\mathsf N}_{\ve,\delta,\kappa}\),
		\[
		\dP\bigl(\mc E_{N,\ve}(f)\bigr)
		\ge
		\dP\left(
		\widehat{\mathsf N}_{\ve,\delta,\kappa}\ge1
		\right).
		\]
		Lemma~\ref{lem:one-sided-spherical-code} now gives
		\[
		\begin{aligned}
			\lim_{\ve\downarrow0}
			\liminf_{N\to\infty}
			\frac1N
			\log
			\dP\bigl(\mc E_{N,\ve}(f)\bigr)
			&\ge
			\lim_{\ve\downarrow0}
			\left[
			\liminf_{N\to\infty}
			\frac1N
			\log\dE\widehat{\mathsf N}_{\ve,\delta,\kappa}
			+
			\frac12\log\left(
			1-\frac{\delta}{q-\ve}
			\right)
			\right]\\
			&=
			\Sigma(f)
			+
			\frac12\log\left(1-\frac{\delta}{q}\right),
		\end{aligned}
		\]
		where the equality follows from
		\eqref{eq:geometric-truncated-first-moment}. Letting \(\delta\downarrow0\)
		gives
		\[
		\lim_{\ve\downarrow0}
		\liminf_{N\to\infty}
		\frac1N
		\log
		\dP\bigl(\mc E_{N,\ve}(f)\bigr)
		\ge
		\Sigma(f),
		\]
		which proves Proposition~\ref{prop:onshell-lower-bound}.
	\end{proof}
	
	\subsection{Proof of Theorem~\ref{thm:legendre}}
	
	We combine the upper and lower bounds obtained above.
	
	\begin{proof}
		Set
		\[
		M_N:=\max_{\bfm\in[-1,1]^N}F_{\TAP}(\bfm).
		\]
		Proposition~\ref{prop:direct-tilt-TAP} gives, for every
		\(s\in(0,1)\),
		\[
		\limsup_{N\to\infty}
		\frac1N\log\dE e^{sM_N}
		\le\Lambda(s).
		\]
		
		For every \(\ve>0\) and \(s\in(0,1)\), Markov's inequality gives
		\[
		\dP\bigl(\mc E_{N,\ve}(f)\bigr)
		\le
		\dP\bigl(M_N\ge N(f-\ve)\bigr)
		\le
		e^{-Ns(f-\ve)}\dE e^{sM_N}.
		\]
		Therefore,
		\[
		\lim_{\ve\downarrow0}
		\limsup_{N\to\infty}
		\frac1N\log\dP\bigl(\mc E_{N,\ve}(f)\bigr)
		\le
		\inf_{s\in(0,1)}
		\bigl[\Lambda(s)-sf\bigr]
		=
		\Sigma(f).
		\]
		Proposition~\ref{prop:onshell-lower-bound} gives the reverse
		inequality and proves~\eqref{eq:existence-tap}.
		
		For every \(\ve>0\),
		\[
		\dE e^{\theta_fM_N}
		\ge
		e^{N\theta_f(f-\ve)}
		\dP\bigl(\mc E_{N,\ve}(f)\bigr).
		\]
		Since \(\theta_f\) minimizes
		\(\theta\mapsto\Lambda(\theta)-\theta f\),
		\[
		\theta_f f+\Sigma(f)=\Lambda(\theta_f).
		\]
		Thus \eqref{eq:existence-tap} gives
		\[
		\liminf_{N\to\infty}
		\frac1N\log\dE e^{\theta_fM_N}
		\ge\Lambda(\theta_f).
		\]
		The reverse inequality follows from
		Proposition~\ref{prop:direct-tilt-TAP}, proving~\eqref{eq:legendre}.
	\end{proof}
	
	\subsection{Free-energy large-deviation lower bound}
	
	We conclude by transferring the TAP-state lower bound to the ordinary free
	energy.
	
	\begin{proposition}[Free-energy large-deviation lower bound]
		\label{prop:free-energy-ldp-lower-bound}
		Let \(f\in(f_{\eq},f_1)\) be regular. Let \(q\) be the first
		positive contact point of \(\zeta_f\), and let \(X\) be the
		Auffinger--Chen process
		associated with \(\zeta_f\). Assume that
		\[
		1-\xi''(q)\dE\left[
		\left(
		\partial_{xx}\Phi_{\zeta_f}(q,X_q)
		\right)^2
		\right]>0.
		\]
		Then
		\[
		\lim_{\rho\downarrow0}\liminf_{N\to\infty}
		\frac1N\log\dP\left(
		\frac1N\log Z_N\ge f-\rho
		\right)
		\ge \Sigma(f).
		\]
	\end{proposition}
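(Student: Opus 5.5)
The plan is to combine the existence lower bound of Proposition~\ref{prop:onshell-lower-bound} with a deterministic lower bound of \(\log Z_N\) by the TAP free energy at the state whose existence we have just guaranteed. The Remark following Proposition~\ref{prop:ising-exp} gives, for every \(\bfm\in[-1,1]^N\),
\[
\frac1N\log Z_N\ge\frac1N H_N(\bfm)+U_N^{k,\delta}(\bfm),
\]
and the Chen--Panchenko--Subag estimates show that \(U_N^{k,\delta}(\bfm)\) is uniformly close to \(N^{-1}(F_{\TAP}(\bfm)-H_N(\bfm))\) over the cube, in expectation and for suitable \(k,\delta\). So on the event \(\mc E_{N,\ve}(f)\) there is \(\bfm\) with \(N^{-1}F_{\TAP}(\bfm)\ge f-\ve\). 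Hence
\[
\frac1N\log Z_N\ge f-\ve-\sup_{\bfm}\Bigl|U_N^{k,\delta}(\bfm)-\tfrac1N\bigl(F_{\TAP}(\bfm)-H_N(\bfm)\bigr)\Bigr|.
\]

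The first step is to upgrade the expectation control of the supremum error to a high-probability statement. The error is a Lipschitz function of the Gaussian disorder with Lipschitz constant \(O(N^{-1/2})\), since both \(U_N^{k,\delta}\) and \(H_N\) are \(1/\sqrt N\)-Lipschitz at the normalized scale. Gaussian concentration therefore gives, for any \(\rho>0\), parameters \(k,\delta\) and \(c>0\) such that the supremum exceeds \(\rho/2\) with probability at most \(e^{-cN^{\,}}\cdot\) something. More precisely, the probability is at most \(\exp(-c\rho^2N)\) for large \(N\).

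This is the main obstacle: that rate \(c\rho^2\) must beat \(|\Sigma(f)|\), and for small \(\rho\) it does not. I would not rely on the unconditional bound. Instead I would work at a single \(\bfm\) and use that \(\mc E_{N,\ve}(f)\) is the realized event, whose probability is only \(e^{N\Sigma(f)}\). Two routes seem possible.

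The first route is to use the fixed-\(\bfm\) (pointwise) version of the Chen--Panchenko--Subag lower bound, \(\log Z_N\ge F_{\TAP}(\bfm)-o(N)\), inside the Kac--Rice count. Under the Kac--Rice conditional law \(\dP_{\bfm,f'}\), the band free energy above \(\bfm\) is a function of the field orthogonal to the conditioned value and gradient. Its conditional expectation is at least \(F_{\TAP}(\bfm)-H_N(\bfm)-o(N)\), as in the band analysis of Section~\ref{sec:new5}, and its conditional concentration gives failure probability \(e^{-cN}\).

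The second route is to fold the event \(\{N^{-1}\log Z_N\ge f-\rho\}\) into the isolation event \(\mathcal G_{\bfm,\delta}\) in Step~1 of the proof of Proposition~\ref{prop:onshell-lower-bound}. There the only requirement was conditional probability at least \(1/4\) for the truncation, so adding an event of conditional probability \(1-o(1)\) preserves \eqref{eq:geometric-truncated-first-moment}.

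I will follow the second route. I would add the event \(\mathcal F_\bfm:=\{N^{-1}\log Z_N\ge N^{-1}F_{\TAP}(\bfm)-\rho/2\}\) to the truncation and show \(\dP_{\bfm,f'}(\mathcal F_\bfm)\to1\) uniformly over the windows \(\mathcal W^{(L)}_{\ve,f'}\). This is a conditional version of the Chen--Panchenko--Subag lower bound at a prescribed critical point, which is the technical heart of the argument. To prove it, I would restrict \(Z_N\) to configurations \(\sigma\) with \(R(\sigma,\bfm)\approx q_\bfm\). I would then apply the second-moment (planted) computation in the band, as in the Remark after Theorem~\ref{thm:legendre} and \cite[Proposition~2.7]{HuangSellke}, or alternatively the replicated band functional \(U^{k,\delta}_N(\bfm)\) at fixed \(\bfm\), whose conditional mean is computed via the regression Lemma~\ref{lem:reduced-same-shell-increment}.

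With this in hand, the retained isolated points also lie in \(\mathcal F_\bfm\), and the spherical-code argument gives
\[
\dP\Bigl(\tfrac1N\log Z_N\ge f-\ve-\tfrac\rho2\Bigr)\ge e^{N(\Sigma(f)+\frac12\log(1-\delta/q)-o(1))}.
\]
Letting \(N\to\infty\), then \(\ve\downarrow0\) with \(\ve<\rho/2\), then \(\delta\downarrow0\) and finally \(\rho\downarrow0\) proves the proposition.
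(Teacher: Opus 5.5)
\textbf{There is a gap: the step your proof rests on is unproved, and the obstacle that led you to it does not exist.} The direct route you set aside is exactly the paper's proof.

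In the error $\sup_{\bfm}|U_N^{k,\delta}(\bfm)-N^{-1}(F_{\TAP}(\bfm)-H_N(\bfm))|$, the term $F_{\TAP}(\bfm)-H_N(\bfm)$ is deterministic, so only $U_N^{k,\delta}(\bfm)$ fluctuates. Its Lipschitz constant in the disorder shrinks as $k$ grows and $\delta$ shrinks. On $\Band^{\Ising}_{k,\delta}(\bfm)$, every overlap $R(\sigma^a,\sigma^b)$ with $a\ne b$, and every $R(\sigma^a,\bfm)$, lies within $\delta$ of $q_\bfm$. Hence
\[
\Var\Bigl(\sum_{a=1}^k\bigl(H_N(\sigma^a)-H_N(\bfm)\bigr)\Bigr)\le Nk\,\xi(1)+CNk^2\delta .
\]
After dividing by $Nk$, the squared Lipschitz constant is at most $N^{-1}(\xi(1)/k+C\delta)$. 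This holds uniformly in $\bfm$, and therefore also for the supremum over $\bfm$. Combine this with the expectation estimate recalled after Proposition~\ref{prop:ising-exp}. Gaussian concentration then bounds the probability that the uniform error exceeds $t$ by $\exp\{-Nt^2/(8(\xi(1)/k+C\delta))\}$ for large $N$. This rate exceeds any prescribed $c$ once $\delta$ is small and then $k$ is large, so your fixed $c\rho^2$ is wrong. That bound is \eqref{eq:CPS-uniform-band-bound}, which the paper takes from the Chen--Panchenko--Subag estimate. The paper then chooses $c>-\Sigma(f)+2\gamma$, intersects with $\mc E_{N,\ve}(f)$, and evaluates $N^{-1}\log Z_N\ge N^{-1}H_N(\bfm)+U_N^{k,\delta}(\bfm)$ at any witness $\bfm$. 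No conditioning and no change to the Kac--Rice argument is needed.

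The route you chose is sound in outline. Adding an event of conditional probability $1-o(1)$ to the truncation preserves \eqref{eq:geometric-truncated-first-moment}, and the spherical-code step still applies to the smaller count. However, its technical heart, $\dP_{\bfm,f'}(\mathcal F_\bfm)\to1$ uniformly on $\mathcal W^{(L)}_{\ve,f'}$, is not proved, and neither tool you cite provides it:
\begin{itemize}
\item The Huang--Sellke planted second moment needs a strictly replica-symmetric band. The band above the first contact point $q$ of $\zeta_f$ generally carries the rest of $\zeta_f$.
\item Lemma~\ref{lem:reduced-same-shell-increment} concerns same-shell points with $R(\bfm,\bfm')\approx r\in(-q,q)$. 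It says nothing about configurations $\sigma$ with $R(\sigma,\bfm)\approx q_\bfm$ and $q_\sigma=1$.
\end{itemize}

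The step can be repaired by Gaussian regression. On the band, the regression mean of $H_N(\sigma)-H_N(\bfm)$ given $(H_N(\bfm),\nabla H_N(\bfm))$ is $\langle\nabla H_N(\bfm),\sigma-\bfm\rangle+O(N\delta)$. The replica barycenter $\bar\sigma$ satisfies $\|\bar\sigma-\bfm\|_2\le C\sqrt{N(k^{-1}+\delta)}$. So under both the conditional and the unconditional law, $U_N^{k,\delta}(\bfm)$ equals the replicated band free energy of the residual field, which is independent of the conditioning. The discrepancy is at most $C\sqrt{k^{-1}+\delta}$ whenever $|H_N(\bfm)|\le CN$ and $\|\nabla H_N(\bfm)\|_2\le C\sqrt N$, and these bounds hold on the Kac--Rice window. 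But this repair relies on the same many-replica variance bound as above, so the detour through the conditional law gains nothing over the paper's short unconditional argument.
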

	
	\begin{proof}
		Recall \(U_N^{k,\delta}\) from
		\eqref{eq:replicated-band-free-energy}. The quantitative uniform band
		estimate of Chen, Panchenko, and
		Subag~\cite[Theorem~1]{ChenPanchenkoSubag} implies that, for every
		\(c,t>0\), one can choose \(k\) sufficiently large and \(\delta>0\)
		sufficiently small so that, for all sufficiently large \(N\),
		\begin{equation}\label{eq:CPS-uniform-band-bound}
			\dP\left(
			U_N^{k,\delta}(\bfm)
			\ge
			\frac{F_{\TAP}(\bfm)-H_N(\bfm)}N-t
			\quad\text{for every }\bfm\in[-1,1]^N
			\right)
			\ge1-e^{-cN}.
		\end{equation}
		The normalization in \eqref{eq:replicated-band-free-energy} subtracts
		\(\log2\) from the band free energy. Our terminal condition
		\(\log\cosh x\), in place of \(\log(2\cosh x)\), subtracts the same
		constant from the TAP correction. Hence no additional constant appears
		in~\eqref{eq:CPS-uniform-band-bound}.
		
		Fix \(\gamma,\rho>0\). By Proposition~\ref{prop:onshell-lower-bound},
		we may choose
		\(0<\ve<\rho/2\) such that
		\[
		\liminf_{N\to\infty}\frac1N
		\log\dP\bigl(\mc E_{N,\ve}(f)\bigr)
		\ge\Sigma(f)-\gamma.
		\]
		Consequently, for all sufficiently large \(N\),
		\[
		\dP\bigl(\mc E_{N,\ve}(f)\bigr)
		\ge \exp\bigl\{N(\Sigma(f)-2\gamma)\bigr\}.
		\]
		Choose \(c>-\Sigma(f)+2\gamma\) and apply
		\eqref{eq:CPS-uniform-band-bound} with \(t=\rho/2\). The probability
		that \(\mc E_{N,\ve}(f)\) and the uniform bound both hold is then at
		least
		\[
		\frac12
		\exp\bigl\{N(\Sigma(f)-2\gamma)\bigr\}
		\]
		for all sufficiently large \(N\).
		
		On this event, let \(\bfm\) be a witness for
		\(\mc E_{N,\ve}(f)\). Since the band estimate is uniform in \(\bfm\),
		no measurable choice is needed. Restricting the replicated partition
		function to \(\Band_{k,\delta}^{\Ising}(\bfm)\) gives
		\begin{align*}
			\frac1N\log Z_N
			&\ge
			\frac1N H_N(\bfm)+U_N^{k,\delta}(\bfm)\\
			&\ge
			\frac1N F_{\TAP}(\bfm)-\frac\rho2\\
			&\ge f-\ve-\frac\rho2
			\ge f-\rho.
		\end{align*}
		Therefore
		\[
		\liminf_{N\to\infty}\frac1N\log\dP\left(
		\frac1N\log Z_N\ge f-\rho
		\right)
		\ge\Sigma(f)-2\gamma.
		\]
		Finally let \(\gamma\downarrow0\) and then \(\rho\downarrow0\).
	\end{proof}
	
	%==============================================================================
	\appendix
	
	\section{The Parisi PDE and its Auffinger--Chen flow}\label{section:Auffinger}
	%==============================================================================
	
	We collect the Parisi-flow identities used in the Kac--Rice and band
	arguments. All statements in this appendix are quoted without proof. They
	are standard consequences of the Auffinger--Chen stochastic control
	representation of the Parisi PDE and of the generalized TAP variational
	calculus, as recalled in
	\cite{AuffingerChen,AuffingerChenControl,ChenPanchenkoSubag} and
	\cite[Appendix~B]{TAPIsing}.
	Throughout, if $\zeta\in\mc P([q,1])$, we keep the convention used in the
	body of the paper and write $\zeta([0,t])$ for $\zeta([q,t])$ when $t\ge q$.
	The same convention is used for signed perturbations of band measures. We
	use the cumulant notation of \eqref{eq:order-parameter-cumulants} with this
	convention.
	
	%------------------------------------------------------------------------------
	\subsection{The Auffinger--Chen flow}
	%------------------------------------------------------------------------------
	
	For $\zeta\in\mc P([0,1])$, the Parisi PDE is
	\begin{equation}\label{eq:ParisiPDE}
		\partial_t\Phi_\zeta(t,x)
		+\frac{\xi''(t)}2
		\left(
		\partial_{xx}\Phi_\zeta(t,x)
		+\zeta([0,t])\bigl(\partial_x\Phi_\zeta(t,x)\bigr)^2
		\right)=0,
		\qquad
		\Phi_\zeta(1,x)=\log\cosh x .
	\end{equation}
	The Parisi flow satisfies
	\[
	0\le\partial_{xx}\Phi_\zeta(t,x)\le1.
	\]
	The lower and upper bounds follow from
	\cite[Lemmas~B.5 and~B.2]{TAPIsing}, respectively. On compact subsets of
	\([0,1)\times\mathbb R\), the spatial derivatives of
	\(\Phi_\zeta\) are bounded and continuous. Their \(t\)-derivatives have
	one-sided limits at atoms of \(\zeta\). These are standard interior
	regularity properties of \eqref{eq:ParisiPDE}.
	Given $s\in[0,1]$ and $x\in\dR$, the associated Auffinger--Chen process on $[s,1]$ is the solution of
	\begin{equation}\label{eq:AuffingerChen}
		\dd X_t
		=\zeta([0,t])\xi''(t)\partial_x\Phi_\zeta(t,X_t)\,\dd t
		+\sqrt{\xi''(t)}\,\dd B_t,
		\qquad X_s=x .
	\end{equation}
	We write
	\[
	M_t:=\partial_x\Phi_\zeta(t,X_t).
	\]
	When the initial condition is clear, it is suppressed from the notation.
	
	The following identities combine
	\cite[Lemmas~B.1 and~B.4]{TAPIsing}.
	\begin{lemma}[Auffinger--Chen identities]\label{lem:AC-identities}
		For the process~\eqref{eq:AuffingerChen}, the following identities hold.
		\begin{enumerate}[label=\textup{(\roman*)}]
			\item $M_t$ is a martingale and
			\begin{equation}\label{eq:AC-martingale-differential}
				\dd M_t
				=\sqrt{\xi''(t)}\,\partial_{xx}\Phi_\zeta(t,X_t)\,\dd B_t .
			\end{equation}
			
			\item The Parisi value along the optimal trajectory satisfies
			\begin{equation}\label{eq:AC-ito-phi}
				\dd\Phi_\zeta(t,X_t)
				=\frac12\zeta([0,t])\xi''(t)M_t^2\,\dd t
				+\sqrt{\xi''(t)}M_t\,\dd B_t .
			\end{equation}
			Equivalently, for $s<t$,
			\begin{equation}\label{eq:AC-ito-phi-integrated}
				\Phi_\zeta(t,X_t)-\Phi_\zeta(s,X_s)
				=\frac12\int_s^t\zeta([0,r])\xi''(r)M_r^2\,\dd r
				+\int_s^t\sqrt{\xi''(r)}M_r\,\dd B_r .
			\end{equation}
			
			\item For $s<t$,
			\begin{equation}\label{eq:AC-basic-ward-a}
				\dE\bigl[(X_t-X_s)M_s\bigr]
				=\dE[M_s^2]\int_s^t\zeta([0,r])\xi''(r)\,\dd r,
			\end{equation}
			and
			\begin{equation}\label{eq:AC-basic-ward-b}
				\dE\bigl[(M_t-M_s)X_s\bigr]=0 .
			\end{equation}
			
			\item Suppose that
			\(\dE[M_r^2]=r\) for \(\zeta\)-a.e.\ \(r\in[0,1]\).
			If \(s,t\in\supp(\zeta)\cap[0,1)\) and
			\(\zeta((s,t))=0\), then
			\begin{equation}\label{eq:AC-deltaM-deltaX}
				\dE\bigl[(M_t-M_s)(X_t-X_s)\bigr]
				=\bigl(\xi'(t)-\xi'(s)\bigr)\int_s^1\zeta([0,r])\,\dd r .
			\end{equation}
			Consequently,
			\begin{equation}\label{eq:AC-ward-contact-form}
				\dE\bigl[M_t(X_t-X_s)\bigr]
				=\bigl(\xi'(t)-\xi'(s)\bigr)
				\left(\zeta([0,s])\,t+\widehat\zeta(t)\right),
			\end{equation}
			where in the last display we used $\zeta((s,t))=0$.
		\end{enumerate}
	\end{lemma}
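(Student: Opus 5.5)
The plan is to apply It\^o calculus to $u:=\partial_x\Phi_\zeta$, $\Phi_\zeta$ and $w:=\partial_{xx}\Phi_\zeta$ along the process \eqref{eq:AuffingerChen}, using the PDE \eqref{eq:ParisiPDE} and its $x$-derivatives to cancel the drifts. Atoms of $\zeta$ only affect a Lebesgue-null set of times, so one-sided time derivatives suffice. For \textup{(i)}, differentiate \eqref{eq:ParisiPDE} in $x$:
\[
\partial_t u+\tfrac{\xi''}{2}\bigl(\partial_{xx}u+2\zeta([0,t])\,u\,\partial_xu\bigr)=0.
\]
The drift of $u(t,X_t)$ is $\partial_tu+\zeta([0,t])\xi''u\,\partial_xu+\tfrac12\xi''\partial_{xx}u=0$, which gives \eqref{eq:AC-martingale-differential}. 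Since $0\le w\le 1$, the integrand is bounded and $M$ is a true martingale. For \textup{(ii)}, the drift of $\Phi_\zeta(t,X_t)$ is $\partial_t\Phi+\zeta\xi''(\partial_x\Phi)^2+\tfrac12\xi''\partial_{xx}\Phi=\tfrac12\zeta([0,t])\xi''M_t^2$ by the PDE, and the diffusion term is $\sqrt{\xi''}M_t\,\dd B_t$. This gives \eqref{eq:AC-ito-phi}, and integrating gives \eqref{eq:AC-ito-phi-integrated}.

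For \textup{(iii)}, write $X_t-X_s=\int_s^t\zeta([0,r])\xi''(r)M_r\,\dd r+\int_s^t\sqrt{\xi''}\,\dd B_r$. The stochastic integral is orthogonal to the $\mathcal F_s$-measurable $M_s$. The martingale property gives $\dE[M_sM_r]=\dE M_s^2$, which yields \eqref{eq:AC-basic-ward-a}. \eqref{eq:AC-basic-ward-b} is the martingale property, since $X_s$ is $\mathcal F_s$-measurable with Gaussian tails (the drift is bounded).

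For \textup{(iv)}, the main step and the main obstacle is to identify $\dE w(s,X_s)$. First I would differentiate the PDE twice in $x$ and apply It\^o to $w(r,X_r)$; I expect the drift to reduce to $-\zeta([0,r])\xi''(r)w^2$. Separately, \textup{(i)} gives $\tfrac{\dd}{\dd r}\dE M_r^2=\xi''\dE w_r^2$. Hence
\[
\dd\,\dE w_r=-\zeta([0,r])\,\dd\,\dE M_r^2 .
\]
Integrating from $s$ to $1$ and using $\dE w_1=1-\dE M_1^2$ (since $w(1,\cdot)=1-\tanh^2$), an integration by parts with $\dE M_r^2=r$ $\zeta$-a.e.\ should yield
\[
\dE w_s=1-s\,\zeta([0,s])-\int_{(s,1]}r\,\zeta(\dd r)=\widehat\zeta(s).
\]
Here $\dE M_s^2=s$ at $s\in\supp\zeta$ follows by continuity, and the atom at $s$ needs care. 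This is essentially the content of Lemma~\ref{lem:AC-susceptibility}.

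With this in hand, set $x:=\zeta([0,s])$, which equals $\zeta([0,r])$ for $r\in(s,t)$ since $\zeta((s,t))=0$. Covariation and the drift decomposition give
\[
\dE[(M_t-M_s)(X_t-X_s)]=\int_s^t\xi''\dE w_r\,\dd r+x\int_s^t\xi''(r)\bigl(\dE M_r^2-\dE M_s^2\bigr)\dd r.
\]
On $(s,t)$ the identity above gives $\dE w_r=\dE w_s-x(\dE M_r^2-\dE M_s^2)$, so the two terms collapse to $(\xi'(t)-\xi'(s))\widehat\zeta(s)$. This is \eqref{eq:AC-deltaM-deltaX}, and no contact is needed inside $(s,t)$. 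Finally, add \eqref{eq:AC-basic-ward-a} with $\dE M_s^2=s$, namely $s\,x(\xi'(t)-\xi'(s))$, and use $\widehat\zeta(s)=\widehat\zeta(t)+x(t-s)$. This gives \eqref{eq:AC-ward-contact-form}.
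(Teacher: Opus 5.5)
Your proof is correct. The paper does not prove this lemma; it quotes it from \cite[Lemmas~B.1 and~B.4]{TAPIsing}. Your It\^o computation is therefore a self-contained substitute rather than a competing route. Parts (i)--(iii) are the standard drift cancellations and martingale-orthogonality arguments.

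In (iv) you do more than the statement requires. The drift $-\zeta([0,r])\xi''(r)w^2$ of $w(r,X_r)=\partial_{xx}\Phi_\zeta(r,X_r)$ gives $\dE w_t-\dE w_s=-\int_s^t\zeta([0,r])\,\dd\,\dE M_r^2$. From this you rederive the susceptibility identity of Lemma~\ref{lem:AC-susceptibility}, which the paper also only quotes, from \cite[Lemma~B.2]{TAPIsing}. The integration by parts there is exact once $\zeta([0,\cdot])$ is taken right-continuous. The atom at $s$ enters only through the boundary term $\zeta([0,s])\dE M_s^2=s\,\zeta([0,s])$. Here $\dE M_s^2=s$ holds because $\{r:\dE M_r^2=r\}$ is closed and has full $\zeta$-measure, hence contains $\supp(\zeta)$.

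Your plateau computation also shows that \eqref{eq:AC-deltaM-deltaX} holds for every $t>s$ with $\zeta((s,t))=0$. So the hypothesis $t\in\supp(\zeta)$ is not needed.

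The one thing you take on faith, as the paper does, is the regularity needed to apply It\^o's formula to $\partial_{xx}\Phi_\zeta$ for a general, not finite-step, $\zeta$. If you justify it by finite-step approximation, pass to the limit in the integrated identity for $\dE w_t-\dE w_s$ before imposing the contact hypothesis. The approximants do not inherit the condition $\dE M_r^2=r$ on the support.
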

	
	The next identity is \cite[Lemma~B.2]{TAPIsing}.
	\begin{lemma}\label{lem:AC-susceptibility}
		Let \(\zeta\in\mc P([0,1])\), fix \(q\in[0,1]\), and let \(X\) be
		the associated Auffinger--Chen process started at \(X_0=0\). Write
		\(M_t=\partial_x\Phi_\zeta(t,X_t)\). If
		\[
		\dE M_q^2=q,
		\qquad
		\dE M_t^2=t
		\quad\text{for \(\zeta\)-a.e. \(t\in[q,1]\)},
		\]
		then
		\[
		\dE\,\partial_{xx}\Phi_\zeta(q,X_q)
		=
		\widehat\zeta(q).
		\]
	\end{lemma}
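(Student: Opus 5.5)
The plan is to exploit the Fenchel duality between $\Phi_\zeta(q,\cdot)$ and $\Phi_\zeta^*(q,\cdot)$ together with Gaussian integration by parts along the Auffinger--Chen flow. First I will differentiate the hypothesis $\dE M_t^2=t$ where this is allowed. By Lemma~\ref{lem:AC-identities}(i), $\dd M_t=\sqrt{\xi''(t)}\,\partial_{xx}\Phi_\zeta(t,X_t)\,\dd B_t$, so Itô's isometry gives
\[
\frac{\dd}{\dd t}\dE M_t^2=\xi''(t)\,\dE\bigl[(\partial_{xx}\Phi_\zeta(t,X_t))^2\bigr].
\]
This identity alone does not involve the first moment of $\partial_{xx}\Phi_\zeta$. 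The quantity I actually need is the covariance $\dE[M_tX_t]$, which I will compute in two ways.

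For the first computation I use the Ward identities of Lemma~\ref{lem:AC-identities}(iii)--(iv). Fix $s=q$ and a later support point $t$. Summing \eqref{eq:AC-ward-contact-form} over consecutive support points of $\zeta$ in $[q,1]$, and passing to a limit for general measures through finite-step approximation, should give
\[
\dE\bigl[M_1(X_1-X_q)\bigr]-\dE\bigl[M_q\,(X_1-X_q)\bigr]
=\int_q^1\xi''(r)\,\widehat\zeta(r)\,\dd r
\]
after an Abel summation that converts $\zeta([0,s])t+\widehat\zeta(t)$ into cumulants. Here the contact equation $\dE M_r^2=r$ on the support will be essential. In particular it is the contact equation at $q$ that makes the boundary term at $q$ collapse to $\zeta([0,q])q$.

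For the second computation I use the terminal condition $\Phi_\zeta(1,x)=\log\cosh x$, which gives $M_1=\tanh X_1$ and $\partial_{xx}\Phi_\zeta(1,x)=1-\tanh^2 x$. Gaussian integration by parts for the Brownian increments then expresses $\dE[M_tX_t]$ as
\[
\int_0^t\xi''(r)\Bigl(\zeta([0,r])\,\dE M_r^2+\dE\,\partial_{xx}\Phi_\zeta(r,X_r)\Bigr)\dd r.
\]
This formula follows from the drift term, from the martingale term via Malliavin or Itô product rules, and from the fact that $\dE[\partial_{xx}\Phi_\zeta(r,X_r)\,|\,\cdot]$ tracks $D_rM_t$. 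Comparing the two expressions on $[q,1]$ identifies $\int_q^1\xi''(r)\bigl(\dE\,\partial_{xx}\Phi_\zeta(r,X_r)-\widehat\zeta(r)\bigr)\dd r$ with a boundary term. That boundary term vanishes because $\dE\,\partial_{xx}\Phi_\zeta(1,X_1)=1-\dE M_1^2$, which equals $\widehat\zeta(1)=0$ only in the limiting sense; I expect to replace this with the endpoint identity $\zeta(\{1\})=0$.

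This comparison only controls an integral, so I would then localize it. Running the argument with $1$ replaced by an arbitrary support point $t\ge q$ yields $\dE\,\partial_{xx}\Phi_\zeta(t,X_t)-\widehat\zeta(t)$ constant on the support. Evaluating at $q$ gives the statement, provided that constant is fixed to be zero by the behavior near the top of the support.

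I expect the main obstacle to be this last normalization step, that is, pinning the constant to zero. What I would try first is the Fenchel identity: since $\partial_m h_\zeta=(\partial_x\Phi_\zeta)^{-1}$, the quantity $\partial_{xx}\Phi_\zeta$ is the reciprocal of $\partial_{mm}h_\zeta$. Combining this with the stationarity in $\eta$ of $V_\eta(q,\Law(M_q))$ at $\eta=\Shift_q\zeta$ should supply the missing equation directly at $q$, in the form of the first variation in the atom mass at $q$.
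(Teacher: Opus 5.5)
There is a genuine gap, and it sits where you set the It\^o computation aside. (The paper gives no proof of this lemma; it quotes \cite[Lemma~B.2]{TAPIsing}.) The missing idea is to apply It\^o's formula to $\partial_{xx}\Phi_\zeta(t,X_t)$ itself. Differentiate the Parisi PDE twice in $x$. The terms $\partial_{xxxx}\Phi_\zeta$ and $\zeta([0,t])\xi''\,\partial_x\Phi_\zeta\,\partial_{xxx}\Phi_\zeta$ then cancel against the generator of $X$, and one is left with
\[
\frac{\dd}{\dd t}\dE\,\partial_{xx}\Phi_\zeta(t,X_t)
=-\zeta([0,t])\,\xi''(t)\,\dE\bigl[(\partial_{xx}\Phi_\zeta(t,X_t))^2\bigr]
=-\zeta([0,t])\,\frac{\dd}{\dd t}\dE M_t^2 .
\]
This holds first for finite-step $\zeta$, then by approximation. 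So the identity you dismissed as not involving the first moment is, up to the factor $-\zeta([0,t])$, exactly the derivative of that first moment. Integrate over $[q,1]$, use $\partial_{xx}\Phi_\zeta(1,\cdot)=1-(\partial_x\Phi_\zeta(1,\cdot))^2$, and split $\zeta([0,t])=\zeta([0,q])+\zeta((q,t])$ with Fubini. This gives
\[
\dE\,\partial_{xx}\Phi_\zeta(q,X_q)
=1-\zeta([0,q])\,\dE M_q^2-\int_{(q,1]}\dE M_s^2\,\zeta(\dd s).
\]
The two hypotheses turn the right-hand side into $1-q\zeta([0,q])-\int_{(q,1]}s\,\zeta(\dd s)=\widehat\zeta(q)$. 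No optimality of $\zeta$ and no normalization constant enter. The terminal term $1-\dE M_1^2$ does not vanish; it cancels against $\zeta([0,1])\,\dE M_1^2$.

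Measured against this, your steps fail at three points.

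\emph{The Ward-identity input is circular.} By It\^o's product rule and the display above, whenever $\zeta((s,t))=0$ one has $\dE[(M_t-M_s)(X_t-X_s)]=(\xi'(t)-\xi'(s))\,\dE\,\partial_{xx}\Phi_\zeta(s,X_s)$. So \eqref{eq:AC-deltaM-deltaX} with $s=q$ \emph{is} the statement you want to prove. Its hypotheses are also stronger than the lemma's: contact $\zeta$-a.e.\ on all of $[0,1]$, and $s\in\supp(\zeta)$. Your summed identity is wrong as well. Summing the diagonal increments drops the cross terms $\dE[(M_{s_{j+1}}-M_{s_j})(X_{s_{k+1}}-X_{s_k})]$ with $j<k$, which are nonzero because of the drift. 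Under the hypotheses, $\dE[(M_1-M_q)(X_1-X_q)]=\int_q^1\xi''(r)\bigl[\widehat\zeta(r)+\zeta([0,r])(r-q)\bigr]\dd r$.

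\emph{The boundary term does not vanish, even in a limiting sense.} One has $\dE\,\partial_{xx}\Phi_\zeta(1,X_1)-\widehat\zeta(1)=1-\dE\tanh^2X_1>0$, and $\zeta(\{1\})=0$ does not change this. In fact the computation above gives $\dE\,\partial_{xx}\Phi_\zeta(r,X_r)-\widehat\zeta(r)=\zeta([0,r])\bigl(r-\dE M_r^2\bigr)$ for $r\in[q,1]$. This vanishes at contact points and generally not elsewhere, so there is no free constant to pin down.

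\emph{The proposed normalization cannot supply the missing equation.} Stationarity of $V_\eta(q,\Law(M_q))$ acts through the first variation \eqref{eq:TAP-first-variation}, which involves only $\dE M_r^2-r$. It therefore reproduces your hypotheses and says nothing about $\dE\,\partial_{xx}\Phi_\zeta$. The lemma also assumes no optimality of $\zeta$.
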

	
	%------------------------------------------------------------------------------
	\subsection{Constant-mass intervals and Cole--Hopf}
	%------------------------------------------------------------------------------
	
	The following plateau formula is \cite[Lemma~B.3]{TAPIsing}.
	\begin{lemma}[Cole--Hopf identity on a plateau]\label{lem:cole-hopf-plateau}
		Let $0\le s<t\le1$, assume $\zeta((s,t))=0$, and set $\alpha:=\zeta([0,s])$.  Then
		\[
		U(r,x):=\exp\{\alpha\Phi_\zeta(r,x)\}
		\]
		solves the backward heat equation on $[s,t]$,
		\[
		\partial_r U(r,x)+\frac12\xi''(r)\partial_{xx}U(r,x)=0 .
		\]
		Hence, with
		\[
		p^{\mathrm{BM}}_{s,t}(x,y)
		:=\frac{1}{\sqrt{2\pi(\xi'(t)-\xi'(s))}}
		\exp\left\{-\frac{(y-x)^2}{2(\xi'(t)-\xi'(s))}\right\},
		\]
		we have
		\begin{equation}\label{eq:cole-hopf-integral}
			e^{\alpha\Phi_\zeta(s,x)}
			=\int_\dR e^{\alpha\Phi_\zeta(t,y)}p^{\mathrm{BM}}_{s,t}(x,y)\,\dd y .
		\end{equation}
		Equivalently, conditionally on $X_s=x$, the transition density of the Auffinger--Chen process on the plateau is
		\begin{equation}\label{eq:AC-plateau-transition}
			p_{s,t}(x,y)
			=\frac{e^{\alpha\Phi_\zeta(t,y)}}{e^{\alpha\Phi_\zeta(s,x)}}
			p^{\mathrm{BM}}_{s,t}(x,y).
		\end{equation}
	\end{lemma}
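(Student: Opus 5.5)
The plan is to reduce the Cole--Hopf identity to a direct computation with the Parisi PDE on the plateau, and then read off the transition density from the drift of the Auffinger--Chen process.

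\emph{Step 1: the heat equation.} On \((s,t)\) we have \(\zeta([0,r])=\zeta([0,s])+\zeta((s,r])=\alpha\), since \(\zeta((s,t))=0\). Hence \eqref{eq:ParisiPDE} on this interval reads
\[
\partial_r\Phi+\tfrac{\xi''}{2}\bigl(\partial_{xx}\Phi+\alpha(\partial_x\Phi)^2\bigr)=0.
\]
For \(U=e^{\alpha\Phi}\), the chain rule gives
\[
\partial_rU=\alpha U\partial_r\Phi,
\qquad
\partial_{xx}U=\alpha U\bigl(\partial_{xx}\Phi+\alpha(\partial_x\Phi)^2\bigr).
\]
Substituting these into the Parisi equation yields the backward heat equation for \(U\). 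If \(\alpha=0\), the equation for \(\Phi\) is already linear, and \(U\equiv1\) is trivial. In that case the statement is read with \(\Phi\) in place of \(U\); equivalently, one takes \(\alpha\downarrow0\).

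\emph{Step 2: the integral formula.} Consider the time change \(\tau(r)=\xi'(r)\). Then \(U\) solves \(\partial_\tau U+\frac12\partial_{xx}U=0\) in \((\tau,x)\). Since \(|\partial_x\Phi|\le1\), \(\Phi\) grows at most linearly, so \(U\) has at most exponential growth. For such solutions of the backward heat equation, the Feynman--Kac representation is unique. Concretely, apply It\^o's formula to \(U(r,x+W_{\xi'(r)}-W_{\xi'(s)})\) on \([s,t]\). It is a martingale, since the exponential growth bound makes the stochastic integral a true martingale. Taking expectations gives \eqref{eq:cole-hopf-integral}. Regularity up to the endpoint \(t\), including the case where \(t\) is an atom, only needs continuity of \(\Phi\) in time. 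To handle it, I would run the argument on \([s,t-\eta]\) and let \(\eta\downarrow0\) by dominated convergence.

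\emph{Step 3: the transition density.} The main point is to identify the law of \(X\) on the plateau. There the drift is \(\alpha\xi''\partial_x\Phi=\xi''\partial_x\log U\). This is a Doob \(h\)-transform of the time-changed Brownian motion \(x+\int\sqrt{\xi''}\,\dd B\) by the space--time harmonic function \(U\). Girsanov's theorem makes this precise. The density process \(U(r,Y_r)/U(s,x)\) is a true martingale, because \(\partial_x\log U\) is bounded by \(\alpha\), so Novikov's condition holds. Under the tilted measure, \(Y\) solves \eqref{eq:AuffingerChen}, and pathwise uniqueness holds because the drift is Lipschitz on compacts and bounded. Therefore the law of \(X_t\) given \(X_s=x\) has density \(p^{\mathrm{BM}}_{s,t}(x,y)U(t,y)/U(s,x)\), which is \eqref{eq:AC-plateau-transition}. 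By Step~2, this density integrates to one, which gives a consistency check.

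I expect the only delicate point to be the justification of the martingale property near an atom at \(t\) and for unbounded \(x\). The bounds \(|\partial_x\Phi|\le1\) and \(0\le\partial_{xx}\Phi\le1\) should suffice for this.
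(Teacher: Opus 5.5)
Your proposal is correct. The paper does not prove this lemma: like every statement in Appendix~\ref{section:Auffinger}, it is quoted without proof from \cite[Lemma~B.3]{TAPIsing}. So what you have written is a self-contained justification, not a variant of an argument in the text. It uses exactly the facts the appendix makes available. On the plateau, $\zeta([0,r])=\alpha$ for all $r\in[s,t)$, so $U=e^{\alpha\Phi_\zeta}$ linearizes the Parisi equation. The bound $|\partial_x\Phi_\zeta|\le1$ controls the martingale terms. The bound $0\le\partial_{xx}\Phi_\zeta\le1$ makes the drift in \eqref{eq:AuffingerChen} globally Lipschitz, which gives the uniqueness in law needed to identify the $h$-transformed process with $X$.

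You can merge Steps~2 and~3. Set $Y_r=x+\int_s^r\sqrt{\xi''(v)}\,\dd B_v$. For $s\le r<t$, It\^o's formula and the plateau equation show that $U(r,Y_r)/U(s,x)$ is the stochastic exponential of $\alpha\int_s^r\partial_x\Phi_\zeta(v,Y_v)\sqrt{\xi''(v)}\,\dd B_v$. The integrand is bounded by $\alpha\|\xi''\|_\infty^{1/2}$. Novikov's criterion then gives, at once, the martingale property and the Girsanov density. The martingale property yields \eqref{eq:cole-hopf-integral} after your limit at the endpoint $t$, so the separate exponential-growth argument is unnecessary. This is the same exponential-martingale mechanism the paper uses in Remark~\ref{rem:ising-invariance} and Lemma~\ref{lem:parisi-maximal-estimate}.

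Your special treatment of $\alpha=0$ is also not needed, and reading the statement ``with $\Phi$ in place of $U$'' would not make sense for the transition-density formula. When $\alpha=0$, the lemma holds exactly as stated. Indeed, $U\equiv1$, the kernel $p^{\mathrm{BM}}_{s,t}(x,\cdot)$ integrates to one, and the Auffinger--Chen drift vanishes on $[s,t)$, so the transition density is $p^{\mathrm{BM}}_{s,t}$.
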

	
	In the parent-band computation this is applied with $s=q_0$, $t=q$, $\alpha=u=\zeta([0,q_0])$, and $\xi'(q)-\xi'(q_0)=\xi_0'(q)$.  Thus
	\begin{equation}\label{eq:band-cole-hopf-collapse}
		(2\pi\xi_0'(q))^{-1/2}
		\int_\dR
		\exp\left\{
		u\Phi_\zeta(q,c)-\frac{(c-b)^2}{2\xi_0'(q)}
		\right\}\,\dd c
		=e^{u\Phi_\zeta(q_0,b)} .
	\end{equation}
	The corresponding tilted one-coordinate channel is
	\begin{equation}\label{eq:band-AC-channel}
		\nu_b(\dd c)
		=\frac{
			\exp\left\{
			u\Phi_\zeta(q,c)-\frac{(c-b)^2}{2\xi_0'(q)}
			\right\}\,\dd c
		}{
			\int_\dR
			\exp\left\{
			u\Phi_\zeta(q,y)-\frac{(y-b)^2}{2\xi_0'(q)}
			\right\}\,\dd y
		}.
	\end{equation}
	If $c\sim\nu_b$, $m=\partial_x\Phi_\zeta(q,c)$, and
	$a=\partial_x\Phi_\zeta(q_0,b)$, then
	\begin{equation}\label{eq:band-AC-martingale-channel}
		\dE[m\mid b]=a .
	\end{equation}

	%------------------------------------------------------------------------------
	\subsection{First variation of the band TAP correction}
	%------------------------------------------------------------------------------
	
	For $q\in[0,1)$, $\mu\in\mc P((-1,1))$ with $\int a^2\,\mu(\dd a)=q$, and $\zeta\in\mc P([q,1])$, recall the band correction
	\begin{equation}\label{eq:band-correction-V}
		V_\zeta(q,\mu)
		:=\int\Phi_\zeta^*(q,a)\,\mu(\dd a)
		-\frac12\int_q^1 r\xi''(r)\zeta([0,r])\,\dd r .
	\end{equation}
	Let $X^\mu$ be the Auffinger--Chen process on $[q,1]$ with the law-matching condition
	\begin{equation}\label{eq:AC-law-matching}
		\partial_x\Phi_\zeta(q,X_q^\mu)\sim\mu,
	\end{equation}
	and write $M_r^\mu:=\partial_x\Phi_\zeta(r,X_r^\mu)$.
	
	\begin{lemma}[First variation and contact equations]\label{lem:first-variation-contact}
		The map $\zeta\mapsto V_\zeta(q,\mu)$ is strictly convex.  If $\kappa$ is a finite signed measure on $[q,1]$ with $\kappa([q,1])=0$, then
		\begin{equation}\label{eq:TAP-first-variation}
			D_\zeta V_\zeta(q,\mu)[\kappa]
			=\frac12\int_q^1\xi''(r)
			\left(\dE[(M_r^\mu)^2]-r\right)
			\kappa([0,r])\,\dd r .
		\end{equation}
		Define the obstacle
		\begin{equation}\label{eq:TAP-obstacle}
			H_\zeta^\mu(s)
			:=\frac12\int_s^1\xi''(r)
			\left(\dE[(M_r^\mu)^2]-r\right)\,\dd r,
			\qquad s\in[q,1].
		\end{equation}
		The unique minimizer $\zeta_\mu$ of $V_\zeta(q,\mu)$ over $\mc P([q,1])$ is characterized by
		\begin{equation}\label{eq:TAP-support-obstacle}
			\supp(\zeta_\mu)
			\subset
			\operatorname*{argmin}_{s\in[q,1]}H_{\zeta_\mu}^\mu(s).
		\end{equation}
		Moreover, $\zeta_\mu(\{1\})=0$. At every interior contact point
		$s\in\supp(\zeta_\mu)\cap[q,1)$, and at $s=q$ by the law matching when
		$\int a^2\mu(\dd a)=q$,
		\begin{equation}\label{eq:TAP-contact-calibration}
			\dE[(M_s^\mu)^2]=s .
		\end{equation}
		For $q=0$ and $\mu=\delta_0$, this reduces to the usual first-variation and contact equations for the Parisi functional $P_\zeta$.
	\end{lemma}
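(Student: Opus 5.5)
The statement to prove is Lemma~\ref{lem:first-variation-contact}: strict convexity of \(\zeta\mapsto V_\zeta(q,\mu)\), the first-variation formula, the obstacle characterization of the minimizer, \(\zeta_\mu(\{1\})=0\), and the contact calibration.

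\emph{Variational representation.} I would first write \(V_\zeta(q,\mu)\) as an infimum over \(x\) of quantities built from the Parisi flow started at time \(q\). By Fenchel duality, \(\Phi_\zeta^*(q,a)=\inf_x\{\Phi_\zeta(q,x)-ax\}\). The Auffinger--Chen stochastic control representation writes \(\Phi_\zeta(q,x)\) as a supremum over bounded progressive controls \(u\) of
\[
\dE\Bigl[\log\cosh\Bigl(x+\int_q^1\zeta([0,r])\xi''(r)u_r\,\dd r+\int_q^1\sqrt{\xi''(r)}\,\dd B_r\Bigr)-\frac12\int_q^1\zeta([0,r])\xi''(r)u_r^2\,\dd r\Bigr].
\]
Convexity of \(\zeta\mapsto\Phi_\zeta(q,x)\) follows from this representation, as in Auffinger--Chen. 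Strict convexity of \(V\) is more delicate, because the infimum in \(x\) and the integration against \(\mu\) could destroy strictness. I would follow the argument of \cite[Theorem~2]{AuffingerChen}: at two distinct \(\zeta_0,\zeta_1\) the optimal controls \(u=\partial_x\Phi(r,X_r)\) differ on a set of positive \(\dd r\otimes\dP\) measure, which gives a strictly positive quadratic gap. To keep that gap after the infimum over \(x\), I would use the fact that the optimal \(x\) is \(\partial_m h_\zeta(q,a)\), which is finite because \(\mu\) is supported in \((-1,1)\), and then integrate over \(\mu\).

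\emph{First variation.} I differentiate along \(\zeta+t\kappa\). By the envelope theorem, the derivative of \(\Phi_\zeta^*(q,a)\) equals the derivative of \(\Phi_\zeta(q,x)\) at the optimizer \(x=b(a)\). Differentiating the PDE in \(\zeta\) (Duhamel along the Auffinger--Chen flow from \(X_q=b\)) gives \(\frac12\int_q^1\xi''(r)\kappa([0,r])\,\dE[M_r^2\mid X_q=b]\,\dd r\). Integrating over \(\mu\), the law-matching condition \eqref{eq:AC-law-matching} turns this into \(\dE[(M_r^\mu)^2]\). The linear term contributes \(-\frac12\int_q^1 r\xi''\kappa([0,r])\,\dd r\). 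Together these give \eqref{eq:TAP-first-variation}. Writing \(\kappa([0,r])=\int\mathbf 1_{s\le r}\,\kappa(\dd s)\) and using Fubini yields \(D V[\kappa]=\int H^\mu_\zeta(s)\,\kappa(\dd s)\) for mass-zero \(\kappa\).

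\emph{Characterization and endpoint facts.} Existence of a minimizer follows from compactness of \(\mc P([q,1])\) and the \(W_1\)-Lipschitz continuity of \(V\) (same interpolation bound as in Lemma~\ref{lem:selected-first-contact}); uniqueness follows from strict convexity. Convexity makes the first-order condition \(\int H\,\dd(\nu-\zeta_\mu)\ge0\) for all \(\nu\) both necessary and sufficient, which is exactly \eqref{eq:TAP-support-obstacle}. Since \(M_1=\tanh X_1\) satisfies \(\dE M_1^2<1\), the derivative \(H'(1)=-\frac12\xi''(1)(\dE M_1^2-1)>0\), so \(1\) is not a minimizer and \(\zeta_\mu(\{1\})=0\). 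At an interior contact point \(s\in(q,1)\), \(H\) has a local minimum, so \(H'(s)=0\) gives \(\dE M_s^2=s\); when \(s=q\), the equality holds directly from law matching and \(\int a^2\,\dd\mu=q\). For \(q=0\) and \(\mu=\delta_0\), the optimal \(x\) is \(0\) and everything reduces to the Parisi case.

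\emph{Main obstacle.} I expect the hardest part to be strict convexity after the Fenchel infimum, together with justifying the envelope differentiation uniformly for \(a\) near \(\pm1\), where \(b(a)\) blows up. To handle this, I would truncate \(\mu\) to \([-1+\eta,1-\eta]\) and pass to the limit using \(|\partial_x\Phi|\le1\).
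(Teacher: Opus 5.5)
The paper quotes this lemma without proof, as a standard consequence of the Auffinger--Chen control representation. Your route is that standard one, and most of it is fine: the Feynman--Kac/envelope first variation, law matching, the Fubini rewriting as $\int H\,\dd\kappa$, and the deductions of $\zeta_\mu(\{1\})=0$ and the contact equations. The gap is in \emph{convexity itself}, not just strictness. You obtain convexity of $\zeta\mapsto\Phi_\zeta(q,x)$ at fixed $x$, and then treat the Fenchel infimum as something that can only destroy strictness. But a pointwise infimum of functions convex in $\zeta$ need not be convex. Let $\zeta_\lambda=(1-\lambda)\zeta_0+\lambda\zeta_1$ and freeze $x$ at its minimizer $x^*$. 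Then
\[
\Phi^*_{\zeta_\lambda}(q,a)\le(1-\lambda)\bigl[\Phi_{\zeta_0}(q,x^*)-ax^*\bigr]+\lambda\bigl[\Phi_{\zeta_1}(q,x^*)-ax^*\bigr],
\]
and each bracket is $\ge\Phi^*_{\zeta_i}(q,a)$, so nothing follows. This matters, because you use convexity for sufficiency of the first-order condition and for uniqueness.

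The missing idea is \emph{joint} convexity of $(\zeta,x)\mapsto\Phi_\zeta(q,x)$, which the same representation gives. For a fixed control $u$, the terminal point $Y=x+\int_q^1\zeta([0,r])\xi''(r)u_r\,\dd r+\int_q^1\sqrt{\xi''(r)}\,\dd B_r$ is affine in $(x,\zeta)$, and the running cost is linear in $\zeta$. Take $x_i=\partial_m h_{\zeta_i}(q,a)$, which is finite since $|a|<1$. Set $x_\lambda=(1-\lambda)x_0+\lambda x_1$, and let $u_r=M_r=\partial_x\Phi_{\zeta_\lambda}(r,X_r)$ be the optimal control from $X_q=x_\lambda$. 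Convexity of $\log\cosh$ then gives $\Phi^*_{\zeta_\lambda}(q,a)\le\Phi_{\zeta_\lambda}(q,x_\lambda)-ax_\lambda\le(1-\lambda)\Phi^*_{\zeta_0}(q,a)+\lambda\Phi^*_{\zeta_1}(q,a)$.

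Strictness comes from the equality case of strict convexity of $\log\cosh$ at the terminal time, not from a quadratic gap between differing optimal controls. Equality forces
\[
x_0-x_1+\int_q^1 g(r)M_r\,\dd r=0\quad\text{a.s.},\qquad g(r):=\bigl(\zeta_0([0,r])-\zeta_1([0,r])\bigr)\xi''(r).
\]
Since $M_q$ is deterministic, stochastic Fubini rewrites the left side as a constant plus $\int_q^1G(s)\sqrt{\xi''(s)}\,\partial_{xx}\Phi_{\zeta_\lambda}(s,X_s)\,\dd B_s$, where $G(s)=\int_s^1g$. Its quadratic variation must vanish, and $\partial_{xx}\Phi>0$ together with $\xi''>0$ on $(0,1]$ forces $G\equiv0$, hence $\zeta_0=\zeta_1$. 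This holds for every $a\in(-1,1)$, so strictness survives integration against $\mu$.

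A minor point: you do not need to truncate $\mu$ near $\pm1$. The bound $|\Phi^*_\zeta(q,a)-\Phi^*_{\zeta'}(q,a)|\le C\int_q^1|\zeta([0,s])-\zeta'([0,s])|\,\dd s$ holds uniformly in $a$, as the infimum of a uniformly Lipschitz family. This already gives dominated convergence for the envelope step.
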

	
	%------------------------------------------------------------------------------
	\subsection{Ward identities in the parent band}
	%------------------------------------------------------------------------------
	
	We record the identities needed for the conditional construction in a band
	above a critical TAP ancestor. Let \(0\le q_0<q<1\), assume that
	\(\zeta((q_0,q))=0\), and put
	\[
	u:=\zeta([0,q_0]),
	\qquad
	\xi_0'(q):=\xi'(q)-\xi'(q_0).
	\]
	Let $X_{q_0}=b$, $X_q=c$, and
	\[
	a:=\partial_x\Phi_\zeta(q_0,b),
	\qquad
	m:=\partial_x\Phi_\zeta(q,c),
	\qquad
	M_r:=\partial_x\Phi_\zeta(r,X_r)\quad(q_0\le r\le1).
	\]
	Assume the endpoint contact equations and the contact calibration above \(q\):
	\[
	\dE[a^2]=q_0,
	\qquad
	\dE[m^2]=q,
	\qquad
	\dE[M_r^2]=r
	\quad\text{for \(\zeta\)-a.e.\ \(r\in[q,1]\)}.
	\]
	Under the channel~\eqref{eq:band-AC-channel}, the martingale property and
	the endpoint equations, together with \eqref{eq:AC-basic-ward-a}, give
	\begin{equation}\label{eq:band-channel-self-overlap}
		\dE[m\mid b]=a,
		\qquad
		\dE[a^2]=q_0,
		\qquad
		\dE[m^2]=q,
		\qquad
		\dE[a(c-b)]=u q_0\xi_0'(q).
	\end{equation}
	Integration by parts in \eqref{eq:band-AC-channel} and
	Lemma~\ref{lem:AC-susceptibility} give
	\begin{equation}\label{eq:band-channel-ward1}
		\dE\bigl[m(c-b)\bigr]
		=\xi_0'(q)\left(
		u\dE[m^2]
		+\dE\left[\partial_{xx}\Phi_\zeta(q,c)\right]
		\right)
		=\xi_0'(q)\left(uq+\widehat\zeta(q)\right).
	\end{equation}
	Taking expectations in \eqref{eq:AC-ito-phi-integrated} gives
	\begin{equation}\label{eq:band-channel-potential-increment}
		\dE\bigl[\Phi_\zeta(q,c)-\Phi_\zeta(q_0,b)\bigr]
		=\frac12\int_{q_0}^q u\,\xi''(r)\dE[M_r^2]\,\dd r .
	\end{equation}
	When \(q_0=0\), \eqref{eq:band-channel-ward1} is the limiting identity
	behind \eqref{eq:ising-ward-inner}, while
	\eqref{eq:band-channel-potential-increment} is the corresponding
	one-point potential identity.
	
	%==============================================================================
	\section{Supersymmetric annealed complexity and Hessian estimates}
	\label{sec:one-point-susy-computations}
	%==============================================================================
	
	%==============================================================================
	\subsection{Kac--Rice formula}
	%==============================================================================
	
	We prove Proposition~\ref{prop:susy-annealed}.
	Fix a regular \(f\in(f_{\eq},f_1)\), let
	\(\theta=\theta_f\), \(\zeta=\zeta_f\), and let \(q\) be the first
	positive contact point of \(\zeta\). Let \(X\) be the corresponding
	Auffinger--Chen process. Thus
	\[
	\zeta(\{0\})=\theta,
	\qquad
	\zeta((0,q))=0,
	\qquad
	F_{\TAP,\zeta}(0)=NP_\zeta.
	\]
	Assume that
	\[
	1-\xi''(q)\dE\left[
	\left(
	\partial_{xx}\Phi_\zeta(q,X_q)
	\right)^2
	\right]>0.
	\]
	For \(\bfm\in(-1,1)^N\), define \(b_i\) by
	\[
	m_i=\partial_x\Phi_\zeta(q,b_i),
	\]
	and write \(\bfb=(b_i)_{i\le N}\).
	
	For \(\ve>0\) and \(f'\in[f-\ve,f+\ve]\), define
	\[
	\mathcal D_{N,\ve}(f')
	:=
	\left\{
	\begin{array}{l}
		\bfm\in(-1,1)^N:\ |q_\bfm-q|\le\ve,\\[1mm]
		W_2\left(
		N^{-1}\sum_{i=1}^N\delta_{b_i},
		\Law(X_q)
		\right)\le\ve,\\[1mm]
		\left|
		f'
		+N^{-1}\left(
		F_{\TAP}(\bfm)-F_{\TAP,\zeta}(\bfm)
		\right)
		-f
		\right|\le\ve
	\end{array}
	\right\}.
	\]
	On the level set \(F_{\TAP,\zeta}(\bfm)=Nf'\), the last condition is
	\[
	\left|N^{-1}F_{\TAP}(\bfm)-f\right|\le\ve.
	\]
	
	For \(v\in\dR^N\) and \(w\in\dR\), let
	\(\varphi_\bfm(v,w)\) denote the density of
	\[
	\left(
	\nabla F_{\TAP,\zeta}(\bfm),
	F_{\TAP,\zeta}(\bfm)-NP_\zeta
	\right)
	\]
	at \((v,w)\).
	
	The Kac--Rice formula gives
	\begin{multline}\label{eq:one-point-kac-rice-window}
		\dE\mathsf N_\ve
		=
		\int_{f-\ve}^{f+\ve}
		\int_{\bfm\in\mathcal D_{N,\ve}(f')}
		\dE\!\left[
		|\det\nabla^2 F_{\TAP,\zeta}(\bfm)|
		\,\middle|\,
		\substack{
			\nabla F_{\TAP,\zeta}(\bfm)=0,\\
			F_{\TAP,\zeta}(\bfm)=Nf'}
		\right]\\
		\times
		\varphi_\bfm\bigl(0,N(f'-P_\zeta)\bigr)
		\,N\,\dd\bfm\,\dd f'.
	\end{multline}
	
	%==============================================================================
	\subsection{Conditional Hessian estimates}
	%==============================================================================
	
	We prove the conditional Hessian and determinant estimates used in
	Section~\ref{sec:onshell-lb}.
	
	\begin{proposition}\label{prop:one-point-conditional-hessian}
		Let \(f\in(f_{\eq},f_1)\) be regular, let \(\zeta=\zeta_f\), and let
		\(q\) be the first positive contact point of \(\zeta\). Let \(X\) be the
		corresponding Auffinger--Chen process. Suppose that
		\[
		1-\xi''(q)\dE\left[
		\left(
		\partial_{xx}\Phi_\zeta(q,X_q)
		\right)^2
		\right]>0.
		\]
		Let \(\bfm\in (-1,1)^N\) satisfy \(q_\bfm=q\), and define \(b_i\) by
		\(m_i=\partial_x\Phi_\zeta(q,b_i)\). Assume that
		\[
		\frac1N\sum_{i=1}^N\delta_{b_i}
		\xrightarrow{W_2}\Law(X_q).
		\]
		Let \(f_N\to f\) and condition on
		\[
		\nabla F_{\TAP,\zeta}(\bfm)=0,
		\qquad
		F_{\TAP,\zeta}(\bfm)=Nf_N.
		\]
		Write \(\dP_{\bfm,f_N}\) and \(\dE_{\bfm,f_N}\) for the corresponding
		conditional probability and expectation. For
		\(\sigma\in\{-,+\}\), the following conclusions hold.
		\begin{enumerate}[label=\textup{(\roman*)}]
			\item There is \(c>0\) such that
			\begin{equation}
				\dP_{\bfm,f_N}\left(
				\nabla_\sigma^2F_{\TAP,\zeta}(\bfm)\preceq-cI_N
				\right)\longrightarrow1.
			\end{equation}
			
			\item
			\begingroup
			There are \(\delta_0,\kappa>0\) such that, for every
			\(0<\delta<\delta_0\),
			\begin{equation}\label{eq:one-point-local-spectral-gap}
				\dP_{\bfm,f_N}\left(
				\sup_{\substack{
						x\in(-1,1)^N,\\
						\|x-\bfm\|_2\le2\sqrt{\delta N}
				}}
				\lambda_{\max}\!\left(
				\nabla^2F_{\TAP,\zeta}(x)
				\right)
				\le-\kappa
				\right)\longrightarrow1.
			\end{equation}
			If \(q_x\) is an atom of \(\zeta\), the two one-sided Hessians may
			differ. In \eqref{eq:one-point-local-spectral-gap},
			\(\nabla^2F_{\TAP,\zeta}(x)\) denotes
			\(\nabla_-^2F_{\TAP,\zeta}(x)\), obtained by approaching \(q_x\)
			through smaller self-overlaps.
			\endgroup
		\end{enumerate}
		The conclusion in \textup{(ii)} is uniform under
		\[
		|q_\bfm-q|\le\ve,
		\qquad
		|f_N-f|\le\ve,
		\qquad
		W_2\left(N^{-1}\sum_{i=1}^N\delta_{b_i},\Law(X_q)\right)\le\ve.
		\]
		The probability of the complementary event in
		\eqref{eq:one-point-local-spectral-gap} is
		\(o_\ve(1)+o_N(1)\), with \(N\to\infty\) before \(\ve\downarrow0\).
	\end{proposition}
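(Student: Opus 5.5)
I would start by computing the Hessian of \(F_{\TAP,\zeta}\) explicitly and splitting it into a random part and a deterministic part. Writing \(F_{\TAP,\zeta}(\bfm)=H_N(\bfm)+\sum_i\Phi_\zeta^*(q_\bfm,m_i)-\frac N2\int_{q_\bfm}^1 s\xi''(s)\zeta([0,s])\,\dd s\), and using \(\partial_m\Phi^*_\zeta(q,m)=-\partial_m h_\zeta\) with \(\partial_{mm}h_\zeta(q,m_i)=1/\partial_{xx}\Phi_\zeta(q,b_i)\), one gets
\[
\nabla^2_\sigma F_{\TAP,\zeta}(\bfm)
=
\nabla^2H_N(\bfm)
-\operatorname{diag}\Bigl(\frac1{\partial_{xx}\Phi_\zeta(q,b_i)}\Bigr)
+\frac{c_1}{N}\bfm\bfm^\top
+\frac1N(\bfm u^\top+u\bfm^\top)
+c_0 I_N ,
\]
where \(c_0,c_1\) and the vector \(u\) come from the \(q_\bfm\)-dependence of the correction, and only their one-sided limits depend on \(\sigma\). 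The term \(c_0I_N\) comes from \(\partial_q\) of the correction and equals \(\xi''(q)\widehat\zeta(q)\) by the same computation that gave \(\nabla H_N(\bfm)=\bfb+\xi''(q)\widehat\zeta(q)\bfm\) in Lemma~\ref{lem:reduced-same-shell-increment}; I would double-check its sign against the conventions there. Under the conditioning \(\nabla F=0\), \(F=Nf_N\), \(\nabla^2H_N(\bfm)\) is a GOE matrix of variance \(\xi''(q)/N\) per entry, plus a perturbation of rank at most \(2\) in the directions \(\bfm\) and \(\bfb\). Its diagonal part, which is conditioned through \(H_N(\bfm)\), also enters only via a rank-one shift.

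For \textup{(i)}, low-rank perturbations do not move the bulk spectrum. The bulk is therefore the free convolution of a semicircle of variance \(\xi''(q)\) with the law of \(-1/D+c_0\), where \(D=\partial_{xx}\Phi_\zeta(q,X_q)\in(0,1]\). By the Pastur / free-convolution criterion, the right edge of such a bulk is strictly negative exactly when \(\xi''(q)\,\dE[D^2]<1\) together with a compatibility identity for the edge. The identity \(\dE D=\widehat\zeta(q)\) from Lemma~\ref{lem:AC-susceptibility} should make \(c_0\) cancel against the subordination value, so that the Plefka condition is precisely the strict negativity of the edge. The empirical measure of the \(D_i\) converges because of the \(W_2\) window, and the diagonal entries are bounded above by \(-1\). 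The finitely many outliers created by the rank-\(\le 4\) perturbation have to be checked separately. I would compute them through a \(4\times4\) secular equation whose entries are expressed via the Ward identities \eqref{eq:ising-ward-inner} and \eqref{eq:ising-contact}, and show they stay negative using the contact condition and the strict obstacle gap at \(q\). This outlier check is the step I expect to be the main obstacle. As a first attempt, I would show the \(\bfm\)-direction outlier equals the second derivative of the radial profile \(r\mapsto\) value along the ray, which is negative by the Parisi optimality at \(q\).

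For \textup{(ii)}, I would first reduce to a concentration statement. Gaussian concentration for the operator norm, combined with the convergence of the bulk edge, upgrades \textup{(i)} to \(\lambda_{\max}\le-c/2\) with conditional probability tending to one. I would then control the variation of the Hessian over the ball \(\|x-\bfm\|_2\le2\sqrt{\delta N}\) in two parts.

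The random part: \(\sup_{\|x-\bfm\|\le 2\sqrt{\delta N}}\|\nabla^2H_N(x)-\nabla^2H_N(\bfm)\|_{\mathrm{op}}\le C\sqrt\delta\) with high probability. This follows from the standard bound on \(\nabla^3H_N\) in operator norm by \(C/\sqrt N\), which holds with overwhelming probability by Borell--TIS and a net argument, and remains valid under the conditioning since the conditional mean is smooth and deterministic.

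The deterministic part: the diagonal entries \(-1/\partial_{xx}\Phi_\zeta(q_x,\partial_m h(q_x,x_i))\) only become more negative near \(|x_i|\to1\), so it suffices to bound them from above. I would use that \(\partial_m h_\zeta\) is monotone and that \(\partial_{xx}\Phi\le1\), which gives \(-1/\partial_{xx}\Phi\le-1\) pointwise everywhere. Thus the only non-uniformity comes from the coordinates where \(D_i\) changes. I would handle these by comparing to the Plefka bulk bound, using a Lipschitz estimate on the coordinates with \(|b_i|\) bounded (controlled by the exponential-moment cutoff) and discarding an \(o_\delta(1)\) fraction of the remaining coordinates via the \(W_2\) window and interlacing. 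The low-rank terms vary continuously in \(q_x\) at the one-sided limit \(\sigma=-\).

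Choosing \(\delta_0\) with \(C\sqrt{\delta_0}+o_{\delta_0}(1)<c/4\) gives \(\kappa=c/4\). Finally, all estimates depend on the conditioning data only through \(q_\bfm\), \(f_N\) and the \(W_2\) distance, continuously, which yields the stated uniformity with error \(o_\ve(1)+o_N(1)\).
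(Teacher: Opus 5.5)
\textbf{Part (i): the bulk is fine, the outlier step is a genuine gap.}

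Your bulk argument for (i) is essentially the paper's Steps 1--2. The subordination preimage of zero is $\omega_0=\xi''(q)\widehat\zeta(q)$ because $\dE\,\partial_{xx}\Phi_\zeta(q,X_q)=\widehat\zeta(q)$, and strict Plefka is exactly $\omega_0<\omega_*$. This works once three things are fixed:
\begin{itemize}
\item The identity shift is $c_0=-\xi''(q)\widehat\zeta(q)$; the transverse block is $\sqrt{\xi''(q)}W_N-\operatorname{diag}\bigl(1/u(q,m_i)+\xi''(q)\chi_N\bigr)$.
\item The unbounded diagonal must be truncated, using monotonicity.
\item The full support of $X_q$ is needed to rule out diagonal spikes.
\end{itemize}

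The gap is the step you flag yourself, the sign of the outlier, and the mechanism you propose for it is not the right one.

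Every finite-rank term carries a factor $\bfm$. This covers the regression mean in $\operatorname{span}(\bfm,\bfb)$ and all terms coming from the $q_\bfm$-dependence of the correction. So in the splitting $\mathbb R e\oplus\bfm^\perp$ the transverse block is a pure projected deformed GOE. The only danger is then the scalar Schur complement $a_N-(Py_N)^{\mathsf T}C_N^{-1}Py_N$.

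The second derivative along the ray is only $a_N$. The coupling $y_N$ to the bulk, which contains the unbounded entries $m_i/(\sqrt q\,u(q,m_i))$, adds $-(Py_N)^{\mathsf T}C_N^{-1}Py_N\ge0$. So negativity of $a_N$ proves nothing.

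Evaluating the Schur complement requires three further ingredients:
\begin{itemize}
\item Isotropic resolvent asymptotics for the deformed GOE. The paper uses the anisotropic local law, with Dyson solution $-\operatorname{diag}(u(q,m_i))$ and stability parameter $\xi''(q)N^{-1}\sum_iu(q,m_i)^2\to1-\Delta_{\mathrm{Pl}}$.
\item Integration by parts under $\Law(X_q)$, which is valid because $\zeta((0,q))=0$.
\item The differentiated Parisi equation.
\end{itemize}
The limit is $-q\theta\xi''(q)\Delta_{\mathrm{Pl}}-q\Delta_{\mathrm{Pl}}^2/\gamma_q$, with $\gamma_q=\dE[\partial_{xx}\Phi_\zeta(q,X_q)(\partial_x\Phi_\zeta(q,X_q))^2]$.

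The strict obstacle gap $H_{\zeta,q}(r)>0$ for $r<q$ plays no role here. Mere optimality at $q$ is also insufficient. It only gives $H_{\zeta,q}''(q)=\tfrac12\xi''(q)\Delta_{\mathrm{Pl}}\ge0$, which makes the limit $\le0$. Strict negativity comes from the strict Plefka hypothesis together with $\theta>0$. The spherical example in Appendix~\ref{sec:spherical-examples} has strict Plefka and a negative transverse bulk, yet a positive radial outlier. So this sign really depends on the Ward identities at the selected level.

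\textbf{Part (ii): the random part is fine, the deterministic part fails as written.}

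Your bound $\sup\|\nabla^3H_N\|_{\mathrm{op}}\le C/\sqrt N$ handles the random part cleanly. The deterministic part does not go through.

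In the ball $\|x-\bfm\|_2\le2\sqrt{\delta N}$, up to a fraction $4\delta/\eta^2$ of coordinates can move by more than $\eta$. On those coordinates the diagonal entry $-1/u(q_x,x_i)$ can increase by an unbounded amount, for instance from a huge $1/u(q,m_i)$ at large $|b_i|$ to $O(1)$. This is a positive perturbation of rank proportional to $N$ and unbounded norm.
\begin{itemize}
\item Interlacing controls only $\lambda_{k+1}$, never $\lambda_{\max}$.
\item Monotonicity does not help, because the only $x$-uniform lower bound on the diagonal is the constant floor $\inf_y1/\partial_{xx}\Phi_\zeta(q,y)$.
\item The radial Schur complement must also be re-established at each $x$, since $y$ changes through the same unbounded entries.
\end{itemize}

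The paper proceeds as follows. It caps $u^{-1}\wedge K$. It uses $N^{-1}\sum_i|u(q_x,x_i)-u(q,m_i)|^2=o_\delta(1)$, which follows from $|\partial_mu|\le2$, to keep the edge and Schur-complement gaps stable at each deterministic $x$. It upgrades these to $e^{-c_KN}$ bounds by Gaussian concentration. It takes a union bound over an $\ell_\infty$-net of cardinality $e^{o_\delta(1)N}$. Finally it removes the cap using $yr^{\mathsf T}+ry^{\mathsf T}-D\preceq(r^{\mathsf T}D^{-1}r)\,yy^{\mathsf T}$.

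The missing ingredient in your proposal is some such pointwise-exponential-plus-net argument, or a genuine uniform stability lemma for the deformed-GOE edge and outlier over the ball.
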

	
	\begin{remark}
		The Plefka condition places the limiting bulk in \((-\infty,0]\).
		The nonpositivity of the outliers requires a separate argument. This is
		consistent with the role of SUSY states as candidates for maxima of
		\(F_\TAP\).
	\end{remark}
	
	\begin{lemma}[Conditional determinant estimates]
		\label{lem:one-point-conditional-determinant}
		Under the hypotheses and notation of
		Proposition~\ref{prop:one-point-conditional-hessian}, the following
		hold for \(\sigma\in\{-,+\}\):
		\begin{enumerate}[label=\textup{(\roman*)}]
			\item
			\begin{equation}\label{eq:one-point-conditional-determinant}
				\begin{aligned}
					\frac1N
					\log\dE_{\bfm,f_N}
					\left|\det\nabla_\sigma^2F_{\TAP,\zeta}(\bfm)\right|
					={}&
					-\frac1N\sum_{i=1}^N
					\log\partial_{xx}\Phi_\zeta(q,b_i)
					+\frac{\xi''(q)}2
					\left(
					\frac1N\sum_{i=1}^N
					\partial_{xx}\Phi_\zeta(q,b_i)
					\right)^2
					+o(1)\\
					\longrightarrow{}&
					-\dE\log\partial_{xx}\Phi_\zeta(q,X_q)
					+\frac{\xi''(q)}2\widehat\zeta(q)^2 .
				\end{aligned}
			\end{equation}
			In the \(\ve\)-window above, the error in
			\eqref{eq:one-point-conditional-determinant} is
			\(o_\ve(1)+o_N(1)\).
			
			\item
			Suppose that, for some \(\kappa_0>0\) and \(L<\infty\),
			\[
			\frac1N\sum_{i=1}^N
			\exp\{(8+\kappa_0)|b_i|\}\le L.
			\]
			Set
			\[
			D_{\bfm,\sigma}
			:=
			\det\nabla_\sigma^2F_{\TAP,\zeta}(\bfm).
			\]
			There are \(c_L>0\) and a deterministic \(r_N=o(N)\) such that
			\begin{equation}\label{eq:one-point-determinant-lower-tail}
				\dP_{\bfm,f_N}\left(
				|D_{\bfm,\sigma}|
				<e^{-r_N}\dE_{\bfm,f_N}|D_{\bfm,\sigma}|
				\right)
				\le e^{-c_LN}.
			\end{equation}
			Moreover, the estimate is uniform over the same window, subject to the
			exponential-moment bound above. There is a function \(\rho_{\det}\),
			with \(\rho_{\det}(\ve)\downarrow0\), and there are nonnegative
			deterministic numbers \(r_{N,\ve}=o_N(N)\) such that
			\begin{equation}\label{eq:susy-one-point-determinant-lower-tail}
				\dP_{\bfm,f_N}\left(
				|D_{\bfm,\sigma}|
				<
				\exp\{-N\rho_{\det}(\ve)-r_{N,\ve}\}
				\dE_{\bfm,f_N}|D_{\bfm,\sigma}|
				\right)
				\le e^{-c_LN}.
			\end{equation}
			Here \(r_{N,\ve}/N\to0\) for fixed \(\ve\), and \(N\to\infty\)
			before \(\ve\downarrow0\).
		\end{enumerate}
	\end{lemma}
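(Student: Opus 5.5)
The plan is to reduce the conditional Hessian to a deformed GOE matrix and then apply the determinant asymptotics of Ben Arous, Bourgade and McKenna for matrices of the form \(\sqrt{s}\,\mathrm{GOE}+\text{diagonal}\). I will first write \(\nabla_\sigma^2F_{\TAP,\zeta}(\bfm)\) explicitly. Set \(d_i:=\partial_{xx}\Phi_\zeta(q,b_i)\in(0,1]\). The diagonal part of the TAP correction gives \(\partial_m^2\Phi_\zeta^*(q,m_i)=-1/d_i\).

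The dependence on \(q_\bfm\) through \(\Phi^*_\zeta(q_\bfm,\cdot)\) and \(A_\zeta(q_\bfm)\) produces two kinds of terms:
\begin{itemize}
\item an isotropic shift \(\frac2N\sum_j\partial_q\Phi_\zeta^*(q,m_j)+\theta q\xi''(q)\). Using \(\partial_q\Phi^*_\zeta=-\frac{\xi''}{2}(\partial_{xx}\Phi_\zeta+\zeta([0,q])(\partial_x\Phi_\zeta)^2)\) and \(q_\bfm=q\), this reduces to \(-\xi''(q)\bar d\), where \(\bar d=\frac1N\sum_i d_i\);
\item a perturbation of rank at most two, spanned by \(\bfm\) and \(\nabla\)-type vectors.
\end{itemize}
At atoms the one-sided limits change only the rank-two part. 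For the random part, condition \(\nabla^2H_N(\bfm)\) on \(H_N(\bfm)\) and \(\nabla H_N(\bfm)\). The result is \(\sqrt{\xi''(q)}\,W\) with \(W\) a GOE matrix normalized by \(N^{-1/2}\), plus a deterministic perturbation of rank \(O(1)\), since only the directions \(\bfm\) and \(\nabla H_N(\bfm)\) are affected. Hence, up to a rank-\(O(1)\) perturbation whose norm is bounded by constants from the \(\ve\)-window,
\[
\nabla_\sigma^2F_{\TAP,\zeta}(\bfm)=\sqrt{\xi''(q)}\,W-\Lambda,\qquad \Lambda=\mathrm{diag}\bigl(d_i^{-1}\bigr)+\xi''(q)\bar d\,I .
\]

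For \textup{(i)}, I will invoke the Ben Arous--Bourgade--McKenna theorem. It gives \(\frac1N\log\dE|\det(\sqrt s W-\Lambda)|=\int\log|x|\,\dd\mu_N(x)+o(1)\), where \(s=\xi''(q)\) and \(\mu_N\) is the free convolution of the semicircle of variance \(s\) with the empirical law of \(-\Lambda\). This holds provided the empirical law of \(\Lambda\) has enough integrable moments of \(\log\), which the \(W_2\)-window supplies. Finite-rank perturbations change \(\frac1N\log|\det|\) by \(o(1)\) once \(0\) is at positive distance from the spectrum, which part \textup{(i)} of Proposition~\ref{prop:one-point-conditional-hessian} provides.

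Next I evaluate the log-potential at \(0\) by subordination. If \(0\) lies outside the support, then with the subordination point \(w\) solving \(w=s\,G(0)\) and \(G(0)=\frac1N\sum_i(w-\lambda_i)^{-1}\), one has \(\int\log|x|\,\dd\mu=\frac1N\sum_i\log|\lambda_i-w|+\frac{w^2}{2s}\). The Onsager shift \(\xi''(q)\bar d\) is exactly what makes \(w=\xi''(q)\bar d\) a solution. Indeed, \(\lambda_i-w=1/d_i\) then gives \(G(0)=-\bar d\) with a consistent sign, and the formula becomes \(-\frac1N\sum_i\log d_i+\frac{\xi''(q)}2\bar d^2\). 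The remaining task is to check that this branch is the correct one, that is, that \(0\) lies to the right of the support. The edge condition for this branch is \(1-s\frac1N\sum_i d_i^2>0\), which in the \(\ve\)-window is exactly the strict Plefka condition. The limit then follows from the \(W_2\)-window together with Lemma~\ref{lem:AC-susceptibility}, which gives \(\dE\,\partial_{xx}\Phi_\zeta(q,X_q)=\widehat\zeta(q)\).

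For \textup{(ii)}, I will combine two inputs:
\begin{itemize}
\item the typical value of \(\frac1N\log|\det|\) equals the annealed value up to \(o(1)\), because \(0\) is at positive distance from the limiting support, so no large deviation of the spectrum is favoured;
\item a lower-tail bound.
\end{itemize}
For the lower-tail bound I will use the BBM lower-tail estimate \(\dP(|\det|<e^{-r_N}\dE|\det|)\le e^{-cN}\). This estimate needs a uniform bound on exponential moments of \(\log\lambda_i\). Here \(\lambda_i\le C/d_i\), and \(1/d_i\) grows like \(e^{2|b_i|}\), which is why the hypothesis \(\frac1N\sum_i e^{(8+\kappa_0)|b_i|}\le L\) enters. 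The rank-\(O(1)\) corrections are handled by interlacing: they move at most \(O(1)\) eigenvalues. Those eigenvalues stay at distance \(\ge c\) from \(0\) except on an event of probability \(e^{-cN}\), by the large-deviation principle for extreme eigenvalues of deformed GOE matrices. Uniformity over the window then gives \(\rho_{\det}(\ve)\) from the continuity of the limit in \textup{(i)}.

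I expect the main obstacle to be the \(e^{-c_LN}\) lower tail at scale \(o(N)\) with an unbounded diagonal. Plain Gaussian concentration of \(\log\det\) only yields \(e^{-ct^2/N}\) for deviations \(t\), which is insufficient. One therefore genuinely needs the BBM machinery, namely local-law control of the Stieltjes transform near \(0\) together with an outlier LDP, adapted to the moment cutoff \(L\).
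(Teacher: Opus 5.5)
Your part (i) follows the paper: reduce to the deformed GOE with diagonal $u(q,m_i)^{-1}+\xi''(q)\chi_N$ plus finite-rank terms, evaluate the log-potential of $\mu_N\boxplus\sigma_{\xi''(q)}$ at the subordination preimage $\xi''(q)\chi_N$, and identify the edge condition with strict Plefka.

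What you skip is that this diagonal is unbounded: $u(q,m_i)^{-1}$ grows exponentially in $|b_i|$. You cannot simply feed it into a Ben Arous--Bourgade--McKenna-type theorem under a log-moment condition. The paper truncates $b_i$ at $\pm T$ and uses the factorization
\[
\det\mathsf M_N=\Bigl(\prod_{i=1}^N u(q,m_i)^{-1}\Bigr)\det\Bigl(I_N+\xi''(q)\chi_NU_N-\sqrt{\xi''(q)}\,U_N^{1/2}W_NU_N^{1/2}\Bigr),
\]
together with $-\log\partial_{xx}\Phi_\zeta(q,x)\le C(1+|x|)$. This bounds the truncation cost by $C\sum_{|b_i|>T}(1+|b_i|)+o(N)$, which the $W_2$ hypothesis makes negligible as $T\to\infty$. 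Also, an in-probability spectral gap does not by itself control the finite-rank contribution to the \emph{annealed} quantity $\dE|\det|$. That has to go through the same comparison.

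Part (ii) is where you depart from the paper, and your stated reason is wrong. Gaussian concentration does not only give $e^{-ct^2/N}$; you used the wrong scale.

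\begin{itemize}
\item The conditional Hessian depends on standard Gaussian coordinates $G$ through $\sqrt{\xi''(q)}\,W_N$ and the $g_N$ column, both with coefficients $O(N^{-1/2})$. Hence $\|A(G)-A(G')\|_F\le CN^{-1/2}\|G-G'\|$.
\item Fix $\kappa<c_*/4$ and set $\ell_{N,\sigma}=\sum_i\log\max\{-\lambda_i,\kappa\}$. The map $x\mapsto\log\max\{-x,\kappa\}$ is $\kappa^{-1}$-Lipschitz. Hoffman--Wielandt and Cauchy--Schwarz then give $|\ell(A)-\ell(A')|\le\kappa^{-1}\sqrt N\|A-A'\|_F$.
\item So $\ell_{N,\sigma}$ is $O(1)$-Lipschitz in $G$, and deviations of size $a\sqrt N=o(N)$ below a median cost only $e^{-ca^2N}$.
\end{itemize}

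That is the whole of the paper's proof of (ii).
\begin{itemize}
\item The exponential gap $\dP_{\bfm,f_N}\bigl(\nabla^2_\sigma F_{\TAP,\zeta}(\bfm)\not\preceq-c_*I_N\bigr)\le e^{-c_LN}$ comes from Step~5 of the proof of Proposition~\ref{prop:one-point-conditional-hessian}. It uses concentration of $\lambda_{\max}$ of the capped matrix, which is $O(N^{-1/2})$-Lipschitz in $G$, followed by removal of the cap. Off this event, $\ell_{N,\sigma}=\log|D_{\bfm,\sigma}|$.
\item Spectral convergence and part (i) place every median of $\ell_{N,\sigma}$ above $\log\dE_{\bfm,f_N}|D_{\bfm,\sigma}|-2s_N$, with $s_N=o(N)$.
\item Concentration then gives \eqref{eq:one-point-determinant-lower-tail} with $r_N=2s_N+a\sqrt N$.
\end{itemize}

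Your route instead rests on two black boxes you have not checked:
\begin{itemize}
\item a lower-tail theorem $\dP(|\det|<e^{-r_N}\dE|\det|)\le e^{-cN}$ with $r_N=o(N)$, $c$ depending only on $L$, and an unbounded diagonal;
\item a large-deviation principle for the outliers of this particular finite-rank perturbation of a deformed GOE, whose perturbation contains the random $g_N$ column and the radial entries.
\end{itemize}
Both would have to be verified in exactly this form, and the second is not a standard result. Neither is needed once you use the $O(1)$-Lipschitz regularized log-determinant together with the exponential gap.
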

	
	Both proofs use the following conditional Gaussian regression formula.
	
	\begin{lemma}
		\label{lem:one-point-gaussian-regression}
		Under the hypotheses and notation of
		Proposition~\ref{prop:one-point-conditional-hessian}, whenever
		\(m=\partial_x\Phi_\zeta(r,x)\), write
		\[
		u(r,m):=\partial_{xx}\Phi_\zeta(r,x)
		=\frac1{\partial_{mm}h_\zeta(r,m)}.
		\]
		Set \(e=\bfm/\sqrt{Nq}\), and let \(P\) be the orthogonal
		projection onto \(\bfm^\perp\). Define
		\[
		z_i
		:=
		\xi''(q)\left(
		\frac{b_i}{\xi'(q)}-\partial_m u(q,m_i)
		-2\zeta(\{0\})m_i
		\right).
		\]
		There exist a GOE matrix \(W_N\), with off-diagonal entries of
		variance \(N^{-1}\), and a standard Gaussian vector \(g_N\) in
		\(\bfm^\perp\), independent of \(W_N\), such that, under
		\(\dP_{\bfm,f_N}\) and relative to
		\(\mathbb R e\oplus\bfm^\perp\),
		\begin{equation}\label{eq:one-point-hessian-blocks}
			\nabla_-^2F_{\TAP,\zeta}(\bfm)
			=
			\begin{pmatrix}
				a_N&y_N^{\mathsf T}P\\
				Py_N&C_N
			\end{pmatrix}
			+o(1),
		\end{equation}
		where the error converges to zero in operator norm in probability,
		and
		\begin{equation}\label{eq:one-point-hessian-tangent-block}
			C_N
			=
			P\left(
			\sqrt{\xi''(q)}\,W_N
			-\operatorname{diag}\left(
			\frac1{u(q,m_i)}
			+\frac{\xi''(q)}N\sum_{j=1}^Nu(q,m_j)
			\right)_{i\le N}
			\right)P\big|_{\bfm^\perp},
		\end{equation}
		\[
		\begin{aligned}
			y_N={}&
			\frac1{\sqrt N}\left(
			-\frac{m_i}{\sqrt q\,u(q,m_i)}+\sqrt q\,z_i
			+\sqrt q\,\zeta(\{0\})\xi''(q)m_i
			\right)_{i\le N}\\
			&+\left(
			\frac{\xi''(q)+q\xi'''(q)-q\xi''(q)^2/\xi'(q)}N
			\right)^{1/2}g_N,
		\end{aligned}
		\]
		and
		\[
		\begin{aligned}
			a_N={}&
			-\frac1{Nq}\sum_{i=1}^N\frac{m_i^2}{u(q,m_i)}
			-\frac{\xi''(q)}N\sum_{i=1}^Nu(q,m_i)
			+\frac2N\sum_{i=1}^Nm_iz_i\\
			&+q\left(
			-\frac{\xi'''(q)}N\sum_{i=1}^Nu(q,m_i)
			+\zeta(\{0\})\xi''(q)
			-\frac{2\xi''(q)}N\sum_{i=1}^N\partial_q u(q,m_i)
			\right).
		\end{aligned}
		\]
	\end{lemma}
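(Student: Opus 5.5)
Writing $F_{\TAP,\zeta}(\bfm)=H_N(\bfm)+\sum_i\Phi^*_\zeta(q_\bfm,m_i)-\frac N2\int_{q_\bfm}^1 s\xi''(s)\zeta([0,s])\,\dd s$, I would split the Hessian at $\bfm$ into a random part and a deterministic part. The random part is $\nabla^2H_N(\bfm)$, computed under the conditioning. The deterministic part is the Hessian of the TAP correction, which depends on $\bfm$ both coordinatewise and through the self-overlap $q_\bfm$. I would then express both in the basis $\mathbb R e\oplus\bfm^\perp$ with $e=\bfm/\sqrt{Nq}$.

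For the deterministic part, the key identity is $\partial_m\Phi^*_\zeta(q,m)=-x$ when $m=\partial_x\Phi_\zeta(q,x)$. It gives $\partial_{mm}\Phi^*_\zeta=-1/u(q,m)$, which produces the diagonal $-\operatorname{diag}(1/u(q,m_i))$. Differentiating through $q_\bfm=\|\bfm\|^2/N$ is the next step. By the chain rule, $\nabla q_\bfm=2\bfm/N$ and $\nabla^2q_\bfm=2I/N$. The $q$-derivative of $\sum_i\Phi^*_\zeta(q,m_i)$ follows from the Parisi PDE via the envelope theorem: $\partial_q\Phi^*_\zeta(q,m)=\partial_q\Phi_\zeta(q,x)=-\frac{\xi''(q)}2\bigl(u+\zeta([0,q])m^2\bigr)$. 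Since $\zeta([0,q^-])=\theta=\zeta(\{0\})$, taking the left limit is exactly why $\nabla_-^2$ appears. Combined with the derivative $\frac12 q\xi''(q)\theta\cdot 2\bfm/N$ of the integral term, the first-order $q$-terms give the isotropic shift $-\frac{\xi''(q)}N\sum_j u(q,m_j)$ on the whole space. The remaining rank-one and rank-two terms are all of the form $\bfm v^{\mathsf T}/N$, with $v$ built from $\partial_m u$, $\partial_q u$ and $m_i$. These feed only into $a_N$ and $y_N$, and they generate the $\partial_mu$ and $2\zeta(\{0\})m_i$ pieces of $z_i$.

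For the random part I would use Gaussian regression, as in Lemma~\ref{lem:reduced-same-shell-increment}. The conditioning fixes $H_N(\bfm)$ and $\nabla H_N(\bfm)$, and the regression identity $\nabla H_N(\bfm)=\bfb+\xi''(q)\widehat\zeta(q)\bfm+\sqrt N o_\ve(1)$ there determines the conditioned gradient. I then decompose $\nabla^2H_N(\bfm)$ by isotropy. Its restriction to $\bfm^\perp$ is independent of $(H_N(\bfm),\nabla H_N(\bfm))$. It equals $\sqrt{\xi''(q)}\,PW_NP$ plus a conditional mean, and this mean is a multiple of the identity only through its covariance with $H_N(\bfm)$ and $\langle\bfm,\nabla H_N(\bfm)\rangle$. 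I would check that this multiple vanishes at order $O(1)$ in operator norm, up to rank-one terms along $\bfm$. The mixed entries $\langle e,\nabla^2H_N(\bfm)P\cdot\rangle$ have nonzero covariance with $P\nabla H_N(\bfm)$, namely $\xi''(q)\bfm/N\otimes$ the transverse gradient. Their conditional mean is therefore $\frac{\xi''(q)}{\xi'(q)}\sqrt q\,P\bfb/\sqrt N$ plus lower order, which gives the $\sqrt q\,\xi''(q)b_i/\xi'(q)$ part of $z_i$. Their residual fluctuation is a Gaussian vector in $\bfm^\perp$ with variance $(\xi''(q)+q\xi'''(q)-q\xi''(q)^2/\xi'(q))/N$ per coordinate, which is the $g_N$ term; I would verify this by a Schur complement with $\Cov(\nabla H,\nabla H)=\xi'(q)I+\frac{\xi''}N\bfm\bfm^{\mathsf T}$. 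The entry $e^{\mathsf T}\nabla^2H_N e$ is a scalar of variance $O(1/N)$ whose conditional mean is deterministic, so it goes into $a_N$. The resulting $\xi'''$ and $\zeta(\{0\})\xi''$ terms come from second derivatives of the covariance $N\xi(R)$ in the radial direction and from the Ward identity for $\frac1N\langle\bfm,\bfb\rangle$ in~\eqref{eq:ising-ward-inner}.

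The main obstacle is bookkeeping. Every $O(1)$ contribution to the operator norm has to be tracked, while discarding errors that are genuinely $o(1)$ in operator norm. The diagonal of $\nabla^2H_N$ has regression mean of order $1/N$ per entry, which is fine. The danger is terms like $\bfm\bfm^{\mathsf T}/N$, which have norm $q=O(1)$ and so must be kept exactly and allocated to $a_N$. I would handle this by writing every deterministic matrix as $\alpha I+\operatorname{diag}+$ (a sum of $\bfm v^{\mathsf T}/N$ and $v\bfm^{\mathsf T}/N$) and projecting each summand onto the block structure. Uniformity over the $\ve$-window then follows from the $W_2$ continuity of the empirical averages $\frac1N\sum_i u(q,m_i)$ and $\frac1N\sum_i m_iz_i$, using $0<u\le1$ and the boundedness of $\partial_mu$ and $\partial_qu$ on compacts.
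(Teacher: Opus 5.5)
Your proposal is correct and follows essentially the same route as the paper. You split $\nabla_-^2F_{\TAP,\zeta}(\bfm)$ into the differentiated TAP correction (Fenchel and envelope identities, and the Parisi PDE with $\zeta([0,q))=\zeta(\{0\})$ on the left) and the conditioned $\nabla^2H_N(\bfm)$ (an independent GOE tangent block, a mixed block regressed on $P\nabla H_N(\bfm)=P\bfb$ with residual $g_N$, and a radial entry of variance $O(1/N)$). The one step you leave implicit is the conditioned value $N^{-1}H_N(\bfm)=\xi'(q)\widehat\zeta(q)+\zeta(\{0\})\xi(q)+o(1)$, which the paper derives from the selected-potential and mixed Ward identities; it is needed alongside $N^{-1}\langle\bfm,\bfb\rangle$ to produce the $\bigl(\xi'''(q)\widehat\zeta(q)-\zeta(\{0\})\xi''(q)\bigr)N^{-1}\bfm\bfm^{\mathsf T}$ term in the conditional mean.
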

	
	\begin{proof}
		Write \(M_q=\partial_x\Phi_\zeta(q,X_q)\). Since
		\(\zeta((0,q))=0\), integration by parts in the plateau
		representation of \(X_q\), the contact identity
		\(\dE[M_q^2]=q\), and
		Lemma~\ref{lem:AC-susceptibility} give
		\begin{align}
			\dE\left[\partial_{xx}\Phi_\zeta(q,X_q)\right]
			&=\widehat\zeta(q),
			\label{eq:one-point-hessian-susceptibility}\\
			\dE\left[X_q\partial_x\Phi_\zeta(q,X_q)\right]
			&=\xi'(q)\dE\left[
			\partial_{xx}\Phi_\zeta(q,X_q)
			+\zeta(\{0\})M_q^2
			\right]\notag\\
			&=\xi'(q)\left(
			\widehat\zeta(q)+\zeta(\{0\})q
			\right).
			\label{eq:one-point-hessian-mixed-ward}
		\end{align}
		Set \(\bfb=(b_i)_{i\le N}\).
		Under the value conditioning at the selected level,
		\begin{equation}\label{eq:one-point-hessian-energy}
			\frac1N H_N(\bfm)
			=
			\xi'(q)\widehat\zeta(q)+\zeta(\{0\})\xi(q)+o(1).
		\end{equation}
		Indeed, the definition of \(F_{\TAP,\zeta}\) and the conditioning
		\(F_{\TAP,\zeta}(\bfm)=Nf_N\), with \(f_N\to f\), give
		\[
		\frac1N H_N(\bfm)
		=
		f_N-\frac1N\sum_{i=1}^N\Phi_\zeta^*(q,m_i)
		+\frac12\int_q^1t\xi''(t)\zeta([0,t])\,\dd t.
		\]
		The Fenchel equality and the empirical-law hypothesis imply
		\[
		\frac1N\sum_{i=1}^N\Phi_\zeta^*(q,m_i)
		=
		\dE\left[
		\Phi_\zeta(q,X_q)
		-X_q\partial_x\Phi_\zeta(q,X_q)
		\right]+o(1).
		\]
		Substituting \eqref{eq:ising-selected-potential} and
		\eqref{eq:one-point-hessian-mixed-ward} into these identities yields
		\[
		\frac1N H_N(\bfm)
		=
		\xi'(q)\widehat\zeta(q)
		+\zeta(\{0\})\xi(q)+o(1),
		\]
		as claimed.
		
		Gaussian regression, using \eqref{eq:one-point-hessian-energy}, gives
		\[
		\begin{aligned}
			&\dE\left[
			\nabla^2H_N(\bfm)
			\,\middle|\,
			\nabla F_{\TAP,\zeta}(\bfm)=0,
			\ F_{\TAP,\zeta}(\bfm)=Nf_N
			\right]\\
			&\qquad=
			\frac{\xi''(q)}{N\xi'(q)}
			\left(\bfb\bfm^{\mathsf T}+\bfm\bfb^{\mathsf T}\right)
			+\frac{
				\xi'''(q)\widehat\zeta(q)
				-\zeta(\{0\})\xi''(q)
			}{N}\bfm\bfm^{\mathsf T}
			+o(1),
		\end{aligned}
		\]
		where \(o(1)\) is in operator norm. Denote the centered conditional
		Hessian of \(H_N\) by \(\widetilde H_N\). Covariance differentiation
		gives
		\[
		P\widetilde H_NP\big|_{\bfm^\perp}
		\stackrel{\mathrm d}=
		\sqrt{\xi''(q)}\,PW_NP\big|_{\bfm^\perp},
		\]
		and
		\[
		P\widetilde H_Ne
		\stackrel{\mathrm d}=
		\sqrt{\frac1N\left(
			\xi''(q)+q\xi'''(q)
			-\frac{q\xi''(q)^2}{\xi'(q)}
			\right)}\,g_N,
		\qquad
		e^{\mathsf T}\widetilde H_Ne=o(1),
		\]
		where \(o(1)\) is in probability.
		
		If \(\zeta\) has an atom at \(q\), the TAP correction is
		\(C^1\) at \(q_\bfm=q\), but its Hessian has a jump there. We first
		compute the Hessian \(\nabla_-^2F_{\TAP,\zeta}\) from the side
		\(q_\bfm<q\). Differentiating the TAP correction and using \(e\)
		as the first basis vector gives
		\eqref{eq:one-point-hessian-blocks} and
		\eqref{eq:one-point-hessian-tangent-block}, together with the
		displayed formulas for \(a_N\) and \(y_N\).
	\end{proof}

	\begin{proof}[Proof of Proposition~\ref{prop:one-point-conditional-hessian}]
		Use the notation and the conditional representation of
		Lemma~\ref{lem:one-point-gaussian-regression}.
		For \(t>0\), let \(\sigma_t\) denote the centered semicircle law
		of variance \(t\). We write \(\boxplus\) for free convolution.
		
		\medskip\noindent\emph{Step 1 (proving the free-convolution gap).}
		Set
		\[
		\mu_q
		:=
		\Law\left(
		\frac1{\partial_{xx}\Phi_\zeta(q,X_q)}
		+\xi''(q)\widehat\zeta(q)
		\right).
		\]
		Write \(G_{\mu_q}(z)=\int(z-x)^{-1}\,\mu_q(\dd x)\), and set
		\[
		\omega_0:=\xi''(q)\widehat\zeta(q).
		\]
		The susceptibility identity
		\eqref{eq:one-point-hessian-susceptibility} gives
		\[
		G_{\mu_q}(\omega_0)
		=-\dE\left[\partial_{xx}\Phi_\zeta(q,X_q)\right]
		=-\widehat\zeta(q),
		\qquad
		\omega_0+\xi''(q)G_{\mu_q}(\omega_0)=0.
		\]
		At this preimage, the strict Plefka condition reads
		\[
		1+\xi''(q)G_{\mu_q}'(\omega_0)
		=
		1-\xi''(q)\dE\left[
		\left(
		\partial_{xx}\Phi_\zeta(q,X_q)
		\right)^2
		\right]>0.
		\]
		
		By the left-edge formula \cite[Eq.~(C.16)]{TAPIsing}
		\textup{(}see also
		\cite[Lemmas~2 and~4 and Corollary~3]{Biane1997}\textup{)},
		the left edge is
		\(\omega_*+\xi''(q)G_{\mu_q}(\omega_*)\), where
		\[
		\omega_*
		:=
		\inf\left\{
		\omega\in\mathbb R:
		\int\frac{\mu_q(\dd x)}{(x-\omega)^2}
		\ge\frac1{\xi''(q)}
		\right\}.
		\]
		The defining integral is increasing and is smaller than
		\(1/\xi''(q)\) at \(\omega_0\), so \(\omega_*>\omega_0\).
		The characteristic
		\[
		\omega\longmapsto
		\omega+\xi''(q)G_{\mu_q}(\omega)
		\]
		is strictly increasing on \([\omega_0,\omega_*)\) and vanishes at
		\(\omega_0\). Hence
		\[
		\inf\operatorname{supp}
		\left(\mu_q\boxplus\sigma_{\xi''(q)}\right)>0.
		\]
		
		Since \(0<\partial_{xx}\Phi_\zeta\le1\),
		\(\inf\operatorname{supp}\mu_q\ge\omega_0+1\). Choose
		\(a\in(0,1)\) such that the characteristic has positive derivative on
		\([\omega_0,\omega_0+a]\). For \(M<\infty\), let
		\(\mu_{q,M}\) be the law of
		\[
		\left(
		\frac1{\partial_{xx}\Phi_\zeta(q,X_q)}
		+\xi''(q)\widehat\zeta(q)
		\right)\wedge M.
		\]
		The Stieltjes transforms of \(\mu_{q,M}\) and their derivatives
		converge uniformly on \([\omega_0,\omega_0+a]\). For all sufficiently
		large \(M\), the truncated characteristic is increasing there and is
		positive at \(\omega_0+a\). The left-edge formula then gives
		\[
		\inf\operatorname{supp}
		\left(\mu_{q,M}\boxplus\sigma_{\xi''(q)}\right)
		\ge
		\omega_0+a+\xi''(q)G_{\mu_{q,M}}(\omega_0+a)>0.
		\]
		Fix such an \(M\), and then \(c_0>0\) so that this edge is larger than
		\(4c_0\).
		
		\medskip\noindent\emph{Step 2 (controlling the transverse block).}
		Put
		\[
		d_{i,N}
		:=
		\frac1{u(q,m_i)}
		+\frac{\xi''(q)}N\sum_{j=1}^Nu(q,m_j).
		\]
		The empirical-law hypothesis gives
		\[
		\frac1N\sum_{j=1}^Nu(q,m_j)
		\longrightarrow\widehat\zeta(q),
		\qquad
		\frac1N\sum_{i=1}^N\delta_{d_{i,N}\wedge M}
		\Longrightarrow\mu_{q,M}.
		\]
		The full support of \(X_q\) and the continuity of
		\(x\mapsto\partial_{xx}\Phi_\zeta(q,x)\) give
		\[
		\max_{i\le N}
		\operatorname{dist}\left(
		d_{i,N}\wedge M,\operatorname{supp}\mu_{q,M}
		\right)
		\le
		\xi''(q)\left|
		\frac1N\sum_{j=1}^Nu(q,m_j)-\widehat\zeta(q)
		\right|
		=o(1).
		\]
		Thus \cite[Proposition~8.1]{CapitaineDonatiMartinFeralFevrier2011}
		applies. Since \(-W_N\) is again a GOE,
		\[
		\lambda_{\min}\left(
		\operatorname{diag}(d_{i,N}\wedge M)
		-\sqrt{\xi''(q)}\,W_N
		\right)
		\xrightarrow{\dP}
		\inf\operatorname{supp}
		\left(\mu_{q,M}\boxplus\sigma_{\xi''(q)}\right).
		\]
		The limit is larger than \(4c_0\). Since
		\(d_{i,N}\ge d_{i,N}\wedge M\), for every
		\(v\in\bfm^\perp\), \eqref{eq:one-point-hessian-tangent-block} gives
		\[
		-v^{\mathsf T}C_Nv
		\ge
		v^{\mathsf T}
		\left(
		\operatorname{diag}(d_{i,N}\wedge M)
		-\sqrt{\xi''(q)}\,W_N
		\right)v
		\ge2c_0\|v\|_2^2
		\]
		with probability tending to one. Therefore
		\begin{equation}\label{eq:one-point-hessian-bulk-gap}
			C_N\preceq-c_0I_{N-1}
		\end{equation}
		with probability tending to one.
		Set
		\[
		\widetilde C_N
		:=
		\sqrt{\xi''(q)}\,W_N
		-\operatorname{diag}(d_{i,N})_{i\le N},
		\]
		so that \(C_N=P\widetilde C_NP|_{\bfm^\perp}\).
		
		\medskip\noindent\emph{Step 3 (computing the resolvent).}
		The same comparison gives
		\(\widetilde C_N\preceq-2c_0I_N\) with probability tending to one.
		For \(\widetilde C_N\), the vector Dyson equation is solved by
		\(-\operatorname{diag}(u(q,m_i))\), whose normalized trace cancels the
		scalar shift in \(\widetilde C_N\). Its stability parameter is
		\[
		\frac{\xi''(q)}N\sum_{i=1}^Nu(q,m_i)^2
		\longrightarrow
		\xi''(q)\dE\left[
		\left(\partial_{xx}\Phi_\zeta(q,X_q)\right)^2
		\right]<1.
		\]
		Apply the anisotropic local law of \cite{KnowlesYin} first to a
		truncated diagonal. The spectral gap removes the truncation and gives
		\begin{equation}\label{eq:one-point-anisotropic-resolvent}
			\frac1N v^{\mathsf T}\widetilde C_N^{-1}w
			=-\frac1N\sum_{i=1}^Nu(q,m_i)v_iw_i+o(1)
			\qquad\text{with high probability}.
		\end{equation}
		The estimate holds for deterministic arrays \(v,w\) with bounded
		empirical second moments. It also holds conditionally on \(v,w\) when
		they are independent of \(W_N\).
		
		\medskip\noindent\emph{Step 4 (computing the scalar Schur complement).}
		For an invertible \(C\),
		\begin{equation}
			\begin{pmatrix}a&y^{\mathsf T}\\y&C\end{pmatrix}
			=
			\begin{pmatrix}1&y^{\mathsf T}C^{-1}\\0&I\end{pmatrix}
			\begin{pmatrix}a-y^{\mathsf T}C^{-1}y&0\\0&C\end{pmatrix}
			\begin{pmatrix}1&0\\C^{-1}y&I\end{pmatrix}.
		\end{equation}
		By \eqref{eq:one-point-hessian-bulk-gap}, it is enough to show that
		\(a_N-(Py_N)^{\mathsf T}C_N^{-1}(Py_N)\) is strictly negative.
		On this event, the inverse of the restriction is
		\[
		PC_N^{-1}P
		=
		\widetilde C_N^{-1}
		-\frac{
			\widetilde C_N^{-1}ee^{\mathsf T}\widetilde C_N^{-1}
		}{
			e^{\mathsf T}\widetilde C_N^{-1}e
		}.
		\]
		Consequently, the Schur complement has the exact decomposition
		\begin{equation}\label{eq:one-point-schur-decomposition}
			a_N-(Py_N)^{\mathsf T}C_N^{-1}(Py_N)
			=
			a_N-y_N^{\mathsf T}\widetilde C_N^{-1}y_N
			+\frac{
				\left(y_N^{\mathsf T}\widetilde C_N^{-1}e\right)^2
			}{
				e^{\mathsf T}\widetilde C_N^{-1}e
			}.
		\end{equation}
		
		Set
		\[
		\Delta_{\mathrm{Pl}}:=1-\xi''(q)\dE\left[
		\left(
		\partial_{xx}\Phi_\zeta(q,X_q)
		\right)^2
		\right]>0,
		\qquad
		\gamma_q:=\dE\left[
		\partial_{xx}\Phi_\zeta(q,X_q)
		\left(\partial_x\Phi_\zeta(q,X_q)\right)^2
		\right]>0.
		\]
		The three terms in \eqref{eq:one-point-schur-decomposition} converge as
		follows:
		\begin{align}
			e^{\mathsf T}\widetilde C_N^{-1}e
			&\xrightarrow{\dP}-\frac{\gamma_q}{q},
			\label{eq:one-point-schur-denominator}\\
			\left(y_N^{\mathsf T}\widetilde C_N^{-1}e\right)^2
			&\xrightarrow{\dP}\Delta_{\mathrm{Pl}}^2,
			\label{eq:one-point-schur-numerator}\\
			a_N-y_N^{\mathsf T}\widetilde C_N^{-1}y_N
			&\xrightarrow{\dP}
			-q\zeta(\{0\})\xi''(q)\Delta_{\mathrm{Pl}}.
			\label{eq:one-point-schur-first-term}
		\end{align}
		Since \(\zeta((0,q))=0\), integration by parts under the law of \(X_q\)
		reads
		\begin{equation}\label{eq:one-point-integration-by-parts}
			\dE\left[
			\psi(X_q)\left(
			\frac{X_q}{\xi'(q)}
			-\zeta(\{0\})\partial_x\Phi_\zeta(q,X_q)
			\right)
			\right]
			=\dE[\psi'(X_q)].
		\end{equation}
		Equation~\eqref{eq:one-point-schur-denominator} follows from
		\eqref{eq:one-point-anisotropic-resolvent} with \(v=w=\bfm\), since
		\(e=\bfm/\sqrt{Nq}\).
		
		For the numerator, use
		\[
		\begin{aligned}
			z_i+\zeta(\{0\})\xi''(q)m_i
			&=
			\xi''(q)\left(
			\frac{b_i}{\xi'(q)}
			-\partial_m u(q,m_i)
			-\zeta(\{0\})m_i
			\right),\\
			\partial_m u(q,m_i)
			&=
			\frac{\partial_{xxx}\Phi_\zeta(q,b_i)}
			{\partial_{xx}\Phi_\zeta(q,b_i)}.
		\end{aligned}
		\]
		Applying \(\widetilde C_N\widetilde C_N^{-1}=I_N\) to the
		\(m_i/u(q,m_i)\) term and
		\eqref{eq:one-point-anisotropic-resolvent} to the remaining terms gives
		\[
		y_N^{\mathsf T}\widetilde C_N^{-1}e
		\xrightarrow{\dP}
		1-\xi''(q)\dE\left[
		\partial_{xx}\Phi_\zeta(q,X_q)
		\partial_x\Phi_\zeta(q,X_q)
		\left(
		\frac{X_q}{\xi'(q)}
		-\zeta(\{0\})\partial_x\Phi_\zeta(q,X_q)
		-\frac{\partial_{xxx}\Phi_\zeta(q,X_q)}
		{\partial_{xx}\Phi_\zeta(q,X_q)}
		\right)
		\right].
		\]
		The independent \(g_N\) term is \(o(1)\) in probability.
		Taking
		\(\psi=\partial_x\Phi_\zeta(q,\cdot)
		\partial_{xx}\Phi_\zeta(q,\cdot)\) in
		\eqref{eq:one-point-integration-by-parts} identifies the expectation as
		\(\dE[(\partial_{xx}\Phi_\zeta(q,X_q))^2]\), proving
		\eqref{eq:one-point-schur-numerator}.
		
		To prove \eqref{eq:one-point-schur-first-term}, apply
		\(\widetilde C_N\widetilde C_N^{-1}=I_N\) to each factor
		\(m_i/u(q,m_i)\), and apply
		\eqref{eq:one-point-anisotropic-resolvent} to the remaining quadratic
		forms, conditionally on \(g_N\) when needed. All unmarked derivatives
		below are evaluated at \((q,X_q)\). This gives
		\[
		\begin{aligned}
			a_N-y_N^{\mathsf T}\widetilde C_N^{-1}y_N
			\longrightarrow q\xi''(q)\Bigg[&-\zeta(\{0\})
			-2\dE\,\partial_q u(q,\partial_x\Phi_\zeta(q,X_q))\\
			&+\xi''(q)\dE\left[
			\partial_{xx}\Phi_\zeta
			\left(
			\frac{X_q}{\xi'(q)}-\zeta(\{0\})\partial_x\Phi_\zeta
			-\frac{\partial_{xxx}\Phi_\zeta}{\partial_{xx}\Phi_\zeta}
			\right)^2\right]
			-\frac{\xi''(q)}{\xi'(q)}\widehat\zeta(q)\Bigg].
		\end{aligned}
		\]
		Differentiating the Parisi equation at fixed \(m\) gives
		\[
		\begin{aligned}
			-2\partial_q u(q,\partial_x\Phi_\zeta(q,X_q))
			=\xi''(q)\Bigg(&\partial_{xxxx}\Phi_\zeta
			-\frac{(\partial_{xxx}\Phi_\zeta)^2}
			{\partial_{xx}\Phi_\zeta}
			+2\zeta(\{0\})(\partial_{xx}\Phi_\zeta)^2\Bigg).
		\end{aligned}
		\]
		Two applications of
		\eqref{eq:one-point-integration-by-parts} give
		\[
		\begin{aligned}
			&\dE\left[\partial_{xx}\Phi_\zeta
			\left(
			\frac{X_q}{\xi'(q)}
			-\zeta(\{0\})\partial_x\Phi_\zeta
			-\frac{\partial_{xxx}\Phi_\zeta}
			{\partial_{xx}\Phi_\zeta}
			\right)^2\right]\\
			&\qquad=
			\dE\left[
			-\partial_{xxxx}\Phi_\zeta
			+\frac{(\partial_{xxx}\Phi_\zeta)^2}
			{\partial_{xx}\Phi_\zeta}
			+\frac{\partial_{xx}\Phi_\zeta}{\xi'(q)}
			-\zeta(\{0\})(\partial_{xx}\Phi_\zeta)^2
			\right].
		\end{aligned}
		\]
		Substitution reduces the bracket to
		\[
		-\zeta(\{0\})
		+\zeta(\{0\})\xi''(q)
		\dE\left[
		\left(\partial_{xx}\Phi_\zeta(q,X_q)\right)^2
		\right]
		=-\zeta(\{0\})\Delta_{\mathrm{Pl}},
		\]
		which proves \eqref{eq:one-point-schur-first-term}. Substituting the three
		limits into \eqref{eq:one-point-schur-decomposition} yields
		\begin{equation}\label{eq:one-point-negative-schur-complement}
			a_N-(Py_N)^{\mathsf T}C_N^{-1}(Py_N)
			\xrightarrow{\dP}
			-q\zeta(\{0\})\xi''(q)\Delta_{\mathrm{Pl}}
			-\frac{q\Delta_{\mathrm{Pl}}^2}{\gamma_q}<0.
		\end{equation}
		
		The resolvent estimate also gives
		\(\|C_N^{-1}Py_N\|_2=O(1)\) with high probability. The block
		factorization, \eqref{eq:one-point-hessian-bulk-gap}, and
		\eqref{eq:one-point-negative-schur-complement} give some \(c_1>0\)
		such that
		\[
		\nabla_-^2F_{\TAP,\zeta}(\bfm)\preceq-c_1I_N
		\]
		with probability tending to one. If \(\zeta\) has an atom at \(q\),
		comparing the two one-sided second derivatives at \(q_\bfm=q\) gives
		\begin{equation}\label{eq:one-point-hessian-jump}
			\begin{aligned}
				&\nabla_+^2F_{\TAP,\zeta}(\bfm)
				-\nabla_-^2F_{\TAP,\zeta}(\bfm)\\
				&\qquad=
				-\frac{2\zeta(\{q\})\xi''(q)}N
				\left(
				1-\frac{\xi''(q)}N\sum_{i=1}^N
				\bigl(\partial_{xx}\Phi_\zeta(q,b_i)\bigr)^2
				\right)\bfm\bfm^{\mathsf T}.
			\end{aligned}
		\end{equation}
		The empirical Plefka factor is positive for all large \(N\). Hence the
		jump is negative semidefinite, and the same gap holds for
		\(\nabla_+^2F_{\TAP,\zeta}(\bfm)\).
		
		\medskip\noindent\emph{Step 5 (extending the gap locally).}
		If \(\|x-\bfm\|_2\le2\sqrt{\delta N}\), then
		\(|q_x-q|\le C\sqrt\delta\). The compact-set continuity used in
		\cite[Proof of Lemma~C.4]{TAPIsing}, together with the \(W_2\)-hypothesis,
		gives
		\begin{equation}\label{eq:one-point-local-susceptibility-continuity}
			\lim_{\delta\downarrow0}\limsup_{N\to\infty}
			\sup_{\|x-\bfm\|_2\le2\sqrt{\delta N}}
			\frac1N\sum_{i=1}^N
			\left|
			u(q_x,x_i)-u(q,m_i)
			\right|^2
			=0.
		\end{equation}
		For all sufficiently small \(\delta\), this implies
		\[
		1-\frac{\xi''(q_x)}N\sum_{i=1}^N
		u(q_x,x_i)^2
		\ge\frac{\Delta_{\mathrm{Pl}}}{2}
		\]
		uniformly in the ball for all sufficiently large \(N\).
		
		Replace every occurrence of
		\(u(q_x,x_i)^{-1}\) in the differentiated TAP correction by
		\(u(q_x,x_i)^{-1}\wedge K\), and denote the resulting matrix by
		\(\mathcal H_{N,K}(x)\). Choose \(K\) large enough that the
		free-convolution edge and the scalar Schur complement at \(\bfm\)
		still have fixed gaps. For this fixed \(K\),
		\eqref{eq:one-point-local-susceptibility-continuity} controls the
		deterministic coefficients. Covariance differentiation gives the Gaussian
		increment bound. The edge estimate and Gaussian concentration give
		\(\kappa_*,c_K>0\) such that, once \(\delta\) is small enough, for
		every deterministic \(x\) in the ball,
		\[
		\dP_{\bfm,f_N}\left(
		\lambda_{\max}\!\left(
		\mathcal H_{N,K}(x)
		\right)>-3\kappa_*
		\right)
		\le e^{-c_KN}
		\]
		for all sufficiently large \(N\).
		
		Choose an \(\ell_\infty\)-net \(\mathcal Q_N\) of the ball with
		\[
		\frac1N\log|\mathcal Q_N|=o_\delta(1).
		\]
		The increment bound and a union bound over \(\mathcal Q_N\) yield
		\[
		\sup_{\substack{
				x\in(-1,1)^N,\\
				\|x-\bfm\|_2\le2\sqrt{\delta N}
		}}
		\lambda_{\max}\bigl(\mathcal H_{N,K}(x)\bigr)
		\le-2\kappa_*
		\]
		with probability tending to one.
		
		To remove the cap, absorb the mixed terms into the discarded negative
		diagonal using
		\[
		yr^{\mathsf T}+ry^{\mathsf T}-D
		\preceq
		(r^{\mathsf T}D^{-1}r)\,yy^{\mathsf T},
		\qquad D\succ0.
		\]
		Using \(|\partial_m u|\le2\) and the \(W_2\)-hypothesis, this gives,
		uniformly over the ball,
		\[
		\nabla^2F_{\TAP,\zeta}(x)
		\preceq
		\mathcal H_{N,K}(x)
		+\bigl(\varepsilon_K+\omega_K(\delta)+o(1)\bigr)I_N,
		\qquad
		\varepsilon_K\longrightarrow0,
		\quad
		\omega_K(\delta)\longrightarrow0
		\ \text{as }\delta\downarrow0.
		\]
		Choose \(K\) large and then \(\delta\) small so that the error is below
		\(\kappa_*\). This proves
		\eqref{eq:one-point-local-spectral-gap} with \(\kappa=\kappa_*\).
		
		The resulting \(\delta_0\) and \(\kappa_*\) are uniform in the
		\(\ve\)-window.
		If \(\zeta\) has an atom at \(q\), the uniform Plefka bound and
		\eqref{eq:one-point-hessian-jump} reduce the claim to
		\(\nabla_-^2F_{\TAP,\zeta}\).
	\end{proof}
	
	\begin{proof}[Proof of Lemma~\ref{lem:one-point-conditional-determinant}]
		We use the notation of Lemma~\ref{lem:one-point-gaussian-regression}
		and begin with \textup{(i)}.
		
		Set
		\[
		U_N:=\operatorname{diag}\bigl(u(q,m_i)\bigr)_{i\le N},
		\qquad
		\chi_N:=\frac1N\sum_{i=1}^Nu(q,m_i),
		\qquad
		\mu_N:=\frac1N\sum_{i=1}^N
		\delta_{u(q,m_i)^{-1}+\xi''(q)\chi_N}.
		\]
		To reduce to a bounded deformation, fix \(T<\infty\) and set
		\[
		b_i^{(T)}:=(-T)\vee b_i\wedge T,
		\qquad
		u_i^{(T)}:=\partial_{xx}\Phi_\zeta(q,b_i^{(T)}),
		\qquad
		U_N^{(T)}:=\operatorname{diag}(u_i^{(T)}),
		\qquad
		\chi_N^{(T)}:=\frac1N\sum_{i=1}^Nu_i^{(T)}.
		\]
		The corresponding deformed-GOE matrices are
		\[
		\begin{aligned}
			\mathsf M_N
			&:=U_N^{-1}+\xi''(q)\chi_NI_N
			-\sqrt{\xi''(q)}W_N,\\
			\mathsf M_N^{(T)}
			&:=(U_N^{(T)})^{-1}+\xi''(q)\chi_N^{(T)}I_N
			-\sqrt{\xi''(q)}W_N.
		\end{aligned}
		\]
		The deformed-GOE argument in \cite[Proof of Lemma~C.4]{TAPIsing} applies to
		\(\mathsf M_N^{(T)}\). We use
		\[
		-\log\partial_{xx}\Phi_\zeta(q,x)\le C(1+|x|)
		\]
		and the factorization
		\[
		\begin{aligned}
			&\det\mathsf M_N\\
			&\quad=
			\left(\prod_{i=1}^Nu(q,m_i)^{-1}\right)
			\det\left(
			I_N+\xi''(q)\chi_NU_N
			-\sqrt{\xi''(q)}U_N^{1/2}W_NU_N^{1/2}
			\right).
		\end{aligned}
		\]
		The same factorization holds for \(\mathsf M_N^{(T)}\). The determinant
		comparison in \cite[Proof of Lemma~C.4]{TAPIsing} gives
		\[
		\left|
		\log\dE|\det\mathsf M_N|
		-
		\log\dE|\det\mathsf M_N^{(T)}|
		\right|
		\le
		C\sum_{\lvert b_i\rvert>T}(1+\lvert b_i\rvert)+o(N).
		\]
		The same comparison shows that the finite-rank terms
		in Lemma~\ref{lem:one-point-gaussian-regression} and the rank-one jump
		\eqref{eq:one-point-hessian-jump} contribute \(o(N)\).
		The \(W_2\)-hypothesis gives
		\[
		\lim_{T\to\infty}\limsup_{N\to\infty}
		\frac1N\sum_{\lvert b_i\rvert>T}(1+\lvert b_i\rvert)=0.
		\]
		Letting \(T\to\infty\) gives, for \(\sigma\in\{-,+\}\),
		\[
		\frac1N
		\log\dE_{\bfm,f_N}
		\left|\det\nabla_\sigma^2F_{\TAP,\zeta}(\bfm)\right|
		=
		\int\log|x|\,\dd
		\left(\mu_N\boxplus\sigma_{\xi''(q)}\right)(x)+o(1).
		\]
		
		It remains to evaluate the logarithmic potential. For large \(N\),
		strict Plefka gives
		\[
		0<\inf\operatorname{supp}
		\left(\mu_N\boxplus\sigma_{\xi''(q)}\right).
		\]
		The subordination preimage of zero is \(\xi''(q)\chi_N\).
		The identity in \cite[Lemma~C.3]{TAPIsing} gives
		\[
		\int\log|x|\,\dd
		\left(\mu_N\boxplus\sigma_{\xi''(q)}\right)(x)
		=
		-\frac1N\sum_{i=1}^N\log u(q,m_i)
		+\frac{\xi''(q)}2\chi_N^2.
		\]
		Together with the \(W_2\)-convergence, this proves
		\eqref{eq:one-point-conditional-determinant}.
		For the windowed estimate, fix \(T\). On \(\{|b_i|\le T\}\), the
		compact-set continuity used in \cite[Proof of Lemma~C.4]{TAPIsing}
		controls the diagonal and regression coefficients, while strict Plefka
		preserves the gap. The displayed tail bound handles the remaining
		coordinates. We then let \(T\to\infty\).
		
		For \textup{(ii)}, assume the exponential-moment bound. The pointwise
		estimate in Step~5 of the proof of
		Proposition~\ref{prop:one-point-conditional-hessian}, followed by removal
		of the cap, gives
		\(c_*,c_L>0\) such that, uniformly on the same \(\ve\)-window,
		\begin{equation}\label{eq:one-point-exponential-gap}
			\dP_{\bfm,f_N}\left(
			\nabla_\sigma^2F_{\TAP,\zeta}(\bfm)
			\not\preceq-c_*I_N
			\right)
			\le e^{-c_LN}.
		\end{equation}
		Fix \(0<\kappa<c_*/4\), where \(c_*\) is from
		\eqref{eq:one-point-exponential-gap}. Let
		\((\lambda_i,v_i)_{i\le N}\) be an orthonormal eigendecomposition of
		\(\nabla_\sigma^2F_{\TAP,\zeta}(\bfm)\). Define
		\[
		\ell_{N,\sigma}
		:=
		\sum_{i=1}^N
		\log\max\{-\lambda_i,\kappa\},
		\qquad
		R_N
		:=
		\sum_{i:\,\lambda_i<-\kappa}
		\frac1{\lambda_i}v_iv_i^{\mathsf T}.
		\]
		Let \(G=(G_\gamma)_\gamma\) be standard Gaussian coordinates generating
		the conditional Hessian.
		The exponential-moment bound gives a uniform \(K_L\)-Lipschitz estimate
		for \(\ell_{N,\sigma}\) as a function of \(G\). Covariance
		differentiation gives
		\[
		\sum_\gamma
		\left[
		\operatorname{Tr}\left(
		R_N\partial_{G_\gamma}
		\left[\nabla_\sigma^2F_{\TAP,\zeta}(\bfm)\right]
		\right)
		\right]^2
		\le
		\frac{C_L}{N}\operatorname{Tr}(R_N^2)+C_L
		\le K_L^2,
		\]
		since \(\|R_N\|_{\mathrm{op}}\le\kappa^{-1}\).
		
		We compare \(\ell_{N,\sigma}\) with
		\[
		\mathcal L_N
		:=
		-\sum_{i=1}^N\log u(q,m_i)
		+\frac{N\xi''(q)}2
		\left(\frac1N\sum_{i=1}^Nu(q,m_i)\right)^2.
		\]
		The deformed-GOE spectral convergence and
		Proposition~\ref{prop:one-point-conditional-hessian}\textup{(i)} give a
		deterministic \(s_N=o(N)\) such that
		\[
		\dP_{\bfm,f_N}\left(
		|\ell_{N,\sigma}-\mathcal L_N|>s_N
		\right)=o(1).
		\]
		After increasing \(s_N\), \eqref{eq:one-point-conditional-determinant}
		also gives
		\[
		\left|
		\log\dE_{\bfm,f_N}|D_{\bfm,\sigma}|
		-\mathcal L_N
		\right|
		\le s_N.
		\]
		Every median of \(\ell_{N,\sigma}\) is at least
		\(\mathcal L_N-s_N\). For fixed \(a>0\), Gaussian concentration and
		\eqref{eq:one-point-exponential-gap} give
		\[
		\dP_{\bfm,f_N}\left(
		|D_{\bfm,\sigma}|
		<
		e^{-2s_N-a\sqrt N}
		\dE_{\bfm,f_N}|D_{\bfm,\sigma}|
		\right)
		\le e^{-c_LN}.
		\]
		This proves \eqref{eq:one-point-determinant-lower-tail} with
		\(r_N=2s_N+a\sqrt N=o(N)\).
		
		Uniformly over the \(\ve\)-window, we may choose \(\rho_{\det}\) and
		\(r_{N,\ve}\) so that
		\[
		2s_N+a\sqrt N
		\le N\rho_{\det}(\ve)+r_{N,\ve},
		\qquad
		\rho_{\det}(\ve)\downarrow0,
		\qquad
		r_{N,\ve}=o_N(N).
		\]
		This proves
		\eqref{eq:susy-one-point-determinant-lower-tail}.
	\end{proof}
	
	%==============================================================================
	\subsection{The joint density}
	%==============================================================================
	
	The remaining term in the Kac--Rice formula is the joint density of the
	value and gradient.
	
	Set
	\[
	B(\bfm)
	:=
	\sum_{i=1}^N
	\left[
	-h_\zeta(q,m_i)-\Phi_\zeta(0,0)
	\right]
	+\frac{N\theta}{2}\int_0^q t\xi''(t)\,\dd t .
	\]
	Uniformly for \(|q_\bfm-q|\le\ve\),
	\begin{equation}\label{eq:one-point-decomposition}
		F_{\TAP,\zeta}(\bfm)-NP_\zeta
		=
		H_N(\bfm)+B(\bfm)+N o_\ve(1)+o_N(N).
	\end{equation}
	Define the corresponding gradient and value targets by
	\begin{equation}\label{eq:one-point-gE}
		g_\bfm:=\bfb+\xi''(q)\widehat\zeta(q)\,\bfm,
		\qquad
		E_\bfm(f')
		:=
		N(f'-P_\zeta)-B(\bfm).
	\end{equation}
	
	\begin{lemma}
		\label{lem:one-point-joint-density}
		Uniformly over \(f'\in[f-\ve,f+\ve]\) and
		\(\bfm\in\mathcal D_{N,\ve}(f')\),
		\begin{equation}\label{eq:one-point-joint-density-bound}
			\begin{aligned}
				\log \varphi_\bfm\bigl(0,N(f'-P_\zeta)\bigr)
				\le{}&
				-\frac N2\log(2\pi \xi'(q))
				-\theta E_\bfm(f')\\
				&-\frac1{2\xi'(q)}
				\left\|\bfb-\theta\xi'(q)\bfm\right\|_2^2
				+\frac N2\theta^2\xi(q)\\
				&-\frac N2\xi''(q)\widehat\zeta(q)^2
				+N o_\ve(1)+o_N(N).
			\end{aligned}
		\end{equation}
		If, in addition, \(f'=f+o_\ve(1)+o_N(1)\) and
		\begin{equation}\label{eq:ward2}
			\frac1N\sum_{i=1}^N
			\bigl[\Phi_\zeta(q,b_i)-\Phi_\zeta(0,0)\bigr]
			=
			f-P_\zeta
			+\frac\theta2\int_0^q t\xi''(t)\,\dd t
			+o_\ve(1)+o_N(1),
		\end{equation}
		then equality holds in
		\eqref{eq:one-point-joint-density-bound} up to
		\(N o_\ve(1)+o_N(N)\).
	\end{lemma}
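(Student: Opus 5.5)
The density is Gaussian, so the plan is to write it explicitly and complete the square against the tilt vector already used in Lemma~\ref{lem:reduced-same-shell-increment}. By \eqref{eq:one-point-decomposition}, the event
\[
\bigl(\nabla F_{\TAP,\zeta}(\bfm),F_{\TAP,\zeta}(\bfm)-NP_\zeta\bigr)=\bigl(0,N(f'-P_\zeta)\bigr)
\]
is, up to the stated errors, the event that the pair \((\nabla H_N(\bfm),H_N(\bfm))\) equals a deterministic target. The gradient part comes from differentiating the deterministic correction and equals \(g_\bfm\) in \eqref{eq:one-point-gE}. The value part equals \(E_\bfm(f')\).

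Since the map \(\bfm\mapsto(\nabla F_{\TAP,\zeta},F_{\TAP,\zeta})\) differs from \((\nabla H_N,H_N)\) by a deterministic shift, the density \(\varphi_\bfm\) is the centered Gaussian density of \((\nabla H_N(\bfm),H_N(\bfm))\) evaluated at \((g_\bfm,E_\bfm(f'))\). Its covariance is \(\Sigma=\begin{pmatrix}\xi'(q)I+\frac{\xi''(q)}N\bfm\bfm^{\mathsf T}&\xi'(q)\bfm\\ \xi'(q)\bfm^{\mathsf T}&N\xi(q)\end{pmatrix}\). The determinant of \(\Sigma\) contributes \(-\frac N2\log(2\pi\xi'(q))+O(\log N)\).

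For the quadratic form, I will use the variational identity
\[
-\tfrac12 v^{\mathsf T}\Sigma^{-1}v=\inf_{\lambda}\bigl[-\langle\lambda,v\rangle+\tfrac12\lambda^{\mathsf T}\Sigma\lambda\bigr].
\]
Plugging in the particular test vector \(\lambda=(x,\theta)\), with \(x=(\bfb-\theta\xi'(q)\bfm)/\xi'(q)\), gives an upper bound that holds for all \(\bfm\). This is the same \(L_\bfm\) direction as before. Expanding,
\[
-\langle x,g_\bfm\rangle-\theta E_\bfm+\tfrac12\bigl[\xi'(q)\|x\|^2+\tfrac{\xi''(q)}N\langle x,\bfm\rangle^2+2\theta\xi'(q)\langle x,\bfm\rangle+\theta^2N\xi(q)\bigr].
\]
With \(g_\bfm=\xi'(q)x+\theta\xi'(q)\bfm+\xi''(q)\widehat\zeta(q)\bfm\), the terms combine into
\[
-\tfrac{\xi'(q)}2\|x\|^2-\theta E_\bfm+\tfrac N2\theta^2\xi(q)-\theta^2 N q\,\xi'(q)+\text{(terms in }\langle x,\bfm\rangle).
\]
The window identity \(N^{-1}\langle x,\bfm\rangle=\widehat\zeta(q)+o_\ve(1)\) follows from \eqref{eq:ising-ward-inner} and the \(W_2\)-window. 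Using it, the \(\langle x,\bfm\rangle\) terms collapse to \(-\frac N2\xi''(q)\widehat\zeta(q)^2\) plus the compensating \(\theta\)-cross terms. The leftover \(\theta^2Nq\xi'(q)\) pieces need careful bookkeeping to confirm they cancel and leave exactly \eqref{eq:one-point-joint-density-bound}. This algebra is routine but error-prone, and the \(q_\bfm\) versus \(q\) discrepancies are absorbed into \(No_\ve(1)\).

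For equality, the bound is tight precisely when \(\lambda\) is the maximizer, that is, when \(\Sigma\lambda=(g_\bfm,E_\bfm)\). The gradient component holds by construction, up to \(\sqrt N o_\ve(1)\). The value component requires \(\Cov(H_N(\bfm),L_\bfm)=E_\bfm(f')\). This is exactly the computation done in Lemma~\ref{lem:reduced-same-shell-increment}, with \eqref{eq:ward2} playing the role of \eqref{eq:ising-selected-potential} via the Fenchel identity. Then \(v-\Sigma\lambda\) is small, and its contribution to \(v^{\mathsf T}\Sigma^{-1}v\) is \(No_\ve(1)+o_N(N)\).

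I expect the main obstacle to be uniformity in the conditioning of \(\Sigma\). The smallest eigenvalue of \(\Sigma\) is controlled, since the value is nondegenerate given the gradient when \(q>0\) and \(\xi\) has two terms (\(\xi(q)-q\xi'(q)^2/(\xi'(q)+q\xi''(q))>0\)). This bounds how much an \(o_\ve(N)\) mismatch in the value can affect the quadratic form.
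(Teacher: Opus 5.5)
Your proposal is essentially the paper's proof. The dual bound $-\frac12 v^{\mathsf T}\Sigma^{-1}v\le-\langle\lambda,v\rangle+\frac12\lambda^{\mathsf T}\Sigma\lambda$ at $\lambda=(x,\theta)$ is the same as the paper's Cameron--Martin tilt by $L_\bfm$, followed by bounding the tilted density by its maximum. Your equality criterion $\Sigma\lambda\approx v$ is the paper's check that the tilted means match the targets under \eqref{eq:ward2}, and your nondegeneracy remark is the paper's rescaled-covariance step.

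One correction to the bookkeeping you deferred: no $\theta^2Nq\,\xi'(q)$ term arises. The $\pm\theta\xi'(q)\langle x,\bfm\rangle$ cross terms cancel exactly, leaving
\[
-\xi''(q)\widehat\zeta(q)\langle x,\bfm\rangle+\frac{\xi''(q)}{2N}\langle x,\bfm\rangle^2
=-\frac N2\xi''(q)\widehat\zeta(q)^2+\frac N2\xi''(q)\Bigl(\frac1N\langle x,\bfm\rangle-\widehat\zeta(q)\Bigr)^2 .
\]
The last square enters with a plus sign, so the window identity $N^{-1}\langle x,\bfm\rangle=\widehat\zeta(q)+o_\ve(1)$ is needed already for the upper bound.

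A smaller point of precision: the density is evaluated at the exact targets $\bigl(-\nabla(F_{\TAP,\zeta}-H_N)(\bfm),\,Nf'-(F_{\TAP,\zeta}-H_N)(\bfm)\bigr)$, not at $(g_\bfm,E_\bfm(f'))$. In your dual form the swap only affects the linear term. It costs $No_\ve(1)$, since $\|x\|_2=O(\sqrt N)$ by the $W_2$ window.
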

	
	\begin{proof}
		Set
		\[
		\widetilde g_\bfm
		:=
		-\nabla\bigl(F_{\TAP,\zeta}-H_N\bigr)(\bfm),
		\qquad
		\widetilde E_\bfm
		:=
		Nf'-\bigl(F_{\TAP,\zeta}-H_N\bigr)(\bfm).
		\]
		Thus the conditioning in \eqref{eq:one-point-kac-rice-window}
		prescribes
		\[
		\bigl(\nabla H_N(\bfm),H_N(\bfm)\bigr)
		=
		(\widetilde g_\bfm,\widetilde E_\bfm).
		\]
		Equations~\eqref{eq:one-point-decomposition} and
		\eqref{eq:one-point-gE} give
		\begin{equation}\label{eq:one-point-exact-target}
			\begin{aligned}
				\|\widetilde g_\bfm-g_\bfm\|_2
				&=\sqrt N\,o_\ve(1)+o_N(\sqrt N),\\
				|\widetilde E_\bfm-E_\bfm(f')|
				&=N o_\ve(1)+o_N(N).
			\end{aligned}
		\end{equation}
		
		Covariance differentiation gives
		\begin{equation}\label{eq:one-point-field-covariance}
			\begin{aligned}
				\dE H_N(\bfm)^2
				&=N\xi(q_\bfm),\\
				\dE\bigl[\nabla H_N(\bfm)H_N(\bfm)\bigr]
				&=\xi'(q_\bfm)\bfm,\\
				\Cov\bigl(\nabla H_N(\bfm)\bigr)
				&=\xi'(q_\bfm)I_N
				+\frac{\xi''(q_\bfm)}N\bfm\bfm^{\mathsf T}.
			\end{aligned}
		\end{equation}
		For every \(q>0\),
		\begin{equation}\label{eq:one-point-covariance-nondegeneracy}
			\begin{aligned}
				&\xi(q)\bigl(\xi'(q)+q\xi''(q)\bigr)
				-q\xi'(q)^2\\
				&\qquad=
				\frac12\sum_{p,r\ge2}
				(p-r)^2\beta_p^2\beta_r^2q^{p+r-1}
				\ge0.
			\end{aligned}
		\end{equation}
		If \(\xi\) has at least two nonzero coefficients, the inequality is
		strict.
		
		Set
		\[
		\widehat Z_\bfm
		:=
		\bigl(\nabla H_N(\bfm),N^{-1/2}H_N(\bfm)\bigr).
		\]
		By \eqref{eq:one-point-field-covariance}, uniformly on
		\(\mathcal D_{N,\ve}(f')\),
		\begin{equation}\label{eq:one-point-rescaled-covariance}
			\left\|
			\Cov(\widehat Z_\bfm)
			-
			\begin{pmatrix}
				\xi'(q)I_N+\dfrac{\xi''(q)}N\bfm\bfm^{\mathsf T}
				&\xi'(q)\bfm/\sqrt N\\[4pt]
				\xi'(q)\bfm^{\mathsf T}/\sqrt N
				&\xi(q)
			\end{pmatrix}
			\right\|_{\mathrm{op}}
			=o_\ve(1)+o_N(1).
		\end{equation}
		Since \(\xi'(q)>0\) and
		\(\lvert q_\bfm-q\rvert\le\ve\) on
		\(\mathcal D_{N,\ve}(f')\), continuity and
		\eqref{eq:one-point-covariance-nondegeneracy} show that the matrix
		in \eqref{eq:one-point-rescaled-covariance} is uniformly
		nondegenerate for all sufficiently small \(\ve\). The matrix
		determinant lemma and the Schur complement give
		\[
		\log\det\Cov(\widehat Z_\bfm)
		=
		N\log\xi'(q)+N o_\ve(1)+o_N(N).
		\]
		Returning from \(\widehat Z_\bfm\) to
		\((\nabla H_N(\bfm),H_N(\bfm))\) changes the log-density by
		\(O(\log N)\). Therefore
		\begin{equation}\label{eq:one-point-maximal-density}
			\log\sup_z p_{(\nabla H_N(\bfm),H_N(\bfm))}(z)
			=
			-\frac N2\log(2\pi\xi'(q))
			+N o_\ve(1)+o_N(N).
		\end{equation}
		
		Set
		\[
		x_\bfm
		:=
		\frac{\bfb-\theta\xi'(q)\bfm}{\xi'(q)},
		\qquad
		L_\bfm
		:=
		\theta H_N(\bfm)
		+\langle x_\bfm,\nabla H_N(\bfm)\rangle.
		\]
		Define
		\[
		\frac{\dd\mathbb P^\bfm}{\dd\mathbb P}
		=
		\exp\left\{
		L_\bfm-\frac12\Var(L_\bfm)
		\right\}.
		\]
		Let \(p^{\,\bfm}\) denote the density of
		\((\nabla H_N(\bfm),H_N(\bfm))\) under \(\mathbb P^\bfm\).
		Under \(\mathbb P^\bfm\), the value and gradient have their original
		covariance and mean
		\[
		\dE^\bfm
		\bigl(\nabla H_N(\bfm),H_N(\bfm)\bigr)
		=
		\Cov\left(
		\bigl(\nabla H_N(\bfm),H_N(\bfm)\bigr),L_\bfm
		\right).
		\]
		The \(W_2\) condition gives
		\(\|x_\bfm\|_2=O(\sqrt N)\). Hence
		\eqref{eq:one-point-exact-target} and the Cameron--Martin formula give
		\begin{align}
			\log \varphi_\bfm\bigl(0,N(f'-P_\zeta)\bigr)
			={}&
			-\theta E_\bfm(f')
			-\langle x_\bfm,g_\bfm\rangle
			+\frac12\Var(L_\bfm)\notag\\
			&+
			\log p^{\,\bfm}
			(\widetilde g_\bfm,\widetilde E_\bfm)
			+N o_\ve(1)+o_N(N),
			\label{eq:one-point-CM-density}
		\end{align}
		Since the tilt does not change the covariance,
		\eqref{eq:one-point-maximal-density} gives
		\[
		\log p^{\,\bfm}
		(\widetilde g_\bfm,\widetilde E_\bfm)
		\le
		-\frac N2\log(2\pi\xi'(q))
		+N o_\ve(1)+o_N(N).
		\]
		
		Using \eqref{eq:one-point-field-covariance}, we obtain
		\begin{align*}
			-\langle x_\bfm,g_\bfm\rangle
			+\frac12\Var(L_\bfm)
			={}&
			-\frac1{2\xi'(q)}
			\left\|\bfb-\theta\xi'(q)\bfm\right\|_2^2
			+\frac N2\theta^2\xi(q)\\
			&-\frac N2\xi''(q)\widehat\zeta(q)^2
			+\frac N2\xi''(q)
			\left(
			\frac1N\langle\bfm,x_\bfm\rangle
			-\widehat\zeta(q)
			\right)^2\\
			&+N o_\ve(1)+o_N(N).
		\end{align*}
		The \(W_2\) condition in \(\mathcal D_{N,\ve}(f')\) and
		\eqref{eq:ising-ward-inner} give
		\[
		\frac1N\langle\bfm,x_\bfm\rangle
		=
		\frac1{\xi'(q)}
		\frac1N\langle\bfm,\bfb\rangle
		-\theta q_\bfm
		=
		\widehat\zeta(q)+o_\ve(1)+o_N(1).
		\]
		This proves \eqref{eq:one-point-joint-density-bound}.
		
		Assume now the two additional conditions in the lemma.
		Equations~\eqref{eq:one-point-field-covariance} and
		\eqref{eq:ising-ward-inner} give
		\begin{equation}\label{eq:one-point-tilted-gradient-mean}
			\frac1N
			\left\|
			\dE^\bfm\nabla H_N(\bfm)-\widetilde g_\bfm
			\right\|_2^2
			=o_\ve(1)+o_N(1).
		\end{equation}
		The value component of the same covariance calculation is
		\begin{equation}\label{eq:one-point-tilted-value-mean}
			\dE^\bfm H_N(\bfm)
			=
			N\left(
			\theta\xi(q)+\xi'(q)\widehat\zeta(q)
			\right)
			+N o_\ve(1)+o_N(N).
		\end{equation}
		Using \eqref{eq:one-point-gE},
		\eqref{eq:ising-fenchel-identity}, \eqref{eq:ward2},
		\eqref{eq:ising-ward-inner}, and
		\[
		\int_0^q t\xi''(t)\,\dd t
		=q\xi'(q)-\xi(q),
		\]
		we obtain
		\[
		E_\bfm(f')
		=
		N\left(
		\xi'(q)\widehat\zeta(q)+\theta\xi(q)
		\right)
		+N o_\ve(1)+o_N(N).
		\]
		Equations~\eqref{eq:one-point-exact-target},
		\eqref{eq:one-point-tilted-gradient-mean}, and
		\eqref{eq:one-point-tilted-value-mean}, together with the Gaussian
		density formula, give
		\[
		\log p^{\,\bfm}
		(\widetilde g_\bfm,\widetilde E_\bfm)
		=
		-\frac N2\log(2\pi\xi'(q))
		+N o_\ve(1)+o_N(N).
		\]
		Substitution into \eqref{eq:one-point-CM-density} proves the equality
		statement.
	\end{proof}
	
	%==============================================================================
	\subsection{Proof of Proposition~\ref{prop:susy-annealed}}
	%==============================================================================
	
	We combine the determinant and density estimates.
	
	\begin{proof}
		Fix \(f'\in[f-\ve,f+\ve]\). The \(W_2\) condition in
		\(\mathcal D_{N,\ve}(f')\) and
		\eqref{eq:one-point-hessian-susceptibility} give
		\[
		\frac1N\sum_{i=1}^N
		\partial_{xx}\Phi_\zeta(q,b_i)
		=
		\widehat\zeta(q)+o_\ve(1).
		\]
		Substitute the uniform-window estimate in
		Lemma~\ref{lem:one-point-conditional-determinant}\textup{(i)} and
		Lemma~\ref{lem:one-point-joint-density} into
		\eqref{eq:one-point-kac-rice-window}. Equations
		\eqref{eq:one-point-gE} and
		\eqref{eq:ising-fenchel-identity}, together with
		\[
		\int_0^q t\xi''(t)\,\dd t
		=q\xi'(q)-\xi(q)
		\]
		give the following upper bound for the Kac--Rice integrand:
		\begin{equation}\label{eq:one-point-coordinate-factorization}
			\begin{aligned}
				&\exp\left\{
				-\theta N(f'-P_\zeta)
				+N o_\ve(1)+o_N(N)
				\right\}
				(2\pi\xi'(q))^{-N/2}
				\prod_{i=1}^N\partial_{mm}h_\zeta(q,m_i)\\
				&\qquad\times
				\exp\left\{
				\theta\sum_{i=1}^N
				\bigl[
				\Phi_\zeta(q,b_i)-\Phi_\zeta(0,0)
				\bigr]
				-\frac1{2\xi'(q)}\sum_{i=1}^Nb_i^2
				\right\}.
			\end{aligned}
		\end{equation}
		
		Enlarge the domain to \([-1,1]^N\) and set
		\[
		b_i=\partial_m h_\zeta(q,m_i),
		\qquad
		\dd b_i
		=\partial_{mm}h_\zeta(q,m_i)\,\dd m_i.
		\]
		Since \(\zeta((0,q))=0\), the plateau transition formula
		\eqref{eq:AC-plateau-transition}, with lower endpoint \(0\), gives
		\[
		\frac1{\sqrt{2\pi\xi'(q)}}
		\int_{\mathbb R}
		\exp\left\{
		\theta\bigl[\Phi_\zeta(q,b)-\Phi_\zeta(0,0)\bigr]
		-\frac{b^2}{2\xi'(q)}
		\right\}\dd b
		=1.
		\]
		Every coordinate integral in
		\eqref{eq:one-point-coordinate-factorization} is therefore one, and
		\begin{equation}\label{eq:one-point-fixed-window-upper}
			\frac1N\log
			\dE\mathsf N_\ve
			\le
			\theta(P_\zeta-f+\ve)
			+o_\ve(1)+o_N(1).
		\end{equation}
		
		For the lower bound, restrict the \(f'\)-integral to
		\[
		|f'-f|\le N^{-2}
		\]
		and use the product measure whose one-coordinate density is
		\[
		\frac1{\sqrt{2\pi\xi'(q)}}
		\exp\left\{
		\theta\bigl[\Phi_\zeta(q,b)-\Phi_\zeta(0,0)\bigr]
		-\frac{b^2}{2\xi'(q)}
		\right\}\dd b.
		\]
		This is the law of \(X_q\). Set
		\(m_i=\partial_x\Phi_\zeta(q,b_i)\). The contact identity
		\eqref{eq:ising-contact} and the law of large numbers give
		\[
		\frac1N\|\bfm\|_2^2\longrightarrow q.
		\]
		The \(W_2\) window follows from the law of large numbers.
		The selected-potential identity
		\eqref{eq:ising-selected-potential}, the definition of \(P_\zeta\),
		and
		\(\int_0^q t\xi''(t)\,\dd t=q\xi'(q)-\xi(q)\) give
		\[
		\dE\bigl[
		\Phi_\zeta(q,X_q)-\Phi_\zeta(0,0)
		\bigr]
		=
		f-P_\zeta
		+\frac\theta2\int_0^q t\xi''(t)\,\dd t.
		\]
		The law of large numbers now gives \eqref{eq:ward2}. The optimality of
		\(\Shift_q\zeta\) at the limiting
		empirical law, together with continuity in the self-overlap and
		empirical law, gives
		\[
		0\le
		F_{\TAP,\zeta}(\bfm)-F_{\TAP}(\bfm)
		\le \frac{N\ve}{2}
		\]
		with probability tending to one under the product measure. On this event,
		\(|f'-f|\le N^{-2}\) implies the last condition defining
		\(\mathcal D_{N,\ve}(f')\) for all sufficiently large \(N\).
		
		Since
		\[
		N\int_{|f'-f|\le N^{-2}}\dd f'=\frac2N
		\]
		the determinant estimate and the equality case of
		Lemma~\ref{lem:one-point-joint-density} give the lower bound below.
		Equation~\eqref{eq:one-point-fixed-window-upper} gives the upper bound:
		\[
		\theta(P_\zeta-f)
		\le
		\liminf_{\ve\downarrow0}
		\liminf_{N\to\infty}
		\frac1N\log
		\dE\mathsf N_\ve
		\le
		\limsup_{\ve\downarrow0}
		\limsup_{N\to\infty}
		\frac1N\log
		\dE\mathsf N_\ve
		\le
		\theta(P_\zeta-f).
		\]
		This proves the proposition.
	\end{proof}
	
	%==============================================================================
	\section{Spherical examples}
	\label{sec:spherical-examples}
	%==============================================================================
	
	We work in the spherical model, where the relevant Kac--Rice exponents can be
	evaluated explicitly. Let \(H_N\) be the centered Gaussian Hamiltonian on
	\(\mathbb R^N\) with covariance
	\[
	\dE H_N(\bfm)H_N(\bfm')
	=N\xi\bigl(R(\bfm,\bfm')\bigr),
	\qquad
	R(\bfm,\bfm')=\frac1N\langle\bfm,\bfm'\rangle.
	\]
	The spins lie on the sphere \(\|\sigma\|_2^2=N\), while the TAP
	magnetizations lie in
	\[
	B_N^\circ:=\{\bfm\in\mathbb R^N:\|\bfm\|_2^2<N\}.
	\]
	For a radial band correction \(\mathcal V\), the TAP free energy is
	\[
	F_{\TAP}(\bfm)=H_N(\bfm)+N\mathcal V(q_\bfm).
	\]
	For \(\zeta\in\mc P([q,1])\), recall that
	\[
	\zhat(s)=\int_s^1\zeta([0,t])\,\dd t.
	\]
	A critical point \(\bfm\) with \(q_\bfm=q\), associated with \(\zeta\), is
	SUSY when it satisfies the Ward identity
	\begin{equation}\label{eq:ward1}
		\xi'(q)\zhat(0)\zhat(q)=q.
	\end{equation}
	
	\subsection{Positive complexity with a negative transverse Hessian bulk}
	
	Let \(\Sigma_{\mathrm{tot}}(q)\) denote the fixed-radius annealed exponent.
	
	\begin{fact}
		For \(\xi(x)=x^4+x^2/10\) and \(q=1/2\), the band minimizer is
		\(\delta_{1/2}\), the Plefka inequality is strict, and
		\(\Sigma_{\mathrm{tot}}(1/2)>0\).
		The conditioned Hessian has a negative transverse bulk and one positive
		radial outlier.
	\end{fact}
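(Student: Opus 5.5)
The plan is to verify each claim by explicit one-dimensional computation, using the closed forms available in the spherical model. Throughout, $\xi(x)=x^4+x^2/10$ and $q=1/2$, so
\[
\xi'(x)=4x^3+\tfrac x5,\qquad \xi''(x)=12x^2+\tfrac15,\qquad \xi''(1/2)=3.2 .
\]

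\textbf{Step 1: the band minimizer is $\delta_{1/2}$.} I will use the spherical (Crisanti--Sommers type) band correction on $\mathcal P([q,1])$, which depends on $\zeta$ only through $\zhat$. Its first variation gives a spherical obstacle, analogous to \eqref{eq:TAP-obstacle} with $\dE[(M^\mu_r)^2]$ replaced by an explicit function of $\zhat$. For $\zeta=\delta_q$ we have $\zhat(s)=1-s$ on $[q,1]$ and the obstacle is an explicit polynomial-plus-rational function of $s$. I then check the support condition \eqref{eq:TAP-support-obstacle}, namely that $s=q$ is a global minimizer on $[1/2,1]$. Strict convexity in Lemma~\ref{lem:first-variation-contact}, in its spherical form, upgrades this to uniqueness.

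After the substitution $u=s-1/2$, the relevant quantity is an integral of the product of $u$ and a quadratic in $u$ against $\xi''$. Its sign on $[0,1/2]$ is decided by elementary inequalities. I expect this to be the delicate point: the pointwise Plefka-type quantity $\xi''(s)(1-q)^2$ exceeds $1$ near $s=1$, since $\xi''(1)/4=3.05$. So the check cannot be done pointwise; it must use the integrated obstacle. I would evaluate it symbolically and confirm the sign at the endpoint and at interior critical points.

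\textbf{Step 2: strict Plefka.} For $\delta_q$ the spherical Plefka condition reads $\xi''(q)\,\zhat(q)^2<1$. Here $\zhat(q)=1-q=1/2$, so the left side is $3.2\cdot\tfrac14=0.8<1$.

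\textbf{Step 3: the conditioned Hessian.} I will compute the conditioned Hessian as in Lemma~\ref{lem:one-point-gaussian-regression}, now with the spherical radial correction. Its transverse block is $\sqrt{\xi''(q)}\,W_N$ shifted by a scalar, namely the radial derivative of $N\mathcal V$ combined with the conditioned gradient. This gives a semicircle centered at $-1/(1-q)-\xi''(q)(1-q)=-3.6$ with radius $2\sqrt{\xi''(q)}\approx 3.58$. Strict Plefka places the right edge at $-3.6+3.58<0$, so the transverse bulk is negative. The radial entry is then obtained by the scalar Schur complement \eqref{eq:one-point-schur-decomposition}. It now depends on the conditioned energy level, and I will evaluate it at the energy that maximizes the fixed-radius Kac--Rice exponent. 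There I expect to find a positive value, which is the claimed positive radial outlier. This also shows the critical point is not SUSY, since \eqref{eq:ward1} fails.

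\textbf{Step 4: positivity of $\Sigma_{\mathrm{tot}}(1/2)$.} The fixed-radius annealed exponent is the supremum over energies $E$ of three terms: the logarithmic potential of the shifted semicircle (explicit, since the bulk avoids $0$), minus the Gaussian density cost of the conditioned gradient and value, plus the entropy $\tfrac12\log q+\tfrac12\log(2\pi e)$-type volume term. I will evaluate this supremum numerically, or at a convenient explicit $E$, to exhibit a strictly positive value. The $q^{p}$ mixture makes the $2\times2$ value--gradient covariance nondegenerate by \eqref{eq:one-point-covariance-nondegeneracy}. The main obstacle overall is Step~1, together with keeping exact constants in Step~4 so that positivity is certified rigorously rather than approximately.
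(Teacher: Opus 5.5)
Your plan is correct, and Steps 2--4 follow the paper's computation. Strict Plefka is $\xi''(q)(1-q)^2=0.8<1$. The transverse block is $\sqrt{\xi''(q)}\,W_{N-1}-\lambda I_{N-1}$ with $\lambda=\xi''(q)(1-q)+1/(1-q)=3.6$, so its right edge is $-\lambda+2\sqrt{\xi''(q)}<0$. Finally, $\Sigma_{\mathrm{tot}}(1/2)$ is the closed-form sum of shell volume, gradient density and tangential log-determinant, evaluated numerically.

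The paper conditions only on $\nabla F_{\TAP}(\bfm)=0$ and locates the outlier through $z_\star=\alpha_q+\tau_qG_q(\lambda+z_\star)$. Your version fixes the energy at the maximizer of the exponent and then checks the sign of the Schur complement at $0$; this is equivalent. That energy is the regression mean of $H_N(\bfm)$ given the gradient, so the extra value-regression term in the radial entry vanishes. By inertia, a positive Schur complement together with a negative bulk means exactly one positive eigenvalue. Concretely, the Schur complement is $\alpha_q+\tau_qG_q(\lambda)=-\tfrac{14}{55}+\tfrac13=\tfrac{13}{165}>0$.

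You diverge in Step~1, and your reason for diverging rests on a miscalculation. Along $\zeta_\varepsilon=(1-\varepsilon)\delta_q+\varepsilon\zeta$, the integrand in \eqref{eq:gateaux} is $\bigl[\xi''(s)-\zhat(s)^{-2}\bigr]\eta(s)$ with $\zhat(s)=1-s$. The relevant pointwise quantity is therefore $\xi''(s)(1-s)^2=(12s^2+\tfrac15)(1-s)^2$, not $\xi''(s)(1-q)^2$. This function is decreasing on $[\tfrac12,1]$ and equals $0.8$ at $s=\tfrac12$. Together with $\eta\le0$ and convexity in $\varepsilon$, this settles minimality at once, which is exactly the paper's argument.

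Your integrated-obstacle check would also work, but it is unnecessary. With $u=s-\tfrac12$, minimality reduces to $u^2(u^2+2u+\tfrac85)+\log(1-2u)+2u\le0$ on $[0,\tfrac12)$. The left side expands as $-\tfrac25u^2-\tfrac23u^3-3u^4-\sum_{k\ge5}(2u)^k/k$, so the inequality holds.

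The pointwise criterion has a further advantage: by continuity it holds for every radius $r$ near $\tfrac12$. You need this, and do not mention it, so that $\mathcal V(r)=\mathcal V_{\delta_r}(r)$ is smooth near $q$. Only then are $\lambda=-2\mathcal V'(q)$ and the $\mathcal V''(q)$ entering the radial Hessian entry the right quantities.
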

	
	Take
	\begin{equation}\label{eq:model}
		\xi(x)=x^4+\tfrac1{10}x^2,
		\qquad
		q=\tfrac12.
	\end{equation}
	
	For \(\zeta\in\mc P([q,1])\), the band correction is
	\begin{equation}\label{eq:Vzeta-def}
		\mathcal V_\zeta(q)
		=
		\frac12\int_q^1
		\left[
		\xi''(s)\zhat(s)+\frac1{\zhat(s)}-\frac1{1-s}
		\right]\dd s
		+\frac12\log(1-q).
	\end{equation}
	For \(\zeta=\delta_q\), one has \(\zhat(s)=1-s\) on \([q,1]\). To verify
	that \(\delta_q\) minimizes \(\mathcal V_\zeta(q)\), fix
	\(\zeta\in\mc P([q,1])\) and consider
	\[
	\zeta_\varepsilon=(1-\varepsilon)\delta_q+\varepsilon\zeta,
	\qquad 0\le\varepsilon\le1.
	\]
	Writing
	\[
	\eta(s)=\zhat(s)-(1-s),
	\qquad
	\zhat_\varepsilon(s)=(1-s)+\varepsilon\eta(s),
	\]
	we have \(\eta(s)\le0\).
	Differentiating \eqref{eq:Vzeta-def} at \(\varepsilon=0\) gives
	\begin{equation}\label{eq:gateaux}
		\left.\frac{\dd}{\dd\varepsilon}
		\mathcal V_{\zeta_\varepsilon}(q)\right|_{\varepsilon=0}
		=
		\frac12\int_q^1
		\left[\xi''(s)-\frac1{(1-s)^2}\right]\eta(s)\,\dd s.
	\end{equation}
	Since \(\eta\le0\), this derivative is nonnegative whenever
	\[
	\xi''(s)(1-s)^2\le1,
	\qquad q\le s\le1.
	\]
	For the mixture \eqref{eq:model}, the function
	\[
	\left(12s^2+\tfrac15\right)(1-s)^2
	\]
	is decreasing on \([1/2,1]\), and \(\xi''(q)(1-q)^2<1\). Hence
	\eqref{eq:gateaux} is nonnegative. The map
	\(\varepsilon\mapsto\mathcal V_{\zeta_\varepsilon}(q)\) is convex because
	\(\xi''\zhat_\varepsilon\) is affine and
	\(\zhat_\varepsilon^{-1}\) is convex. Hence
	\[
	\mathcal V_\zeta(q)\ge\mathcal V_{\delta_q}(q).
	\]
	The band correction is therefore minimized by \(\delta_{1/2}\). The Plefka
	inequality is strict. The same inequalities hold for \(r\) in a
	neighborhood of \(q=1/2\), so the radial correction there is
	\[
	\mathcal V(r):=\mathcal V_{\delta_r}(r).
	\]
	
	For this band correction, the TAP gradient condition is
	\(\nabla H_N(\bfm)=\lambda\bfm\), where
	\[
	\lambda=-2\mathcal V'(q)
	=\xi''(q)(1-q)+\frac1{1-q}.
	\]
	The fixed-radius Kac--Rice formula, including the shell volume, gradient
	density, and tangential determinant, gives
	\[
	\Sigma_{\mathrm{tot}}(q)
	=\frac12\log\frac{eq}{\xi'(q)}
	-
	\frac{q\lambda^2}{2(\xi'(q)+q\xi''(q))}
	+\frac{\xi''(q)(1-q)^2}{2}
	-\log(1-q).
	\]
	The radial Schur complement does not change the exponential rate. To
	determine its sign, set
	\[
	\begin{aligned}
		s_q&:=\xi''(q),\\
		\tau_q
		&:=
		\xi''(q)+q\xi'''(q)
		-\frac{q\xi''(q)^2}{\xi'(q)},\\
		\alpha_q
		&:=
		-\lambda+4q\mathcal V''(q)
		+
		\frac{
			q\lambda\bigl(q\xi'''(q)+2\xi''(q)\bigr)
		}{
			\xi'(q)+q\xi''(q)
		}.
	\end{aligned}
	\]
	Under the conditioning \(\nabla F_{\TAP}(\bfm)=0\), Gaussian regression in
	the decomposition
	\(\dR^N=\operatorname{span}(\bfm)\oplus\bfm^\perp\) gives
	\[
	\nabla^2F_{\TAP}(\bfm)
	\stackrel{\mathrm d}=
	\begin{pmatrix}
		\alpha_q+o_{\dP}(1)&y_N^{\mathsf T}\\
		y_N&\sqrt{s_q}\,W_{N-1}-\lambda I_{N-1}
	\end{pmatrix},
	\]
	where \(W_{N-1}\) is a GOE matrix, \(y_N\) is centered Gaussian with
	covariance \(N^{-1}\tau_qI_{N-1}\), and the two are independent. Write
	\[
	G_q(t):=\frac{t-\sqrt{t^2-4s_q}}{2s_q},
	\qquad t>2\sqrt{s_q}.
	\]
	The radial outlier \(z_\star\) is determined by
	\begin{equation}\label{eq:spherical-radial-outlier}
		z_\star
		=
		\alpha_q+\tau_qG_q(\lambda+z_\star).
	\end{equation}
	For the mixture \eqref{eq:model}, at \(q=1/2\),
	\eqref{eq:spherical-radial-outlier} gives
	\begin{align*}
		\Sigma_{\mathrm{tot}}(q)
		&=0.0292591294\ldots>0,\\
		-\lambda+2\sqrt{\xi''(q)}
		&=-0.0222912\ldots<0,\\
		z_\star
		&=0.0502581\ldots>0.
	\end{align*}
	Thus the fixed-radius annealed exponent is positive. The transverse bulk is
	strictly negative, while the radial outlier is positive. The corresponding
	critical points have index \(N-1\) with probability tending to one.
	
	\subsection{Cancellation of the SUSY complexity}
	
	We next verify the cancellation of the signed SUSY complexity at a fixed
	radius.
	
	\begin{fact}
		For an even mixed spherical model, let \(q\in(0,1)\) and
		\(\zeta\in\mc P([q,1])\). Assume that
		\[
		\xi'(q)\zhat(0)\zhat(q)=q,
		\qquad
		\xi''(q)\zhat(q)^2\le1.
		\]
		For \(\bfm\in B_N^\circ\) with \(\|\bfm\|_2^2=Nq\), let
		\(\dE_\bfm\) denote conditional expectation given
		\(\nabla F_{\TAP}(\bfm)=0\). Assume that, as \(N\to\infty\),
		\[
		\frac1N\log\left|
		\dE_\bfm\det\nabla^2F_{\TAP}(\bfm)
		\right|
		-
		\frac1N\log\left|
		\dE_\bfm\det\left(
		\nabla^2F_{\TAP}(\bfm)|_{\bfm^\perp}
		\right)
		\right|
		=o(1).
		\]
		Then the signed SUSY exponent at radius \(q\), denoted by
		\(\Sigma_{\mathrm{SUSY}}^{\mathrm{shell}}(q)\), is zero.
	\end{fact}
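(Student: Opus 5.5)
The plan is to write the signed SUSY exponent at radius \(q\) explicitly as a sum of three one-point Kac--Rice contributions, and then show that the Ward identity \eqref{eq:ward1} and the Plefka inequality force this sum to vanish. We follow the fixed-radius Kac--Rice decomposition used in the first spherical example, replacing the unsigned determinant by the signed conditional determinant \(\dE_\bfm\det\nabla^2F_{\TAP}(\bfm)\).

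\emph{First step: reduce the signed determinant to the tangential one.} By the assumption on the difference of the two normalized log-determinants, we may replace \(\dE_\bfm\det\nabla^2F_{\TAP}(\bfm)\) by \(\dE_\bfm\det(\nabla^2F_{\TAP}(\bfm)|_{\bfm^\perp})\) at exponential scale. As in the first example, differentiating the radial correction \eqref{eq:Vzeta-def} in \(q\) makes the gradient condition read \(\nabla H_N(\bfm)=\lambda\bfm\). We expect
\[
\lambda=\xi''(q)\zhat(q)+\frac1{\zhat(q)}.
\]
By Gaussian regression, the tangential block is \(\sqrt{\xi''(q)}\,W_{N-1}-\lambda I_{N-1}\), up to a finite-rank term that does not affect the exponent. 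The AM--GM inequality and the Plefka assumption \(\xi''(q)\zhat(q)^2\le1\) give \(\lambda\ge2\sqrt{\xi''(q)}\). Hence the bulk lies in \((-\infty,0]\) and the signed and unsigned tangential determinants agree up to the sign \((-1)^{N-1}\). The semicircle logarithmic potential, evaluated on the branch \(G_q(\lambda)=\zhat(q)\) selected by the Plefka inequality, gives the determinant rate
\[
-\log\zhat(q)+\tfrac12\xi''(q)\zhat(q)^2 .
\]

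\emph{Second step: the remaining factors.} We add two further pieces. The first is the shell-volume term \(\tfrac12\log(2\pi e q)\). The second is the log-density of the gradient at the required value. The tangential gradient has covariance \(\xi'(q)I\) and must vanish, which contributes \(-\tfrac12\log(2\pi\xi'(q))\). The radial component must equal \(\lambda\sqrt{Nq}\) and has variance \(\xi'(q)+q\xi''(q)\). In the SUSY computation, however, this radial Gaussian factor is not taken at face value. The bosonic Lagrange multiplier for the radial constraint is instead fixed at its saddle. This saddle is exactly what the BRST/Ward structure encodes, and I would implement it as the tilt by \(\theta\)-type multipliers used in Lemma~\ref{lem:one-point-joint-density}. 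The signed exponent then becomes an explicit function of \(q\), \(\zhat(q)\), \(\zhat(0)\) and \(\xi\) and its derivatives at \(q\).

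\emph{Third step: algebraic cancellation.} Using \(\xi'(q)\zhat(0)\zhat(q)=q\), we eliminate \(\log(q/\xi'(q))\) against \(\log\zhat(q)\) and \(\log\zhat(0)\). The quadratic terms should combine into a perfect square that vanishes at the saddle. As a consistency check, I would verify that the \(q\)-derivative of the resulting expression vanishes along the Ward curve, and that the expression tends to zero as \(q\downarrow0\) or in the replica-symmetric case \(\zeta=\delta_q\).

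The main obstacle is this third step, specifically the correct treatment of the radial direction. The unsigned count keeps a positive radial outlier, and the first spherical example shows that it carries a strictly positive exponent. The fact therefore rests on showing that the determinant hypothesis, together with the saddle choice of the radial multiplier, removes exactly the terms \(\tfrac12\log(q/\xi'(q))-q\lambda^2/(2(\xi'+q\xi''))\) modulo the Ward identity. I would first try to prove the cancellation by rewriting everything in terms of \(\zhat(q)\) and \(\zhat(0)\) and checking it directly, before attempting a more conceptual BRST argument.
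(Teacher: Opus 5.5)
\textbf{There is a genuine gap: the cancellation in your third step is never carried out, and the radial mechanism you propose would break it.} Your determinant rate $-\log\zhat(q)+\tfrac12\xi''(q)\zhat(q)^2$ on the Plefka branch matches the paper, as do the shell volume $\tfrac12\log(2\pi e q)$ and the tangential density $-\tfrac12\log(2\pi\xi'(q))$. The problem is your treatment of the radial gradient factor.

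There is no radial multiplier to fix at a saddle, and no tilt to perform. The paper takes the radial Gaussian factor at face value: $-q\lambda^2/(2(\xi'(q)+q\xi''(q)))$ per unit $N$. The Cameron--Martin tilt of Lemma~\ref{lem:one-point-joint-density} is an exact rewriting of a density, so it could not change this value. At fixed radius there is also no level constraint, hence no $\theta$.

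The idea you are missing is elementary. Since $\zeta\in\mc P([q,1])$, $\zhat$ is constant on $[0,q]$, so $\zhat(0)=\zhat(q)$ and the Ward identity reads $\xi'(q)\zhat(q)^2=q$. Then $\lambda=(\xi'(q)+q\xi''(q))/\sqrt{q\xi'(q)}$, and the pieces become:
\begin{itemize}
\item radial factor: $-\tfrac12-q\xi''(q)/(2\xi'(q))$;
\item determinant: $-\tfrac12\log(q/\xi'(q))+q\xi''(q)/(2\xi'(q))$;
\item shell volume plus tangential density: $\tfrac12+\tfrac12\log(q/\xi'(q))$.
\end{itemize}
These sum to exactly zero. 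No perfect square, BRST argument, or saddle is needed.

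Your stated main obstacle is therefore misdiagnosed. The terms $\tfrac12\log(q/\xi'(q))-q\lambda^2/(2(\xi'(q)+q\xi''(q)))$ must not be removed: they are precisely what cancels the determinant and the $\tfrac12$ from the shell volume. If you discarded them, you would be left with $\tfrac12-\tfrac12\log(q/\xi'(q))+q\xi''(q)/(2\xi'(q))$, which is generally nonzero.

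The first spherical example also does not show that the radial outlier carries a positive exponent. The paper says explicitly that the radial Schur complement does not change the rate. That example has $\Sigma_{\mathrm{tot}}(1/2)>0$ because it lies off the Ward curve: $\xi'(1/2)(1-1/2)^2=0.15\neq 1/2$. Its exponent is the same three-term formula with $\zhat(q)=1-q$. In the Fact, the radial row and column of the Hessian are handled by the determinant hypothesis, while the radial gradient density is a separate factor that must be kept as is.
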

	
	Since \(\zeta\) is supported on \([q,1]\), the function \(\zhat\) is
	constant on \([0,q]\). Therefore \eqref{eq:ward1} is equivalent to
	\begin{equation}\label{eq:susy-radius-identity}
		\xi'(q)\zhat(q)^2=q.
	\end{equation}
	We first record the gradient density.
	\begin{lemma}
		Let \(q\in(0,1)\) satisfy \(\xi'(q)>0\), let
		\(\zeta\in\mc P([q,1])\), and let \(\bfm\in B_N^\circ\) satisfy
		\(\|\bfm\|_2^2=Nq\). Let
		\[
		\lambda=-2\mathcal V'_\zeta(q)
		=\xi''(q)\zhat(q)+\frac1{\zhat(q)}.
		\]
		Then
		\begin{equation}\label{eq:prob0}
			\log p_{\nabla F_{\TAP}(\bfm)}(0)
			=-\frac{Nq\lambda^2}{2(\xi'(q)+q\xi''(q))}
			-\frac{N-1}{2}\log(2\pi\xi'(q))
			-\frac12\log(2\pi(\xi'(q)+q\xi''(q))).
		\end{equation}
		If in addition \(\xi'(q)\zhat(q)^2=q\), then
		\begin{equation}\label{eq:prob0-ward}
			\frac1N\log p_{\nabla F_{\TAP}(\bfm)}(0)
			=-\frac12
			-\frac{q\xi''(q)}{2\xi'(q)}
			-\frac12\log(2\pi\xi'(q))
			+O\left(\frac1N\right).
		\end{equation}
	\end{lemma}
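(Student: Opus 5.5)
The plan is to compute the Gaussian density directly. Since the band correction depends on \(\bfm\) only through \(q_\bfm\), the gradient of \(F_{\TAP}\) is an explicit shift of \(\nabla H_N(\bfm)\), and \(\nabla H_N(\bfm)\) has an explicit covariance.

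\emph{Step 1 (the shift \(\lambda\)).} Here \(\zeta\in\mc P([q,1])\) is held fixed. Then \(\zhat(s)\) does not depend on the lower endpoint for \(s\ge q\), so differentiating \eqref{eq:Vzeta-def} in \(q\) only sees the lower limit of the integral and the term \(\tfrac12\log(1-q)\):
\[
\mathcal V_\zeta'(q)
=-\frac12\left[\xi''(q)\zhat(q)+\frac1{\zhat(q)}-\frac1{1-q}\right]-\frac1{2(1-q)}
=-\frac12\left[\xi''(q)\zhat(q)+\frac1{\zhat(q)}\right].
\]
Since \(\nabla q_\bfm=2\bfm/N\), we get
\[
\nabla F_{\TAP}(\bfm)=\nabla H_N(\bfm)+2\mathcal V_\zeta'(q)\bfm=\nabla H_N(\bfm)-\lambda\bfm,
\]
with \(\lambda\) as stated. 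Hence
\[
p_{\nabla F_{\TAP}(\bfm)}(0)=p_{\nabla H_N(\bfm)}(\lambda\bfm).
\]

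\emph{Step 2 (the Gaussian density).} By covariance differentiation, as in \eqref{eq:one-point-field-covariance} (which applies verbatim on \(\dR^N\)),
\[
\Cov\bigl(\nabla H_N(\bfm)\bigr)=\xi'(q)I_N+\frac{\xi''(q)}N\bfm\bfm^{\mathsf T}.
\]
This matrix has eigenvalue \(\xi'(q)\) on \(\bfm^\perp\), with multiplicity \(N-1\), and eigenvalue \(\xi'(q)+q\xi''(q)\) along \(\bfm\), because \(\|\bfm\|_2^2=Nq\). The point \(\lambda\bfm\) lies entirely in the radial eigendirection and has squared norm \(\lambda^2Nq\). The centered Gaussian density formula on \(\dR^N\) therefore gives \eqref{eq:prob0} immediately. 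The \(N-1\) transverse coordinates each contribute \(-\tfrac12\log(2\pi\xi'(q))\), and the radial coordinate contributes \(-\tfrac12\log\bigl(2\pi(\xi'(q)+q\xi''(q))\bigr)-\tfrac{Nq\lambda^2}{2(\xi'(q)+q\xi''(q))}\).

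\emph{Step 3 (the Ward simplification).} Under \eqref{eq:susy-radius-identity} we have \(\zhat(q)^2=q/\xi'(q)\). Substituting this into \(\lambda^2=\xi''(q)^2\zhat(q)^2+2\xi''(q)+\zhat(q)^{-2}\) gives
\[
q\lambda^2=\frac{q^2\xi''(q)^2}{\xi'(q)}+2q\xi''(q)+\xi'(q)=\frac{\bigl(\xi'(q)+q\xi''(q)\bigr)^2}{\xi'(q)}.
\]
The exponent is therefore
\[
-\frac{N\bigl(\xi'(q)+q\xi''(q)\bigr)}{2\xi'(q)}=-\frac N2-\frac{Nq\xi''(q)}{2\xi'(q)}.
\]
The determinant terms equal \(-\tfrac N2\log(2\pi\xi'(q))+O(1)\). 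Dividing by \(N\) yields \eqref{eq:prob0-ward}.

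There is no real obstacle. The only point requiring care is Step 1: the derivative must be taken with \(\zeta\) held fixed, so that \(\zhat\) does not move with \(q\). With that, the \((1-q)^{-1}\) terms from the integrand and from \(\tfrac12\log(1-q)\) cancel exactly, and \(\lambda\) takes the clean form in the statement.
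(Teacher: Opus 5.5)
Your proof is correct and follows the paper's own argument. You rewrite $\nabla F_{\TAP}(\bfm)=0$ as $\nabla H_N(\bfm)=\lambda\bfm$, read the density off the $N-1$ tangent directions of variance $\xi'(q)$ and the radial direction of variance $\xi'(q)+q\xi''(q)$ evaluated at $\lambda\sqrt{Nq}$, and then substitute $q\lambda^2=(\xi'(q)+q\xi''(q))^2/\xi'(q)$ under the Ward identity. Your explicit check of the formula for $\lambda$, where the two $(1-q)^{-1}$ terms cancel when $\zeta$ is held fixed, is a small addition that the paper takes as part of the statement.
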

	
	\begin{proof}
		Since
		\[
		\nabla F_{\TAP}(\bfm)
		=\nabla H_N(\bfm)+2\mathcal V'_\zeta(q)\bfm,
		\]
		the event \(\nabla F_{\TAP}(\bfm)=0\) is
		\(\nabla H_N(\bfm)=\lambda\bfm\). The tangent components of
		\(\nabla H_N(\bfm)\) are independent centered Gaussian variables with
		variance \(\xi'(q)\). The radial component has variance
		\(\xi'(q)+q\xi''(q)\) and is evaluated at \(\lambda\sqrt{Nq}\).
		This gives \eqref{eq:prob0}. Under \eqref{eq:susy-radius-identity},
		\[
		\lambda^2
		=\frac{(\xi'(q)+q\xi''(q))^2}{q\xi'(q)}.
		\]
		Substitution into \eqref{eq:prob0} proves \eqref{eq:prob0-ward}.
	\end{proof}
	
	\begin{proof}[Proof of the fact]
		The Kac--Rice formula gives
		\[
		\Sigma_{\mathrm{SUSY}}^{\mathrm{shell}}(q)
		=\lim_{N\to\infty}
		\left\{
		\frac12\log(2\pi e q)
		+\frac1N\log p_{\nabla F_{\TAP}(\bfm)}(0)
		+\frac1N\log\left|
		\dE_\bfm\det\nabla^2F_{\TAP}(\bfm)
		\right|
		\right\}.
		\]
		The equation
		\[
		\lambda=\xi''(q)a+\frac1a
		\]
		has two roots, possibly equal. Since \(\zhat(q)\) is a root, the non-strict
		Plefka inequality \(\xi''(q)\zhat(q)^2\le1\) forces it to be the smaller
		root:
		\[
		a
		=
		\frac{
			\lambda-\sqrt{\lambda^2-4\xi''(q)}
		}{
			2\xi''(q)
		}
		=
		\zhat(q).
		\]
		For this root, the signed tangential determinant contributes
		\begin{equation}\label{eq:susy-det}
			\frac1N\log\left|
			\dE_\bfm\det\left(
			\nabla^2F_{\TAP}(\bfm)|_{\bfm^\perp}
			\right)
			\right|
			=\frac{\xi''(q)\zhat(q)^2}{2}-\log\zhat(q)+o(1).
		\end{equation}
		By the determinant hypothesis, the full Hessian has the same signed
		determinant exponent.
		
		Using \eqref{eq:susy-radius-identity} in \eqref{eq:prob0-ward} and
		\eqref{eq:susy-det}, the three terms are
		\begin{align*}
			\text{shell volume}
			&=\frac12\log(2\pi)+\frac12+\frac12\log q,\\
			\text{gradient density}
			&=-\frac12-\frac{q\xi''(q)}{2\xi'(q)}
			-\frac12\log(2\pi)-\frac12\log\xi'(q),\\
			\text{determinant}
			&=\frac{q\xi''(q)}{2\xi'(q)}
			-\frac12\log q+\frac12\log\xi'(q).
		\end{align*}
		Their sum is zero:
		\[
		\Sigma_{\mathrm{SUSY}}^{\mathrm{shell}}(q)=0.
		\]
	\end{proof}
	
	\subsection{Annealed and SUSY complexities above equilibrium}
	
	Let \(\Sigma_{\mathrm{ann}}(q,f)\) denote the fixed-radius,
	fixed-level annealed exponent, and set
	\[
	\Sigma_{\mathrm{ann}}(f)
	:=
	\sup_{0<q'<1}\Sigma_{\mathrm{ann}}(q',f).
	\]
	Define the spherical SUSY complexity by
	\begin{equation}\label{eq:cex-susy-def}
		\Sigma_{\mathrm{SUSY}}(f)
		=\inf_{0<\theta<1}\bigl[\Lambda_{\mathrm{sph}}(\theta)-\theta f\bigr],
		\qquad
		\Lambda_{\mathrm{sph}}(\theta)
		=\theta
		\inf_{\substack{\zeta\in\mc P([0,1])\\
				\zeta(\{0\})\ge\theta}}
		\mathcal V_\zeta(0).
	\end{equation}
	
	\begin{fact}
		There exist an even mixed spherical model and a level
		\(f>f_{\eq}^{\mathrm{sph}}\) such that
		\[
		\Sigma_{\mathrm{ann}}(f)>0>\Sigma_{\mathrm{SUSY}}(f).
		\]
	\end{fact}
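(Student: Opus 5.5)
The plan is to reuse the explicit mixture \eqref{eq:model}, \(\xi(x)=x^4+x^2/10\), at radius \(q=1/2\). For it, the first fact of this appendix already gives a strict Plefka inequality, the band minimizer \(\delta_{1/2}\), and \(\Sigma_{\mathrm{tot}}(1/2)=0.0292\ldots>0\). The two inequalities are proved separately.

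The inequality \(\Sigma_{\mathrm{SUSY}}(f)<0\) holds for every \(f>f_{\eq}^{\mathrm{sph}}\) and needs no computation. Take a spherical Parisi minimizer \(\zeta_*\) and set \(\zeta_\theta=(1-\theta)\zeta_*+\theta\delta_0\). This measure is admissible in \eqref{eq:cex-susy-def}, so
\[
\Lambda_{\mathrm{sph}}(\theta)-\theta f\le\theta\bigl(\mathcal V_{\zeta_\theta}(0)-f\bigr).
\]
By continuity of the spherical Parisi functional, \(\mathcal V_{\zeta_\theta}(0)\to f_{\eq}^{\mathrm{sph}}<f\) as \(\theta\downarrow0\). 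The right-hand side is therefore strictly negative for small \(\theta>0\), exactly as in the proof of Lemma~\ref{lem:selected-first-contact}(1).

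For the annealed side, I refine the fixed-radius count by the energy level. Gaussian regression, as in the proof of \eqref{eq:prob0}, handles the value conditioned on \(\nabla H_N(\bfm)=\lambda\bfm\). The conditional mean of \(H_N(\bfm)/N\) is \(q\lambda\xi'(q)/(\xi'(q)+q\xi''(q))\). The conditional variance is
\[
v_q=\xi(q)-\frac{q\xi'(q)^2}{\xi'(q)+q\xi''(q)}.
\]
The value is asymptotically independent of the tangential Hessian, since only a rank-one term couples to it. Hence
\[
\Sigma_{\mathrm{ann}}(q,f)=\Sigma_{\mathrm{tot}}(q)-\frac{(f-f_q)^2}{2v_q},
\]
where \(f_q\) is that conditional mean plus \(\mathcal V(q)\). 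Numerically, at \(q=1/2\) I get \(\lambda=3.6\), \(f_q\approx0.491+0.010\approx0.50\), and \(v_q\approx0.0057\). So \(\Sigma_{\mathrm{ann}}(1/2,f)>0\) for every \(f<f_q+\sqrt{2v_q\Sigma_{\mathrm{tot}}}\approx0.518\). It then suffices to pick \(f\) in \((f_{\eq}^{\mathrm{sph}},0.518)\), and \(\Sigma_{\mathrm{ann}}(f)\ge\Sigma_{\mathrm{ann}}(1/2,f)>0\).

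The main obstacle is to certify that \(f_{\eq}^{\mathrm{sph}}<0.518\) in the normalization of \eqref{eq:Vzeta-def}. The crude annealed bound \(\mathcal V_{\delta_0}(0)=\xi(1)/2=0.55\) is not sufficient. I would evaluate \(\mathcal V_\zeta(0)\) for an explicit one-step RSB trial \(\zeta=x\delta_0+(1-x)\delta_{q_1}\), optimized over \((x,q_1)\), using the closed form of \eqref{eq:Vzeta-def} with piecewise-linear \(\zhat\). Any trial value below \(0.518\) bounds \(f_{\eq}^{\mathrm{sph}}\) from above, since the Parisi value is an infimum.

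If the margin fails for this mixture, I would rescale \(\xi\mapsto\beta^2\xi\) and scan \(q\), repeating the computation of \(\Sigma_{\mathrm{tot}}\), \(f_q\), \(v_q\) and the 1RSB bound. The goal is to find parameters where \(f_q+\sqrt{2v_q\Sigma_{\mathrm{tot}}(q)}\) exceeds the trial Parisi value. One must also check that the strict Plefka inequality and the minimality of \(\delta_q\) for the band, via the criterion \(\xi''(s)(1-s)^2\le1\) on \([q,1]\), persist for the chosen parameters.
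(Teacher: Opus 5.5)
Your argument that $\Sigma_{\mathrm{SUSY}}(f)<0$ for every $f>f_{\eq}^{\mathrm{sph}}$ is correct, and it is more general than what the paper does. Indeed, convexity of $\zeta\mapsto\mathcal V_\zeta(0)$ gives $\mathcal V_{\zeta_\theta}(0)\le(1-\theta)f_{\eq}^{\mathrm{sph}}+\theta\,\xi(1)/2$. Your completed-square form of $\Sigma_{\mathrm{ann}}(q,f)$ also agrees with \eqref{eq:cex-Sigma}.

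The gap is on the annealed side, and for the mixture \eqref{eq:model} it cannot be closed. By \eqref{eq:gateaux} at $q=0$, the derivative of $\mathcal V_\zeta(0)$ at $\delta_0$ in the direction $\delta_u$ is $-\frac12[\xi(u)+u+\log(1-u)]$. For $\xi(x)=x^4+x^2/10$ the bracket vanishes at $0$ and is strictly decreasing on $(0,1)$: it behaves like $-0.4u^2-u^3/3+\cdots$ near $0$, is about $-0.11$ at $u=1/2$, and about $-0.21$ at $u=0.7$. Hence all these directional derivatives are positive, and by convexity $\delta_0$ is the Parisi minimizer. The model is therefore replica symmetric, with $f_{\eq}^{\mathrm{sph}}=\xi(1)/2=0.55$ exactly. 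Your window $(f_{\eq}^{\mathrm{sph}},0.518)$ is empty, and no 1RSB trial can go below $0.55$. Scanning $q$ does not rescue this mixture. $\Sigma_{\mathrm{tot}}(q)>0$ only for $q$ roughly in $(0.33,0.67)$, and there $f_q+\sqrt{2v_q\Sigma_{\mathrm{tot}}(q)}$ stays below $0.53$.

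Your fallback, rescaling $\xi$ and scanning $q$, then carries the entire content of the statement. It is neither carried out nor guaranteed to succeed, and for rescalings of this near-pure mixture it should fail. Near $f_{\eq}^{\mathrm{sph}}$ the best radius in this family is the SUSY radius of the constrained minimizer. There the Cameron--Martin tilt with $\theta=\zeta(\{0\})$ and radial coefficient $(1-q)/q$ exactly centers the value and the radial gradient, because of the Ward identity, and $\Sigma_{\mathrm{ann}}(q,f)$ reproduces the Legendre value. The edge of positive annealed complexity therefore lands on $f_{\eq}^{\mathrm{sph}}$ itself rather than above it. For example, at $\beta^2=2$ the maximizing radius is $q\approx0.798$, and both the edge and the 1RSB value of $f_{\eq}^{\mathrm{sph}}$ come out $\approx1.09555$.

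The missing idea is that you need a mixture in which a non-SUSY radius, one violating \eqref{eq:ward1}, carries positive annealed complexity above $f_{\eq}^{\mathrm{sph}}$. The paper supplies this explicitly with $\xi(x)=x^8+2x^2$, $q=0.744$, and $f=1.402$. The single trial $\zeta_0=0.0926\,\delta_0+0.9074\,\delta_{0.5356}$ gives $\mathcal V_{\zeta_0}(0)<f$. This certifies both $f>f_{\eq}^{\mathrm{sph}}$ and $\Sigma_{\mathrm{SUSY}}(f)<0$ at once. Then \eqref{eq:cex-Sigma} gives $\Sigma_{\mathrm{ann}}(0.744,f)\approx0.002>0$ at a radius where $\xi'(q)(1-q)^2\ne q$.
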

	
	For the counterexample, set
	\[
	\xi(x)=x^8+2x^2,
	\qquad
	f=1.402,
	\qquad
	q=0.744.
	\]
	The spherical equilibrium free energy is
	\[
	f_{\eq}^{\mathrm{sph}}
	=
	\inf_{\zeta\in\mc P([0,1])}\mathcal V_\zeta(0).
	\]
	Define
	\[
	\theta_0=0.0926,
	\qquad
	r=0.5356,
	\qquad
	\zeta_0=\theta_0\delta_0+(1-\theta_0)\delta_r.
	\]
	Its cumulant is
	\[
	\zhat_0(s)=
	\begin{cases}
		(1-r)+\theta_0(r-s),&0\le s<r,\\
		1-s,&r\le s\le1,
	\end{cases}.
	\]
	Direct evaluation of \eqref{eq:Vzeta-def} gives
	\[
	f_{\eq}^{\mathrm{sph}}\le\mathcal V_{\zeta_0}(0)<f.
	\]
	
	The convexity calculation leading to \eqref{eq:gateaux} shows that the band
	minimizer at \(q\) is \(\delta_q\), since
	\[
	\xi''(s)(1-s)^2=(56s^6+4)(1-s)^2
	\]
	is decreasing on \([q,1]\) and \(\xi''(q)(1-q)^2<1\).
	The fixed-radius, fixed-level annealed Kac--Rice exponent is
	\begin{equation}\label{eq:cex-Sigma}
		\begin{split}
			\Sigma_{\mathrm{ann}}(q,f)
			={}&\frac12\log\frac{eq}{\xi'(q)}
			+\frac{\xi''(q)(1-q)^2}{2}
			-\log(1-q)\\
			&-
			\frac{
				\xi(q)q\lambda^2
				-2\xi'(q)q\lambda E
				+(\xi'(q)+q\xi''(q))E^2
			}{2\Delta},
		\end{split}
	\end{equation}
	where
	\[
	\lambda=\xi''(q)(1-q)+\frac1{1-q},
	\qquad
	E=f-\mathcal V_{\delta_q}(q),
	\]
	and
	\[
	\Delta
	=\xi(q)(\xi'(q)+q\xi''(q))-q\xi'(q)^2.
	\]
	
	Since \(\zeta_0(\{0\})=\theta_0\),
	\eqref{eq:cex-susy-def} gives
	\[
	\Sigma_{\mathrm{SUSY}}(f)
	\le\theta_0\bigl(\mathcal V_{\zeta_0}(0)-f\bigr).
	\]
	
	Direct evaluation of \eqref{eq:Vzeta-def} and
	\eqref{eq:cex-Sigma} gives
	\begin{equation*}
		\begin{aligned}
			\mathcal V_{\zeta_0}(0)
			&=1.401015445\ldots<f,\\
			\xi''(q)(1-q)^2
			&=0.884595576\ldots<1,\\
			\Sigma_{\mathrm{ann}}(q,f)
			&=0.002039886\ldots>0,\\
			\theta_0\bigl(\mathcal V_{\zeta_0}(0)-f\bigr)
			&=-9.116\ldots\times10^{-5}<0,\\
			\xi'(q)(1-q)^2
			&=0.261192846\ldots\ne q.
		\end{aligned}
	\end{equation*}
	It follows that
	\[
	\Sigma_{\mathrm{ann}}(f)
	\ge\Sigma_{\mathrm{ann}}(q,f)
	>0
	>\Sigma_{\mathrm{SUSY}}(f).
	\]
	The positive contribution at radius \(q\) is not SUSY, since
	\(\xi'(q)(1-q)^2\ne q\) violates \eqref{eq:ward1}.
	
	%==============================================================================
	\bibliographystyle{amsplain}
	\bibliography{references}
	
\end{document}